\documentclass[11pt]{amsart}

\usepackage[dvipsnames]{xcolor}
\usepackage{enumitem}
\usepackage{amssymb,graphicx,mathtools}
\usepackage[margin=1in]{geometry}
\usepackage{etoolbox}

\usepackage[square]{natbib}

\usepackage[citecolor=PineGreen,colorlinks=true,linkcolor=RoyalBlue,urlcolor=BrickRed]{hyperref}

\theoremstyle{plain}
\newtheorem{theorem}{Theorem}[section]
\newtheorem{proposition}[theorem]{Proposition}
\newtheorem{corollary}[theorem]{Corollary}
\newtheorem{lemma}[theorem]{Lemma}
\theoremstyle{definition}
\newtheorem{remark}[theorem]{Remark}

\newcommand{\Z}{\mathbb{Z}}
\newcommand{\E}{\mathbb{E}}
\renewcommand{\P}{\mathbb{P}}
\DeclareMathOperator{\Var}{Var}
\DeclareMathOperator{\Cov}{Cov}
\newcommand{\one}{\mathbf{1}}
\newcommand{\eps}{\varepsilon}

\makeatletter
\patchcmd{\@settitle}{\uppercasenonmath\@title}{\Large}{}{}
\patchcmd{\@setauthors}{\MakeUppercase}{\large}{}{}
\makeatother

\begin{document}

\title{Sharpness and critical scaling of parking}

\author{Ahmed Bou-Rabee}
\author{Christoforos Panagiotis}

\begin{abstract}
In the parking model, each site of the $d$-dimensional lattice independently
starts with one car with probability $p$ or one parking spot with probability $1-p$. Cars move according to independent discrete-time simple random walks and park at the first spot they find free. We prove that in the critical regime $p=1/2$, the expected number of visits to a site in $n$ rounds is of order
$n^{(4-d)/4}$ for $d\leq3$ and $\log n$ for $d\geq4$. We also prove that in the subcritical regime $p\in(0,1/2)$, the parking-time tail is bounded above and below by stretched exponentials with exponent $d/(d+2)$. As $p\uparrow1/2$, we also determine the divergence of the
expected total number of visits to a site: its order is $(1-2p)^{-3}$,
$(1-2p)^{-1}$ and $(1-2p)^{-1/3}$ in dimensions one, two and three,
respectively, and $\log(1/(1-2p))$ in dimensions four and higher. Our proof uses a representation of the parking process as the divisible sandpile of \citet{LevinePeres09} plus a martingale-type term.
These results answer questions posed by \citet{DGJLS}.
\end{abstract}

\maketitle

{\small\tableofcontents}

\section{Introduction}\label{sec:intro}
At each site of $\Z^d$ place a random number of particles and a random number of
holes, independently from site to site. Write $\eta(x)$ for the number of particles at $x$ minus the number of holes there. The particles move according to independent simple random walks in discrete time: in each round, every particle that has not settled takes one step. A particle \textbf{settles} when it reaches a hole that is still unfilled. When $j$ particles arrive in the same round at a site with $h$ unfilled holes, choose $\min\{j,h\}$ of them uniformly at random to settle there.

This model generalizes both the parking model introduced by \citet{DGJLS}, where each site has either one parking spot or one car, and the particle--hole model \citep[Section~2]{CRS14}, where each site has one hole and a random nonnegative number of additional particles. The particle--hole model is also related to internal diffusion-limited aggregation with multiple sources and activated
random walk at infinite sleep rate \citep[Sections~1.1 and~10.3]{Rolla}.

The \textbf{odometer} $U_n(x)$ counts the steps taken from $x$ during the first $n$ rounds. As explained in Section~\ref{sec:model} below, it can be defined using the stack representation of \citet{DiaconisFulton}. Attach to each site an
independent sequence of uniformly chosen neighbors, which are the destinations of its
successive steps. Let $I_{y,x}(m)$ count the entries equal to $x$ among the first $m$ entries at $y$. For each neighbor $x$ of $y$, this is a sum of $m$ independent Bernoulli variables of mean $1/(2d)$. Then $U_0=0$ and
\begin{equation}\label{eq:parallel-intro}
	U_{n+1}(x)=\Bigl(\eta(x)+\sum_yI_{y,x}(U_n(y))\Bigr)^{\!+}\,.
\end{equation}

For i.i.d.\ initial configurations in the parking and particle--hole models, the two densities are equal exactly when $\E\eta(0)=0$. This is the critical point: when $\E\eta(0)<0$, every fixed site has finite odometer almost surely; when $\E\eta(0)=0$ and $\Var\eta(0)>0$, every site has infinite odometer almost surely \citep[Corollary~10.3 and Theorem~10.4]{Rolla}.
Figure~\ref{fig:parking} shows the two regimes.

\begin{figure}
	\centering
	\includegraphics[width=0.75\textwidth]{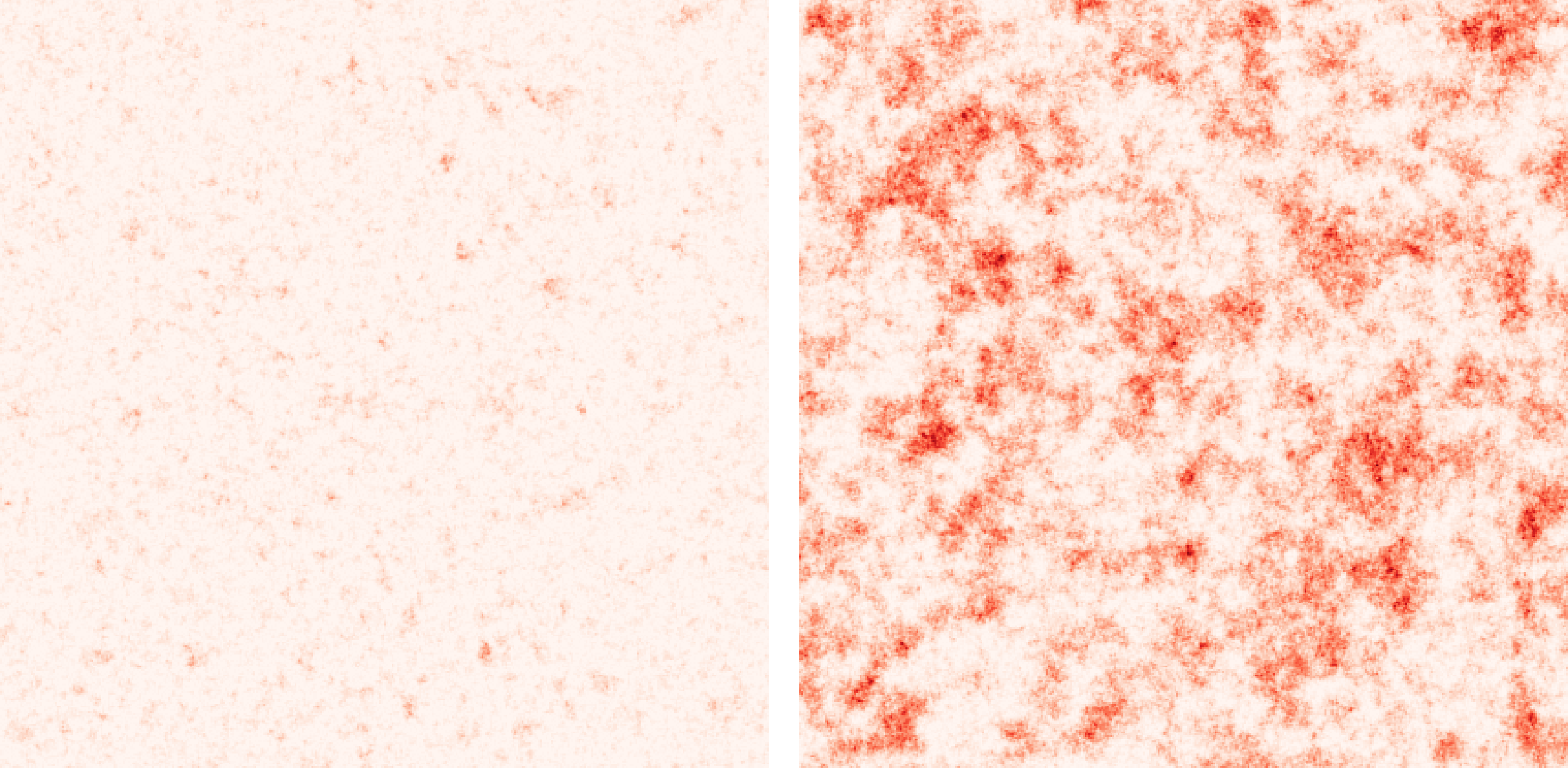}
	\caption{A snapshot of the odometer $U_{420}$ for the parking model on $\Z^2$. On the left, $\E\eta(0)=-0.16$ and on the right, $\E\eta(0)=0$. Darker shades
	indicate higher values, and the two panels share the same scale.}
	\label{fig:parking}
\end{figure}

In the special case of the parking model, \citet[Open Question~1]{DGJLS} asked
how fast $\E U_n(0)$ grows at the critical density, and conjectured asymptotics of order $n^{1/4}$ and $n^{1/2}$ for the oriented and unoriented walks on $\Z^2$. They also asked
whether the total number of visits has finite expectation below the critical density, and how that expectation diverges as the density rises to its critical value \citep[Open Question~2]{DGJLS}. We prove both conjectured asymptotics, show that the expectation is finite below the critical density, and determine its order of divergence on $\Z^d$.

Our main tool is a comparison to a deterministic analogue of the update rule in
\eqref{eq:parallel-intro}, the \textbf{divisible sandpile} of
\citet{LevinePeres09}, defined by replacing $I_{y,x}(m)$ there with its expected value $P(y,x)m$, where $P$ is the transition matrix of simple random walk. Specifically, the divisible sandpile is defined by starting with a signed mass $\eta(x)$ at each site $x \in \Z^d$. In each round, every site holding positive mass \textbf{topples}, dividing that mass equally among its $2d$ neighbors, while a site of negative mass absorbs what arrives until its mass turns positive. Writing $u_n(x)$ for the total mass emitted from $x$ during the first $n$ rounds,
\begin{equation}\label{eq:divisible-intro}
	u_0=0\,,\qquad u_{n+1}=(\eta+Pu_n)^+\,.
\end{equation}
We call $u_n$ the \textbf{divisible sandpile odometer} and $U_n$ the \textbf{particle odometer}.

The divisible sandpile odometer solves an optimal stopping problem for the walk. For every configuration $\eta$, consider a walk started at $x$ that collects reward $\eta(y)$ whenever it visits $y$, and stop no later than round $n$. Then, the largest expected total is equal to $u_n(x)$. Writing $X$ for the walk started at $x$ and $\E_x$ for the average over it alone,
\begin{equation}\label{eq:stopping-intro}
	u_n(x)=\sup_{\sigma\leq n}\E_x\sum_{j=0}^{\sigma-1}\eta(X_j)\,,
\end{equation}
the supremum being over stopping times for the natural filtration of the walk, bounded by $n$ and including $\sigma=0$; the stopping rule may depend on $\eta$. The representation is
classical \citep[Chapter~I]{PeskirShiryaev} and was observed in the divisible sandpile setting by \citet{BPRS}. It relates the divisible sandpile to a random walk in random scenery. All quantitative estimates on $u_n$ below start from
\eqref{eq:stopping-intro}.

\subsection{Subcritical sharpness}\label{ssec:results-sub}

Tag a particle at the origin, and let $R_t$ be the set of sites its walk has visited by time $t$. By independence, the event that no site of $R_t$ other than the origin has a hole at time zero has probability $\P(\eta(0)\geq0)^{|R_t|-1}$, and on that event the particle has not settled by time $t$.

Write $S_t$ for the expected number of particles which start at the origin and have not settled by time $t$. The origin itself holds a particle with probability $\P(\eta(0)>0)$, so averaging over the walk started there,
\[
	S_t\geq\P(\eta(0)>0)\,\E_0\bigl[\P(\eta(0)\geq0)^{|R_t|-1}\bigr]\,.
\]
By the Donsker--Varadhan estimate for the range
\citep[Theorem~1]{DonskerVaradhan}, the logarithm of the expectation on the
right is asymptotic to $-kt^{d/(d+2)}$ for some $k>0$. We prove an upper bound
with the same exponent at every density below the critical one.

\begin{theorem}[Stretched-exponential settling time]\label{thm:subcritical-tail}
	Let $\eta=(\eta(x))_{x\in\Z^d}$ have i.i.d.\ integer-valued coordinates,
	with
	\[
		\E|\eta(0)|<\infty\,,\qquad \E\eta(0)<0\,,\qquad \P(\eta(0)>0)>0\,,
	\]
	and suppose that $\E e^{\theta\eta(0)}<\infty$ for some $\theta>0$. Then there are
	$0<c\leq C<\infty$ such that, for every $t\geq1$,
	\begin{equation}\label{eq:sharpness}
		\frac1C\exp\bigl\{-Ct^{d/(d+2)}\bigr\}\leq S_t\leq
		C\exp\bigl\{-ct^{d/(d+2)}\bigr\}\,.
	\end{equation}
\end{theorem}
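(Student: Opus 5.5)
The lower bound is essentially already given by the discussion preceding the statement. Since $\P(\eta(0)>0)>0$ and $\E\eta(0)<0$, we also have $\P(\eta(0)<0)>0$, so $q:=\P(\eta(0)\geq0)<1$. The Donsker--Varadhan asymptotics for $\E_0[q^{|R_t|}]$ then give $\log\E_0[q^{|R_t|-1}]\geq -Ct^{d/(d+2)}$ for large $t$. Small $t$ are absorbed by enlarging $C$, since $S_t\geq S_\infty$ is not needed: $S_t$ is positive and nonincreasing, and at bounded times the bound is trivial. So the work is in the upper bound.

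For the upper bound I would reduce $S_t$ to a statement about the odometer. A particle starting at the origin that is unsettled at time $t$ has taken $t$ steps. By the abelian/stack structure, the steps from sites $x$ are counted by $U_\infty(x)$. Mass transport (stationarity) then gives
\[
S_t\leq \E\,\#\{\text{walkers from }0\text{ alive at }t\}\leq \P\bigl(\text{some site within distance }t\text{ of the particle's path has large odometer}\bigr).
\]
A cleaner route is the following. The tagged particle survives to time $t$ only if its path $R_t$ contains no hole left unfilled when it passes. If $|R_t|\geq r$, it passes $r$ distinct sites, so each of these sites $y$ must have been "filled" by particles other than itself, which forces $\sum_{y\in R_t}\bigl(\eta(y)+\text{inflow}(y)\bigr)\geq 0$. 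I would therefore split on the size of the range.

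\textbf{Small range.} If $|R_t|\leq t^{d/(d+2)}$, the walk is confined to a ball of radius $\rho=t^{1/(d+2)}$ for time $t$. This costs $\exp(-ct/\rho^2)=\exp(-ct^{d/(d+2)})$ by the standard confinement estimate.

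\textbf{Large range.} If $|R_t|>t^{d/(d+2)}$, I want to show that it is exponentially unlikely in $|R_t|$ that all holes on $R_t$ have been filled by time $t$. The filling particles must come from nearby: particles arriving at $R_t$ by time $t$ lie within distance $t$. The key input is the divisible-sandpile comparison. Write $U_n=u_n+M_n$ with $M_n$ the martingale-type term built from $I_{y,x}(m)-P(y,x)m$. In the subcritical regime the optimal-stopping formula \eqref{eq:stopping-intro} together with Cramér-type large deviations for the i.i.d.\ scenery (using $\E e^{\theta\eta(0)}<\infty$ and $\E\eta(0)<0$) shows that $u_\infty(x)$ has exponential tails. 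The reason is that a walk collecting reward $\eta$ with negative drift along its range can gain $k$ only with probability $e^{-ck}$ after a union bound over walk strategies, which is controlled because each step adds a fresh site or revisits. Azuma/Bernstein bounds on $M$ then give the same tails for $U_\infty$. Finally, the number of holes along $R_t$ that are filled is at most the net inflow $\sum_{y\in R_t}\sum_z I_{z,y}(U(z))$, whereas the holes number at least $c|R_t|$ with probability $1-e^{-c|R_t|}$. Comparing these and using the exponential odometer tails together with a union bound over connected lattice animals of size $r$ (at most $e^{Cr}$ of them, which requires the large-deviation constant to beat the animal entropy) yields $e^{-cr}$.

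I expect that last step to be the main obstacle. The exponential tail of a single odometer does not by itself control a sum over a set of size $r$ against the animal entropy. I would first try a block/coarse-graining argument at a large fixed scale $L$, chosen so that the density deficit $-\E\eta(0)L^d$ dominates the fluctuations. That would make a "good block" fail with probability small enough to beat $e^{Cr}$, and one then needs only that a tagged particle's survival forces a positive fraction of bad blocks along its path.
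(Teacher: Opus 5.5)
Your lower bound is the paper's argument (Lemma~\ref{lem:range-lower} plus Donsker--Varadhan). The small-range case of the upper bound also holds, though not for the reason you give. A range of $m$ sites need not lie in a ball of radius $m^{1/d}$; the bound instead follows from $\P_0(|R_t|\leq\epsilon t^{d/(d+2)})\leq e^{\nu\epsilon t^{d/(d+2)}}\E_0e^{-\nu|R_t|}$ and Donsker--Varadhan.

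The gap is the large-range case, which carries the whole difficulty, and none of its ingredients works as stated.
\begin{itemize}
\item The exponential tail of $u_\infty$ is not justified by a ``union bound over walk strategies'': the supremum in \eqref{eq:stopping-intro} is over adaptive rules, and every revisit collects $\eta$ again.
\item That tail is in fact false for $d=1$. With probability $\P(\eta(0)\geq1)^{2L+1}$ every site of $[-L,L]$ is positive, and stopping on exit gives $u_\infty(0)\geq(L+1)^2$. So the tail is no better than $e^{-C\sqrt k}$.
\item Transferring tails to $U_\infty$ is not available from Section~\ref{sec:routing} either. Its bounds carry the factors $\kappa_d(n)$ and $(n+1)^{1/r}$, which diverge as $n\to\infty$.
\item More seriously, ``all holes of $R_t$ are filled'' is not a large deviation for the inflow. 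By mass transport, the expected number of arrivals at a site is $\E U_\infty(0)$. This is unbounded over the laws covered by the theorem: it diverges as $\E\eta(0)\uparrow0$ (Theorem~\ref{thm:near}), and then far exceeds $\E\eta(0)^-$.
\end{itemize}
Counting of this kind is a perturbative argument that can at best work at low particle density, the regime \citet{DLS} already handled. Your block renormalization leaves the core unproved. Whether the holes inside a ``good'' block are filled before the tagged particle arrives is decided by particles entering from far away, and no local condition on the block controls that.

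You also do not need any entropy over lattice animals. In the construction where each particle carries its own walk, the tagged walk is independent of the process with the tagged particle removed. On $\{\tau_1>t\}$, that process has no unfilled hole in $R_t$ at time $t$. So the real task is to show that, for a fixed finite set $A$, all holes in $A$ are filled with probability at most $Ce^{-a|A|}$. The obstacle is positive correlation among the filling events, not entropy.

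The paper handles this by monotonicity. Let $F=\one\{\tau_1>t\}$ and let $Z$ be the number of unfilled holes in $R_t$ at time $t$ without the tagged particle. Then $FZ=0$. Moreover $F$ is nondecreasing when particles are added (Lemma~\ref{lem:tagged-monotonicity}), and $Z$ is nonincreasing and one-Lipschitz (Lemma~\ref{lem:one-particle}). Tilt the one-site law by $e^{\lambda\eta(0)}$ for $0\leq\lambda\leq\lambda_1$, keeping the mean nonpositive. Lemma~\ref{lem:product} gives $|\Cov_\lambda(F,Z)|\leq3\partial_\lambda\E_\lambda F$. At the same time $\Cov_\lambda(F,Z)=-\varphi(\lambda)\widehat\E_\lambda Z$, with $\widehat\E_\lambda Z\geq\delta(\lambda)|R_t|-C_k$. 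The resulting inequality $3\varphi'\geq(\delta(\lambda)|R_t|-C_k)\varphi$ integrates to $\P(\tau_1>t\mid\eta(0)=k,\,X)\leq C_ke^{-a|R_t|}$. Hence $S_t\leq C\E_0e^{-a|R_t|}$, and Donsker--Varadhan covers small and large ranges at once, with no odometer tails and no divisible sandpile.
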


\citet{DLS} proved the two bounds in \eqref{eq:sharpness} for the parking model, in every dimension
for $p$ small and on $\Z$ for every $0<p<1/2$; Theorem~\ref{thm:subcritical-tail}
removes both restrictions. In that model Theorem~\ref{thm:subcritical-tail} also implies that the expected parking time is finite for every $0<p<1/2$. This addresses the first half of
\citet[Open Question~2]{DGJLS}, and Theorem~\ref{thm:near} below addresses the
second. In Remark~\ref{rem:stabilization-mixing} below, we discuss how Theorem~\ref{thm:subcritical-tail} also implies a mixing property of the final
configuration.

\subsection{Critical growth}\label{ssec:results-crit}

Suppose now that $\eta(0)$ is nonconstant with mean zero. If
$\E e^{\theta|\eta(0)|}<\infty$ for some $\theta>0$, then \citet{BP} show that
\begin{equation}\label{eq:bp}
	\E u_n(0)\asymp
	\begin{cases}
		n^{(4-d)/4}&d\leq3\,,\\
		\log n&d=4\,,\\
		(\log n)^{2/d}&d\geq5\,,
	\end{cases}
\end{equation}
the third case requiring also that $\eta(0)$ be bounded below. We show that $\E U_n(0)$ is comparable to $\E u_n(0)+\log n$. In dimensions at most three, \citet{BP} also prove locally uniform convergence of the rescaled odometer to a continuous random field defined by a Brownian optimal-stopping problem. Proposition~\ref{prop:spatial-scaling} transfers that scaling limit to the particle
odometer.

\begin{theorem}[Comparison of the two expected odometers]\label{thm:master}
	Let $\eta=(\eta(x))_{x\in\Z^d}$ have i.i.d.\ integer-valued coordinates.
	Suppose that $\eta(0)$ is nonconstant, $\E\eta(0)=0$, and
	$\E e^{\theta|\eta(0)|}<\infty$ for some $\theta>0$. Then there are
	$0<c\leq C<\infty$ such that, for every $n\geq2$,
	\begin{equation}\label{eq:master}
		c\bigl(\E u_n(0)+\log n\bigr)\leq\E U_n(0)
		\leq C\bigl(\E u_n(0)+\log n\bigr)\,.
	\end{equation}
\end{theorem}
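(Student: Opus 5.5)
The plan is to write the particle recursion \eqref{eq:parallel-intro} as the divisible recursion \eqref{eq:divisible-intro} plus a martingale-type error, and then prove the two inequalities in \eqref{eq:master} separately. Set $M_{y,x}(m)=I_{y,x}(m)-P(y,x)m$. For fixed $y$ and $x$ this is a centered random walk in $m$ with bounded increments. Substituting into \eqref{eq:parallel-intro} gives
\[
	U_{n+1}=\bigl(\eta+PU_n+\xi_n\bigr)^+\,,\qquad \xi_n(x)=\sum_yM_{y,x}(U_n(y))\,.
\]
So $U$ is the divisible sandpile odometer driven by the time-dependent random source $\eta+\xi_n$. By the same dynamic programming that gives \eqref{eq:stopping-intro}, one gets an optimal-stopping representation
\[
	U_n(x)=\sup_{\sigma\le n}\E_x\sum_{j<\sigma}\bigl(\eta+\xi_{n-1-j}\bigr)(X_j)\,.
\]
Comparison results for the map $v\mapsto(\eta+Pv)^+$ are available because it is monotone and $1$-Lipschitz in the sup norm, and it is contractive in $\ell^1$-type norms weighted by $P$.

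\textbf{Upper bound.} Because $(\cdot)^+$ is subadditive, induction gives $U_n\le u_n+D_n$, where $D$ solves $D_{n+1}=PD_n+\xi_n^+$. Since $\E D$ is translation invariant, we get $\E D_n(0)\le\sum_{k<n}\E\xi_k^+(0)$, which is too crude. Instead I would control $\E|\xi_k(0)|$ through the optional stopping structure. At a fixed level $m$ we have $\E M(m)^2\asymp m$, and a maximal inequality over the random index $U_k(y)$ gives $\E|\xi_k|\lesssim\E\sqrt{U_k}$. This alone is not enough. The key point is that in $P$-averaged form $\sum_k P^{n-k}\xi_k$, the martingale increments from different times are orthogonal, so the errors are square-summable rather than summable. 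The resulting bound is
\[
	\E D_n(0)\lesssim\Bigl(\sum_k\sum_y P^{n-k}(0,y)^2\,\E U_k(y)\Bigr)^{1/2}\,.
\]
Heat-kernel bounds give $\sum_y P^{j}(0,y)^2\asymp j^{-d/2}$. Inserting the bootstrap hypothesis $\E U_k\le A(\E u_k+\log k)$ and using the growth \eqref{eq:bp} of $\E u_k$, this yields $\E D_n\le \tfrac12 A(\E u_n+\log n)+C$. That closes the induction. I expect $\log n$ to enter this way in $d\ge4$, from the $\sum_j j^{-d/2}\cdot$(odometer) sum and the positive-part corrections. I expect making this orthogonality rigorous to be the main obstacle. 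The indices $U_k(y)$ are random and depend on the same stacks, so I would use the stack filtration of Diaconis--Fulton, in which each $U_k(y)$ is a stopping time for the stack at $y$, and apply Wald-type identities there.

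\textbf{Lower bound.} First, $\E U_n\ge c\,\E u_n$. Positive parts only increase quantities, so $U_n\ge \eta+PU_{n-1}+\xi_{n-1}$ holds pointwise. Taking a sub-solution built from the optimal stopping time $\sigma^*$ for $u_n$ (which depends only on $\eta$) shows that $U_n(0)\ge\E_0\sum_{j<\sigma^*}(\eta+\xi)(X_j)$, up to the positive-part corrections. The $\xi$-terms then average to zero conditionally on $\eta$, by the same stopping-time martingale property. Second, $\E U_n\ge c\log n$. I would get this from the pure fluctuation effect. A particle that has not settled keeps walking, and in every dimension the probability that a given site is still active at time $k$ is at least order $1/k$. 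This comes from the critical balance: a local surplus in a ball of radius $\sqrt k$ has order $k^{d/4}$ and must be transported, which in $d\ge4$ reduces to the $1/k$ contribution from single wandering particles. Summing $\sum_{k\le n}c/k$ gives $c\log n$.
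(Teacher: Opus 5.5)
Your lower bound $\E U_n(0)\geq\E u_n(0)$ is fine. Plugging an $\eta$-measurable optimal rule for $u_n$ into your pathwise stopping representation of $U_n$, and averaging out the $\xi$-rewards by deferred decisions, is equivalent to the conditional Jensen argument of Theorem~\ref{thm:comparison}.

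The upper bound has a genuine gap. The inequality you actually establish, $U_n\leq u_n+D_n$ with $D$ driven by $\xi_k^+$, contains no cancellation. The square-root bound you then write down is a bound for the \emph{signed} sum $\sum_kP^{n-1-k}\xi_k(0)$, which is the paper's $w_n(0)$. Nothing you say shows that this controls $U_n(0)-u_n(0)$. It cannot, because the maximizing $\sigma$ in your representation of $U_n$ is chosen after $\xi$ is realized and adapts to it. Concretely, for $d\geq3$ you have $\sum_k\sum_yP^{n-k}(0,y)^2\leq\sum_{j\geq0}P^{2j}(0,0)<\infty$. Your estimate would then give $\E U_n(0)\leq\E u_n(0)+C\sqrt{\E U_n(0)}$, hence $\E U_n(0)\leq2\E u_n(0)+C$. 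This is false. For $d\geq5$ and $\eta(0)$ bounded below, $\E u_n(0)\asymp(\log n)^{2/d}$ while $\E U_n(0)\geq c\log n-C$. For $d=4$ it contradicts Theorem~\ref{thm:four-sparse}. For the same reason the $\log n$ cannot come from $\sum_jj^{-d/2}$, which converges when $d\geq3$.

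The missing idea is the maximal comparison of Lemma~\ref{lem:pathwise-comparison}, $|U_n(x)-u_n(x)|\leq2\,\E_x\max_{0\leq j\leq n}|w_{n-j}(X_j)|$. It also follows from your own representation once the $\xi$-reward up to $\sigma$ is written as $w_n(x)-w_{n-\sigma}(X_\sigma)$ plus a martingale of the walk. Controlling a maximum over $n+1$ space--time points requires $L^r$ bounds:
\begin{enumerate}
\item Revealing instructions as they are read makes $w_m(0)$ a sum of bounded martingale differences, with predictable quadratic variation at most $\sum_y\sup_{j\leq n}\Gamma_j(y)\,U_n(y)$, where $\sum_y\sup_{j\leq n}\Gamma_j(y)\leq C\kappa_d(n)$.
\item Pinelis's inequality then gives $(\E|w_m(0)|^r)^{1/r}\leq C(\sqrt{r\kappa_d(n)(\E U_n(0)^r)^{1/r}}+r)$.
\item The union bound over times costs a factor $(n+1)^{1/r}$.
\item Young's inequality absorbs $U_n$, so no bootstrap in $n$ is needed.
\item The concentration estimate for $u_n$ replaces $(\E u_n(0)^r)^{1/r}$ by $\E u_n(0)+\sqrt r\,\|g_n\|_2+r\max_xg_n(x)$.
\end{enumerate}
For $d\geq4$, taking $r\asymp\log n$ keeps $(n+1)^{2/r}$ bounded, and the additive $\log n$ in the theorem is exactly the term $r$. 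For $d\leq3$ a fixed $r$ suffices.

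The second gap is the lower bound $\E U_n(0)\geq c\log n-C$, for which you give only a heuristic. It is superfluous when $d\leq4$, where $\E u_n(0)\geq c\log n$ already. For $d\geq5$, however, the divisible odometer can be of order $(\log n)^{2/d}$, so $S_k\geq c/k$ must be proved directly. Your surplus count does not do this: it yields an active density of order $k^{-d/4}\leq k^{-1}$, and the appeal to ``single wandering particles'' is not an argument.

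The paper adapts the resampling argument of Cabezas, Rolla and Sidoravicius (Lemma~\ref{lem:critical-density}). Resample each site independently with probability $\eps/\gamma$. Run the two systems on shared instructions while matching common particles, and follow the unmatched particles and holes. These have a single sign at each site and disappear only by meeting an opposite label. The difference of two labels is a simple random walk with at most $2t$ steps by time $t$. Since $\sum_{z\neq0}\P_z(T_0\leq2t)=\E_0|R_{2t}|-1\leq2t$, the mass transport principle gives $4S_t\geq\eps-t\eps^2$. Taking $\eps=1/(2t)$ yields $S_t\geq1/(16t)$, and hence $\E U_n(0)=\sum_{s<n}S_s\geq c\log n-C$.
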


Only the upper bound in \eqref{eq:master} uses the exponential moment. For the
lower bound, it is enough that the coordinates are i.i.d.\ and integer-valued,
with $\eta(0)$ nonconstant and of mean zero.

\begin{corollary}[Growth at the critical density]\label{cor:growth}
	Under the assumptions of Theorem~\ref{thm:master},
	\begin{equation}\label{eq:growth}
		\E U_n(0)\asymp
		\begin{cases}
			n^{(4-d)/4}&d\leq3\,,\\
			\log n&d\geq4\,.
		\end{cases}
	\end{equation}
	When $d\leq3$, we have $S_t\asymp (t+1)^{-d/4}$ for every $t\geq0$.
	When $d\geq4$, there are $0<c\leq C<\infty$ such that, for every $t\geq0$,
	\begin{equation}\label{eq:activity}
		\frac{c}{t+1}\leq S_t\leq\frac{C\log(t+2)}{t+1}\,.
	\end{equation}
\end{corollary}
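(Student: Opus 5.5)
The plan is to read off \eqref{eq:growth} from Theorem~\ref{thm:master} and \eqref{eq:bp}, and then to convert the bounds on $\E U_n(0)$ into bounds on $S_t$. Two structural facts about $S_t$ do the conversion: an exact identity and monotonicity.

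\textbf{Growth.} Under the assumptions of Theorem~\ref{thm:master}, \eqref{eq:bp} holds in every dimension. For $d\geq5$, \eqref{eq:bp} also requires $\eta(0)$ bounded below. I would avoid this by using only the trivial bound $\E u_n(0)\leq C\log n$ in the upper bound of \eqref{eq:master}, together with $\E U_n(0)\geq c\log n$ from its lower bound. For $d\leq3$ the term $n^{(4-d)/4}$ dominates $\log n$, and for $d=4$ the two terms agree. This gives \eqref{eq:growth}. For the trivial bound, a monotone coupling should show that $u_n$ is dominated by the odometer for $\eta$ replaced by $\max(\eta,-1)$ minus a constant shift; alternatively, I would check directly that the BP upper bound for $d\geq5$ only uses the exponential moment. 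I expect this point to be routine but worth checking.

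\textbf{Identity and monotonicity.} By a mass-transport argument (translation invariance of the i.i.d.\ field and of the stacks), the expected number of steps taken from the origin in round $t$ equals the expected number of particles started at the origin that are still unsettled at time $t$. Hence
\[
	\E U_n(0)=\sum_{t=0}^{n-1}S_t\,.
\]
Moreover, $S_t$ is nonincreasing in $t$, since a settled particle never reactivates.

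\textbf{Upper bounds on $S_t$.} For $t\geq2$, monotonicity gives
\[
	\tfrac{t}{2}S_t\leq\sum_{s=\lceil t/2\rceil}^{t}S_s\leq\E U_{t+1}(0)\,.
\]
This yields $S_t\leq C t^{-d/4}$ for $d\leq3$ and $S_t\leq C\log t/t$ for $d\geq4$. Small $t$ are handled by $S_t\leq\E\eta(0)^+<\infty$.

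\textbf{Lower bound, $d\leq3$.} Choose $K$ large, so that $\E U_{Kt}(0)-\E U_t(0)\geq c(Kt)^{(4-d)/4}-Ct^{(4-d)/4}\geq c'(Kt)^{(4-d)/4}$. This difference is at most $KtS_t$ by monotonicity, so $S_t\geq c''t^{-d/4}$.

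\textbf{Lower bound, $d\geq4$ (main obstacle).} The bound $S_t\geq c/(t+1)$ does not follow from the global comparison, because differences of $\log$'s lose the constants. Instead, I would revisit the source of the $\log n$ term in the lower bound of Theorem~\ref{thm:master}. I expect it comes from a per-scale contribution: in each dyadic window of rounds $[t,2t]$, the origin takes at least $c$ expected steps. Concretely, a local excess of particles over holes near the origin forces some particle to wander for time of order $t$ before settling, and this happens with probability bounded below uniformly in scale. If the lower-bound proof can be arranged to give
\[
	\sum_{s=t}^{2t}S_s\geq c\,,
\]
then monotonicity gives $tS_t\geq c$, which is \eqref{eq:activity}. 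I would first try to extract exactly this dyadic statement from the argument behind the $\log n$ lower bound. Failing that, I would prove it directly: condition on a particle at the origin and on the event that the holes in a ball of radius $\sqrt t$ are all filled before it reaches them, which by a second-moment estimate has probability $\gtrsim 1$ on scale $t$ after averaging over the configuration.
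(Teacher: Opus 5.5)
\textbf{Summary.} There is a genuine gap: the lower bound $S_t\geq c/(t+1)$ for $d\geq4$ is left unproved. The rest of your argument follows the paper. That covers reading \eqref{eq:growth} off \eqref{eq:master}, the identity $\E U_n(0)=\sum_{s<n}S_s$ with monotonicity of $S_t$, the upper bounds on $S_t$, and the $d\leq3$ lower bound via $\E U_{Kt}(0)-\E U_t(0)\leq KtS_t$. The paper uses the slightly simpler $(t+1)S_t\leq\sum_{s\leq t}S_s$ for the upper bounds.

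\textbf{The $d\geq5$ concern.} This is settled by Theorem~\ref{thm:BP}, which gives $\E u_n(0)\leq C\log(n+1)$ under the exponential moment alone. The coupling alternative you sketch would not work. The field $\max(\eta,-1)$ has positive mean, so its odometer grows linearly, and subtracting a constant to restore mean zero destroys pointwise domination of $\eta$.

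\textbf{Why your route to the $d\geq4$ lower bound fails.} You are right that \eqref{eq:master} cannot give it, and that one should return to the source of the $\log n$. However, the mechanism you anticipate is the wrong one.
\begin{itemize}
\item A local excess of particles over holes in a ball of radius $\sqrt t$ is a density fluctuation of order $t^{-d/4}$. This is what the divisible sandpile sees.
\item From dimension five on, $t^{-d/4}$ is $o(1/t)$. This is consistent with $\E u_n(0)\asymp(\log n)^{2/d}$ for laws bounded below. So no argument of that type can produce $c/(t+1)$.
\item Your fallback cannot hold as stated. An event of probability bounded below on which the tagged particle survives to time $t$ would force $S_t\geq c$, contradicting $S_t\leq C\log(t+2)/(t+1)$.
\end{itemize}

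\textbf{The missing idea.} It is the resampling coupling of Cabezas, Rolla and Sidoravicius, adapted in Lemma~\ref{lem:critical-density}. It gives the pointwise bound $S_t\geq1/(16t)$ for $t\geq1/(2\gamma)$ directly. The $\log n$ in \eqref{eq:master} is obtained by summing this bound, not the other way around. The steps are as follows.
\begin{itemize}
\item Resample each $\eta(x)$ independently with probability $\eps/\gamma$, where $\gamma$ is the mean absolute difference of two independent copies of $\eta(0)$.
\item Run the two processes with shared instructions and matched common particles. Track the unmatched particles and holes; an expected $\eps$ of them are created at the origin.
\item Two oppositely signed ones created at $x$ and $y$ can cancel by time $t$ only if a simple random walk from $x-y$ hits $0$ within $2t$ steps.
\item Since $\sum_{z\neq0}\P_z(T_0\leq2t)=\E_0|R_{2t}|-1\leq2t$, the expected number created at the origin that disappear by time $t$ is at most $\eps^2t$.
\item By mass transport, the survivors are counted among $A_t(0),H_t(0),\widetilde A_t(0),\widetilde H_t(0)$. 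At mean zero their expectations sum to $4S_t$, so $4S_t\geq\eps-t\eps^2$.
\item Choosing $\eps=1/(2t)$ gives $S_t\geq1/(16t)$.
\end{itemize}
The finitely many small $t$ are then covered by $S_t>0$ at each fixed time. A particle at the origin survives if no other site of its range up to time $t$ initially holds a hole. You would need this step as well.
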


\citet[Open Question~1]{DGJLS} asked how fast $\E U_n(0)$ grows at the critical density, and whether on $\Z^2$ it is asymptotic to a constant multiple of $n^{1/2}$. Corollary~\ref{cor:growth} gives the order in every dimension, and Theorem~\ref{thm:trichotomy} gives the asymptotic on $\Z$, $\Z^2$ and $\Z^3$.

Corollary~\ref{cor:growth} removes a logarithm from the bounds of \citet{PRS} on
$\Z$ and from the lower bound of \citet{JJLS} on $\Z^2$ and $\Z^3$, where no
matching upper bound is proved. From dimension four on, the upper bound in
\eqref{eq:activity} is new on the lattice. The lower bound with the correct
power is due to \citet{CRS} and is reproved here in discrete time. \citet{BL}
obtained the power $t^{-1}$ from dimension four on for a system in which the
holes also move; the upper bound in \eqref{eq:activity} carries an extra
logarithm.

Since $S_t\to0$, every particle settles after finitely many steps and every hole is eventually filled. However, as we show in
Proposition~\ref{prop:everyone-settles} below, the odometer explodes: $U_\infty(x)$ is infinite at every site. No single particle is responsible, because each of them takes finitely many steps. The steps taken from a site are instead contributed by infinitely many different particles, arriving from ever further away. For the parking model, \citet[Theorems~2.1
and~2.2]{DGJLS} proved that every car parks, every spot is filled, and every site has infinite odometer. For the particle--hole model, \citet[Theorem~6]{CRS14} proved that every site has infinite odometer.

\subsection{Quenched odometer comparison}\label{ssec:results-compare}
By \eqref{eq:bp}, the divisible sandpile term on the right of \eqref{eq:master} dominates the logarithm when $d\leq3$, matches it when $d=4$, and is dominated by it when $d\geq5$ and $\eta(0)$ is bounded below.
\begin{theorem}[Quenched odometer comparison]\label{thm:trichotomy}
	Let $\eta=(\eta(x))_{x\in\Z^d}$ have i.i.d.\ integer-valued coordinates.
	Suppose that $\eta(0)$ is nonconstant, $\E\eta(0)=0$, and
	$\E e^{\theta|\eta(0)|}<\infty$ for some $\theta>0$.
	All limits below are as $n\to\infty$.
	\begin{enumerate}[label=\textup{(\roman*)}]
		\item \underline{\textup{Dimensions one, two, and three.}}  The two
		odometers agree to leading order: almost surely and in $L^r$ for every
		$r\geq1$,
		\[
			\frac{U_n(0)-u_n(0)}{\E u_n(0)}\longrightarrow0\,.
		\]
		Consequently $\E U_n(0)/\E u_n(0)\to1$, and $n^{-(4-d)/4}\E U_n(0)$
		converges to a limit in $(0,\infty)$.

		\item \underline{\textup{Dimension four.}}  For every $n\geq1$, the
		ratio $\E U_n(0)/\E u_n(0)$ is at least one. For each fixed law this
		ratio is bounded above uniformly in $n$, but no such upper bound is
		uniform over the laws in this theorem: for every $B>0$ there is a law
		satisfying its hypotheses with
		\begin{equation}\label{eq:four-ratio}
			\liminf_{n\to\infty}\frac{\E U_n(0)}{\E u_n(0)}\geq B\,.
		\end{equation}

		\item \underline{\textup{Dimensions five and higher.}}  If in addition
		$\eta(0)$ is bounded below, then $\E U_n(0)\asymp\log n$ and
		$\E|U_n(0)-u_n(0)|\asymp\log n$, while
		\[
			\frac{\E U_n(0)}{\E u_n(0)}\longrightarrow\infty\,.
		\]
	\end{enumerate}
\end{theorem}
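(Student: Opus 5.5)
The plan is to write the particle odometer as the divisible sandpile odometer plus a martingale-type error, and then to control that error separately in each dimension regime. Set $M_{y,x}(m)=I_{y,x}(m)-P(y,x)m$. Then \eqref{eq:parallel-intro} becomes $U_{n+1}=(\eta+PU_n+\xi_n)^+$, where $\xi_n(x)=\sum_y M_{y,x}(U_n(y))$. The map $v\mapsto(\eta+Pv)^+$ is monotone, and its iterates have the stopping representation \eqref{eq:stopping-intro}. Unrolling the recursion therefore gives a perturbed stopping problem: $U_n(x)$ equals, up to the nonlinearity, $\sup_{\sigma\le n}\E_x\sum_{j<\sigma}\bigl(\eta(X_j)+\xi_{n-1-j}(X_j)\bigr)$.

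Comparing this with $u_n$ gives the two-sided bound
\[
|U_n(0)-u_n(0)|\le \sup_{\sigma\le n}\Bigl|\E_0\sum_{j<\sigma}\xi_{n-1-j}(X_j)\Bigr|,
\]
which is at most a Green-function-weighted sum of the martingale increments $M$, evaluated at the random times $U(y)$. The noise $M_{y,x}(m)$ has conditional variance of order $m$. Its $P$-weighted contribution at distance $r$ and time scale $t$ is therefore like $\sum_y G_t(0,y)^2U(y)$, and the supremum over $\sigma$ costs only a logarithmic factor by a maximal or chaining inequality.

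\emph{Part (i).} Here $u_n\asymp n^{(4-d)/4}$. The error should be of order $\bigl(\sum_y G_n(0,y)^2\,u_n(y)\bigr)^{1/2}$, with $\sum_y G_n^2\asymp n^{(4-d)/2}$ for $d\le3$ ($n^{1/2}, \log n, 1$ in $d=1,2,3$). This gives an error of $n^{(4-d)/4}\cdot n^{-\delta}$, which is $o(\E u_n)$. To make this rigorous I would:
\begin{itemize}
\item first use Theorem~\ref{thm:master} and exponential concentration of the Bernoulli sums to get a priori $L^r$ bounds $\E U_n(y)^r\lesssim n^{r(4-d)/4}$;
\item then feed these back into a bootstrap estimate of the perturbation, using Azuma/Freedman for the martingale $M$ with a union bound over the polynomially many relevant sites and times;
\item obtain almost-sure convergence along $n=2^k$ by Borel--Cantelli, and fill in between by monotonicity of $U_n, u_n$ in $n$ and the regular growth of $\E u_n$;
\item deduce $L^r$ convergence from the uniform $L^{r+1}$ bounds.
\end{itemize}
The limit of $n^{-(4-d)/4}\E U_n(0)$ then follows from the scaling limit of \citet{BP} transferred via Proposition~\ref{prop:spatial-scaling}.

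\emph{Part (ii).} The lower bound $\E U_n\ge\E u_n$ should follow from Jensen: $x\mapsto x^+$ is convex and $\E\xi_n=0$ conditionally on the past, so by induction and monotonicity $\E[U_{n+1}\mid\cdot]\ge(\eta+P\E U_n)^+$. This inequality needs care because $U_n$ enters nonlinearly. I would prove $\E U_n\ge u_n$ pointwise in $\eta$, averaging only over the stacks, by a supermartingale argument on the stopping representation. The upper bound for a fixed law comes from Theorem~\ref{thm:master} together with $\E u_n\asymp\log n$. For \eqref{eq:four-ratio}, I would take laws with $\Var\eta(0)$ tiny: for example, $\eta(0)=\pm1$ with probability $\epsilon$ and $0$ otherwise. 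Then $\E u_n$ scales like $\epsilon^{1/2}\log n$, since the divisible odometer is linear in the amplitude of the scenery fluctuations. The $\log n$ term in $\E U_n$, in contrast, comes from the Bernoulli noise and should stay of order $c\log n$ independently of $\epsilon$, giving a ratio $\gtrsim\epsilon^{-1/2}$. The key check is that the lower bound constant $c$ in Theorem~\ref{thm:master} can be taken uniform in $\epsilon$ for the $\log n$ part; I would reprove that part directly with a particle counting argument (e.g.\ via $S_t\ge c/(t+1)$ with $c$ scaling like the particle density $\epsilon$, compared to $\epsilon^{1/2}$).

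\emph{Part (iii).} Combine \eqref{eq:bp} ($\E u_n\asymp(\log n)^{2/d}=o(\log n)$) with Theorem~\ref{thm:master} to get $\E U_n\asymp\log n$ and the ratio tending to infinity. Then $\E|U_n-u_n|\ge\E U_n-\E u_n\gtrsim\log n$, and the matching upper bound is at most $\E U_n+\E u_n$.

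\emph{Main obstacle.} I expect the hardest step to be the a priori bootstrap in part (i). The noise field $\xi$ is evaluated at the random, $\eta$-dependent times $U_n(y)$, so independence is lost. Getting uniform control requires a maximal inequality over all $m\le n^{C}$ simultaneously at every site in a box, together with the Lipschitz stability of the stopping value under perturbations of the reward.
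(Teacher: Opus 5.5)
\textbf{Part (ii): the law-uniform lower bound is missing.} You correctly see that \eqref{eq:four-ratio} needs a logarithmic lower bound on $\E U_n(0)$ whose coefficient does not depend on the law. But you give no argument for it, and the mechanism you sketch goes the wrong way. If $S_t\geq c/(t+1)$ held only with $c$ proportional to the density $\eps$ of nonzero sites, then against your own $\E u_n(0)\approx\eps^{1/2}\log n$ the ratio would be of order $\eps^{1/2}\to0$.

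The missing idea is the discrete-time resampling coupling of \citet{CRS} (Lemma~\ref{lem:critical-density}):
\begin{itemize}
\item Resample each site independently with probability $\eps'/\gamma$, where $\gamma$ is the mean absolute difference of two independent copies of $\eta(0)$.
\item Run both processes with shared steps and match common particles. The unmatched particles and holes then have density $\eps'$.
\item Two opposite ones, started at $x$ and $y$, can cancel by time $t$ only if a simple random walk from $x-y$ hits the origin within $2t$ steps. Since $\sum_{z\neq0}\P_z(T_0\leq2t)\leq2t$, this gives $4S_t\geq\eps'-t(\eps')^2$.
\item Taking $\eps'=1/(2t)$ yields $S_t\geq1/(16t)$ for $t\geq1/(2\gamma)$. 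Only the starting time depends on the law.
\end{itemize}

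Your upper bound on $\E u_n(0)$ is also unjustified. Positive homogeneity of $u_n(0)$ in the scenery handles rescalings of one fixed law, and the sparse family is not of that form. The paper combines homogeneity with coordinatewise convexity. The sparse law is dominated in convex order by $b_\eps\zeta$, with $\zeta$ symmetric Laplace and $b_\eps=1/\log(e/\eps)$. This gives $\E u_n(0)\leq Cb_\eps\log(n+1)$, so the ratio is at least $c\log(e/\eps)$. For a dominating law with an exponential moment, as Theorem~\ref{thm:BP} requires, the atoms at $\pm1$ force the scale to be at least of order $1/\log(1/\eps)$. So $\eps^{1/2}$ is not reachable this way, and it is not needed.

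\textbf{Part (i): the variance weight is wrong.} The overall architecture is right. Your perturbed stopping bound is, by optional stopping, the paper's $|U_n-u_n|\leq2w_n^\star$, and your bootstrap is its Young's-inequality absorption. But the weight $G_n(0,y)^2$ you write down would not give the result. Indeed $\sum_yG_n(0,y)^2\asymp n^{(4-d)/2}$; this is $\|g_n\|_2^2$, so your formula is right but your parenthetical values are not. That weight gives an error of order $n^{3(4-d)/8}$, which is larger than $\E u_n(0)\asymp n^{(4-d)/4}$.

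The values $n^{1/2}$, $\log n$, $1$ belong to a different quantity. Since $\sum_xM_{y,x}(m)=0$, an instruction at $y$ first read in round $s$ contributes $g_{n-s}(z)-(Pg_{n-s})(y)$. Its conditional variance $\Gamma_{n-s}(y)$ is a squared gradient, at most $C(1+|y|)^{2-2d}$. With truncation in $d=2$, and an exact computation in $d=1$ where the gradient is only bounded, one gets $\sum_y\sup_{m\leq n}\Gamma_m(y)\leq C\kappa_d(n)$. This cancellation is what makes the error $o(\E u_n(0))$.

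\textbf{Part (i): the dyadic fill-in fails.} Borel--Cantelli along $n=2^k$ with a monotone fill-in does not work. For $d\leq3$, $u_n(0)/\E u_n(0)$ has a nondegenerate limit law, and $u_{2^{k+1}}(0)-u_{2^k}(0)$ is of the same order as $\E u_{2^k}(0)$. So monotonicity of $U_n$ and $u_n$ separately cannot control their difference between dyadic times. You need summable tails at every $n$, which $r=\lceil\log(n+1)\rceil$ moments give: $\P(|U_n(0)-u_n(0)|>a\E u_n(0))\leq e^{-c(\log n)^2}$ for each fixed $a>0$.

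The Jensen lower bound in (ii) needs no supermartingale argument, since $\E[\xi_n\mid\eta]=0$ by Lemma~\ref{lem:deferred}. Part (iii) is as in the paper.
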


As we show in the proof, the law in part (ii) can be taken from the
particle--hole model, so \eqref{eq:four-ratio} already occurs there.

The conclusion $\E U_n(0)/\E u_n(0)\to\infty$ in part~\textup{(iii)} can fail when $\eta(0)$ is not bounded below.
\citet{BP} show that $\E u_n(0)\asymp(\log n)^{1/\gamma}$ when
$-\log\P(\eta(0)\leq-s)\asymp s^\gamma$ with $1\leq\gamma<d/2$. An exponential lower tail is the case $\gamma=1$, and it is compatible with the exponential moment assumed above. That tail gives $\E u_n(0)\asymp\log n$, so \eqref{eq:master} implies that $\E U_n(0)\asymp\log n$ as well and the ratio stays bounded.

\subsection{Nearest particle and hole}\label{ssec:results-nearest}

Color each site according to which is closer: the nearest car still
driving or the nearest parking spot still free. Figure~\ref{fig:segregation} suggests the appearance of larger monochromatic regions at later times. \citet[Open Question~3]{DGJLS} asked whether the origin therefore ends up closer to a car:
\[
	\P(\text{the origin is closer to a free spot than to a driving
	car at time } t)\longrightarrow0\,?
\]
For the general particle--hole model considered here, we answer the analogous
question affirmatively when $d\leq3$ and construct counterexamples when $d\geq5$.

\begin{figure}[!ht]
	\centering
	\includegraphics[width=0.88\textwidth]{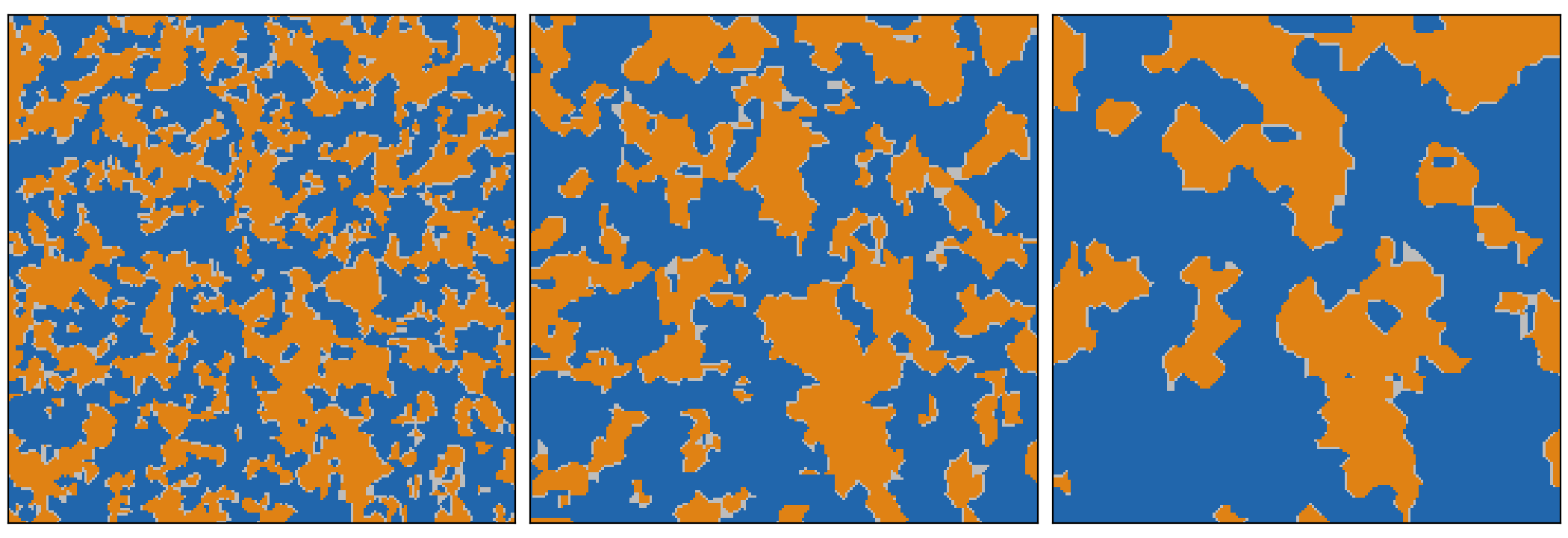}
	\caption{The parking model on $\Z^2$ at the critical density, on a
	$200\times200$ window at times $50$, $200$ and $800$. Sites are colored blue when the nearest driving car is closer, orange when the nearest free spot is closer, and gray in case of a tie.}
	\label{fig:segregation}
\end{figure}

\begin{theorem}\label{thm:nearest}
	Let $d\leq3$, and let $(\eta(x))_{x\in\Z^d}$ be independent and identically
	distributed, integer-valued and nonconstant, with $\E\eta(0)=0$ and
	$\E e^{\theta|\eta(0)|}<\infty$ for some $\theta>0$. Then
	\[
		\P(\text{the origin is closer to an unfilled hole than to an active
		particle at time } t)\longrightarrow0\,,
	\]
	as $t\to\infty$.
\end{theorem}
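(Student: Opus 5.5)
We aim to show that at a large time $t$, with high probability there is an active particle within distance $r$ of the origin while no unfilled hole lies within distance $r$, for a suitable scale $r=r(t)$. The natural scale is diffusive, $r\asymp t^{1/2}$, possibly shrunk by a small factor $\eps$. The heuristic comes from Corollary~\ref{cor:growth}: when $d\leq3$ the density of active particles at time $t$ is $S_t\asymp t^{-d/4}$. In a ball of radius $\eps t^{1/2}$ this gives $\eps^d t^{d/4}\to\infty$ active particles in expectation. For holes, we use the divisible sandpile comparison of Theorem~\ref{thm:trichotomy}(i). The odometer $u_t$ is of order $t^{(4-d)/4}\gg1$ on a macroscopic region around the origin, and this should force every hole in that region to be filled.

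The first step is to show that no unfilled hole is nearby. By \eqref{eq:parallel-intro}, a site $x$ with $\eta(x)<0$ has all its holes filled by time $t$ as soon as $U_t(x)>0$. So it suffices to prove
\[
\P\Bigl(\min_{|x|\leq\eps t^{1/2}}U_t(x)=0\Bigr)\to0 .
\]
We use the locally uniform scaling limit of $t^{-(4-d)/4}U_t(\lfloor t^{1/2}\cdot\rfloor)$, which is Proposition~\ref{prop:spatial-scaling} transferred from \citet{BP}. The limiting Brownian optimal stopping field $v$ is continuous and almost surely positive at $0$. Given $\delta>0$, choose $\eps$ so that $\P(\min_{|y|\leq\eps}v(y)\leq 2a)<\delta$ for some $a>0$. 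Uniform convergence then gives $\min_{|x|\leq\eps t^{1/2}}U_t(x)\geq a t^{(4-d)/4}>0$ with probability at least $1-2\delta$ for large $t$. One point needs checking: positivity of $v$ at the origin. We would verify it from the stopping representation \eqref{eq:stopping-intro} in the limit, since a white-noise reward has a strictly positive optimal value.

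The second step is to find an active particle within distance $\eps' t^{1/2}$, where we may take $\eps'<\eps$ so that the comparison is strict. Let $A_t(x)$ be the number of unsettled particles at $x$ at time $t$. By the mass-transport principle and stationarity, $\E\sum_{|x|\leq \eps' t^{1/2}}A_t(x)=S_t\,|B_{\eps' t^{1/2}}|\asymp \eps'^d t^{d/4}$. We need a second-moment or concentration bound showing that this count is positive with high probability. I expect this to be the main obstacle, because active particles are strongly correlated: they cluster in regions where $\eta$ has positive local sum. We would avoid variance computations and argue through the odometer instead. The number of particles that have not settled equals the initial excess plus net inflow, that is, $\sum_{x\in B}\eta(x)+\text{(flux through }\partial B)$ minus the holes filled. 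A cleaner route uses the odometer again. If $U_{t}(x)-U_{t-1}(x)>0$ at some $x$, then a particle stepped from $x$ in round $t$, so an active particle sits at a neighbor at time $t$. Hence it suffices that $U$ increased somewhere in the ball during the last round, or during a window $[t-s,t]$ with $s=o(t)$; in the latter case the particle may have moved at most $s\ll t^{1/2}$ further. We would show that $\sum_{|x|\leq\eps' t^{1/2}}(U_t(x)-U_{t-s}(x))>0$ with high probability, again using the scaling limit, but now in time. This needs the limit field to be strictly increasing in the time variable near $0$. Equivalently, the Brownian stopping value at time horizon $1$ exceeds that at horizon $1-s/t$ somewhere in the ball, which follows because the optimal stopping time is not almost surely bounded by $1-\kappa$. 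If this space-time continuity is not available from \citet{BP}, we would prove it from \eqref{eq:stopping-intro} together with the $L^r$ comparison in Theorem~\ref{thm:trichotomy}(i) applied at two times.

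Combining the two steps: with probability $1-O(\delta)$, an active particle lies within $\eps' t^{1/2}+s$ of the origin and every hole within $\eps t^{1/2}$ is filled. Letting $\delta\to0$ completes the proof.
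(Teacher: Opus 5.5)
Your first step is the same as the paper's: locally uniform convergence of the rescaled odometer to $\mathcal U(1,\cdot)$, almost sure positivity of $\mathcal U(1,0)$, and $H_t(x)=0$ wherever $U_t(x)>0$. For the positivity, the paper uses the lower-tail estimate of \citet{BP}. Your phrase ``a white-noise reward has a strictly positive optimal value'' restates the claim rather than proving it.

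The gap is in your second step. Proposition~\ref{prop:spatial-scaling} is a distributional limit with no rate. It therefore sees $U_t-U_{t-s}$ only when $s\geq\kappa t$ for a fixed $\kappa>0$. When $s=o(t)$, the rescaled increment $\mathcal U(1,\cdot)-\mathcal U(1-s/t,\cdot)$ tends to zero and is lost in the unquantified error. Your deterministic displacement bound needs the much shorter window $s\ll t^{1/2}$, so ``the scaling limit, but now in time'' cannot give $\sum_{|x|\leq\eps't^{1/2}}(U_t(x)-U_{t-s}(x))>0$ for any admissible $s$. At $s=1$ that sum is $\sum_{|x|\leq\eps't^{1/2}}A_{t-1}(x)$, so the reduction is circular. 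There is also a smaller issue: a particle active at time $t-j$ survives to time $t$ only if the holes it meets are already filled. You therefore need $U_{t-s}>0$ on an enlarged ball, not just $U_t>0$.

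The paper avoids time windows altogether. The pathwise identity \eqref{eq:signed-count} expresses $A_t-H_t$ at the single time $t$ through $\eta$, $U_t$ and the arrivals. After smoothing against $\varphi$, the arrivals split into a discrete Laplacian term in $U_t$ plus a martingale of vanishing variance. Hence $\langle\nu_R,\varphi\rangle\Rightarrow\langle\mathcal W+\mathcal L\mathcal U(1,\cdot),\varphi\rangle$. On $\{\mathcal U>0\}$, the equation $\partial_s\mathcal U=\mathcal L\mathcal U+\mathcal W$ identifies this limit with $\int\varphi\,\partial_s\mathcal U(1,\cdot)$. Time differences of $\mathcal U$ solve the heat equation there because the noise cancels, and the strong minimum principle gives $\partial_s\mathcal U>0$. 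Since $H_t=0$ on the support of $\varphi$, $\langle\nu_R,\varphi\rangle>0$ forces an active particle there.

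Your route can also be repaired. Take $s=\kappa t$ with $\kappa$ fixed and small compared with $(\eps-\eps')^2$. Ask for $U_{t/2}>0$ on the ball of radius $\eps t^{1/2}$, so that this ball is hole-free from time $t/2$ on. Prove that $\int_{|y|\leq\eps'}(\mathcal U(1,y)-\mathcal U(1-\kappa,y))\,dy>0$ on the event that $\mathcal U(1/2,\cdot)>0$ on the ball of radius $\eps$; either the minimum principle above or your minimal-optimal-stopping-time argument at fixed $\kappa$ does this. Finally, replace ``moved at most $s$'' by Doob's inequality for the selected particle. Inside the hole-free ball, its remaining path is a simple random walk independent of how it was selected.
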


In fact, for each $K\geq1$, the probability that, at time $t$, the nearest
unfilled hole is at least $K$ times as far from the origin as the nearest active
particle tends to one as $t\to\infty$.

The proof uses the diffusive scaling limit at time one. Positivity of the
continuum odometer on a ball implies that the corresponding rescaled lattice
ball contains no unfilled hole. Strict positivity of its time derivative on a
smaller ball forces that ball to contain an active particle.

From dimension five on, the following theorem gives a counterexample with a
bounded, nonconstant, mean-zero initial law.

\begin{theorem}\label{thm:nearest-counterexample}
	For every $d\geq5$, there exist $p_d\in(0,1/2)$ and $c>0$ such that, if
	$(\eta(x))_{x\in\Z^d}$ are i.i.d.\ with
	\begin{equation}\label{eq:nearest-counterexample-law}
		\P(\eta(0)=1)=\P(\eta(0)=-1)=p_d,\qquad
		\P(\eta(0)=0)=1-2p_d\,,
	\end{equation}
	then
	     \begin{equation}\label{eq:nearest-counterexample}
		\liminf_{t\to\infty}
		\P(\text{the origin is closer to an unfilled hole than to an active
		particle at time }t)\geq c\,.
	\end{equation}
\end{theorem}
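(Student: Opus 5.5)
We need a plan for Theorem~\ref{thm:nearest-counterexample}: in $d\geq5$, with sparse $\pm1$ charges, the origin has probability bounded below of being closer to an unfilled hole than to an active particle at large times.

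The idea is that in $d\geq5$ the dynamics is essentially annihilation at small density. When $p_d$ is small, each particle typically finds a hole after a bounded number of steps, or else wanders far. By time $t$ the active particles have density $S_t\leq C\log(t+2)/(t+1)$ from Corollary~\ref{cor:growth}. By particle--hole symmetry of counting, the number of unfilled holes per site at time $t$ equals the number of active particles per site: mean zero gives equal spatial densities, since each settlement removes one of each. The asymmetry to exploit is that holes do not move while particles diffuse.

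The plan is to find an event of probability at least $c$, uniform in $t$, on which a hole adjacent to the origin stays unfilled forever and no active particle is at distance $\leq1$ from the origin at time $t$. Concretely, take $\eta(0)=0$, $\eta(e_1)=-1$, and $\eta=0$ on a large ball $B_L$ around the origin except at $e_1$. Also require that no particle ever enters $B_L$ during the whole dynamics. On this event the hole at $e_1$ is never filled and the origin sees no active particle within distance $L/2$. So the origin is strictly closer to an unfilled hole (distance $1$) than to any active particle.

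The key step is to lower-bound the probability that no particle ever enters $B_L$, uniformly in $t$. I would bound the expected number of particle visits to $B_L$ through the odometer: the expected number of particles entering $B_L$ is at most $\sum_{y\in\partial B_L}\E U_\infty(y)$, but $U_\infty=\infty$ by Proposition~\ref{prop:everyone-settles}, so this crude bound fails. Instead I would use transience in $d\geq5$ together with small $p_d$. Each particle is a transient walk stopped at its settling time. The probability that a given particle starting at $x$ hits $B_L$ before settling is at most $\min\{1,C L^{d-2}|x|^{2-d}\}\cdot\P(\text{unsettled on reaching distance }|x|/2)$. The settling-time tail at criticality, $S_t\lesssim\log t/t$, combined with the range of the walk in $d\geq5$, gives a summable bound. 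The sum over $x$ is $\sum_x p_d\,|x|^{2-d}\,\P_x(\text{alive after } |x|^2 \text{ steps})\lesssim p_d\sum_x |x|^{2-d}\log|x|/|x|^2$, which converges when $d\geq5$: the summand is $|x|^{-d}\log|x|$ times volume $|x|^{d-1}$, hmm, borderline. I would therefore instead use the better fact that, conditioned on the configuration near $x$, a particle in the sparse regime meets a hole within distance $p_d^{-1/d}$ scales. I would also use a monotonicity (abelian) argument: removing the particles near $B_L$ only increases the chance that $B_L$ stays empty, via a coupling of the stacks.

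The main obstacle is making this decorrelation rigorous, since the settling of far particles depends on the environment, including our conditioning. I would first try to obtain an estimate $\E[\#\text{particles entering } B_L]\leq \varepsilon(p_d)L^{d-2}\cdot C$ that is small relative to $1$ for suitable $L$. This would come from Theorem~\ref{thm:master}'s comparison, applied as $\E U_\infty$ restricted to trajectories started outside $B_{2L}$, plus the FKG-type conditioning on $\eta\equiv0$ inside $B_L$, which costs only a factor $(1-2p_d)^{|B_L|}$. Choosing $p_d$ small makes the loss from conditioning bounded below while the entry expectation tends to $0$. Markov's inequality then gives the constant $c$.
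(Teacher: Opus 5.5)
Your construction cannot work: the event you build has probability tending to zero in $t$, and its ``forever'' version is null. The sparse $\pm1$ law is bounded, nonconstant and mean zero, so Proposition~\ref{prop:everyone-settles} applies. Almost surely every hole is filled after finitely many rounds and $U_\infty(x)=\infty$ at every site. Hence \emph{the hole at $e_1$ is never filled} and \emph{no particle ever enters $B_L$} are null events, whatever $p_d$ and $L$ are.

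Restricting to time $t$ does not help. Your event forces $H_t(e_1)=1$, which has probability $h_t=\P(H_t(0)=1)\to0$ by Lemma~\ref{lem:nearest-one-point}. More generally, the probability that some unfilled hole lies within a fixed distance $L$ of the origin is at most $|B_L|\,h_t\to0$.

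The Markov step fails for the same reason. The expected number of arrivals at a fixed site through round $t$ is $\E U_t(0)\geq c\log t-C$ (Lemma~\ref{lem:critical-density}). Fixing $\eta$ on $B_L$ changes this only by an amount bounded in $t$, so it cannot be made small by shrinking $p_d$.

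The phenomenon is not local: the nearest unfilled hole recedes to infinity. The argument must be run at the diverging scale $R\asymp h_t^{-1/d}$, the mean spacing of unfilled holes. The constant $c$ then comes from density times volume, $h_t\cdot R^d\asymp1$.

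What is missing is a decorrelation estimate showing that unfilled holes do not cluster on that scale, plus a counting argument at that scale. The paper proceeds as follows.
\begin{enumerate}
\item It proves the two-hole bound $\P(H_t(x)=H_t(z)=1)\leq Ch_t^2\exp\{C\log(1/h_t)(1+|x-z|)^{4-d}\}$ (Proposition~\ref{prop:nearest-two-hole}). This uses a one-particle coupling whose cost is governed by $\sum_n(n+1)\P_0(X_n=x)\leq C(1+|x|)^{4-d}$, which is where $d\geq5$ enters. It also uses $\E U_t(0)\leq C\log(1/h_t)$ and its moment version.
\item Pairs at bounded distance are handled by the one-site comparison $\P(H_t(x)=H_t(z)=1)\leq2p\,h_t$, which is where small $p_d$ enters.
\item Summing over $|x-z|\leq4R$ shows that holes with no other hole within $4R$ have density at least $3h_t/4$.
\item Mass transport with $\P(A_t(0)>0)\leq h_t$ shows that a density at least $h_t/4$ of these holes have at most one active site within distance $2R$.
\item An orthant argument gives $cR^d$ sites strictly closer to each such hole than to every active particle. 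A final mass transport gives probability at least $(h_t/4)\,cR^d\geq c$.
\end{enumerate}
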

Simulations suggest that the counterexample in
Theorem~\ref{thm:nearest-counterexample} extends to dimension four.

\subsection{Near criticality}\label{ssec:results-near}

Theorem~\ref{thm:subcritical-tail} shows that the expected limiting odometer is finite
below the critical density, while Corollary~\ref{cor:growth} shows that it is infinite
at the critical density. The next theorem gives the rate of divergence for a
family whose mean increases to zero.

\begin{theorem}[Near criticality]\label{thm:near}
	Let $\delta_0>0$. For each $0\leq\delta\leq\delta_0$, let
	$\eta_\delta=(\eta_\delta(x))_{x\in\Z^d}$ have independent and identically
	distributed integer-valued coordinates of mean $-\delta$. Suppose that
	$\eta_0(0)$ is nonconstant. Let $U^\delta$ be the particle odometer started
	from $\eta_\delta$.
	Suppose that, for some $\theta>0$, the expectations
	$\E e^{\theta|\eta_\delta(0)|}$ are bounded uniformly over
	$0\leq\delta\leq\delta_0$. Suppose also that, for a constant $K<\infty$ and every
	$0<\delta\leq\delta_0$, the variables $\eta_\delta(0)$ and $\eta_0(0)$ admit a coupling
	with $\E|\eta_\delta(0)-\eta_0(0)|\leq K\delta$. Then, as
	$\delta\downarrow0$,
	\begin{equation}\label{eq:near}
		\E U_\infty^\delta(0)\asymp
		\begin{cases}
			\delta^{-3}&d=1\,,\\
			\delta^{-1}&d=2\,,\\
			\delta^{-1/3}&d=3\,,\\
			\log(e/\delta)&d\geq4\,.
		\end{cases}
	\end{equation}
\end{theorem}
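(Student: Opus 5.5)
The plan is to reduce $\E U^\delta_\infty(0)$ to a question about $\E U^0_n(0)$ at a time horizon $n=n(\delta)$ at which the drift $-\delta$ starts to dominate the critical fluctuations. We then read off the answer from Corollary~\ref{cor:growth}. The heuristic comes from the optimal stopping representation \eqref{eq:stopping-intro}. For a walk run for $n$ steps, the random scenery sum $\sum_{j<n}\eta_0(X_j)$ has typical size $n^{(4-d)/4}$ for $d\le 3$, because the range is of order $n^{d/2}$ in $d\ge 2$ and $\sqrt n$ in $d=1$. The drift costs $\delta n$. These balance when $\delta n\asymp n^{(4-d)/4}$, that is, at $n_\delta\asymp\delta^{-4/d}$. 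The resulting odometer is $n_\delta^{(4-d)/4}=\delta^{-(4-d)/d}$, which gives $\delta^{-3},\delta^{-1},\delta^{-1/3}$ for $d=1,2,3$. For $d\ge4$ the horizon is instead set by the $\log$ term, so the answer is $\log n_\delta\asymp\log(e/\delta)$ for any polynomial horizon. Accordingly I set $n_\delta=\lceil\delta^{-4/d}\rceil$ for $d\le3$ and $n_\delta=\lceil\delta^{-A}\rceil$ for $d\ge4$, with suitable $A$.

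\emph{Lower bound.} I couple $\eta_\delta$ to $\eta_0$ with $\E|\eta_\delta-\eta_0|\le K\delta$. Then I show $\E U^\delta_\infty(0)\ge \E U^\delta_{n_\delta}(0)\ge c\,\E U^0_{n_\delta}(0)$. The comparison $\E U^\delta_{n}\gtrsim \E U^0_n - $ (error) should follow from monotonicity and Lipschitz properties of the update rule \eqref{eq:parallel-intro} in $\eta$. Removing particles (holes) decreases (increases) the odometer. Each unit of discrepancy at $y$ changes $U_n(0)$ in expectation by at most the expected number of visits to $0$ made by an extra particle started at $y$ within $n$ rounds, namely $\sum_y G_n(y,0)=n$. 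This gives an error of at most $K\delta n_\delta$. For $d\le3$ this is $K\delta^{1-4/d}$, which is of the same order as the main term $\delta^{-(4-d)/d}$, so the horizon must be taken as $n=\epsilon\,\delta^{-4/d}$ with $\epsilon$ small. The error is then $\epsilon K\delta^{-(4-d)/d}$, while the main term is $\epsilon^{(4-d)/4}\delta^{-(4-d)/d}$, and the latter wins for small $\epsilon$. For $d\ge4$, I take $n=\delta^{-1/2}$, so the error $K\delta^{1/2}$ is negligible compared with $\log n$. The lower bound in Corollary~\ref{cor:growth} (via Theorem~\ref{thm:master}) then finishes.

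\emph{Upper bound.} Here I would reuse the machinery behind Theorem~\ref{thm:master} and Theorem~\ref{thm:subcritical-tail}. The decomposition $U=u+$ (martingale-type term) still applies. I would (a) bound $\E u^\delta_\infty(0)$ by the order above, and (b) bound the martingale-type term by $C(\E u^\delta_\infty(0)+\log(e/\delta))$ as in the upper bound of \eqref{eq:master}. For (a), use \eqref{eq:stopping-intro} and split at horizon $n_\delta$. Up to time $n_\delta$, the gain is at most $\E\sup_{\sigma\le n_\delta}$ of the critical scenery plus the coupling error, which is $O(\delta^{-(4-d)/d})$ by \eqref{eq:bp}. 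Beyond that, decompose time into dyadic blocks $[2^kn_\delta,2^{k+1}n_\delta)$. On block $k$, the scenery fluctuation is of order $(2^kn_\delta)^{(4-d)/4}$, with stretched-exponential tails by the exponential moment, while the drift is $-\delta 2^kn_\delta$. The probability that continuing into block $k$ is profitable therefore decays rapidly in $k$, and summing over blocks gives $C\delta^{-(4-d)/d}$, or $C\log(e/\delta)$ for $d\ge4$, using the corresponding logarithmic bound on $u$. Alternatively, the tail bound on $S_t$ from the proof of Theorem~\ref{thm:subcritical-tail}, made uniform in $\delta$ with scale $n_\delta$, would give $\E U^\delta_\infty(0)=\sum_t S_t\lesssim \sum_{t\le n_\delta}S^{\rm crit}_t+\sum_{t>n_\delta}(\cdot)$. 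Using $\E U_\infty(0)=\sum_t S_t$ by mass transport, the first sum matches the claimed order by Corollary~\ref{cor:growth}.

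\emph{Main obstacle.} The hardest step is the uniform-in-$\delta$ control past the horizon $n_\delta$. One needs a version of the stretched-exponential decay in Theorem~\ref{thm:subcritical-tail} whose constants scale correctly: $S_t\lesssim S^{\rm crit}_{n_\delta}\exp\{-c(t/n_\delta)^{\alpha}\}$ for $t\ge n_\delta$. I would attempt this by a renormalization or rescaling argument. At scale $n_\delta$, boxes of side $\sqrt{n_\delta}$ carry a net deficit $\delta n_\delta^{d/2}$, which is comparable to the fluctuation $n_\delta^{d/4}$ exactly by the choice of $n_\delta$. The subcritical argument would then be rerun on this coarse-grained lattice, with uniform exponential moments supplying the large-deviation estimates.
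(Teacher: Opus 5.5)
Your lower bound is correct. For $d\ge4$ it is the paper's argument. For $d\le3$ it takes a slightly different route. You perturb the critical particle system, $\E U^\delta_n(0)\ge\E U^0_n(0)-K\delta n$ with $n=\eps\delta^{-4/d}$; this inequality is Lemma~\ref{lem:density-compare} applied through $\min\{\eta_\delta,\eta_0\}$ and summed over $t<n$, which makes your discrepancy-walk heuristic rigorous. The paper instead uses the divisible sandpile, $u^\delta_\infty(0)\ge u_m(0;\xi_\delta)-\delta m$, together with a convex-order lower bound on $\E u_m(0;\xi_\delta)$. Your version is uniform in $d$ and needs only Corollary~\ref{cor:growth}.

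The upper bound has a genuine gap. Step (b) cannot be run at $n=\infty$. The comparison from Lemma~\ref{lem:pathwise-comparison} and Proposition~\ref{prop:w-moment} carries the factors $(n+1)^{1/r}$ and $\kappa_d(n)$, and the additive $\log n$ in \eqref{eq:master} comes from taking $r\asymp\log n$; all of these diverge with $n$. So everything rests on a finite horizon $N$, polynomial in $1/\delta$, with $\sum_{t\ge N}S^\delta_t=O(1)$. You defer this to an unspecified renormalization aimed at decay already from scale $n_\delta$. The missing idea is simpler and yields a weaker statement that suffices. The tilt of Theorem~\ref{thm:subcritical} stays available up to the positive zero $\lambda_\delta\asymp\delta$ of $\psi_\delta'$, where $\psi_\delta(\lambda)=\log\E e^{\lambda\eta_\delta(0)}$. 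It gives $a_\delta=-\psi_\delta(\lambda_\delta)/3\asymp\delta^2$ with bounded prefactor, so $S^\delta_t\le C\E_0e^{-c\delta^2|R_t|}$ (Lemma~\ref{lem:near-tilt}). A block lower-tail bound on $|R_t|$ then makes the tail summable beyond $N$ of order $\delta^{-4}L^3$, $\delta^{-2}L^3$, $\delta^{-2}L^2$ for $d=1$, $d=2$, $d\ge3$, with $L=\log(e/\delta)$ (Proposition~\ref{prop:resolvent}).

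This $N$ is not your $n_\delta$. For $d\ge3$ it exceeds by a power the time at which $S^0_t$ falls to order $\delta$. Comparing with the critical system across that gap, via $S^\delta_t\le S^0_t+K\delta$, costs $K\delta N\approx\delta^{-1}L^2$, far more than $\delta^{-1/3}$ or $\log(e/\delta)$; in $d=1,2$ it still loses logarithms. The window $[0,N]$ must instead be handled by the drifted divisible sandpile. Apply the routing comparison there with $r\asymp\log N$, which keeps $(N+1)^{1/r}$ bounded and makes the errors $\delta^{-5/2}L^3$, $\delta^{-1/2}L^2$, $\delta^{-1/4}L$, $L$ of lower or equal order. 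Then bound $\E u^\delta_N(0)\le\E u^\delta_\infty(0)$.

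That last bound is your step (a), and there too the key mechanism is missing. The optimal rule is chosen knowing the scenery and may run far beyond its typical duration. So the claim that continuing into block $k$ is rarely profitable needs a quantitative bound on $\P(\sigma>s_k)$. The paper parametrizes by the annealed mean $M=\E\sigma_n$ of the optimal rule. It combines dyadic blocks, Markov's inequality $\P(\sigma>s_k)\le M/s_k$, and H\"older's inequality against $q$-th moments of the block values $u_{\ell_k}(X_{s_k};\xi_\delta)$, which are bounded through convex-order comparison with a Laplace scenery. This gives $\E\sum_{j<\sigma}\xi_\delta(X_j)\le C\phi_d(M)$, where $\phi_d(M)$ is $(M+1)^{(4-d)/4}$ for $d\le3$ and $\log(M+2)$ for $d\ge4$. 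Hence $\E u^\delta_n(0)\le\sup_{M\ge0}\bigl(C\phi_d(M)-\delta M\bigr)$ uniformly in $n$ (Lemma~\ref{lem:mean-horizon} and Proposition~\ref{prop:near-divisible}), which is the stated order. Your dyadic blocks are the right skeleton, but without this device the sum over blocks is not controlled.
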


In the parking model, each site holds either one car, corresponding to $\eta_\delta(0)=1$, or one spot, corresponding to $\eta_\delta(0)=-1$. Thus, if $p$ denotes the density of cars, then $\delta=1-2p$. In this case, \eqref{eq:near} becomes
\[
	\E U_\infty^\delta(0)\asymp
	\begin{cases}
		(1-2p)^{-3}&d=1\,,\\
		(1-2p)^{-1}&d=2\,,\\
		(1-2p)^{-1/3}&d=3\,,\\
			\log\bigl(e/(1-2p)\bigr)&d\geq4\,,
	\end{cases}
\]
which is the order of the divergence asked about by
\citet[Open Question~2]{DGJLS}. \citet{JJLS} proved the upper bound on $\Z$, and the
lower bound for $d\leq3$ up to a logarithmic factor. The upper bound
	for $d\geq2$ is new.

\subsection{Oriented walk}\label{ssec:results-oriented}

Fix $d\geq2$ and let $e_1,\ldots,e_d$ be the standard basis vectors of $\Z^d$.
Replace the simple random walk in the model by the walk which steps from $x$ to
$x+e_i$ with probability $1/d$ for each $1\leq i\leq d$, and let $\vec U_n$ be
its particle odometer. Let $\vec P(x,x-e_i)=1/d$ and define
$\vec u_0=0$ and $\vec u_{n+1}=(\eta+\vec P\vec u_n)^+$. Thus $\vec P$ is the
transpose of the particles' transition kernel, as required by
Remark~\ref{rem:transpose}. Let $\vec S_t$ be the expected number of particles
from the origin not settled by time $t$.

\begin{theorem}[Oriented walk]\label{thm:oriented-walk}
	Let $\eta=(\eta(x))_{x\in\Z^d}$ have i.i.d.\ integer-valued coordinates.
	Suppose that $\eta(0)$ is nonconstant, $\E\eta(0)=0$, and
	$\E e^{\theta|\eta(0)|}<\infty$ for some $\theta>0$. Then
	\[
		\E \vec U_n(0)\asymp
		\begin{cases}
			n^{1/4}&d=2\,,\\
			\log n&d\geq3\,.
		\end{cases}
	\]
	When $d=2$, the order $n^{1/4}$ sharpens to an asymptotic: the ratio $\E \vec U_n(0)/\E \vec u_n(0)$ tends to one, and there is $\mu\in(0,\infty)$ such that
	\[
		\E \vec U_n(0)\sim\mu n^{1/4}
		\qquad\text{and}\qquad
		\vec S_t\sim\frac{\mu}{4}t^{-3/4}\,.
	\]
\end{theorem}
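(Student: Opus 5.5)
The plan is to run the proofs of Theorem~\ref{thm:master}, Corollary~\ref{cor:growth} and Theorem~\ref{thm:trichotomy}(i) again with the oriented kernel, and then read off the oriented answer from the divisible sandpile. The comparison in Theorem~\ref{thm:master} only used three things: the representation of $U_n$ as $u_n$ plus a martingale-type error built from the centred increments $I_{y,x}(m)-P(y,x)m$; the variational formula \eqref{eq:stopping-intro}; and heat-kernel and Green's function estimates for the walk. For the oriented walk, Remark~\ref{rem:transpose} tells us to use $\vec P$, the transpose of the particles' kernel, in the divisible recursion. Then $\vec u_n(x)=\sup_{\sigma\le n}\E_x\sum_{j<\sigma}\eta(\vec X_j)$, where $\vec X$ steps from $x$ to $x-e_i$.

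The key observation concerns this walk. Write $\vec X_j=x-\sum_{k\le j}e_{\xi_k}$. The coordinate sum drops by exactly $1$ at each step, so the walk is transient and never revisits a site. Projected onto the hyperplane $\{\sum_i x_i=\text{const}\}$, it is a mean-zero random walk in $\Z^{d-1}$ with the same local behaviour as a simple random walk there. The oriented walk on $\Z^d$ is therefore a space-time walk in effective spatial dimension $d'=d-1$. As a result, every estimate in the earlier proofs that depended on $d$ through heat-kernel decay $t^{-d/2}$ now holds with $t^{-(d-1)/2}$.

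Next we compute $\E\vec u_n(0)$. Along the path, the scenery $\eta(\vec X_j)$ consists of i.i.d.\ values seen exactly once, but nearby paths share scenery through the $(d-1)$-dimensional projection. The stopping problem is then a "random scenery in $d'$ space dimensions plus time" problem. The Brownian optimal-stopping analysis of \citet{BP} gives $\E u_n\asymp n^{(4-d)/4}$ in dimension $d$ through the scaling $n^{1/2}\cdot n^{-d/4}$. Redoing that scaling with spatial scenery in $d'$ dimensions and the extra time coordinate, the relevant field in $d=2$ is a one-dimensional spatial white noise indexed by space-time. This should give $\E\vec u_n(0)\asymp n^{1/4}$ together with a Brownian-type scaling limit, and hence $\E\vec u_n(0)\sim\mu n^{1/4}$. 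For $d\ge3$ it should give growth at most logarithmic.

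Given these inputs, the comparison \eqref{eq:master} transfers verbatim:
\[
\E\vec U_n\asymp\E\vec u_n+\log n .
\]
In $d=2$ this yields order $n^{1/4}$, and for $d\ge3$ it yields order $\log n$, which is the lower bound of the logarithm matched by the upper bound. The ratio limit in $d=2$ follows exactly as in Theorem~\ref{thm:trichotomy}(i), since the martingale error is $o(n^{1/4})$. For the settling tail, mass conservation gives $\E\vec U_n(0)=\sum_{t<n}\vec S_t$. Once we have the asymptotic $\mu n^{1/4}$ and monotonicity of $\vec S_t$ in $t$, a Tauberian step gives $\vec S_t\sim\tfrac{\mu}{4}t^{-3/4}$.

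The main obstacle will be the scaling limit for $\vec u_n$ in $d=2$, i.e.\ adapting \citet{BP}, whose proof rests on the reversible simple walk, to this non-reversible, drifted walk. I would first try to treat the coordinate sum as time. The recursion then becomes a one-dimensional parabolic obstacle problem driven by white noise, and I would prove locally uniform convergence through the same stopping-time compactness argument used by \citet{BP}.
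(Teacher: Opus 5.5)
There is a genuine gap where you say that \eqref{eq:master} ``transfers verbatim''. For $d\geq3$ your whole lower bound $\E\vec U_n(0)\geq c\log n$ comes from the additive logarithm. In the paper that logarithm comes from Lemma~\ref{lem:critical-density}, and that argument needs the symmetry of the simple random walk. Appending $v$ for a step of the particle bearing $\alpha$ and $-v$ for a step of the one bearing $\beta$ produces a simple random walk only because $v$ and $-v$ have the same law, whatever the adapted order of moves. For the oriented walk the appended increment is $+e_i$ or $-e_i$ according to which label moves, and that choice depends on the configuration. For instance, two active unmatched particles never change the difference of their coordinate sums. So there is no single walk, independent of the starting separation $x-y$, whose hitting probabilities you can sum to $\E_0|R_{2t}|-1\leq2t$, and $S_t\geq c/t$ does not follow.

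The paper gets the logarithm from the layered structure instead. The odometers on a layer depend only on the scenery and the stacks on lower layers. Hence the number $N_n$ of arrivals at the origin through round $n$ is independent of $\eta(0)$. Write $a_n=\E\vec U_n(0)$. From $\vec U_{n+1}(0)=(\eta(0)+N_n)^+$ and $\E N_n=a_n$, one gets $a_{n+1}-a_n=\E(\eta(0)+N_n)^-\geq\E\eta(0)^-\,\P(N_n=0)\geq\E\eta(0)^-\,q^{da_n}$, with $q=(d-1)/d$, by Jensen's inequality. This difference inequality integrates to $a_n\geq c\log(n+1)$.

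The upper comparison and the ratio limit have a second gap. They depend on the variance sum $\sum_y\sup_m\Gamma_m(y)$ that replaces $\kappa_d(n)$ in Lemma~\ref{lem:gamma-sum}, and your rule of replacing $t^{-d/2}$ by $t^{-(d-1)/2}$ gives the wrong value for it. That rule predicts the simple-walk value in dimension $d-1$: $\sqrt n$ for $d=2$ and $\log n$ for $d=3$. With those values the error term $r(n+1)^{2/r}\kappa$ in \eqref{eq:critical-moment} is at least of order $\sqrt n\log n$, respectively $(\log n)^2$. These swamp $n^{1/4}$ and $\log n$, and the resulting bound on the martingale error is not $o(n^{1/4})$.

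The missing observation is that each layer is visited at a single time, so $g_m=\sum_{\ell<m}\vec p_\ell$ involves no time integration. For $y$ on layer $\ell$, $\Gamma_m(y)$ is the variance of $\vec p_{\ell-1}$ over the $d$ sites $y+e_i$. Summing over each layer telescopes to $\sum_y\sup_m\Gamma_m(y)=\sum_{\ell\geq1}(\|\vec p_{\ell-1}\|_2^2-\|\vec p_\ell\|_2^2)=1$ in every dimension. With $\kappa$ replaced by one and $r=2\vee\lceil\log(n+1)\rceil$, one gets $\E|\vec U_n(0)-\vec u_n(0)|\leq Cn^{1/8}(\log n)^{3/4}$ when $d=2$, and the bound $C\log(n+1)$ when $d\geq3$.

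Your estimates on $\vec u_n$ itself are only heuristic. The paper proves $\E\vec u_n(0)\leq Cn^{1/4}$ by dyadic chaining of $\Phi_{n-j}(X_j)$, using the exact identity $\sum_\ell\|\vec p_\ell(\cdot-q)-\vec p_\ell\|_2^2=4|q|$ of Lemma~\ref{lem:shift}. The resulting $L^r$ bound with $r>4$ gives the uniform integrability needed to turn the scaling limit into $\E\vec u_n(0)\sim\mu n^{1/4}$. Your plan to adapt \citet{BP} with space--time white noise, and your Tauberian step for $\vec S_t$, otherwise agree with the paper.
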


Together with Theorem~\ref{thm:trichotomy} on $\Z^2$, Theorem~\ref{thm:oriented-walk} proves both asymptotics conjectured in
\citet[Open Question~1]{DGJLS}. For every $T>0$, as $n\to\infty$, the variable
$n^{-1/4}\vec U_{\lfloor nT\rfloor}(0)$ converges in distribution to the
value of a Brownian optimal stopping problem. For $1\leq d\leq3$, the variables
$n^{-(4-d)/4}U_{\lfloor nT\rfloor}(0)$ have analogous limits.

\subsection{Outline and proof overview}\label{ssec:method}

Most of our main results rest on a comparison of the particle and divisible
sandpile odometers; Theorems~\ref{thm:subcritical-tail}
and~\ref{thm:nearest-counterexample} are proved without it. The estimates and scaling limits for
$u_n$ from \citet{BP} start from \eqref{eq:stopping-intro}, while Jensen's
inequality in \eqref{eq:parallel-intro} gives
$u_n\leq\E[U_n\mid\eta]$. We prove this bound in
Section~\ref{sec:divisible}.

Section~\ref{sec:routing} bounds $U_n$ above by introducing $w_n$ to record the
accumulated difference between the arrivals and their means:
\[
	w_0=0\,,\qquad
	w_{k+1}(x)=(Pw_k)(x)+\sum_y\bigl(I_{y,x}(U_k(y))-P(y,x)U_k(y)\bigr)\,.
\]
Then
\[
	U_{n+1}-w_{n+1}=\max\bigl\{\eta+P(U_n-w_n),\,-w_{n+1}\bigr\}\,,
	\qquad
	u_{n+1}=\max\{\eta+Pu_n,\,0\}\,.
\]
The first entries of the two maxima differ only through $U_n-w_n$ and $u_n$,
while their second entries are $-w_{n+1}$ and $0$. Since the maximum changes by at most the largest change in its entries, induction gives
\[
	|U_n(x)-u_n(x)|\leq2\,\E_x\max_{0\leq j\leq n}|w_{n-j}(X_j)|\,.
\]

Revealing each instruction when a particle first uses it allows us to express $w_n(0)$ as a sum
of bounded martingale differences. Exactly $U_{n-1}(y)$ instructions are read at
$y$, so the odometer itself controls the total variance, and a Bernstein
inequality gives, for every $r\geq2$,
\[
	\bigl(\E|w_n(0)|^r\bigr)^{1/r}\leq
	C\sqrt{r\kappa_d(n)\bigl(\E U_n(0)^r\bigr)^{1/r}}+Cr\,,
\]
where $\kappa_d(n)$ is defined to be $\sqrt n$ in dimension one, $\log(n+2)$ in dimension
two, and $1$ from dimension three on. Hence
\[
	\bigl(\E U_n(0)^r\bigr)^{1/r}\leq
	C\bigl(\bigl(\E u_n(0)^r\bigr)^{1/r}+r(n+1)^{2/r}\kappa_d(n)\bigr)\,.
\]

In Section~\ref{sec:critical} we prove a logarithmic lower bound on $\E
U_n(0)$ whose constant does not depend on the law of $\eta(0)$. The comparison with the divisible sandpile
gives only $\E U_n(0)\geq\E u_n(0)$, and the asymptotic behavior of $\E u_n(0)$ does not give such a bound: in
dimension four the constant in $\E u_n(0)\asymp\log n$ depends on the law of
$\eta(0)$, and from dimension five on $\E u_n(0)$ is of order $(\log n)^{2/d}$
when $\eta(0)$ is bounded below. Instead, we adapt the resampling argument of \citet{CRS} to discrete time. It
gives $S_t\geq1/(16t)$ for all sufficiently large $t$, and hence
$\E U_n(0)\geq c\log n-C$.

In Section~\ref{sec:bounds} we combine the two lower bounds with the upper
bound and prove Theorem~\ref{thm:master} and Corollary~\ref{cor:growth}. From
dimension four on we take $r=\lceil\log n\rceil$ in the moment bound on
$U_n(0)$, for which the factor $(n+1)^{2/r}$ remains bounded and the resulting error term is
of order $\log n$, the additive logarithm in \eqref{eq:master}; the rates
\eqref{eq:bp} then give Corollary~\ref{cor:growth}.

In Section~\ref{sec:four} we show that in dimension four, although both
odometers have order $\log n$, their ratio cannot be bounded uniformly over all
laws of $\eta(0)$. Since $\eta\mapsto u_n(0)$ is convex in each coordinate,
$\E u_n(0)$ increases when the i.i.d.\ one-site law is increased in convex
order. This gives $\E u_n(0)\leq C\log n/\log(e/\eps)$ when $\eta(0)$ takes the values
$1$ and $-1$ with probability $\eps/2$ each, while $\E U_n(0)\geq c\log n$ for every law;
the ratio is therefore at least $c\log(e/\eps)$.

Below dimension four the divisible odometer has a scaling limit, and the
discrepancy estimates transfer it to $U_n$. On the set where the limiting
odometer is positive, its time derivative solves the heat equation, because the
scenery does not depend on time and cancels in time differences. The strong
minimum principle then makes this derivative strictly positive there.
We prove Theorem~\ref{thm:nearest} this way in Section~\ref{sec:nearest}.
For Theorem~\ref{thm:nearest-counterexample}, a one-particle coupling gives a two-hole bound controlled by
\[
	\sum_{n\geq0}(n+1)\P_0(X_n=x)\,,
\]
which is at most $C(1+|x|)^{4-d}$ when $d\geq5$. A one-site comparison handles
bounded separation. Most holes are therefore isolated on the scale of their
mean spacing, and the mass transport principle gives that a positive fraction of sites is
closer to an isolated hole than to every active particle.

Below the critical density we argue directly, without the divisible sandpile.
Tag a particle at the origin and tilt the law of $\eta(0)$ to raise the density
of particles. The tagged particle, if it survives to time $t$, leaves no unfilled
hole in its range. Its survival indicator and the number of unfilled holes in
the range therefore have covariance equal to minus the product of their means.
Lemma~\ref{lem:product} bounds that covariance by three times the derivative of
the survival probability in the tilt parameter. Integration and the Donsker--Varadhan estimate give
\eqref{eq:sharpness}; see Section~\ref{sec:subcritical}.

At a distance $\delta$ below criticality, $S_t$ differs by at most $C\delta$
from its critical value. Summing until the critical value reaches order
$\delta$ gives the lower bound in \eqref{eq:near} from dimension four on; below
dimension four the lower bound comes from the divisible sandpile. For the upper bounds, the
tilt makes the tail summable beyond an explicit cutoff, while the stopping
argument before that cutoff reduces the estimate to optimizing
$CM^{(4-d)/4}-\delta M$ when $d\leq3$ and $C\log(M+2)-\delta M$ when $d\geq4$.
We carry this out in Section~\ref{sec:near}.

The oriented walk reaches each layer at a unique time. That property makes the
critical ambient dimension three rather than four, and it replaces
$\kappa_d(n)$ by $1$ in every dimension. In Section~\ref{sec:oriented} we chain over dyadic time
scales to bound $\E\vec u_n(0)$ by $Cn^{1/4}$ in dimension two. From dimension
three on we use that the arrivals at the origin are independent of $\eta(0)$, so
that $\E\vec U_{n+1}(0)-\E\vec U_n(0)\geq ce^{-C\E\vec U_n(0)}$, which
integrates to $\E\vec U_n(0)\geq c\log n$.

\subsection{Related work}\label{ssec:related}

\subsubsection{Annihilating systems}

In the annihilating system of \citet{BL}, particles of two kinds are placed on
$\Z^d$ according to independent Poisson configurations. Every particle
performs an independent continuous-time simple random walk at the same rate,
and two particles of opposite kind annihilate when they meet. Starting from
equal initial densities, they proved that the density of either kind at time
$t$ has order $t^{-d/4}$ for $d\leq4$ and $t^{-1}$ for $d\geq5$. Their proof
uses the equality of the two laws of motion. That equality fails here because
the holes never move, so their proof does not imply the matching exponents in
Corollary~\ref{cor:growth}.
\citet{BL01} describe the spatial segregation behind these rates.
\citet{Arratia} proved site recurrence when particles of a single kind
annihilate in pairs from an independent initial placement.

The decay $t^{-d/4}$ was predicted by \citet{OZ} and \citet{TW}, who traced it
to fluctuations in the initial densities. \citet{KR} gave a scaling theory for
unequal initial densities and biased motion. In the biased case they predict the
exponent $3/4$ in ambient dimension two and criticality in ambient dimension
three. Theorem~\ref{thm:oriented-walk} confirms the exponent and
Remark~\ref{rem:critical-dimension} the critical dimension. The
exponent $d/(d+2)$ of Theorem~\ref{thm:subcritical-tail} also governs a walk
among static traps \citep{BV}, where it comes from rare trap-free regions. See
\citet{ADS} for a survey. This tail is heavier than the one \citet{BL} obtain
when both kinds move, because the holes here do not move.

\citet{CRS} keep the annihilation rule but take the initial configuration
independent with a finite first moment and let the two kinds jump at different
rates. They proved that every site is visited infinitely often, and extended the Bramson--Lebowitz lower bound
on the density. Their site recurrence covers immobile holes except when the
holes outnumber the particles, in which case a positive density of holes is
never filled. \citet{AJLRS} also let particles of the same type coalesce, and again allow
two jump rates. With no cap on that coalescence they bound the expected
occupation time of the root below by a logarithm, at densities where an added
particle avoids annihilation with positive probability.

\citet{JJLS} work in continuous time and let the spots move at every speed up
to that of the cars, including zero. At the critical density, they proved that
the expected number of cars at a site at time $t$ is at least
$c(t\log t)^{-d/4}$ for $d\leq3$. This matches Corollary~\ref{cor:growth} up to
the logarithm. With the spots frozen, they proved upper and lower bounds for
the expected occupation time of a fixed site, at and near criticality, on $\Z$ and
on the bidirected regular tree. Their two bounds agree on the tree, and agree on
$\Z$ up to logarithmic factors. Near criticality, their lower bound holds on
$\Z^d$ for $d\leq3$ and has order $(1-2p)^{-(4-d)/d}$ divided by
$\log(1/(1-2p))$, so Theorem~\ref{thm:near} removes this logarithmic factor. Their matching
upper bound is on $\Z$ alone, and they record that finiteness for $d\geq2$ was
open.

\citet{BBJJ} proved that the occupation time of a set increases in increasing
convex order with the initial law, and deduced the analogous result for the
total particle lifespan in internal diffusion-limited aggregation.
Theorem~\ref{thm:four-sparse} applies this comparison to the divisible sandpile
odometer instead of the particle one.

\subsubsection{The parking model}

\citet{DGJLS} introduced the parking model on the lattice and proved the
almost sure statements recalled after Corollary~\ref{cor:growth} on every
transitive, unimodular and infinitely accessible graph. They also settled the
supercritical case: for $p>1/2$ a car parks with probability $(1-p)/p$, and the
number of visits to a site is almost surely infinite. On $\Z$, \citet{PRS}
proved that the expected number of rounds a car at the origin drives before it parks is finite for every $0<p<1/2$, and that at $p=1/2$ the expectation of that number truncated at $t$ lies between $ct^{3/4}(\log t)^{-1/4}$ and
$Ct^{3/4}$.

The parking model is related to activated random walk at infinite sleep rate.
At a finite sleep rate \citet{JR} proved that on $\Z$, below the critical
density, the total odometer has a stretched-exponential tail with
exponent $1/2$, and deduced that its mean is finite. \citet{KM} bound the
critical density as the sleep rate tends to zero and to infinity, and
\citet{LS} survey the conjectures at finite sleep rate.

Parking models have also been considered on trees, but in these models cars
move deterministically toward the root rather than by random walk. See
\citet{GP}, \citet{CH}, \citet{Contat}, \citet{ACCH} and
\citet{BBJ}.

\subsubsection{The divisible sandpile}

The divisible sandpile and its odometer are due to \citet{LevinePeres09}, the
optimal stopping representation to \citet{BPRS}, and the growth rates
\eqref{eq:bp} to \citet{BP}. \citet{LMPU} took the initial
masses independent and identically distributed and proved that the divisible sandpile
fails to stabilize at the critical density when they have positive finite
variance. On the torus, with the total mass balanced, there is no free
boundary: \citet{CHR} proved that the rescaled odometer then converges to a
bilaplacian Gaussian limit in every dimension, and \citet{CHR2} that heavy tails
replace the Gaussian limit by a stable random distribution. \citet{LP10} identified the scaling limit
of internal diffusion-limited aggregation with several sources.

\subsection{Notation}\label{ssec:conventions}

Let $\P_x$ and $\E_x$ be the law and expectation of a simple random walk started
at $x$ and independent of everything else; unless a subscript or a conditioning
is shown, $\E$ averages over all the randomness of the model. A subscript
$\infty$ denotes the pointwise limit in the number of rounds, and every such
limit below exists by monotonicity. Distances on $\Z^d$ are graph distances,
and we write $|x|$ for the distance from $x$ to the origin. For a walk $X$, let
$T_x\coloneqq\inf\{n\geq0:X_n=x\}$. Constants $c$ and $C$ are
positive and finite, may change from line to line, and depend only on the
dimension and the law of $\eta(0)$ unless stated otherwise. We write
$a\asymp b$ when $ca\leq b\leq Ca$ throughout the range of the parameter under
discussion.

\section*{Acknowledgments}
We thank Antal A. J\'arai for helpful discussions. We thank David Sivakoff for bringing this question to our attention. CP was supported by an EPSRC New Investigator Award (UKRI1019).

\medskip

We acknowledge ChatGPT 5.6 for help with the proof of
Lemma~\ref{lem:product}.

\section{Particle odometer}\label{sec:model}
In this section we describe the stack representation of the model. We attach
the randomness to the sites rather than to the particles, following
\citet{DiaconisFulton}. Each site carries an independent stack of
instructions, and a particle leaving a site reads the next unread instruction
there. An instruction is then independent of the event that it is ever read, so the
arrivals at a site may be averaged while $\eta$ is held fixed
(Lemma~\ref{lem:deferred}). Adding one particle to $\eta$ perturbs the state
by exactly one active particle or one unfilled hole, and never shortens the
journey of a particle already present (Lemmas~\ref{lem:one-particle}
and~\ref{lem:tagged-monotonicity}). Two conservation identities close the
section: the mass transport principle turns $\E U_n(0)$ into $\sum_{s<n}S_s$, and the
densities of active particles and of unfilled holes differ by $\E\eta(0)$ at
every time.

Let $P(x,y)\coloneqq\one\{x\sim y\}/(2d)$ act on functions by
$(Pf)(x)\coloneqq\sum_yP(x,y)f(y)$; it is symmetric and stochastic. Let the initial
configuration $\eta=(\eta(x))_{x\in\Z^d}$ have i.i.d.\ integer-valued
coordinates, and write $a^+\coloneqq\max\{a,0\}$ and
$a^-\coloneqq\max\{-a,0\}$. Cancelling
each particle against a hole at the same site leaves every site $x$ with
$\eta(x)^+$ particles or $\eta(x)^-$ holes, never both. The mean $\E\eta(0)$
vanishes at $p=1/2$ for the parking model and when the mean number of particles
per site is one in the particle--hole model.

Instead of following each particle along its own walk, we attach the randomness
to the sites. At each site $y$ take an independent stack of instructions
$\rho_1(y),\rho_2(y),\ldots$ with $\P(\rho_j(y)=x)=P(y,x)$, so that the $j$th
step from $y$ follows $\rho_j(y)$, and let
$I_{y,x}(m)\coloneqq\#\{1\leq j\leq m:\rho_j(y)=x\}$. A round has three stages: every active particle takes one step, each arrival receives an independent random
variable uniform on $[0,1]$, and at each site the arrivals fill the unfilled holes
there in increasing order of these variables until either the arrivals or the holes run out. Arrivals which do not fill a hole are active in the next round, and a particle which fills one never moves again.

The stack construction has the same law as the construction in which each particle is assigned its own independent walk and independent uniform variables. An instruction is never read until a particle is about to use it, so the unread instructions stay independent of everything that has happened.

\begin{lemma}\label{lem:parallel}
	Let $U_n(x)$ be the number of steps taken from $x$ during the first $n$
	rounds. Then $U_0=0$, and for every $n\geq0$ and every $x\in\Z^d$,
	\begin{equation}\label{eq:parallel}
		U_{n+1}(x)=\Bigl(\eta(x)+\sum_yI_{y,x}(U_n(y))\Bigr)^{\!+}\,.
	\end{equation}
\end{lemma}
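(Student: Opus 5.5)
The plan is an induction on $n$ that tracks, together with $U_n$, the number of particles arriving at each site during each round. Since $U_0=0$ holds by definition, everything reduces to accounting at a single site $x$ through the first $n+1$ rounds. The count $U_{n+1}(x)$ equals the number of particles that were active at $x$ at the start of some round $k\in\{1,\dots,n+1\}$, because each such particle takes exactly one step from $x$ in round $k$. So we need to count the particle-visits to $x$ at times $0,\dots,n$ after which the particle was still active.

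\textbf{Step 1 (arrivals).} Let $A_k(x)$ be the number of particles that step into $x$ during round $k$, for $1\le k\le n$. In the stack representation, the steps taken from $y$ during the first $n$ rounds read exactly the instructions $\rho_1(y),\dots,\rho_{U_n(y)}(y)$, in order. Hence $\sum_{k\le n}A_k(x)=\sum_y I_{y,x}(U_n(y))$. Call this number $N$; it is the total number of arrivals at $x$ by the end of round $n$.

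\textbf{Step 2 (accounting at $x$).} Write $\eta(x)=a-h$, with $a=\eta(x)^+$ initial particles and $h=\eta(x)^-$ initial holes after cancellation. If $h=0$, every initial particle at $x$ is active at time $0$ and every arrival is active in the next round, since no hole is ever available at $x$. This gives $a+N$ departures in rounds $1,\dots,n+1$, which equals $(\eta(x)+N)^+$.

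If $h>0$, arrivals fill holes until the holes run out. Suppose $j$ particles arrive in a round when $h'$ holes remain unfilled. Then exactly $\min\{j,h'\}$ of them settle and $(j-h')^+$ stay active. Summing over rounds $1,\dots,n$, the number of arrivals that remain active is $(N-h)^+$, because the holes are consumed first and no hole is ever re-opened. There are no initial particles, so $U_{n+1}(x)=(N-h)^+=(\eta(x)+N)^+$.

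\textbf{Step 3 (the ordering convention).} The random tie-breaking by uniform labels only selects \emph{which} arrivals settle. It never changes \emph{how many} settle, so it does not affect $U$. We also check that a particle arriving in round $n$ and staying active steps from $x$ in round $n+1$, which is why those arrivals are counted in $U_{n+1}$. Particles arriving in round $n+1$ are not counted, which matches using $U_n$ rather than $U_{n+1}$ inside $I_{y,x}$.

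The main obstacle is bookkeeping rather than mathematics. The key identification is in Step 1: the instructions read at $y$ in the first $n$ rounds are exactly the first $U_n(y)$ entries of the stack at $y$. This holds because each step from $y$ consumes the next unread instruction and $U_n(y)$ counts those steps. Once this is in place, the formula follows from the monotone filling of holes at $x$.
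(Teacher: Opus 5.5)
Your proposal is correct and follows the same route as the paper: of the $\eta(x)^+ +\sum_y I_{y,x}(U_n(y))$ particles that start at or arrive at $x$ through round $n$, exactly $\min\{\cdot,\eta(x)^-\}$ settle, and every other one steps from $x$ by round $n+1$. Your case split and round-by-round hole count make explicit the settling count that the paper asserts in one line.
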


\begin{proof}
	By the end of round $n$, exactly $\eta(x)^+ +\sum_yI_{y,x}(U_n(y))$ particles have started at $x$ or arrived there. Of these, the minimum of that number and $\eta(x)^-$ settle, and every other one takes a step from $x$ by the end of round $n+1$, so $U_{n+1}(x)$ equals the right side of \eqref{eq:parallel}.
\end{proof}

\begin{lemma}\label{lem:deferred}
	For every $n\geq0$, every $y$ and $x$ in $\Z^d$ and every $j\geq1$, the event
	$\{U_n(y)\geq j\}$ is measurable with respect to the initial configuration
	together with the instructions other than $\rho_j(y)$. Consequently
	\begin{equation}\label{eq:deferred}
		\E\bigl[\one\{U_n(y)\geq j\}\one\{\rho_j(y)=x\}\bigm|\eta\bigr]
		=P(y,x)\,\P\bigl(U_n(y)\geq j\bigm|\eta\bigr)\,.
	\end{equation}
\end{lemma}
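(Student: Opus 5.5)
We prove the measurability claim by induction on $n$, and then deduce \eqref{eq:deferred} from independence. Fix $y$ and $j\geq1$, and let $\mathcal G_j$ be the $\sigma$-field generated by $\eta$ and by all instructions $\rho_i(z)$ with $(z,i)\neq(y,j)$. It is not enough to show that $\{U_n(y)\geq j\}$ is $\mathcal G_j$-measurable for each $n$ separately. The induction needs a stronger statement: for every $n$ and every site $x$, the truncated value $U_n(x)\wedge j$ when $x=y$, and all of $U_n(x)$ for $x\neq y$ on the event $\{U_n(y)<j\}$, are determined by $\mathcal G_j$.

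The cleanest formulation is the following. Define a modified system in which instruction $\rho_j(y)$ is replaced by a fixed dummy neighbor $z_0$, and call its odometer $\tilde U_n$. The modified system is $\mathcal G_j$-measurable, because it is built from \eqref{eq:parallel} using only $\eta$ and the instructions other than $\rho_j(y)$. We then claim that for every $n$,
\[
\{U_n(y)\geq j\}=\{\tilde U_n(y)\geq j\},
\qquad\text{and}\qquad
U_n=\tilde U_n \text{ on } \{U_n(y)<j\}.
\]
We prove this by induction on $n$ using \eqref{eq:parallel}. The case $n=0$ is trivial. Suppose the claim holds at $n$. On $\{U_n(y)<j\}$ the two odometers agree, and the counts $I_{y,x}(U_n(y))$ involve only $\rho_1(y),\dots,\rho_{U_n(y)}(y)$, which excludes $\rho_j(y)$. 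So the right-hand sides of \eqref{eq:parallel} coincide and $U_{n+1}=\tilde U_{n+1}$. On $\{U_n(y)\geq j\}=\{\tilde U_n(y)\geq j\}$, monotonicity of the odometer in $n$ gives $U_{n+1}(y)\geq j$ and $\tilde U_{n+1}(y)\geq j$. We must check that monotonicity holds for both systems. This follows by induction from \eqref{eq:parallel}, because $m\mapsto I_{y,x}(m)$ is nondecreasing and $U_1\geq U_0=0$. Hence the events $\{U_{n+1}(y)\geq j\}$ and $\{\tilde U_{n+1}(y)\geq j\}$ agree on both pieces of the partition, and the induction closes. The main subtlety is exactly this: once $\rho_j(y)$ has been read, the two systems may diverge elsewhere. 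The event in question is nonetheless unaffected, because the odometer never decreases.

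For \eqref{eq:deferred}, write $\{U_n(y)\geq j\}=\{\tilde U_n(y)\geq j\}$. The indicator of this event is a function of $\eta$ and of the instructions other than $\rho_j(y)$. By construction, $\rho_j(y)$ is independent of all of these, and $\P(\rho_j(y)=x)=P(y,x)$. Conditioning on $\eta$ and integrating first over $\rho_j(y)$, with everything else held fixed, therefore gives
\[
\E\bigl[\one\{U_n(y)\geq j\}\one\{\rho_j(y)=x\}\bigm|\eta\bigr]=P(y,x)\,\P\bigl(U_n(y)\geq j\bigm|\eta\bigr),
\]
as claimed.
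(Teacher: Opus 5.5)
Your proof is correct and follows the paper's approach. The paper likewise fixes every variable except $\rho_j(y)$ and notes that evolutions with different values of that instruction agree until the $j$th departure from $y$, then uses independence of $\rho_j(y)$; your induction on \eqref{eq:parallel} with a dummy instruction, together with monotonicity of $n\mapsto U_n$, spells out that one-line argument in detail.
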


\begin{proof}
	Fix every variable except $\rho_j(y)$ and compare two arbitrary values of
	that instruction. The evolutions agree until the $j$th departure from $y$,
	because neither has read $\rho_j(y)$ before that departure. Hence
	$\{U_n(y)\geq j\}$ has the same value for every possible value of
	$\rho_j(y)$. Since $\rho_j(y)$ is independent of all remaining variables,
	\eqref{eq:deferred} follows.
\end{proof}

Let $H_t(x)$ be the number of unfilled holes at $x$ after round $t$, and let
$A_t(x)\coloneqq U_{t+1}(x)-U_t(x)$ be the number of active particles there.
Then $A_t(x)H_t(x)=0$.

\begin{lemma}\label{lem:one-particle}
	Let $\eta$ and $\widetilde\eta$ agree at every site except one, where
	$\widetilde\eta$ exceeds $\eta$ by one, and couple the two processes by
	giving every particle they share the same walk and the same uniform
	variables. Then, for every $t\geq0$, either $\widetilde H_t=H_t$ and
	$\widetilde A_t-A_t$ is one at a single site and zero elsewhere, or
	$\widetilde A_t=A_t$ and $H_t-\widetilde H_t$ is one at a single site and
	zero elsewhere.
\end{lemma}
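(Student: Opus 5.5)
We argue by induction on $t$, tracking a single ``discrepancy'' object: either one extra active particle or one missing unfilled hole (a hole that is filled in the $\widetilde\eta$ process but not in the $\eta$ process). In the coupling, each particle of $\eta$ is identified with a particle of $\widetilde\eta$ carrying the same walk and uniform variables, and $\widetilde\eta$ has one extra particle or one fewer hole at the distinguished site $z$.

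\emph{Base case and set-up.} At time zero, after cancelling particles against holes at each site, there are two cases. If $\eta(z)\geq0$, then $\widetilde\eta$ has one extra particle at $z$, and $z$ has no holes in either process. If $\eta(z)<0$, then $\widetilde\eta$ has one hole fewer at $z$. Both cases are of the stated form, since before round one the counts $A$ and $H$ are read off from $\eta^\pm$. The induction hypothesis is slightly stronger than the statement: all particles present in both processes occupy the same positions and have the same active or settled status, except possibly for one particle, the \emph{discrepancy}. The discrepancy is either an extra active particle in $\widetilde\eta$ at some site $x$, or a hole at some site $x$ that is unfilled in $\eta$ but filled in $\widetilde\eta$.

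\emph{Inductive step.} In the first case, the common active particles step identically under the shared walks, and the extra particle steps to some site $x'$. At each site other than $x'$ the arrivals and holes coincide, so the settling decisions coincide. At $x'$, the arrivals in $\widetilde\eta$ are those in $\eta$ plus one. Filling proceeds in increasing order of the uniform variables, and the common arrivals carry the same uniforms in both processes. So if $\widetilde{j}=j+1$ arrivals meet $h$ holes, two things can happen. If $h\leq j$, the same holes are filled, and the extra particle or some common arrival is left over. In that case exactly one more particle is active in $\widetilde\eta$, although its identity may change; we relabel it as the discrepancy. If $h>j$, all arrivals settle in both processes, and $\widetilde\eta$ fills one more hole. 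The discrepancy then becomes a missing hole, giving $\widetilde A=A$ and $H-\widetilde H=\one_{x'}$.

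In the second case, the discrepancy is a hole at $x$ that is unfilled only in $\eta$. All active particles coincide, so the arrivals coincide, and only site $x$ can differ. There $\eta$ has one more unfilled hole. If the arrivals number at least $\widetilde h+1$, then one arrival that stays active in $\widetilde\eta$ settles in $\eta$. The discrepancy converts into an extra active particle of $\widetilde\eta$, namely the arrival with the $(\widetilde h+1)$st smallest uniform, and the holes then agree. Otherwise the hole discrepancy persists at $x$.

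\emph{Main obstacle.} The delicate point is the identification of particles when the discrepancy switches role or identity. The surviving extra particle need not be the originally added one, because the uniform ordering may settle the new particle and leave a common one active. To keep the coupling literal (``every particle they share has the same walk''), we phrase the invariant in terms of counts $A_t,H_t$, and we use the stack representation (Lemma~\ref{lem:parallel}) for the steps. The multiset of departures from each site then reads the same stack instructions in both processes, except for the single discrepancy departure. With this reading, the relabelling is harmless and the counting argument above closes the induction.
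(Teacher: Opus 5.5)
Your inductive case analysis is exactly the paper's proof, and it is correct: an extra particle meets either $h\le j$ or $h>j$ holes, and an extra hole is met by either at most or at least $\widetilde h+1$ arrivals. The problems are in the bookkeeping around it.

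\textbf{The strengthened invariant is false.} It is not true that all shared particles agree except for one. Suppose the extra particle displaces a shared particle $B$ at $x_1$, and $B$ later displaces a shared particle $C$ at $x_2$. Then $B$ is settled at $x_1$ in the $\eta$-process and at $x_2$ in the $\widetilde\eta$-process, while $C$ is settled in one process and active in the other.

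\textbf{The remedy in your last paragraph fails.} Under the common-stack coupling of Lemma~\ref{lem:parallel}, consider a site $y$ after the discrepancy has departed from it. Every later departure from $y$ in the $\widetilde\eta$-process reads the instruction one index further down the stack than its counterpart. So shared particles stop taking the same steps, and the conclusion of the lemma is false for that coupling. For example, take $d=1$ with no holes, $\eta=\one_1$ and $\widetilde\eta=\one_0+\one_1$. Take the stacks $\rho_1(0)=1$, $\rho_2(0)=-1$, $\rho_1(1)=2$, $\rho_2(1)=0$, $\rho_3(1)=2$ and $\rho_1(2)=1$. Then $A_3=\one_0$ while $\widetilde A_3=\one_{-1}+\one_2$. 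Nor can the counts $A_t,H_t$ alone carry the induction. Your step ``at every site other than $x'$ the arrivals coincide'' needs the same particles to be active at the same sites, which is identity-level information.

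\textbf{No fix of this kind is needed.} Relabeling causes no conflict with the per-particle coupling in the statement. A displaced shared particle has settled in the $\eta$-process, so its walk and uniforms are never read there again. In the $\widetilde\eta$-process it simply continues along its own walk, so the coupling stays literal. The invariant to carry is the following: every particle active in the $\eta$-process after round $t$ is active in the $\widetilde\eta$-process at the same site, and exactly one of two things holds. Either the $\widetilde\eta$-process has exactly one further active particle and the unfilled holes agree, or the active particles agree and the $\eta$-process has exactly one further unfilled hole. Nothing is claimed about settled particles. Your four subcases preserve this invariant as written, and with it the argument is the paper's.
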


\begin{proof}
	Call the process started from $\eta$ the first and the process started from
	$\widetilde\eta$ the second, and induct over the rounds. Initially, if the
	site where the particle is added has an unfilled hole, then the first process
	has one additional unfilled hole; otherwise the second process has one
	additional active particle. Suppose the second
	process has one extra active particle and that particle reaches a site with
	$h$ unfilled holes together with $m$ common particles. If $m<h$, then all
	$m+1$ arrivals settle in the second process, leaving the first with one extra
	hole. If $m\geq h$, then both processes fill all $h$ holes, and the extra
	particle either stays active or displaces one common particle, which becomes
	the extra active particle. Suppose instead the first process has one extra
	unfilled hole, so the two have $h+1$ and $h$ holes there. If at most $h$
	common particles arrive, then the extra hole remains; if at least $h+1$
	arrive, then one common particle settles only in the first process and
	becomes the extra active particle of the second. The identity of the unmatched
	active particle changes exactly when the currently unmatched particle fills a
	hole that a common particle would otherwise fill.
\end{proof}

\begin{lemma}[Tagged-particle monotonicity]\label{lem:tagged-monotonicity}
	In the coupling of Lemma~\ref{lem:one-particle}, every particle present in
	both processes
	stays active in the process started from $\widetilde\eta$ at least as long as
	in the process started from $\eta$.
\end{lemma}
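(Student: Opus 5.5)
We will prove the statement by induction over the rounds, tracking the invariant of Lemma~\ref{lem:one-particle} together with a claim about which particles are affected. The coupling gives every common particle the same walk and the same uniform variables in both processes. The natural invariant at the end of round $t$ has two parts. First, every common particle that is active in the first process is active in the second and sits at the same site. Second, the only possible discrepancy is a single unmatched object, as in Lemma~\ref{lem:one-particle}: either one extra active particle in the second process, or one extra unfilled hole in the first. Since common particles use the same walk, they are at the same site in both processes as long as both are active. So the statement reduces to showing that no common particle settles in the second process strictly before it settles in the first.

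We check this round by round, following the case analysis in the proof of Lemma~\ref{lem:one-particle}. Fix a site $x$ and a round. Suppose the discrepancy is an extra active particle of the second process and it does not arrive at $x$. Then the holes and the common arrivals at $x$ are identical in the two processes, and since the ranking by uniform variables is shared, exactly the same common particles settle.

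Suppose the extra particle does arrive at $x$, where there are $h$ holes and $m$ common arrivals. The second process fills the holes in increasing order of the uniforms among $m+1$ arrivals, and the first among the $m$ common ones. The key observation is that the set of common particles settling in the second process is a subset of those settling in the first. Both are initial segments of the same uniform ordering of the common arrivals, and the second segment has length $\min\{h,m+1\}$ minus the indicator that the extra particle settles, which is at most $\min\{h,m\}$. At most one common particle is displaced; it stays active only in the second process and becomes the new extra particle, consistent with the invariant.

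Suppose instead the discrepancy is an extra hole at $x$ in the first process, so the two processes have $h+1$ and $h$ holes there. The common arrivals are ordered identically, and the first process settles the first $\min\{h+1,m\}$ of them while the second settles the first $\min\{h,m\}$. Again the settlers in the second process form a subset of the settlers in the first. The one possible exception settles only in the first process and becomes the second process's extra active particle. At sites other than the discrepancy site, the two processes agree exactly.

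The main obstacle is to make sure the invariant propagates: a common particle that remains active in the second process after settling in the first must not later cause some other common particle to settle earlier in the second process. This is handled by Lemma~\ref{lem:one-particle}. Once a common particle settles in the first process only, it is by definition the unique unmatched active particle in the second, so the first part of the invariant (concerning particles active in the first process) still holds. Induction over $t$ then shows that every common particle active in the first process after round $t$ is active in the second, which is the claim.
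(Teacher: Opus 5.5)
Your proof is correct and is essentially the paper's argument: induction over rounds, with the invariant that every common particle active in the first process is active, at the same site, in the second, and with the uniform ranking shared between the two processes. The paper does not split into cases on the type of discrepancy. It uses only $\widetilde H_t\le H_t$ from Lemma~\ref{lem:one-particle}, together with the fact that the arrivals at each site in the second process contain those of the first. Hence a common particle whose rank exceeds the hole count in the first process also exceeds it in the second.
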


\begin{proof}
	Call the process started from $\eta$ the first and the process started from
	$\widetilde\eta$ the second, and induct over the rounds, using the same
	uniform variables for common arrivals. Every
	common particle active in the first system is then among the arrivals in the
	second, while at each site the second system has no more holes than the first.
	If such a
	particle does not settle in the first system, then its rank among the arrivals
	exceeds the number of holes there. Adding arrivals cannot lower its rank, and removing a
	hole cannot make it settle. It therefore remains active in the second system
	as well.
\end{proof}

Label the particles which start at the origin as
$1,\ldots,\eta(0)^+$, let $\tau_i$ be the round in which particle $i$ settles,
and let
\[
	S_t\coloneqq\E\bigl[\#\{1\leq i\leq\eta(0)^+:\tau_i>t\}\bigr]\,.
\]

\begin{lemma}\label{lem:transport}
	For every $t\geq0$ and every $n\geq0$,
	\begin{equation}\label{eq:transport}
		\E A_t(0)=S_t\,,
		\qquad
		\E U_n(0)=\sum_{s<n}S_s\,,
	\end{equation}
	and, for every $k\geq1$ with $\P(\eta(0)=k)>0$, the $k$ particles at the
	origin are exchangeable conditionally on $\eta(0)=k$, so that
	\begin{equation}\label{eq:S-expand}
		S_t=\sum_{k\geq1}k\,\P(\eta(0)=k)\,\P(\tau_1>t\mid\eta(0)=k)\,.
	\end{equation}
\end{lemma}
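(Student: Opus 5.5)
The plan is a mass transport argument. For the first identity, define $F(x,y)$ as the expected number of particles that start at $x$ and are active at site $y$ after round $t$. Here "active after round $t$" means unsettled after round $t$, so the particle will take a step in round $t+1$. The law of the whole system, meaning the initial configuration, the stacks and the uniform variables, is invariant under translations of $\Z^d$. Hence $F(x+z,y+z)=F(x,y)$. Every active particle at $y$ started at some site, so $\E A_t(y)=\sum_xF(x,y)$. Also $\sum_yF(x,y)$ is the expected number of particles from $x$ not settled by time $t$. By translation invariance this equals $S_t$ once we check that $\tau_i>t$ is the same as being active after round $t$, i.e. contributing to $A_t=U_{t+1}-U_t$. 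I will check this indexing against the definition of $U$: $U_{t+1}(y)-U_t(y)$ counts the steps taken from $y$ in round $t+1$, and these are exactly the steps of the particles present and unsettled at $y$ after round $t$. Then $\sum_xF(x,0)=\sum_x F(0,-x)=\sum_yF(0,y)$, which is the first identity. All terms are nonnegative, so Tonelli justifies the exchange of sums, and finiteness is not needed.

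The second identity follows by telescoping: $U_n(0)=\sum_{s<n}(U_{s+1}(0)-U_s(0))=\sum_{s<n}A_s(0)$, then take expectations and use the first identity. Since $U_0=0$ and $A_s\ge0$, this is valid even when values are infinite.

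For exchangeability, I will use the equivalent per-particle construction stated before Lemma~\ref{lem:parallel}. In it, each particle carries its own independent walk and its own independent uniform variables, and the law is the same as in the stack construction. Conditionally on $\eta$ with $\eta(0)=k$, relabelling the $k$ particles at the origin by a permutation permutes their i.i.d. walks and uniform variables. This preserves the joint law and maps the vector $(\tau_1,\dots,\tau_k)$ to the permuted vector. Averaging over $\eta$ on $\{\eta(0)=k\}$ then gives $\E[\#\{i:\tau_i>t\}\mid\eta(0)=k]=k\,\P(\tau_1>t\mid\eta(0)=k)$, and summing over $k$ with weights $\P(\eta(0)=k)$ gives \eqref{eq:S-expand}.

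The main obstacle is bookkeeping rather than depth. One point is the indexing of "active after round $t$" versus $\tau_i>t$. The other is that the first identity must be justified for the particle labelling in the per-particle construction, where labelled particles make sense, while $U$ was defined through stacks. The equality in law of the two constructions, jointly with the identities of the particles, handles this. Because in the stack picture labels are not intrinsic, I will define $F$ in the per-particle construction, where translation invariance is also clear.
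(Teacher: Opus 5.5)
Your proposal is correct and follows the same route as the paper: translation-invariant mass transport gives $\E A_t(0)=S_t$, summing over $s<n$ gives the odometer identity, and conditioning on $\eta(0)$ gives \eqref{eq:S-expand}. Your checks that $\{\tau_i>t\}$ matches the indexing $A_t=U_{t+1}-U_t$, and your exchangeability argument via the per-particle construction, fill in details the paper leaves implicit.
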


\begin{proof}
	Send each particle that is still active after round $t$ from its starting
	site to its position at that time. The transport is invariant under the
	translations of $\Z^d$, so the expected mass received at the origin, which is
	$\E A_t(0)$, equals the expected mass sent from the origin, which is $S_t$.
	Summing $\E A_s(0)=S_s$ over $s<n$ gives $\E U_n(0)=\sum_{s<n}S_s$, and
	conditioning the definition of $S_t$ on $\eta(0)$ gives \eqref{eq:S-expand}.
\end{proof}

Every arrival at the origin in round $t+1$ comes from a particle active at a
neighbor after round $t$. Translation invariance and $\sum_yP(y,0)=1$ therefore
make the expected number of arrivals in that round equal to $\E A_t(0)=S_t$.
Summing over $t<n$, the expected number of arrivals at the origin during the
first $n$ rounds is again $\E U_n(0)$.

\begin{lemma}\label{lem:activity-holes}
	Let $\eta$ be translation invariant with $\E|\eta(0)|<\infty$.
	Then, for every $t\geq0$,
	\begin{equation}\label{eq:activity-holes}
		\E A_t(0)-\E H_t(0)=\E\eta(0)\,.
	\end{equation}
\end{lemma}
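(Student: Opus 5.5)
The plan is a conservation argument: for each site, the number of particles minus the number of holes changes only by particles moving in or out. Fix $t\geq0$ and a site $x$. After round $t$, the net count at $x$ is $A_t(x)-H_t(x)$, where settled particles have cancelled their holes. Since $A_t(x)H_t(x)=0$ and each site starts with either $\eta(x)^+$ particles or $\eta(x)^-$ holes, this net count should equal $\eta(x)$ plus the arrivals at $x$ during rounds $1,\dots,t$ minus the departures from $x$ during those rounds. The departures number $U_t(x)$ and the arrivals number $\sum_y I_{y,x}(U_t(y))$. So the first step is to verify the pathwise identity
\[
	A_t(x)-H_t(x)=\eta(x)+\sum_y I_{y,x}(U_t(y))-U_t(x)\,.
\]
This follows from Lemma~\ref{lem:parallel} together with the hole count. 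Write $N=\eta(x)+\sum_yI_{y,x}(U_t(y))$. If $N\geq0$, then every hole at $x$ has been filled, so $H_t(x)=0$, and $A_t(x)=U_{t+1}(x)-U_t(x)=N-U_t(x)$. If $N<0$, then $U_{t+1}(x)=0$, which forces $U_t(x)=0$ and $A_t(x)=0$. In that case the unfilled holes number $\eta(x)^--\sum_y I_{y,x}(U_t(y))=-N$. Both cases give the identity.

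The second step is to take expectations. The quantity $\E\eta(x)$ is finite by assumption. The departures have expectation $\E U_t(0)$ by translation invariance. By the remark after Lemma~\ref{lem:transport}, the arrivals have the same expectation $\E U_t(0)=\sum_{s<t}S_s$. That remark uses only the mass transport principle, and it is valid here because each arrival in round $s+1$ corresponds to a particle active after round $s$.

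The one point that needs care, and which I expect to be the main obstacle, is justifying that the two terms cancel: they might both be infinite. The cancellation requires $\E U_t(0)<\infty$ for fixed $t$. I would prove this by induction on $t$ from Lemma~\ref{lem:parallel} and Lemma~\ref{lem:deferred}. These give
\[
	\E[U_{t+1}(x)\mid\eta]\leq\eta(x)^++\sum_yP(y,x)\,\E[U_t(y)\mid\eta]\,,
\]
and averaging with translation invariance yields $\E U_{t+1}(0)\leq\E\eta(0)^++\E U_t(0)$. Hence $\E U_t(0)\leq t\,\E\eta(0)^+<\infty$.

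Finally, $A_t(0)$ and $H_t(0)$ are nonnegative. We have $\E H_t(0)\leq\E\eta(0)^-<\infty$, and $\E A_t(0)=S_t\leq\E\eta(0)^+<\infty$. Subtracting the finite equal expectations of arrivals and departures therefore gives $\E A_t(0)-\E H_t(0)=\E\eta(0)$.
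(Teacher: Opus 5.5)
Your proof is correct and is essentially the paper's argument. The paper uses the one-round identity $A_{t+1}(x)-H_{t+1}(x)=J_{t+1}(x)-H_t(x)$, takes $\E J_{t+1}(0)=\E A_t(0)$ from translation invariance, and inducts; you instead use the telescoped form, which the paper records as \eqref{eq:signed-count}, cancel cumulative arrivals against departures, and check integrability through $\E U_t(0)\leq t\,\E\eta(0)^+$ where the paper uses the cruder pathwise bound $A_t(0)\leq\sum_{|y|\leq t}\eta(y)^+$.
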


\begin{proof}
	A particle active at the origin after round $t$ started within distance $t$
	of the origin. Thus $A_t(0)\leq\sum_{|y|\leq t}\eta(y)^+$, and every
	expectation in this proof is finite. If $j$ particles arrive at a site
	carrying $h$ unfilled holes,
	then the new counts are $(j-h)^+$ and $(h-j)^+$, whose difference is $j-h$.
	Writing $J_{t+1}(x)$ for the number of arrivals at $x$ in round $t+1$,
	\begin{equation}\label{eq:signed-recursion}
		A_{t+1}(x)-H_{t+1}(x)=J_{t+1}(x)-H_t(x)\,.
	\end{equation}
	Every arrival at $x$ in round $t+1$ comes from a particle active at a neighbor
	of $x$ after round $t$. Translation invariance therefore gives
	$\E J_{t+1}(0)=\sum_yP(y,0)\E A_t(y)=\E A_t(0)$. Taking expectations in
	\eqref{eq:signed-recursion} leaves $\E A_t(0)-\E H_t(0)$ unchanged
	from one round to the next. At $t=0$ it equals
	$\E\eta(0)^+-\E\eta(0)^-=\E\eta(0)$.
\end{proof}

Since every active particle moves in every round, $U_t(x)=\sum_{s<t}A_s(x)$,
and $\sum_yI_{y,x}(U_t(y))$ counts the arrivals at $x$ through round $t$.
Induction on \eqref{eq:signed-recursion} then gives the pathwise identity
\begin{equation}\label{eq:signed-count}
	A_t(x)-H_t(x)=\eta(x)+\sum_yI_{y,x}(U_t(y))-U_t(x)\,.
\end{equation}

When the mean vanishes, $\E A_t(0)=\E H_t(0)=S_t$.

\begin{lemma}\label{lem:density-compare}
	Let $\eta$ and $\widetilde\eta$ have finite first moments and admit a
	translation invariant coupling under which
	$\eta(x)\leq\widetilde\eta(x)$ for every $x$ almost surely. Let $S_t$ and
	$\widetilde S_t$ be the two expected numbers of particles which start at
	the origin and are still active after round $t$. Then, for every $t\geq0$,
	\[
		0\leq\widetilde S_t-S_t
		\leq\E\widetilde\eta(0)-\E\eta(0)\,.
	\]
\end{lemma}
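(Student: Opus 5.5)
By Lemma~\ref{lem:transport} and the remark after Lemma~\ref{lem:activity-holes}, $S_t=\E A_t(0)$ and $\widetilde S_t=\E\widetilde A_t(0)$. Lemma~\ref{lem:activity-holes} gives $\E A_t(0)=\E H_t(0)+\E\eta(0)$, and similarly for $\widetilde\eta$. Hence
\[
	\widetilde S_t-S_t=\bigl(\E\widetilde\eta(0)-\E\eta(0)\bigr)-\bigl(\E H_t(0)-\E\widetilde H_t(0)\bigr)\,.
\]
Both inequalities therefore follow if, under a suitable coupling of the two processes, we have pathwise $A_t\leq\widetilde A_t$ and $H_t\geq\widetilde H_t$ at the origin. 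Taking expectations, the first gives $\widetilde S_t-S_t\geq0$. The second makes the subtracted term nonnegative, which gives $\widetilde S_t-S_t\leq\E\widetilde\eta(0)-\E\eta(0)$.

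The pathwise monotonicity comes from Lemma~\ref{lem:one-particle}. On the event where $\widetilde\eta-\eta$ is finitely supported, we pass from $\eta$ to $\widetilde\eta$ by adding one particle at a time. At each addition, Lemma~\ref{lem:one-particle} says that either the active counts increase by one at a single site with holes unchanged, or the hole counts decrease by one at a single site with active counts unchanged. In both cases $A_t$ does not decrease and $H_t$ does not increase at any site, and the inequalities compose along the chain. The coupling is the stack/shared-walk coupling, and it must be the same at each step of the chain. For this we use the Diaconis--Fulton stacks together with the uniform variables attached to particles, so that common particles share walks throughout.

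The main obstacle is that $\widetilde\eta-\eta$ is generally infinitely supported. The fix is locality. By time $t$, $A_t(0)$ and $H_t(0)$ depend only on $\eta$ restricted to the ball of radius $t+1$ (in fact radius $t$, plus instructions), because particles move one step per round. So we replace $\widetilde\eta$ by the configuration equal to $\widetilde\eta$ inside that ball and to $\eta$ outside it. This configuration still dominates $\eta$, and it differs from $\eta$ at finitely many sites by finitely many particles, since $\widetilde\eta(x)-\eta(x)$ is finite almost surely by the finite first moments. This yields $A_t(0)\leq\widetilde A_t(0)$ and $H_t(0)\geq\widetilde H_t(0)$ almost surely, and all expectations are finite by the bound $A_t(0)\leq\sum_{|y|\leq t}\eta(y)^+$ used in the proof of Lemma~\ref{lem:activity-holes}. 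One point to check is that translation invariance of the coupling is used only to apply Lemma~\ref{lem:activity-holes} to each marginal separately, and both marginals are translation invariant. Taking expectations then concludes.
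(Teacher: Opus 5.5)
Your proposal is correct and is essentially the paper's proof: Lemma~\ref{lem:one-particle} applied once per added particle, locality within distance $t$ to pass from finitely many added particles to the full coupling, and then Lemma~\ref{lem:transport} together with \eqref{eq:activity-holes} applied to each marginal. The one slip is calling the coupling the Diaconis--Fulton stacks: the monotonicity in Lemma~\ref{lem:one-particle} needs the per-particle construction, in which every particle carries its own walk and uniform variables, because under stacks an extra particle shifts which instruction later particles read and $A_t$ can then decrease at some site; your stated requirement that common particles share walks is exactly the per-particle construction, so the argument goes through once the coupling is named correctly.
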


\begin{proof}
	Lemma~\ref{lem:one-particle}, applied successively, gives
	$A_t\leq\widetilde A_t$ and $H_t\geq\widetilde H_t$ for finitely many added
	particles. For the stated coupling, $A_t(x)$ and $H_t(x)$ depend only on the
	initial configuration within distance $t$ of $x$. Thus
	$A_t(x)\leq\widetilde A_t(x)$ and
	$H_t(x)\geq\widetilde H_t(x)$ also hold for the full coupling.

	Taking expectations at the origin gives $S_t\leq\widetilde S_t$ by
	Lemma~\ref{lem:transport}. For the other inequality, apply
	\eqref{eq:activity-holes} to $\eta$ and to $\widetilde\eta$:
	\[
		S_t-\E\eta(0)=\E H_t(0)\geq\E\widetilde H_t(0)
		=\widetilde S_t-\E\widetilde\eta(0)\,. \qedhere
	\]
\end{proof}

For the parking model, assign an independent $\operatorname{Uniform}[0,1]$ variable to every site and place
a car where it falls below $p$; raising $p$ to $1/2$ only adds cars. Writing
$S_t^q$ for $S_t$ at car density $q$,
Lemma~\ref{lem:density-compare} gives, for every $t\geq0$,
\begin{equation}\label{eq:parking-compare}
	S_t^{1/2}-(1-2p)\leq S_t^p\leq S_t^{1/2}\,.
\end{equation}

\section{Divisible sandpile odometer and optimal stopping}\label{sec:divisible}

Replacing each arrival by its mean turns \eqref{eq:parallel} into the
deterministic recursion \eqref{eq:divisible}. Lemma~\ref{lem:stopping}
identifies this recursion with an optimal stopping problem. Because unused
instructions retain their original laws, Jensen's inequality gives $u_n\leq\E[U_n\mid\eta]$ in
Theorem~\ref{thm:comparison}, with no moment assumption. Retaining the
unfilled holes gives an exact recursion for $\E[U_n\mid\eta]$.

Replace the particles and holes by a divisible mass, so that $\eta(x)$ is the
signed mass at $x$. In each round, every site holding positive mass topples,
sending all of that mass in equal shares to its $2d$ neighbors. Let $u_n(x)$ denote the total mass emitted from $x$ during the first $n$
rounds. Then
\begin{equation}\label{eq:divisible}
    u_0=0\,,\qquad u_{n+1}=(\eta+Pu_n)^+\,.
\end{equation} 
This is the divisible sandpile of \citet{LevinePeres09}. In the normalization of \citet{BPRS}, the
odometer records mass emitted per neighbor and therefore equals $u_n/(2d)$ on
$\Z^d$. With the scenery $\eta$ fixed, run a walk from $x$, receive the reward
$\eta(X_j)$ at time $j$, and stop at a time of your choosing no later than $n$.
The largest expected total is $u_n(x)$: the
recursion \eqref{eq:divisible} is the dynamic programming equation of that
optimal stopping problem \citep[Chapter~I]{PeskirShiryaev}, as \citet{BPRS}
observed in the divisible sandpile setting. Multiplying Theorem~3.2 of \citet{BPRS} by $2d$ gives the
following.

\begin{lemma}[{\citealp{BPRS}}]\label{lem:stopping}
	Let $X$ be the simple random walk. For every configuration $\eta\in\mathbb R^{\Z^d}$, every $n\geq0$ and $x\in\Z^d$,
	\begin{equation}\label{eq:stopping}
		u_n(x)=\sup_{\sigma\leq n}\E_x\sum_{j=0}^{\sigma-1}\eta(X_j)\,,
	\end{equation}
	where the supremum is over stopping times for the
	natural filtration of $X$, bounded by $n$ and including $\sigma=0$; the
	stopping rule may depend on $\eta$.
\end{lemma}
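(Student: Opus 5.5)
The plan is to prove \eqref{eq:stopping} by induction on $n$, as the dynamic programming principle for a finite-horizon optimal stopping problem. Write $V_n(x)$ for the right side of \eqref{eq:stopping}. We show that $V_n$ satisfies the same recursion \eqref{eq:divisible} as $u_n$; the two then agree for every $n$.

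For $n=0$, the only admissible stopping time is $\sigma=0$, so $V_0=0=u_0$. For the inductive step we show that $V_{n+1}=(\eta+PV_n)^+$. Let $\sigma\leq n+1$ be a stopping time for the walk started at $x$.

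\emph{Upper bound.} The event $\{\sigma=0\}$ is determined by $X_0=x$ and is therefore deterministic. If $\sigma=0$, the total is $0$. Otherwise $\sigma\geq1$, and the total equals $\eta(x)$ plus the reward collected by the shifted walk $X_{1+\cdot}$ over the horizon $\sigma-1\leq n$. Condition on $X_1=y$. The time $\sigma-1$ is a stopping time for the shifted walk, which is a simple random walk from $y$; the dependence on $y$ can be absorbed because $X_1=y$ is fixed. So the conditional expected remaining reward is at most $V_n(y)$. Averaging over $y$ with the weights $P(x,y)$ gives a total of at most $\eta(x)+(PV_n)(x)$. Hence $V_{n+1}\leq\max\{0,\eta+PV_n\}$.

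\emph{Lower bound.} Choosing $\sigma=0$ gives $V_{n+1}\geq0$. For the other term, take one step and then, from each $y$, use a stopping time that is $\varepsilon$-optimal for $V_n(y)$. Pasting these together over the countably many values of $y$ gives a stopping time bounded by $n+1$ whose expected total is at least $\eta(x)+(PV_n)(x)-\varepsilon$.

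Integrability needs a little care when $\eta$ is unbounded. This is fine: only the finitely many sites within distance $n$ of $x$ matter, so every expectation in the argument is finite. Also, $\E_x$ over a bounded horizon is a finite sum over paths, so the pasting and conditioning steps are elementary with no measure-theoretic issues.

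No step is a real obstacle. The only point needing attention is the pasting in the lower bound: we must check that the combined rule $\sigma=1+\sigma_{X_1}\circ\theta_1$ is a stopping time for the natural filtration of $X$. Alternatively, one can cite \citet[Chapter~I]{PeskirShiryaev} and \citet[Theorem~3.2]{BPRS} directly, noting the factor $2d$ from the different normalization.
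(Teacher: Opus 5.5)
Your proof is correct and follows the same route as the paper. The paper gives no separate argument: it notes that \eqref{eq:divisible} is the dynamic programming equation of this finite-horizon stopping problem and cites \citet[Chapter~I]{PeskirShiryaev} and \citet[Theorem~3.2]{BPRS}, whereas you carry out that induction explicitly, with the upper bound by conditioning on $X_1$ and the lower bound by pasting near-optimal rules from the $2d$ neighbours of $x$ (a finite problem, so the supremum is in fact attained).
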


Averaging the instructions while holding $\eta$ fixed already bounds $u_n$ above
by $\E[U_n\mid\eta]$.

\begin{theorem}[Conditional domination]\label{thm:comparison}
	Fix an integer-valued initial configuration $\eta$. For every $n\geq0$ and
	$x\in\Z^d$,
	\begin{equation}\label{eq:comparison}
		u_n(x)\leq\E[U_n(x)\mid\eta]\,.
	\end{equation}
\end{theorem}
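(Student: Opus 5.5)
We prove \eqref{eq:comparison} by induction on $n$, writing $\bar U_n(x)\coloneqq\E[U_n(x)\mid\eta]$. The base case is trivial, since $u_0=0=U_0$. For the inductive step we take the conditional expectation of \eqref{eq:parallel} given $\eta$. The map $a\mapsto(\eta(x)+a)^+$ is convex, so Jensen's inequality gives
\[
	\bar U_{n+1}(x)\geq\Bigl(\eta(x)+\sum_y\E\bigl[I_{y,x}(U_n(y))\bigm|\eta\bigr]\Bigr)^{\!+}\,.
\]
This reduces the claim to computing the conditional mean of the arrivals.

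To compute that mean, we write
\[
	I_{y,x}(U_n(y))=\sum_{j\geq1}\one\{U_n(y)\geq j\}\one\{\rho_j(y)=x\}\,.
\]
Lemma~\ref{lem:deferred} then gives, term by term,
\[
	\E\bigl[I_{y,x}(U_n(y))\bigm|\eta\bigr]=P(y,x)\sum_{j\geq1}\P(U_n(y)\geq j\mid\eta)=P(y,x)\,\bar U_n(y)\,.
\]
Interchanging the sum over $j$ and the conditional expectation is justified by monotone convergence, since all terms are nonnegative. It follows that $\bar U_{n+1}\geq(\eta+P\bar U_n)^+$ pointwise.

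To close the induction we use monotonicity. The map $f\mapsto(\eta+Pf)^+$ is nondecreasing in $f$ because $P$ has nonnegative entries. Together with the induction hypothesis $\bar U_n\geq u_n$, this gives
\[
	\bar U_{n+1}\geq(\eta+Pu_n)^+=u_{n+1}\,.
\]

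The only delicate point is that $\bar U_n(y)$ may be infinite, since no moment assumption is made. Two observations handle this. First, $U_n(y)\leq\sum_{|z|\leq n}\eta(z)^+\cdot(\text{bounded factor})$ is finite for fixed $\eta$: each step from $y$ within $n$ rounds is made by a particle that started within distance $n$, and each particle steps at most $n$ times. Hence $\bar U_n(y)\leq n\sum_{|z|\leq n}\eta(z)^+<\infty$ for a fixed configuration that is finite on balls. Second, even without this bound, the argument runs in $[0,\infty]$, where Jensen's inequality for the convex nondecreasing function $a\mapsto(\eta(x)+a)^+$ on nonnegative variables remains valid. The infinite sum $\sum_y$ is in fact finite, running only over neighbors of $x$. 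We therefore do not expect a real obstacle. The key step is the deferred-decision identity of Lemma~\ref{lem:deferred}, which makes the conditional mean of the arrivals exactly $P\bar U_n$.
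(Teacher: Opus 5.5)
Your proof is correct and is essentially the paper's argument: Lemma~\ref{lem:deferred} summed over $j$ gives the conditional mean of the arrivals, conditional Jensen for the positive part gives $\bar U_{n+1}\geq(\eta+P\bar U_n)^+$, and monotonicity of $f\mapsto(\eta+Pf)^+$ closes the induction. One step is left implicit: the identity $\sum_yP(y,x)\bar U_n(y)=(P\bar U_n)(x)$ uses the symmetry of $P$, which the paper states explicitly and which Remark~\ref{rem:transpose} replaces by $P^*$ for nonsymmetric kernels.
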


\begin{proof}
	Let $v_n\coloneqq\E[U_n\mid\eta]$. Summing \eqref{eq:deferred} over $j$ gives
	$\E[I_{y,x}(U_n(y))\mid\eta]=P(y,x)v_n(y)$, so
	$\E[\sum_yI_{y,x}(U_n(y))\mid\eta]=\sum_yP(y,x)v_n(y)=(Pv_n)(x)$ by symmetry
	of $P$. The positive part is a convex function, so Jensen's inequality applied to
	\eqref{eq:parallel} gives $v_{n+1}\geq(\eta+Pv_n)^+$. The map $v\mapsto(\eta+Pv)^+$ is nondecreasing and $u_0=v_0=0$, so induction
	proves
	\eqref{eq:comparison}.
\end{proof}

Retaining the unfilled holes gives an identity in place of \eqref{eq:comparison}.
Using that $a^+=a+(-a)^+$ in \eqref{eq:parallel} and averaging over the instructions
alone gives
\[
	\E[U_{n+1}\mid\eta]=\eta+P\,\E[U_n\mid\eta]+\E[H_n\mid\eta]\,.
\]
The divisible recursion \eqref{eq:divisible} takes a positive part, whereas the
averaged particle recursion adds $\E[H_n\mid\eta]$.

\begin{remark}[Nonsymmetric walks]\label{rem:transpose}
	For a translation invariant kernel $P$, the incoming mean at $x$ is
	$(P^*v_n)(x)$, where $P^*(x,y)\coloneqq P(y,x)$. The proof of Theorem~\ref{thm:comparison} compares the particle odometer with
	\eqref{eq:divisible} driven by $P^*$; the stopping representation of that
	divisible sandpile uses the walk with kernel $P^*$.
\end{remark}

We next quote the estimates of \citet{BP}, multiplied by $2d$ to match our
normalization. The theorem below allows $\eta$ to be real-valued and not
only integer-valued. In Section~\ref{sec:four}, we use this generality to bound
$\E u_n(0)$ for an integer-valued $\eta$ taking nonzero values with small
probability, by comparison with independent Laplace variables.

\begin{theorem}[{\citealp[Theorem~1.3, Corollary~6.2, Theorem~6.6,
and (94)]{BP}}]
\label{thm:BP}
	Let $\eta=(\eta(x))_{x\in\Z^d}$ be i.i.d.\ real-valued with mean zero, positive finite
	variance, and $\E e^{\theta|\eta(0)|}<\infty$ for some $\theta>0$, and
	let $u_n$ obey $u_0=0$ and $u_{n+1}=(\eta+Pu_n)^+$. Then, for every $n\geq2$,
	$\E u_n(0)\asymp n^{(4-d)/4}$ for $d\leq3$ and $\E u_n(0)\asymp\log n$ for
	$d=4$. For $d\geq5$,
	\[
		c(\log n)^{2/d}\leq\E u_n(0)\leq C\log(n+1)\,.
	\]
	If moreover $\eta(0)$ is bounded below, then $\E u_n(0)\asymp(\log n)^{2/d}$
	for $d\geq5$. When $d\leq3$ the limit
	$\lim_{n\to\infty}n^{-(4-d)/4}\E u_n(0)$ exists and lies in $(0,\infty)$.
\end{theorem}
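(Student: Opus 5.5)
Theorem~\ref{thm:BP} is a restatement of results of \citet{BP}, so the plan is to translate them into our normalization rather than re-derive them. The one structural fact needed is that the recursion $u_{n+1}=(\eta+Pu_n)^+$ is positively homogeneous: multiplying $\eta$ by $\lambda>0$ multiplies every $u_n$ by $\lambda$. Likewise, dividing the output by $2d$ commutes with the recursion once the convention is fixed.

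The first step is to pin down the dictionary with \citet{BP} (who follow the normalization of \citet{BPRS}). As recorded before Lemma~\ref{lem:stopping}, their odometer counts mass emitted per neighbor and so equals $u_n/(2d)$. I will make this precise through Lemma~\ref{lem:stopping}, since both papers characterize the odometer by the same optimal-stopping value $\sup_{\sigma\leq n}\E_x\sum_{j<\sigma}\eta(X_j)$, up to the factor $2d$. The points to check are:
\begin{itemize}
\item the time indexing is the same, with reward collected at times $0,\dots,\sigma-1$ and $\sigma=0$ allowed;
\item the toppling is parallel, so that $n$ rounds correspond to horizon $n$;
\item the walk is the simple random walk with kernel $P$.
\end{itemize}
Once the two sides are identified as the same stopping functional, every bound of the form $c f(n)\leq\E u_n(0)\leq Cf(n)$ transfers with constants changed by the factor $2d$.

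The second step is to check hypotheses case by case against the cited results. \citet[Theorem~1.3]{BP} assumes i.i.d.\ real-valued scenery with mean zero, positive finite variance and an exponential moment. It gives $n^{(4-d)/4}$ for $d\leq3$, $\log n$ for $d=4$, and the lower bound $(\log n)^{2/d}$ for $d\geq5$. Corollary~6.2 and (94) there supply the general upper bound $C\log(n+1)$ for $d\geq5$. Theorem~6.6 there gives the matching $(\log n)^{2/d}$ upper bound when $\eta(0)$ is bounded below. The existence and positivity of $\lim n^{-(4-d)/4}\E u_n(0)$ for $d\leq3$ comes from their scaling limit. That limit is stated for the rescaled odometer, so I will pass to expectations using uniform integrability. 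Their moment bounds, which follow from the exponential moment, should give this. The resulting limit is multiplied by $2d$, which preserves membership in $(0,\infty)$.

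I expect the main obstacle to be the bookkeeping in the first step rather than any analysis. Off-by-one differences in time, such as $u_n$ versus $u_{n+1}$ or emitted versus received mass, are harmless for orders of growth. They do matter for the existence of the normalized limit, so I will settle them through the stopping representation, where $|u_{n+1}-u_n|$ is controlled by a single step of the reward. A second subtlety is that we allow real-valued rather than integer-valued $\eta$. Nothing in the stopping representation or in \citet{BP} uses integrality, so this generality comes for free, but I will state explicitly that it is used, as the later comparison with Laplace variables in Section~\ref{sec:four} requires.
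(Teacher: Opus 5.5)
Your proposal is correct and follows the paper, which simply quotes the cited results of \citet{BP} and multiplies them by $2d$ to pass from the per-neighbor normalization of \citet{BPRS} to $u_n$; you justify that factor through the shared stopping representation and positive homogeneity of the recursion in $\eta$. One small addition: any of those bounds that \citet{BP} state only for large $n$ extend to every $n\geq2$, since $u_n$ is nondecreasing in $n$ and $0<\E\eta(0)^+\leq\E u_n(0)\leq n\,\E|\eta(0)|$.
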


When $\eta(0)$ is not bounded below, the exact rate in dimensions at least five
depends on its lower tail. In the particle--hole model each site initially has
one hole, so $\eta\geq-1$ and the bounded-below statement applies.

\section{Pathwise comparison and moment bounds}\label{sec:routing}

Each instruction moves one particle to one neighbor, whereas
\eqref{eq:divisible} divides the corresponding mass among all neighbors. We
let $w$ record the accumulated difference between the actual arrivals and
their means. Lemma~\ref{lem:pathwise-comparison}
bounds the odometer difference by the largest value of $|w|$ seen by an
independent walk. Revealing each instruction when it is first used expresses
$w_n(0)$ as a sum of bounded martingale differences. The sum of their
variances is controlled by the odometer and by a truncated Green-function
estimate. The Green-function factor is at most $C\sqrt n$ in dimension one,
$C\log(n+2)$ in dimension two, and a constant from dimension three on.

More precisely, let $w_0\coloneqq0$ and
\begin{equation}\label{eq:error-recursion}
	w_{k+1}(x)=\frac1{2d}\sum_{y\sim x}w_k(y)
	+\sum_{y\sim x}\Bigl(I_{y,x}(U_k(y))-\frac1{2d}U_k(y)\Bigr)\,.
\end{equation}
The second sum counts the steps from the neighbors of $x$ which landed at $x$ in
the first $k$ rounds, minus the number their means would have sent. The first
term is one more application of $P$ to $w_k$.
Let $X$ be a simple random walk independent of $\eta$ and the instructions, and write
\[
	w_n^\star(x)\coloneqq\E_x\max_{0\leq j\leq n}|w_{n-j}(X_j)|\,,
\]
where $\E_x$ averages over the walk alone, so that $w_n^\star(x)$ is random.

\begin{lemma}\label{lem:pathwise-comparison}
	For every $n\geq0$ and every $x\in\Z^d$,
	\begin{equation}\label{eq:pathwise-error}
		|U_n(x)-w_n(x)-u_n(x)|\leq w_n^\star(x)\,,
	\end{equation}
	and therefore
	\begin{equation}\label{eq:pathwise-comparison}
		|U_n(x)-u_n(x)|\leq2\,w_n^\star(x)\,.
	\end{equation}
\end{lemma}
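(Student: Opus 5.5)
We prove \eqref{eq:pathwise-error} by induction on $n$, using the two max-recursions described in the outline. The approach is to derive a max-recursion for $V_n\coloneqq U_n-w_n$, compare it entrywise with the divisible recursion, and then unroll the comparison along an independent walk. The second bound \eqref{eq:pathwise-comparison} then follows from the first, since $|w_n(x)|$ is the $j=0$ term in the maximum defining $w_n^\star(x)$, so $|w_n(x)|\le w_n^\star(x)$.

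\emph{Step 1: a recursion for $V_n$.} From \eqref{eq:error-recursion} and the symmetry of $P$,
\[
\sum_yI_{y,x}(U_k(y))=(PU_k)(x)+w_{k+1}(x)-(Pw_k)(x).
\]
Substituting this into \eqref{eq:parallel} gives
\[
U_{k+1}=\bigl(\eta+PV_k+w_{k+1}\bigr)^+,
\]
and subtracting $w_{k+1}$ yields
\[
V_{k+1}=\max\{\eta+PV_k,\,-w_{k+1}\}.
\]
For comparison, the divisible recursion is $u_{k+1}=\max\{\eta+Pu_k,0\}$.

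\emph{Step 2: the one-step comparison.} Since $|\max\{a,b\}-\max\{a',b'\}|\le\max\{|a-a'|,|b-b'|\}$, setting $D_k\coloneqq|V_k-u_k|$ gives
\[
D_{k+1}(x)\le\max\{(PD_k)(x),\,|w_{k+1}(x)|\},\qquad D_0=0.
\]

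\emph{Step 3: unrolling along a walk.} We claim that for all $k\le n$,
\[
D_k(x)\le \E_x\max_{0\le j<k}|w_{k-j}(X_j)|.
\]
The case $k=0$ is trivial. For the inductive step, write the induction hypothesis at each neighbour $y$ as an expectation over the walk started at $y$, and average over $y$ using the Markov property. This gives
\[
(PD_k)(x)\le \E_x\max_{1\le j\le k}|w_{k+1-j}(X_j)|.
\]
Combining this with $|w_{k+1}(x)|$, which is the $j=0$ term, via $\max\{\E Z,a\}\le\E\max\{Z,a\}$, yields the claim at $k+1$. Since $w_0=0$, the claimed bound is dominated by $w_n^\star(x)$, which is \eqref{eq:pathwise-error}.

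The only delicate point is Step 1: the recursion for $V_n$ relies on symmetry of $P$, so that the mean arrivals equal $PU_k$; in the nonsymmetric case $P$ would be replaced by $P^*$ as in Remark~\ref{rem:transpose}. The remaining steps are elementary.
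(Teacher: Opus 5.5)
Your proposal is correct and follows the paper's proof essentially step for step: the same max-recursion $V_{k+1}=(\eta+PV_k)\vee(-w_{k+1})$ for $V_k=U_k-w_k$, the same Lipschitz property of the maximum giving $D_{k+1}\le\max\{PD_k,|w_{k+1}|\}$, and the same induction from $D_0=0$. The only cosmetic difference is that you unroll the induction explicitly along the walk, whereas the paper packages it as the two inequalities $w_{k+1}^\star\ge|w_{k+1}|$ and $w_{k+1}^\star\ge Pw_k^\star$, which is exactly what your Markov-property computation verifies.
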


\begin{proof}
	Let $V_k\coloneqq U_k-w_k$ and $D_k\coloneqq|V_k-u_k|$. Lemma~\ref{lem:parallel} and
	\eqref{eq:error-recursion} give
	\[
		V_{k+1}=(\eta+PV_k)\vee(-w_{k+1}),
		\qquad
		u_{k+1}=(\eta+Pu_k)\vee0\,.
	\]
	Since
	\[
		|(a_1\vee a_2)-(b_1\vee b_2)|
		\leq\max\{|a_1-b_1|,|a_2-b_2|\}\,,
	\]
	we have, pointwise,
	\begin{equation}\label{eq:bellman-difference}
		D_{k+1}\leq\max\{PD_k,|w_{k+1}|\}\,.
	\end{equation}
	The definition of $w_k^\star$ and the Markov property give
	\[
		w_{k+1}^\star\geq|w_{k+1}|,
		\qquad
		w_{k+1}^\star\geq Pw_k^\star\,.
	\]
	Since $D_0=w_0^\star=0$, induction in
	\eqref{eq:bellman-difference} proves $D_n\leq w_n^\star$, which is
	\eqref{eq:pathwise-error}. Finally, we have
	$|w_n(x)|\leq w_n^\star(x)$ by the term $j=0$ in the maximum, and
	\eqref{eq:pathwise-comparison} follows.
\end{proof}

Let $g_m(x)\coloneqq\sum_{j<m}P^j(0,x)$ be the Green function of the walk truncated
at time $m$, so that $g_m(x)=0$ when $|x|\geq m$. An instruction at $y$ first
used in round $s$ and pointing at $z$ contributes
$g_{n-s}(z)-(Pg_{n-s})(y)$ to $w_n(0)$, as follows by iterating
\eqref{eq:error-recursion}. Let
\[
\Gamma_m(y)\coloneqq\sum_zP(y,z)\bigl(g_m(z)-(Pg_m)(y)\bigr)^2\,,
\]
the variance of $g_m$ at a uniform neighbor of $y$. Let
$\kappa_d(n)\coloneqq n^{1/2}$ when $d=1$,
$\kappa_d(n)\coloneqq\log(n+2)$ when $d=2$, and
$\kappa_d(n)\coloneqq1$ when $d\geq3$.
For every $m\geq1$, every $y$, and every neighbor $z$ of $y$,
\begin{equation}\label{eq:green-gradient}
	|g_m(y)-g_m(z)|\leq C(1+|y|)^{1-d}\,,
	\qquad\text{hence}\qquad
	\Gamma_m(y)\leq C(1+|y|)^{2-2d}\,.
\end{equation}
For $d\geq2$, pair the two parity classes in the sum defining $g_m$. The
first-difference local central limit estimate and the Gaussian bound
\citep[Section~2.3]{LawlerLimic} give, uniformly in $m$,
\[
	|g_m(y)-g_m(z)|
	\leq C\sum_{j\geq0}(j+1)^{-(d+1)/2}
	\exp\{-c|y|^2/(j+1)\}+C(1+|y|)^{-d}\,,
\]
where the last term bounds the single terminal summand left by the parity
pairing. Splitting the displayed sum at $j=|y|^2$ gives
\eqref{eq:green-gradient}. In dimension one the gradient is
bounded, as the proof of Lemma~\ref{lem:gamma-sum} computes exactly. We also use the increment bound
\begin{equation}\label{eq:increment}
	\sup_{m\geq1}\sup_y\sup_{z\sim y}|g_m(z)-(Pg_m)(y)|\leq C\,,
\end{equation}
which follows by writing
$g_m(z)-(Pg_m)(y)=(g_m(z)-g_m(y))+((I-P)g_m)(y)$: the first bracket is bounded
by \eqref{eq:green-gradient} and the second is
$\one\{y=0\}-P^m(0,y)$, of absolute value at most one. In dimensions one and
two the bound $(1+|y|)^{2-2d}$ is not summable, and the truncation at time $m$
has to be used.

\begin{lemma}\label{lem:gamma-sum}
	For every $n\geq1$,
	\begin{equation}\label{eq:gamma-sum}
		\sum_y\sup_{m\leq n}\Gamma_m(y)\leq C\kappa_d(n)\,.
	\end{equation}
\end{lemma}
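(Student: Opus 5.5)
We split the sum over $y$ at $|y|=n$: the near part is handled by the gradient bound \eqref{eq:green-gradient}, and the far part by the fact that $g_m$ vanishes far away. Since $g_m(z)=0$ whenever $|z|\geq m$, and $m\leq n$, every term $\Gamma_m(y)$ with $|y|\geq n+1$ vanishes. Indeed, then all neighbours $z$ of $y$ satisfy $|z|\geq n\geq m$, so both $g_m(z)$ and $(Pg_m)(y)$ are zero. Hence only $|y|\leq n$ contributes.

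For $d\geq2$ we use \eqref{eq:green-gradient}. Since $\Gamma_m(y)$ is the variance of $g_m$ at a uniform neighbour of $y$, it is at most $\max_{z,z'\sim y}|g_m(z)-g_m(z')|^2$. Passing through $y$ and applying \eqref{eq:green-gradient} to each of the two gradients gives a bound $C(1+|y|)^{2-2d}$ that is uniform in $m$. Therefore
\[
	\sum_y\sup_{m\leq n}\Gamma_m(y)\leq C\sum_{|y|\leq n}(1+|y|)^{2-2d}\leq C\sum_{r=0}^{n}(1+r)^{d-1}(1+r)^{2-2d}=C\sum_{r=0}^n(1+r)^{1-d}.
\]
This is $C\log(n+2)$ when $d=2$ and bounded when $d\geq3$, which is $C\kappa_d(n)$ in both cases.

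For $d=1$, the paper says the gradient is bounded; we verify this directly. Write $g_m(x)=\sum_{j<m}P^j(0,x)$. Parity pairing, or the identity $(I-P)g_m=\delta_0-P^m(0,\cdot)$, gives the one-dimensional discrete equation $\tfrac12(g_m(x+1)-g_m(x))-\tfrac12(g_m(x)-g_m(x-1))=P^m(0,x)-\delta_0(x)$. Summing from $x$ out to $\infty$, where $g_m$ vanishes, expresses the gradient as $\pm2\P_0(|X_m|\geq\cdot)$ plus a correction of at most $2$. Thus $|g_m(x+1)-g_m(x)|\leq 2$, so $\Gamma_m(y)\leq C$ uniformly. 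The sum over $|y|\leq n$ then gives at most $C(2n+1)$. This is too weak, because $\kappa_1(n)=\sqrt n$ is required.

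To get $\sqrt n$ we use the exact gradient more carefully. For $y\neq0$, the gradient at $y$ is comparable to $\P_0(|X_m|\geq|y|)$ up to the parity term $P^m(0,y)$. Both are at most $Ce^{-cy^2/m}+Cm^{-1/2}e^{-cy^2/m}$, and we must also keep the lattice-parity effect under control. So $\sup_{m\leq n}\Gamma_m(y)\leq C e^{-cy^2/n}+C\one\{y=0\}$, and summing over $y$ gives $C\sqrt n$.

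The main obstacle is this one-dimensional computation. One must check that the periodicity of the walk does not produce a non-decaying oscillating gradient in $g_m$, which would leave $\Gamma_m(y)$ of order one out to distance $n$. The first thing to try is to write $g_m(z)-(Pg_m)(y)$ as $(g_m(z)-g_m(y))+\one\{y=0\}-P^m(0,y)$. Then the Gaussian tail for $\P_0(|X_m|\geq|y|)$ gives decay in $|y|/\sqrt m\geq|y|/\sqrt n$ uniformly over $m\leq n$.
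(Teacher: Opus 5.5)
Your proof is correct and follows the paper's route. For $d\geq2$ you sum the gradient bound \eqref{eq:green-gradient} over shells, using the support cutoff $|y|\leq n$ to get the logarithm when $d=2$. For $d=1$ you sum the identity $(I-P)g_m=\delta_0-P^m(0,\cdot)$ in from infinity to obtain the exact gradient as a tail probability, then apply a Gaussian tail bound uniform in $m\leq n$.

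The parity worry you flag is already settled by that summation. It gives exactly $g_m(y)-g_m(y+1)=2\,\P_0(X_m>y)$ for $y\geq0$, and the symmetry $g_m(-y)=g_m(y)$ covers $y<0$. So there is no oscillating term, and the ``correction of at most $2$'' only combines with the tail into $2\,\P_0(X_m\leq y)$ on the negative side.
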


\begin{proof}
	\smallskip
	\noindent\emph{Step 1.} We prove \eqref{eq:gamma-sum} when $d\geq3$.
	For $d\geq3$ the pointwise bound \eqref{eq:green-gradient} bounds
	$\Gamma_m(y)$ by $C(1+|y|)^{2-2d}$ uniformly in $m$, and
	$\sum_y(1+|y|)^{2-2d}<\infty$ because $2-2d<-d$.

	\smallskip
	\noindent\emph{Step 2.} We prove \eqref{eq:gamma-sum} when $d=2$.
	For $d=2$, \eqref{eq:green-gradient}
	reads $\Gamma_m(y)\leq C(1+|y|)^{-2}$. Since $g_m$ vanishes outside
	$\{|z|<m\}$, the quantity $\Gamma_m(y)$ vanishes once every neighbor of $y$
	lies outside that ball, hence once $|y|\geq m+1$, so summing over the shells
	$|y|\asymp2^j$ with $2^j\leq n+1$ leaves $C\log(n+2)$.

	\smallskip
	\noindent\emph{Step 3.} We prove \eqref{eq:gamma-sum} when $d=1$.
	For $d=1$ the pointwise bound is only $\Gamma_m(y)\leq C$, which would give
	$O(n)$, so we compute the gradient exactly. Let
	$D_m(y)\coloneqq g_m(y)-g_m(y+1)$.
	Then $(I-P)g_m(y)=\tfrac12(D_m(y)-D_m(y-1))$, so
	$(I-P)g_m(y)=\one\{y=0\}-P^m(0,y)$ becomes
	$D_m(y)-D_m(y-1)=2(\one\{y=0\}-P^m(0,y))$, while $D_m(y)=0$ for $y\geq m$.
	Summing the identity for $D_m(y)-D_m(y-1)$ over the integers larger than $y$
	leaves
	$D_m(y)=2\,\P_0(X_m>y)$ for every $y\geq0$. A uniform neighbor of $y$ takes
	the two values $y\pm1$, so for $y\geq1$,
	\[
		\Gamma_m(y)=\tfrac14\bigl(D_m(y-1)+D_m(y)\bigr)^2
		\leq4\,\P_0(X_m\geq y)^2\,,
	\]
	while $\Gamma_m(0)=0$ because $g_m$ is symmetric. Since
	$\E_0e^{\theta X_m}=(\cosh\theta)^m\leq e^{m\theta^2/2}$, taking
	$\theta=y/m$ bounds $\P_0(X_m\geq y)$ by $e^{-y^2/(2m)}$, which is at most
	$e^{-y^2/(2n)}$ for $m\leq n$. Hence $\sup_{m\leq n}\Gamma_m(y)\leq
	4e^{-y^2/n}$ for every $y$, by symmetry also for $y<0$, and summing over $y$
	leaves $C\sqrt n$.
\end{proof}

Reveal each instruction in the round when it is first used. Conditional on
the previous rounds, unused instructions remain independent, and the error is a
sum of martingale differences.

\begin{lemma}\label{lem:exposure}
	Let $\mathcal G_0\coloneqq\sigma(\eta)$ and, for $k\geq1$, let $\mathcal G_k$ be
	generated by $\mathcal G_{k-1}$ together with the indicators
	$\one\{U_k(y)\geq j,\ \rho_j(y)=x\}$ over all $x$ and $y$ in $\Z^d$ and all
	$j\geq1$. Then
	$U_{k+1}$ is measurable with respect to $\mathcal G_k$, and conditionally on
	$\mathcal G_k$ the instructions $\rho_j(y)$ with $j>U_k(y)$ are independent
	with law $P(y,\cdot)$.
\end{lemma}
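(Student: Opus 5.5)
We argue by induction on $k$, proving both claims together with a third: $\mathcal G_k$ is generated by $\eta$ together with a family of events that can be written in terms of the instructions $\rho_j(y)$ with $j\leq U_k(y)$ only. More precisely, we prove the following strengthened statement. For every finite collection of sites and indices, and every measurable set $G\in\mathcal G_k$, the event $G$ intersected with $\{U_k(y_i)<j_i\ \text{for all }i\}$ has the form $G'\cap\{U_k(y_i)<j_i\}$. Here $G'$ lies in the $\sigma$-field generated by $\eta$ and the instructions other than $\rho_{j_1}(y_1),\ldots,\rho_{j_r}(y_r)$.

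\emph{Measurability of $U_{k+1}$.} By Lemma~\ref{lem:parallel}, $U_{k+1}(x)=(\eta(x)+\sum_yI_{y,x}(U_k(y)))^+$. Since
\[
	I_{y,x}(U_k(y))=\sum_{j\geq1}\one\{U_k(y)\geq j,\ \rho_j(y)=x\}\,,
\]
it is a sum of generators of $\mathcal G_k$, and the sum over $y$ is finite because only neighbors contribute. Hence $U_{k+1}$ is $\mathcal G_k$-measurable. Note also that $U_k$ itself is $\mathcal G_{k-1}\subseteq\mathcal G_k$-measurable, by the same argument one step earlier (and $U_0=0$ for the base case).

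\emph{Conditional law of unread instructions.} We follow the idea of Lemma~\ref{lem:deferred}. Fix finitely many pairs $(y_i,j_i)$ and values $x_i$, and show that
\[
	\P\bigl(\rho_{j_i}(y_i)=x_i\ \forall i,\ G,\ U_k(y_i)<j_i\ \forall i\bigr)
	=\prod_iP(y_i,x_i)\,\P\bigl(G,\ U_k(y_i)<j_i\ \forall i\bigr)
\]
for every generator-intersection $G\in\mathcal G_k$. The key point is that on $\{U_k(y_i)<j_i\ \forall i\}$, the whole history through round $k$ does not read any $\rho_{j_i}(y_i)$. Indeed, $U_m\leq U_k$ for $m\leq k$, so those instructions are not read in earlier rounds either. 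Fix every variable other than the $\rho_{j_i}(y_i)$, and vary those instructions. The evolution through round $k$ is then unchanged, exactly as in the proof of Lemma~\ref{lem:deferred}.

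Consider the generators $\one\{U_m(y)\geq j,\ \rho_j(y)=x\}$ with $m\leq k$. On this event, such a generator either involves $j\leq U_m(y)$, and so an instruction other than the $\rho_{j_i}(y_i)$, or vanishes. Therefore the intersected event depends only on $\eta$ and on instructions independent of the $\rho_{j_i}(y_i)$, and independence of the stacks gives the factorization. A $\pi$--$\lambda$ argument then extends the factorization from intersections of generators to all of $\mathcal G_k$. Finally, conditioning on $\mathcal G_k$, on which $U_k$ is measurable, yields the claimed conditional i.i.d.\ law $P(y,\cdot)$ for the instructions with $j>U_k(y)$.

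The main obstacle is the rigorous form of this ``does not depend on unread instructions'' step. The indices $j>U_k(y)$ are random, so the conditional statement has to be phrased on the events $\{U_k(y)<j\}$ and glued over them. We also need the invariance under resampling to cover all generators of $\mathcal G_k$ simultaneously, not just the single event $\{U_n(y)\geq j\}$ treated in Lemma~\ref{lem:deferred}. We handle this with the pathwise resampling argument, applied to finitely many instructions at once, combined with the monotonicity $U_m\leq U_k$.
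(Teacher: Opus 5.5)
Your argument is correct, but it is organized differently from the paper's.

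\textbf{The paper's route.} The paper inducts on the conditional statement itself. It assumes that, given $\mathcal G_{k-1}$, the instructions with $j>U_{k-1}(y)$ are fresh. It notes that the block $\{(y,j):U_{k-1}(y)<j\leq U_k(y)\}$ is a $\mathcal G_{k-1}$-measurable index set of fresh instructions, and that passing to $\mathcal G_k$ reveals exactly that block. So what remains unread stays fresh. The formal bookkeeping (finite cylinders, monotone class) is left to a closing sentence.

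\textbf{Your route.} You prove the conditional law at each fixed $k$ in one stroke, with no conditional-law induction hypothesis. A simultaneous version of the resampling argument of Lemma~\ref{lem:deferred} shows the following: on $E=\{U_k(y_i)<j_i\ \forall i\}$, both $E$ and every generator of $\mathcal G_k$ are functions of $\eta$ and the other instructions. Plain independence of the stacks then gives $\P(\{\rho_{j_i}(y_i)=x_i\ \forall i\}\cap G\cap E)=\prod_iP(y_i,x_i)\,\P(G\cap E)$ on generator-intersections $G$. A $\pi$--$\lambda$ argument extends this to all $G\in\mathcal G_k$; alternatively, note that $\{G: G\cap E\in\sigma(\eta,\text{other instructions})\}$ is a $\sigma$-field. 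Since $E\in\mathcal G_k$, this is exactly the conditional statement.

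\textbf{Comparison.} Your route handles the random index set explicitly by localizing to $E$. That is precisely the point the paper's phrase ``passing to $\mathcal G_k$ reveals exactly those instructions'' leaves to the reader. It also reuses Lemma~\ref{lem:deferred} instead of a separate induction. The paper's route is shorter and mirrors the round-by-round revelation later used in Lemma~\ref{lem:w-martingale}.

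\textbf{Two small remarks.} First, your announced induction on $k$ is nominal. The only inductive facts you use are $U_m\leq U_k$ for $m\leq k$ and the measurability of $U_k$, and both are immediate from \eqref{eq:parallel}. Second, the invariance step is cleanest if run directly on \eqref{eq:parallel}. Show by induction on $m\leq k$ that $U_m$ is unchanged when the $\rho_{j_i}(y_i)$ are altered on $E$, since $I_{y,x}(U_m(y))$ never reads them there. The tie-breaking uniform variables then play no role.
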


\begin{proof}
	We proceed by induction on $k$. For $k=0$ the recursion reads $U_1=\eta^+$, and the
	instructions are independent of $\eta$. Suppose that $U_k$ is
	$\mathcal G_{k-1}$-measurable and that, conditionally on
	$\mathcal G_{k-1}$, the instructions $\rho_j(y)$ with
	$j>U_{k-1}(y)$ are independent with law $P(y,\cdot)$. Then the index set
	$\{(y,j):U_{k-1}(y)<j\leq U_k(y)\}$ is $\mathcal G_{k-1}$-measurable as well; conditionally on
	$\mathcal G_{k-1}$ the instructions in that set are independent with the
	law $P(y,\cdot)$. Passing to $\mathcal G_k$ reveals exactly those
	instructions, so, conditionally on $\mathcal G_k$, the instructions
	$\rho_j(y)$ with $j>U_k(y)$ remain independent with law $P(y,\cdot)$.
	Finally $I_{y,x}(U_k(y))$ is $\mathcal G_k$-measurable, so
	\eqref{eq:parallel} makes $U_{k+1}$ $\mathcal G_k$-measurable.
	Formally, the conditional-independence assertion is checked first for each finite cylinder
	of unused instructions and then extended to the full unused family by the
	monotone-class theorem.
\end{proof}

\begin{lemma}\label{lem:w-martingale}
	Fix $n\geq2$. There are a filtration $(\mathcal F_i)_{i\geq0}$ and random
	variables $(\xi_i)_{i\geq1}$, only finitely many of which are nonzero almost
	surely, such that $w_n(0)=\sum_i\xi_i$ and, for every $i\geq1$,
	\[
		\E[\xi_i\mid\mathcal F_{i-1}]=0\,,\qquad
		|\xi_i|\leq\max_{m<n}\max_y\max_{z\sim y}|g_m(z)-(Pg_m)(y)|\,.
	\]
	Moreover
	\begin{equation}\label{eq:qv}
		\sum_i\E[\xi_i^2\mid\mathcal F_{i-1}]
		=\sum_{s=1}^{n-1}\sum_yA_{s-1}(y)\Gamma_{n-s}(y)\,.
	\end{equation}
\end{lemma}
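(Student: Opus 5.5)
First I iterate the recursion \eqref{eq:error-recursion} to write $w_n(0)$ explicitly as a sum over instructions. By induction on $k$,
\[
	w_k(x)=\sum_{s=1}^{k-1}\sum_y\sum_{j=U_{s-1}(y)+1}^{U_s(y)}\Bigl(g_{k-s}(\rho_j(y)\to x)-(Pg_{k-s})(y\to x)\Bigr)\,,
\]
where, by translation invariance and symmetry of $P$, the kernel $g_m$ centred at $x$ evaluated at the arrival site $z=\rho_j(y)$ is $g_m(z-x)$. The inductive step checks that an instruction first read in round $s$ (so $U_{s-1}(y)<j\leq U_s(y)$) enters $w_{s+1}$ through the source term with weight $\one\{z=x\}-P(y,x)$. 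Each further round then applies $P$, and summing $P^i$ over $i<k-s$ produces $g_{k-s}(z)-(Pg_{k-s})(y)$, as asserted before Lemma~\ref{lem:gamma-sum}. Instructions first read in round $n$ or later do not contribute, so only rounds $s\leq n-1$ appear. Since $U_{n-1}(y)=0$ for $|y|\geq n$ and $U_{n-1}$ is finite a.s., only finitely many terms are nonzero.

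Next I order the terms. Inside round $s$, the index set $\{(y,j):U_{s-1}(y)<j\leq U_s(y)\}$ is $\mathcal G_{s-1}$-measurable by Lemma~\ref{lem:exposure}, and it has a.s.\ finitely many elements that matter, those with $|y|<n$. I enumerate $\Z^d\times\mathbb N$ in a fixed order and list the pairs of round $1$, then round $2$, and so on. I let $\mathcal F_i$ be generated by $\mathcal G_{s-1}$ together with the instructions already listed in round $s$. To handle the random number of terms, I index by a deterministic enumeration of triples $(s,y,j)$. I then set $\xi_i$ to be the contribution when the triple is active, that is, when $U_{s-1}(y)<j\leq U_s(y)$, an event in $\mathcal G_{s-1}\subset\mathcal F_{i-1}$, and $\xi_i=0$ otherwise. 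Lemma~\ref{lem:exposure} says that, conditionally on $\mathcal G_{s-1}$, the unread instructions are i.i.d.\ with law $P(y,\cdot)$. Hence, given $\mathcal F_{i-1}$, an active $\rho_j(y)$ is still distributed as $P(y,\cdot)$, being independent of the other instructions revealed so far in round $s$. This gives $\E[\xi_i\mid\mathcal F_{i-1}]=0$, since the $P(y,\cdot)$-mean of $g_m(z)$ is $(Pg_m)(y)$. The conditional variance is $\Gamma_{n-s}(y)$ (suitably translated), and the bound $|\xi_i|\leq\max_{m<n}\max_{y}\max_{z\sim y}|g_m(z)-(Pg_m)(y)|$ holds because $m=n-s<n$.

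Summing the conditional variances, each $y$ contributes $\Gamma_{n-s}(y)$ once per active $j$ in round $s$, that is, $U_s(y)-U_{s-1}(y)=A_{s-1}(y)$ times. This yields \eqref{eq:qv}, after identifying $\Gamma_m$ at $y$ relative to origin $0$ by the symmetry $g_m(-x)=g_m(x)$.

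The main obstacle is measure-theoretic bookkeeping rather than computation. I need the enumeration to be a genuine filtration, with the activity of triple $i$ determined at time $i-1$, which holds because the activity event lies in $\mathcal G_{s-1}$. I also need the interleaving of $\mathcal G_s$ with the partial revelations to be consistent. The point to check is that $\mathcal G_s$ is generated exactly by $\mathcal G_{s-1}$ and the round-$s$ revealed instructions, which is how Lemma~\ref{lem:exposure} was set up.
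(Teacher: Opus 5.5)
Your expansion of $w_n(0)$ as a sum over instructions matches the paper. So does your computation of each increment's conditional mean, bound and conditional variance $\Gamma_{n-s}(y)$. The gap is in the indexing device, the step you yourself flag as the main obstacle: it fails once $n\geq3$. In general $U_s(y)$ has no deterministic bound (already $U_1=\eta^+$), so $j$ cannot be truncated at a deterministic level. Each round therefore has infinitely many candidate triples $(s,y,j)$, even after restricting to $|y|\leq n-s$.

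Listing all round-$1$ triples before any round-$2$ triple gives an ordering of type $\omega\cdot(n-1)$, not a sequence $(\xi_i)_{i\geq1}$. In any deterministic enumeration by $\mathbb N$, a round-$s$ triple with $s\geq2$ is preceded by only finitely many round-$(s-1)$ triples. So $\mathcal F_{i-1}\supseteq\mathcal G_{s-1}$ cannot hold there. Without it, the activity event $\{U_{s-1}(y)<j\leq U_s(y)\}$ is not $\mathcal F_{i-1}$-measurable. The reason is that $U_s(y)$ depends on $\rho_{j'}(y')$ for neighbors $y'$ and all $j'\leq U_{s-1}(y')$, an unbounded range. Predictability is then lost, and with it $\E[\xi_i\mid\mathcal F_{i-1}]=0$.

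The $\mathbb N$-indexing is not cosmetic: Proposition~\ref{prop:w-moment} applies Lemma~\ref{lem:bernstein} to $\xi_1,\ldots,\xi_k$ and lets $k\to\infty$. The remedy is the paper's random-length blocks. Conditionally on $\mathcal G_{s-1}$, the set $\{(y,j):|y|\leq n-s,\ U_{s-1}(y)<j\leq U_s(y)\}$ is finite and known. Reveal exactly those instructions in a fixed order, then pass to $\mathcal G_s$. Concatenate these blocks over $1\leq s<n$ and pad with zeros. Only instructions that have actually been read are revealed, so Lemma~\ref{lem:exposure} keeps the rest fresh for later rounds.

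Two smaller corrections. First, your claim that $U_{n-1}(y)=0$ for $|y|\geq n$ is false, since $U_1(y)=\eta(y)^+$ at every site. Finiteness comes instead from the support of $g_m$: it makes the bracket $g_{n-s}(z)-(Pg_{n-s})(y)$ vanish for $|y|>n-s$, and $U_{n-1}$ is a.s.\ finite on that ball. Second, the source term in \eqref{eq:error-recursion} is cumulative. An instruction read in round $s$ therefore re-enters the error in each of rounds $s,\ldots,n-1$. It is this repeated entry, not a single entry propagated by powers of $P$, that sums to $g_{n-s}$.
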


\begin{proof}
	Iterating \eqref{eq:error-recursion} one instruction at a time,
	\[
		w_n(0)=\sum_{s=1}^{n-1}\sum_y\sum_{j=U_{s-1}(y)+1}^{U_s(y)}
		\bigl[g_{n-s}(\rho_j(y))-(Pg_{n-s})(y)\bigr]\,,
	\]
	since an instruction first used in round $s$ occurs in the errors of rounds
	$s$ through $n-1$ and the associated kernels sum to $g_{n-s}$. As
	$g_m(z)=0$ for $|z|\geq m$, the bracket vanishes unless $|y|\leq n-s$, so
	finitely many terms are nonzero. For each $s$, condition on
	$\mathcal G_{s-1}$. The finite set
	\[
		\{(y,j):|y|\leq n-s,\ U_{s-1}(y)<j\leq U_s(y)\}\,,
	\]
	is then known. Reveal these instructions one at a time in a fixed order and
	then pass to $\mathcal G_s$. Concatenating the finite blocks over
	$1\leq s<n$ and padding the sequence with zeros gives $(\xi_i)$.
	Lemma~\ref{lem:exposure} shows that each instruction has conditional law
	$P(y,\cdot)$ when it is revealed. Thus an increment arising from an
	instruction first read at $y$ in round $s$ has conditional mean zero,
	absolute value at most
	$\max_{m<n}\max_v\max_{z\sim v}|g_m(z)-(Pg_m)(v)|$, and conditional variance
	$\Gamma_{n-s}(y)$.
\end{proof}

\begin{lemma}[{\citealp[Theorems~4.1 and~3.3]{Pinelis}}]\label{lem:bernstein}
	Let $(\mathcal F_i)_{i=0}^k$ be a filtration, let $(\xi_i)_{i=1}^k$ be
	martingale differences for it, let $r\geq2$, and let $a>0$. If
	$|\xi_i|\leq a$ for every $1\leq i\leq k$, then
	\[
		\Bigl(\E\Bigl|\sum_{i=1}^k\xi_i\Bigr|^r\Bigr)^{1/r}\leq
		C\Bigl(\sqrt r\,\Bigl(\E\Bigl[\sum_{i=1}^k
		\E[\xi_i^2\mid\mathcal F_{i-1}]\Bigr]^{r/2}\Bigr)^{1/r}+ra\Bigr)\,,
	\]
	with $C$ universal. If instead there is $v>0$ such that, almost surely,
	$\sum_{i=1}^k\E[|\xi_i|^q\mid\mathcal F_{i-1}]\leq\tfrac{q!}{2}a^{q-2}v$
	for every integer $q\geq2$, then
	$(\E|\sum_{i=1}^k\xi_i|^r)^{1/r}\leq C(\sqrt{rv}+ra)$, again with $C$ universal.
\end{lemma}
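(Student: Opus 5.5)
Both parts are standard martingale moment inequalities, and the plan is to reduce each to a tail or exponential bound and then integrate. I would prove the second part first because it is the more elementary one. Write $M_k=\sum_{i\le k}\xi_i$ and fix $0<\lambda<1/a$. Expanding $e^{\lambda\xi_i}$ in a power series and using $\E[\xi_i\mid\mathcal F_{i-1}]=0$ gives
\[
\E[e^{\lambda\xi_i}\mid\mathcal F_{i-1}]\le 1+\sum_{q\ge2}\frac{\lambda^q}{q!}\E[|\xi_i|^q\mid\mathcal F_{i-1}]\le\exp\Bigl\{\sum_{q\ge2}\frac{\lambda^q}{q!}\E[|\xi_i|^q\mid\mathcal F_{i-1}]\Bigr\}\,.
\]
Set $V_k=\sum_{i\le k}\sum_{q\ge2}\frac{\lambda^q}{q!}\E[|\xi_i|^q\mid\mathcal F_{i-1}]$. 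Then $\exp\{\lambda M_k-V_k\}$ is a supermartingale. By the almost sure hypothesis, $V_k\le\frac v2\sum_{q\ge2}\lambda^qa^{q-2}=\frac{\lambda^2v}{2(1-\lambda a)}$ uniformly. Hence $\E e^{\lambda M_k}\le\exp\{\lambda^2v/(2(1-\lambda a))\}$. Choosing $\lambda=x/(v+ax)$ yields the Bernstein tail $\P(|M_k|\ge x)\le2\exp\{-x^2/(2(v+ax))\}$, where the same argument applied to $-\xi_i$ gives the lower tail.

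Next I would integrate this tail. Writing $\E|M_k|^r=\int_0^\infty rx^{r-1}\P(|M_k|\ge x)\,dx$ and bounding the tail by $2e^{-x^2/(4v)}$ for $x\le v/a$ and by $2e^{-x/(4a)}$ for $x\ge v/a$ gives
\[
\E|M_k|^r\le C\bigl(r\Gamma(r/2)(4v)^{r/2}+r\Gamma(r)(4a)^r\bigr)\,.
\]
Stirling's formula then gives $(\E|M_k|^r)^{1/r}\le C(\sqrt{rv}+ra)$ with $C$ universal.

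For the first part the conditional variance $\langle M\rangle_k=\sum_i\E[\xi_i^2\mid\mathcal F_{i-1}]$ is random, so a single exponential supermartingale is not enough. The route I would follow is Pinelis' Rosenthal--Burkholder inequality with optimal order constants,
\[
\|M_k\|_r\le C\bigl(\sqrt r\,\|\langle M\rangle_k^{1/2}\|_r+r\,\|\max_i|\xi_i|\|_r\bigr)\,,
\]
followed by $\|\max_i|\xi_i|\|_r\le a$. To derive that inequality in a self-contained way, I would use a good-$\lambda$ or stopping argument. For each level $y$, let $\tau=\inf\{k:\langle M\rangle_{k+1}>y\}$; this is a stopping time because $\langle M\rangle$ is predictable. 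The stopped martingale satisfies the Bernstein condition with $v=y$, since for $q\ge2$ we have $|\xi_i|^q\le a^{q-2}\xi_i^2$. The second part therefore applies and gives
\[
\P(|M_k|\ge x)\le\P(\langle M\rangle_k>y)+2\exp\Bigl\{-\frac{x^2}{2(y+ax)}\Bigr\}\,.
\]
Choosing $y=x^2/(\beta r)$ with a suitable constant $\beta$ and integrating against $rx^{r-1}\,dx$ bounds $\E|M_k|^r$ by $C^r\bigl((\sqrt r)^r\E\langle M\rangle_k^{r/2}+(Cra)^r\bigr)$.

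The main obstacle is this last integration. One has to tune the coupling between $y$ and $x$ so that the $\P(\langle M\rangle_k>y)$ term reproduces $\E\langle M\rangle_k^{r/2}$ with only a $C^r r^{r/2}$ loss. At the same time, the exponential term must be controlled without picking up an extra factor of $r$. If this direct tuning proves too lossy, I would fall back on citing Pinelis' Theorem~4.1 for the Rosenthal--Burkholder form and checking that its constants have the stated order in $r$.
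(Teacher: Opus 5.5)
The paper gives no argument of its own here; it quotes Pinelis, Theorem~3.3 for the Bernstein form and Theorem~4.1 for the Rosenthal--Burkholder form. Your proof of the second part is correct and self-contained: the exponential supermartingale, the choice $\lambda=x/(v+ax)$, and integration over the Gaussian and exponential regimes give $C(\sqrt{rv}+ra)$. Your stopping-time reduction in the first part is also correct.

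The gap is the step you flag as the ``main obstacle''. It is not a tuning issue; that integration cannot work. With $y=x^2/(\beta r)$ the exponent $x^2/(2(y+ax))=1/(2/(\beta r)+2a/x)$ increases to $\beta r/2$. So the second term never drops below $2e^{-\beta r/2}$, and $\int_0^\infty rx^{r-1}\cdot2e^{-x^2/(2(y+ax))}\,dx$ diverges.

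No other coupling $y=y(x)$ helps, because the family
\[
\P(|M_k|\ge x)\le\P(\langle M\rangle_k>y)+2e^{-x^2/(2(y+ax))}\qquad(x,y>0)
\]
does not imply the moment bound. Take random variables with $\Lambda=y_0$ on an event of probability $p$ and $\Lambda=0$ off it, and $Y=\sqrt{2y_0\log(2/p)}$ on that event and $Y=0$ off it. The pair $(Y,\Lambda)$ satisfies every one of these inequalities in place of $(|M_k|,\langle M\rangle_k)$, for every $a\ge0$. Yet $\E Y^r=(2\log(2/p))^{r/2}\E\Lambda^{r/2}$, so for fixed $r$ and $a\downarrow0$ the target bound fails as $p\to0$. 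The martingale property has to be used a second time.

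Your fallback, Pinelis' Theorem~4.1 together with $\|\max_i|\xi_i|\|_r\le a$, is exactly the paper's route and is fine. If you want the first part self-contained, replace the integration by a good-$\lambda$ inequality.

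Let $M^*=\max_{j\le k}|M_j|$ and let $\mu$ be the first $j$ with $|M_j|>\lambda$, so that $|M_\mu|\le\lambda+a$. On $\{M^*>2\lambda,\ \langle M\rangle_k\le\delta^2\lambda^2\}$, the increments after $\mu$, stopped as in your $\tau$ with $y=\delta^2\lambda^2$, must travel at least $\lambda-a$. Apply the maximal form of your Bernstein bound (Doob's inequality for your nonnegative exponential supermartingale) conditionally on $\mathcal F_\mu$. Whenever $a\le\gamma\lambda$ with $\gamma\le1/2$, this gives
\[
\P(M^*>2\lambda)\le2e^{-1/(8(\delta^2+\gamma))}\,\P(M^*>\lambda)+\P\bigl(\langle M\rangle_k^{1/2}>\delta\lambda\bigr)+\one\{\lambda<a/\gamma\}\,.
\]
Integrate against $d\bigl((2\lambda)^r\bigr)$ with $\delta^2=\gamma=c/r$, where $c$ is small enough that $2^{r+1}e^{-r/(16c)}\le1/2$ for all $r\ge2$. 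Since $M^*\le ka$, the term $\E(M^*)^r$ is finite and can be absorbed. This leaves $\|M^*\|_r\le C(\sqrt r\,\|\langle M\rangle_k^{1/2}\|_r+ra)$. The factor $e^{-cr}$ supplied by the Bernstein bound is what yields the optimal orders $\sqrt r$ and $r$.
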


Every step from the origin by round $n$ is taken by a particle which began within distance $n$ and takes at most $n$ steps, so
\begin{equation}\label{eq:apriori-finite}
	U_n(0)\leq n\sum_{|y|\leq n}\eta(y)^+\,.
\end{equation}
Whenever $\eta(0)^+$ has an exponential moment, the finite sum on the
right has moments of every order.

\begin{proposition}\label{prop:w-moment}
	Let $\eta=(\eta(x))_{x\in\Z^d}$ have i.i.d.\ integer-valued coordinates,
	with $\E e^{\theta\eta(0)^+}<\infty$ for some $\theta>0$. There is a
	dimension-dependent $C<\infty$ such that, for every $n\geq1$ and $r\geq2$,
	\[
		\max_{m\leq n}\bigl(\E|w_m(0)|^r\bigr)^{1/r}
		\leq C\Bigl(\sqrt{r\kappa_d(n)\bigl(\E U_n(0)^r\bigr)^{1/r}}
		+r\Bigr)\,,
	\]
	\[
		\bigl(\E w_n^\star(0)^r\bigr)^{1/r}
		\leq(n+1)^{1/r}\max_{m\leq n}\bigl(\E|w_m(0)|^r\bigr)^{1/r}\,.
	\]
\end{proposition}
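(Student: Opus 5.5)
For the first bound, fix $m\leq n$ (the case $m=1$ gives $w_1=0$, so assume $m\geq2$). Apply Lemma~\ref{lem:w-martingale} with $m$ in place of $n$ to write $w_m(0)=\sum_i\xi_i$. Since only finitely many $\xi_i$ are nonzero, I truncate at a large index $k$ and let $k\to\infty$ by Fatou. The increments are bounded by a constant $a$ by \eqref{eq:increment}, so the first part of Lemma~\ref{lem:bernstein} gives
\[
	\bigl(\E|w_m(0)|^r\bigr)^{1/r}\leq C\Bigl(\sqrt r\,\bigl(\E Q^{r/2}\bigr)^{1/r}+r\Bigr),
\]
where $Q$ is the quadratic variation in \eqref{eq:qv}. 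For every $s<m$ we have $\Gamma_{m-s}\leq\sup_{j\leq n}\Gamma_j$, and $\sum_{s}A_{s-1}(y)=U_{m-1}(y)\leq U_n(y)$. Hence
\[
	Q\leq\sum_yU_n(y)\,\gamma(y),\qquad\gamma(y)\coloneqq\sup_{j\leq n}\Gamma_j(y).
\]
By Lemma~\ref{lem:gamma-sum}, $\sum_y\gamma(y)\leq C\kappa_d(n)$.

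The key step is to bound $\bigl(\E Q^{r/2}\bigr)^{2/r}$. I use Minkowski's inequality in $L^{r/2}$, which is valid because $r/2\geq1$, together with translation invariance of the law of $U_n$. The invariance holds because $\eta$ is i.i.d.\ and the stacks are i.i.d., so $U_n(y)$ has the same law as $U_n(0)$. This gives
\[
	\bigl\|Q\bigr\|_{r/2}\leq\sum_y\gamma(y)\,\|U_n(y)\|_{r/2}\leq C\kappa_d(n)\bigl(\E U_n(0)^{r/2}\bigr)^{2/r}\leq C\kappa_d(n)\bigl(\E U_n(0)^r\bigr)^{1/r}.
\]
The last step is Lyapunov's inequality. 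Taking square roots gives the first display, with the constant uniform in $m\leq n$. The sum over $y$ is finite, and the moments are finite by \eqref{eq:apriori-finite} under the exponential moment of $\eta(0)^+$. This moment assumption is needed so that the right side is meaningful; if the moments were infinite, the bound would hold trivially.

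For the second bound, I bound the maximum by a sum: $\max_j|w_{n-j}(X_j)|^r\leq\sum_{j=0}^n|w_{n-j}(X_j)|^r$. Jensen's inequality applied to $\E_x$ gives $w_n^\star(0)^r\leq\E_0\max_j|w_{n-j}(X_j)|^r$. Taking full expectations and using that $X$ is independent of $w$, each term equals $\sum_x\P_0(X_j=x)\,\E|w_{n-j}(x)|^r=\E|w_{n-j}(0)|^r$, again by translation invariance. Therefore
\[
	\E w_n^\star(0)^r\leq(n+1)\max_{m\leq n}\E|w_m(0)|^r,
\]
and taking $r$-th roots finishes the proof.

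The main obstacle I expect is justifying translation invariance of the law of $w_m(x)$, jointly with the exchange of sums. I would handle this by noting that $w_m$ is a translation-covariant measurable function of the i.i.d.\ field $(\eta,\rho)$, and that all sums involved are finite because $g$ vanishes beyond distance $m$.
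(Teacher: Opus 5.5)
Your proposal is correct and follows the paper's proof essentially step for step. The first estimate uses Pinelis' Bernstein inequality on truncated sums with Fatou's lemma, the bound $\sum_y\sup_{j\leq n}\Gamma_j(y)\,U_n(y)$ on the predictable quadratic variation, Minkowski in $L^{r/2}$ with translation invariance and Lemma~\ref{lem:gamma-sum}, and then Lyapunov's inequality. The second uses Jensen's inequality, a union bound over the $n+1$ times, and translation invariance. Your separate treatment of $m\leq1$, where $w_m=0$, is a small detail the paper leaves implicit; it is needed because Lemma~\ref{lem:w-martingale} is stated only for $n\geq2$.
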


\begin{proof}
	For $m\leq n$ the sum $\sum_{s=1}^{m-1}A_{s-1}(y)$ is $U_{m-1}(y)$, so the
	predictable quadratic variation \eqref{eq:qv} is at most
	$\sum_y(\sup_{j\leq n}\Gamma_j(y))U_n(y)$, and Minkowski's inequality,
	translation invariance and \eqref{eq:gamma-sum} give
	\[
		\Bigl(\E\Bigl(\sum_y\bigl(\sup_{j\leq n}\Gamma_j(y)\bigr)U_n(y)\Bigr)^{r/2}\Bigr)^{2/r}
		\leq C\kappa_d(n)\bigl(\E U_n(0)^{r/2}\bigr)^{2/r}\,.
	\]
	The increment bound
	\eqref{eq:increment} controls $|\xi_i|$. Almost surely only finitely many
	$\xi_i$ are nonzero, but their number is unbounded, so apply
	Lemma~\ref{lem:bernstein} to $\xi_1,\ldots,\xi_k$ and let $k\to\infty$ using
	Fatou's lemma. With $(\E U_n(0)^{r/2})^{2/r}\leq(\E U_n(0)^r)^{1/r}$, this
	proves the first estimate. Jensen's inequality and a union bound over the
	$n+1$ times prove the second:
	$\E w_n^\star(0)^r\leq\sum_{j\leq n}\E\,\E_0|w_{n-j}(X_j)|^r
	\leq(n+1)\max_{m\leq n}\E|w_m(0)|^r$ by translation invariance.
\end{proof}

The martingale representation also shows that $\E[w_n(0)\mid\eta]=0$. Taking
expectations in \eqref{eq:pathwise-error} gives the upper bound, and
Theorem~\ref{thm:comparison} gives the lower bound:
\[
	0\leq\E U_n(0)-\E u_n(0)\leq\E w_n^\star(0)\,.
\]

\section{The logarithmic lower bound}\label{sec:critical}

Section~\ref{sec:divisible} bounds $\E U_n(0)$ below by $\E u_n(0)$, but from
dimension four on this need not give a logarithmic lower bound uniform over the
law of $\eta(0)$. We obtain such a bound by adapting the resampling argument of
\citet{CRS} to discrete time. Run the original and resampled configurations
with shared steps and match their common particles. For each pair of opposite
labels, cancellation by time $t$ requires a simple random walk to hit zero
within $2t$ steps. Summing these hitting probabilities bounds the expected
number of canceled labels.

\begin{lemma}\label{lem:critical-density}
	Let $\eta=(\eta(x))_{x\in\Z^d}$ be independent copies of a nonconstant integer-valued
	$\eta(0)$ with mean zero. There is a universal $c>0$ such that
	$\liminf_{t\to\infty}tS_t\geq c$, and there is
	$C<\infty$ such that, for every $n\geq2$,
	\begin{equation}\label{eq:q-lower-log}
		\E U_n(0)\geq c\log n-C\,.
	\end{equation}
\end{lemma}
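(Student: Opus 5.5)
The plan is to prove $\liminf_t tS_t\geq c$ by adapting the resampling argument of \citet{CRS} to discrete time. The bound \eqref{eq:q-lower-log} then follows by summation. By Lemma~\ref{lem:transport}, $\E U_n(0)=\sum_{s<n}S_s$. Once $S_s\geq c/(s+1)$ holds for all $s\geq s_0$, we get $\E U_n(0)\geq c\log n-C$, with $C$ absorbing the finitely many early terms.

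For the density bound, fix a large box $\Lambda_L=\{|x|\leq L\}$ with $L\gg t$. Let $\widetilde\eta$ agree with $\eta$ outside $\Lambda_L$ and be an independent resampling inside it. Write $\Sigma=\sum_{x\in\Lambda_L}\eta(x)$ and $\widetilde\Sigma$ for its analogue. Since $\eta(0)$ is nonconstant with mean zero, $\Sigma-\widetilde\Sigma$ is a mean-zero sum of $|\Lambda_L|$ i.i.d.\ nondegenerate variables, so $\E|\Sigma-\widetilde\Sigma|\geq c_0\sqrt{|\Lambda_L|}$. In fact we only need $\E(\Sigma-\widetilde\Sigma)^+\geq c\sqrt{|\Lambda_L|}\,\sigma$.

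Run the two processes with the same stacks of instructions and the same uniform variables. By Lemma~\ref{lem:one-particle}, applied one site change at a time, the sites in $\Lambda_L$ can be converted from $\widetilde\eta$ to $\eta$ by adding or removing single particles. Each such move creates one \emph{label}: an extra active particle or an extra unfilled hole, carrying sign $+$ or $-$ according to which process owns it. Labels of opposite sign can cancel when they meet, and labels of equal sign never cancel. The net signed count of labels is conserved and equals $\Sigma-\widetilde\Sigma$ restricted to the box, so at time $t$ the total discrepancy satisfies
\[
\sum_{x}\bigl(|A_t(x)-\widetilde A_t(x)|+|H_t(x)-\widetilde H_t(x)|\bigr)\geq|\Sigma-\widetilde\Sigma|-2N_t,
\]
where $N_t$ is the number of canceled pairs. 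Each uncanceled label contributes an active particle or unfilled hole in one of the two processes. Those within distance $t$ of the boundary of the box are affected by outside sites, but they are $O(L^{d-1}t)$ in number. Taking expectations and using translation invariance together with $\E A_t=\E H_t=S_t$ gives
\[
4S_t|\Lambda_L|+O(L^{d-1}t)\geq c_0\sqrt{|\Lambda_L|}-2\E N_t.
\]

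The main obstacle is bounding $\E N_t$. A label is a particle performing a walk, or a frozen hole. In either case, two labels at $x$ and $y$ can only cancel by time $t$ if the difference of their positions hits zero. That difference is a lazy walk (moving with rate one or two), and it must hit zero within $2t$ steps. Hitting probabilities are at most $\P(|x-y|\leq 2t)$-type quantities. The delicate point is that labels are not independent walkers: relabeling in Lemma~\ref{lem:one-particle} passes the identity between particles. Here I would follow \citet{CRS}: pair the labels deterministically by an ordering, and bound $\E N_t$ by the expected number of opposite-sign pairs within distance $2t$, using the product structure of the resampling. This gives $\E N_t\leq C|\Lambda_L|\,\P(\text{pairs})$, which must be of smaller order than $\sqrt{|\Lambda_L|}$ when $L$ is taken of order $\sqrt t$ times a large constant.

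I expect this matching of scales to be the crux. The choice $L=K\sqrt t$ makes $\sqrt{|\Lambda_L|}/|\Lambda_L|\asymp t^{-d/4}$, so it fits $d\leq3$ but gives only $t^{-d/4}$ rather than $1/t$ in high dimensions. For $d\geq4$ I would instead resample a single-site perturbation. Lemma~\ref{lem:one-particle} shows that one added particle persists as a label unless absorbed, and Lemma~\ref{lem:density-compare} caps $S_t$ changes by the mean change. The universal $1/(16t)$ would come from the variance-over-$t$ balance: the label survives unless its walk meets an opposite label, and a discrete-time hitting bound of $1/(2t)$-type over $2t$ steps yields $\liminf tS_t\geq c$. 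Finally, I would make the constant universal by normalizing the variance out, using that the argument only uses nonconstancy through $\E|\Sigma-\widetilde\Sigma|^2$.
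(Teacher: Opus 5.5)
Your reduction of \eqref{eq:q-lower-log} to $S_s\geq c/(s+1)$ through $\E U_n(0)=\sum_{s<n}S_s$ is fine. However, there is a genuine gap: neither of your two resampling schemes yields a universal $c/t$.

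\emph{The box scheme.} Every site of $\Lambda_L$ is resampled, so labels have density of order one and most of them cancel. Hence $\E N_t$ is of order $|\Lambda_L|$ rather than $o(\sqrt{|\Lambda_L|})$, and your inequality with $-2N_t$ is vacuous. Conservation of the net signed count would make $N_t$ irrelevant. But then escape across the boundary becomes the obstruction, and with $L=K\sqrt t<t$ your error term $O(L^{d-1}t)$ dwarfs $\sqrt{|\Lambda_L|}$. Even a repaired Bramson--Lebowitz-type argument gives at best $c\,t^{-d/4}$, with $c$ proportional to the standard deviation of $\eta(0)$. That is too small when $d\geq5$. In $d=4$ it is not uniform in the law: it degenerates for the sparse laws of Theorem~\ref{thm:four-sparse}. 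The variance cannot be ``normalized out'', since $\eta$ is integer-valued and $S_t$ counts particles.

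\emph{The single-site fallback for $d\geq4$.} By Lemma~\ref{lem:one-particle}, a single-site change produces exactly one discrepancy. It never disappears and has no opposite label to meet, so it can give at best a density of order $t^{-d/2}$. Moreover, the perturbed field no longer has the law of $\eta$, so $\E\widetilde A_t=\E\widetilde H_t=S_t$ is unavailable.

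\emph{The missing idea} is a sparse, translation-invariant, law-preserving resampling whose intensity is tuned to $t$. Let $\gamma$ be the mean absolute difference of two independent copies of $\eta(0)$, and replace each $\eta(x)$ independently by a fresh copy with probability $\eps/\gamma$. Then:
\begin{itemize}
\item the new field has the law of $\eta$, so the paper needs no box and no boundary term;
\item labels have density $\eps$, half of each sign in mean;
\item labels created at one site share a sign.
\end{itemize}
Opposite labels from $x$ and $y$ can cancel by round $t$ only if a simple random walk from $x-y$ hits $0$ within $2t$ steps. This walk is assembled from the fresh instructions read by the labels' current bearers, with holes simply waiting. Summing over pairs and using $\sum_{z\neq0}\P_z(T_0\leq2t)=\E_0|R_{2t}|-1\leq2t$ bounds the expected number of canceled labels from the origin by $t\eps^2$. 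Mass transport then gives $4S_t\geq\eps-t\eps^2$. Taking $\eps=1/(2t)$, which is admissible once $t\geq1/(2\gamma)$, gives $S_t\geq1/(16t)$.

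This linear-versus-quadratic trade-off in the label density is what your proposal lacks. The law enters only through the threshold on $t$, which is why $c$ is universal.
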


\begin{proof}
	\smallskip
	\noindent\emph{Step 1.} We construct the coupled particle systems.
	Let $\gamma>0$ be the expected absolute difference between two independent
	variables with the law of $\eta(0)$.
	Fix $\eps$ with $0\leq\eps\leq\gamma$. Independently at each site, replace
	$\eta(x)$ by an independent copy with probability $\eps/\gamma$. The
	expected absolute change at each site is $\eps$, and the resampled
	configuration has the same law as $\eta$. We use tildes for the process
	started from the resampled configuration. The change at each site is
	symmetric, so its positive and negative parts have mean $\eps/2$.

	Match $A_t(x)\wedge\widetilde A_t(x)$ active particles at every site and give
	each matched pair one common step. Matched arrivals cancel in
	\eqref{eq:signed-recursion}. Thus the signed difference between the two
	processes at $x$ after round $t+1$ counts the unmatched particles arriving at
	$x$ together with the unmatched holes already there. Neither process has an
	active particle and an unfilled hole at the same site, so all unmatched
	particles and holes at one site have the same sign. Each unmatched particle
	takes one fresh step in round $t+1$, while each unmatched hole waits. Cancel
	opposite unmatched particles and holes in pairs whenever they reach the same
	site.

	\smallskip
	\noindent\emph{Step 2.} We compare each pair of oppositely signed labels with
	a simple random walk.
	Label every initially unmatched particle or hole by its initial site and
	sign, and give it an independent continuous priority. When matching replaces
	the unmatched particle or hole carrying a label, transfer the label and
	priority to the replacement. Use the priorities for all matching and
	cancellation choices. This rule is translation-equivariant.

	Let $\mathcal G$ be generated by the initial labels and their priorities. Fix
	opposite labels $\alpha$ and $\beta$ initially at $x$ and $y$. Until either
	label disappears, reveal chronologically every step taken by the unmatched
	particle bearing either label. When both move in one round, first reveal the
	step of the particle bearing $\alpha$. Starting from $x-y$, append $v$ when the particle
	bearing $\alpha$ moves by $v$, and append $-v$ when the particle bearing
	$\beta$ moves by $v$. Immediately before each instruction is revealed, the
	particle reading it and its active status are determined by the revealed past,
	while the instruction is fresh. Conditional on $\mathcal G$, the appended
	increments are therefore independent and uniform over the neighbors of the
	origin. After either label disappears, append independent increments to
	obtain a full simple random walk.

	Before the labels disappear, this walk records the position of the particle or
	hole bearing $\alpha$ minus the position of the particle or hole bearing
	$\beta$ after each round. If the two labels cancel each other, then their positions coincide at
	cancellation, so the walk has hit zero. At most $2t$ increments
	have then been appended by round $t$, and hence
	\[
		\P\bigl(\alpha\text{ and }\beta\text{ cancel each other by }t
		\bigm|\mathcal G\bigr)
		\leq\P_{x-y}(T_0\leq2t)\,.
	\]

	\smallskip
	\noindent\emph{Step 3.} We prove $S_t\geq1/(16t)$ for
	$t\geq1/(2\gamma)$ and \eqref{eq:q-lower-log}.
	We use
	\[
		\sum_{z\neq0}\P_z(T_0\leq2t)
		=\E_0|R_{2t}|-1\leq2t\,,
	\]
	where the equality follows from $\P_z(T_0\leq m)=\P_0(T_z\leq m)$.
	All unmatched particles or holes created at one site have the same sign, and
	the collections created at distinct sites are independent. For each pair of
	distinct sites, the expected number of opposite-sign pairs is $\eps^2/2$.
	Hence the expected number of labels created at the origin that disappear by
	time $t$ is at most
	$\tfrac{\eps^2}{2}(\E_0|R_{2t}|-1)$. The expected number of labels created at
	the origin that remain is therefore at least
	$\eps-\tfrac{\eps^2}{2}(\E_0|R_{2t}|-1)$. By the mass transport principle, this is also the
	expected number of unmatched particles and holes present at the origin. Every
	such particle or hole is counted among the active particles and unfilled holes
	of the two processes. Since both processes have the original mean-zero law,
	their total expected number of active particles and unfilled holes at the
	origin is
	$\E A_t(0)+\E H_t(0)+\E\widetilde A_t(0)+\E\widetilde H_t(0)=4S_t$, so that
	for every $0\leq\eps\leq\gamma$,
	\[
		4S_t\geq\eps-\frac{\eps^2}{2}\bigl(\E_0|R_{2t}|-1\bigr)
		\geq\eps-t\eps^2\,.
	\]
	For $t\geq1/(2\gamma)$, take $\eps=1/(2t)$. Then $\eps-t\eps^2=1/(4t)$, so
	$S_t\geq1/(16t)$. Thus $\liminf_{t\to\infty}tS_t\geq1/16$. Summing over
	$1/(2\gamma)\leq t<n$ gives \eqref{eq:q-lower-log}.
\end{proof}

\begin{corollary}\label{cor:critical}
	Let $\eta=(\eta(x))_{x\in\Z^d}$ be independent copies of a nonconstant integer-valued
	$\eta(0)$ with mean zero. Then there are $c,C>0$, with $c$ universal, such
	that, for every $n\geq2$,
	\[
		\E U_n(0)\geq\max\bigl\{\E u_n(0),\ c\log n-C\bigr\}\,.
	\]
\end{corollary}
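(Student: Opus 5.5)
The corollary combines two lower bounds already proved, so I will establish each one separately and then take the maximum.

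For the first bound, I will average the conditional domination of Theorem~\ref{thm:comparison} over $\eta$. That theorem gives $u_n(0)\leq\E[U_n(0)\mid\eta]$ pointwise in $\eta$, with no moment assumption. Both sides are nonnegative, so taking expectations is legitimate even when a side is infinite, and it yields $\E u_n(0)\leq\E U_n(0)$.

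For the second bound, I will quote \eqref{eq:q-lower-log} from Lemma~\ref{lem:critical-density}, which gives $\E U_n(0)\geq c\log n-C$ for every $n\geq2$. Its hypotheses are that the coordinates are i.i.d., integer-valued, nonconstant and of mean zero, and these match the corollary's. The point needing care is the claim that $c$ is universal. I will trace the constant through Step~3 of that proof. There, $S_t\geq1/(16t)$ holds for all $t\geq1/(2\gamma)$, where $\gamma$ is the expected absolute difference of two independent copies of $\eta(0)$; $\gamma>0$ because the law is nonconstant. By Lemma~\ref{lem:transport}, $\E U_n(0)=\sum_{s<n}S_s$. Since each $S_s\geq0$, I can drop the terms with $s<\lceil 1/(2\gamma)\rceil$ and sum $1/(16s)$ over the rest. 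This gives
\[
	\E U_n(0)\geq\tfrac1{16}\bigl(\log n-\log\lceil 1/(2\gamma)\rceil-1\bigr)\,,
\]
so $c=1/16$ is universal, while $C$ depends only on $\gamma$, that is, on the law of $\eta(0)$.

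Having both lower bounds for every $n\geq2$, $\E U_n(0)$ is at least their maximum, which is the claim. There is no real obstacle here. The only thing to check is the bookkeeping that separates the universal $c$ from the law-dependent $C$, and the explicit summation above handles it.
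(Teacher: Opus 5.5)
Your proposal is correct and matches the paper's proof: average Theorem~\ref{thm:comparison} over $\eta$ to get $\E U_n(0)\geq\E u_n(0)$, and quote \eqref{eq:q-lower-log} from Lemma~\ref{lem:critical-density}. Your explicit summation of $S_s\geq1/(16s)$ over $s\geq\lceil1/(2\gamma)\rceil$ only makes explicit what the paper leaves implicit: $c=1/16$ is universal, while $C$ depends on the law through $\gamma$.
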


\begin{proof}
	Theorem~\ref{thm:comparison} gives
	$\E U_n(0)\geq\E u_n(0)$, and Lemma~\ref{lem:critical-density} gives
	$\E U_n(0)\geq c\log n-C$.
\end{proof}

\section{Growth of the expected odometer}\label{sec:bounds}

The proof begins with the pathwise comparison and absorbs its square-root term
by Young's inequality. This leaves the corresponding moment of $u_n(0)$ and
$r(n+1)^{2/r}\kappa_d(n)$.
Lemma~\ref{lem:u-concentration} then bounds the moment of $u_n(0)$ by its mean
and the two Green-function terms in
\eqref{eq:green-norms}. We take $r=8$ below dimension four, where all error
terms have the order of $\E u_n(0)$. From dimension four on, we take $r$ of
order $\log n$, which keeps $(n+1)^{2/r}$ bounded and uses $\kappa_d(n)=1$.

Assume the hypotheses of Theorem~\ref{thm:master}.

The Green estimates collected in \citet[Section~3.1]{BP} give
\begin{equation}\label{eq:green-norms}
	\|g_n\|_2\asymp
	\begin{cases}
		n^{3/4}&d=1\\
		n^{1/2}&d=2\\
		n^{1/4}&d=3\\
		\sqrt{\log n}&d=4\\
		1&d\geq5
	\end{cases}
	\qquad
	\max_xg_n(x)\asymp
	\begin{cases}
		n^{1/2}&d=1\\
		\log n&d=2\\
		1&d\geq3
	\end{cases}
	\,.
\end{equation}

\begin{lemma}\label{lem:u-concentration}
	Let $K$ be a finite-range, translation-invariant transition kernel on
	$\Z^d$. Let $g_n^K(z)\coloneqq\sum_{j<n}K^j(0,z)$, and let
	$v_0=0$ and $v_{n+1}=(\eta+Kv_n)^+$ for i.i.d.\ $\eta(x)$ satisfying
	$\E e^{\theta|\eta(0)|}<\infty$ for some $\theta>0$. Then there is
	$C<\infty$, depending on $K$ and the law of $\eta(0)$, such that, for every
	$n\geq1$ and $r\geq2$,
	\begin{equation}\label{eq:u-concentration}
		\bigl(\E|v_n(0)-\E v_n(0)|^r\bigr)^{1/r}
		\leq C\bigl(\sqrt r\,\|g_n^K\|_2+r\|g_n^K\|_\infty\bigr)\,.
	\end{equation}
\end{lemma}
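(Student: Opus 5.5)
We will prove \eqref{eq:u-concentration} by a martingale (Doob) decomposition over the sites, using the stopping representation of Lemma~\ref{lem:stopping} (with the walk of kernel $K$, as in Remark~\ref{rem:transpose}) to control how much $v_n(0)$ changes when a single coordinate of $\eta$ is changed. The target is the second form of the Bernstein inequality in Lemma~\ref{lem:bernstein}, with variance proxy $v\asymp\|g_n^K\|_2^2$ and scale $a\asymp\|g_n^K\|_\infty$.

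\emph{Step 1: one-site sensitivity.} Let $\eta'$ agree with $\eta$ except at $z$. By Lemma~\ref{lem:stopping}, $v_n(0)$ is a supremum over stopping times $\sigma\le n$ of $\E_0\sum_{j<\sigma}\eta(X_j)$. For any fixed $\sigma$, the two rewards differ by $(\eta(z)-\eta'(z))\,\E_0\#\{j<\sigma:X_j=z\}$, and this expected count is at most $g_n^K(z)$. Taking suprema on both sides gives
\[
	|v_n(0)[\eta]-v_n(0)[\eta']|\le|\eta(z)-\eta'(z)|\,g_n^K(z)\,.
\]
Since $v_n(0)$ depends only on the finitely many sites within range $n$ of the origin, we enumerate those sites as $z_1,\dots,z_N$ and let $\mathcal F_i=\sigma(\eta(z_1),\dots,\eta(z_i))$. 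This gives the martingale differences $\xi_i=\E[v_n(0)\mid\mathcal F_i]-\E[v_n(0)\mid\mathcal F_{i-1}]$.

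\emph{Step 2: conditional moments of the increments.} Let $\eta'(z_i)$ be an independent copy of $\eta(z_i)$. Then $\xi_i=\E[v_n(0)[\eta]-v_n(0)[\eta^{(i)}]\mid\mathcal F_i]$, where $\eta^{(i)}$ is $\eta$ with the $i$th coordinate replaced by $\eta'(z_i)$. Step~1 gives
\[
	|\xi_i|\le g_n^K(z_i)\,\E\bigl[|\eta(z_i)-\eta'(z_i)|\bigm|\eta(z_i)\bigr]\le g_n^K(z_i)\bigl(|\eta(z_i)|+\E|\eta(0)|\bigr)\,.
\]
Because $\eta(z_i)$ is independent of $\mathcal F_{i-1}$, this yields, for every integer $q\ge2$,
\[
	\E[|\xi_i|^q\mid\mathcal F_{i-1}]\le g_n^K(z_i)^q\,\E\bigl(|\eta(0)|+\E|\eta(0)|\bigr)^q\le g_n^K(z_i)^q\,q!\,b^{q}
\]
for a constant $b$ fixed by the exponential moment. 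Summing over $i$ and using $g^q\le\|g_n^K\|_\infty^{q-2}g^2$, we get
\[
	\sum_i\E[|\xi_i|^q\mid\mathcal F_{i-1}]\le\tfrac{q!}{2}\,(b\|g_n^K\|_\infty)^{q-2}\cdot 2b^2\|g_n^K\|_2^2\,.
\]
This is exactly the hypothesis of the second part of Lemma~\ref{lem:bernstein}, with $a=b\|g_n^K\|_\infty$ and $v=2b^2\|g_n^K\|_2^2$. That lemma then gives $(\E|v_n(0)-\E v_n(0)|^r)^{1/r}\le C(\sqrt r\,\|g_n^K\|_2+r\|g_n^K\|_\infty)$.

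\emph{Main obstacle.} The increments are not bounded, since $\eta$ is unbounded. This is why we use the Bernstein-moment form of Lemma~\ref{lem:bernstein} and not the bounded-difference form. The point needing care is that each conditional moment bound must hold almost surely. It does, because after averaging over the resampled coordinate, the factor bounding $\xi_i$ depends only on $\eta(z_i)$, which is independent of $\mathcal F_{i-1}$.

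A second detail is the occupation-time bound in Step~1. The stopping representation for a general kernel $K$ is taken from Remark~\ref{rem:transpose} and the dynamic-programming argument of Lemma~\ref{lem:stopping}; it may involve the transpose kernel. In that case the relevant Green function is $g_n^{K^*}(z)=g_n^K(-z)$, which has the same $\ell^2$ and $\ell^\infty$ norms, so the bound is unaffected.
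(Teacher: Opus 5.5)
Your proof is correct. It differs from the paper mainly in that the paper gives no in-text argument and simply imports \eqref{eq:u-concentration} from the general-kernel concentration estimate of \citet[Remark~3.4]{BP}.

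You derive the bound directly from tools already in the paper:
\begin{itemize}
\item The stopping representation (Lemma~\ref{lem:stopping}, run with the $K$-walk) makes $\eta\mapsto v_n(0)$ Lipschitz in the coordinate $\eta(z)$ with constant $g_n^K(z)$. For a fixed stopping time, the two rewards differ by $\eta(z)-\eta'(z)$ times an expected occupation time that is at most $g_n^K(z)$. A difference of suprema over the same class of stopping times is at most the supremum of the differences.
\item The Doob martingale over sites then has increments dominated by $g_n^K(z_i)\bigl(|\eta(z_i)|+\E|\eta(0)|\bigr)$. This is a function of a coordinate independent of $\mathcal F_{i-1}$. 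So the exponential moment gives, almost surely, the Bernstein moment condition in the second part of Lemma~\ref{lem:bernstein}, with $a\asymp\|g_n^K\|_\infty$ and $v\asymp\|g_n^K\|_2^2$.
\end{itemize}

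The citation buys brevity. Your route buys a self-contained account of where each norm of $g_n^K$ and the exponential moment enter. The unbounded increments are exactly why you need the moment form of Lemma~\ref{lem:bernstein} rather than a bounded-difference inequality. Your route also gives a constant depending only on the law of $\eta(0)$ and universal constants, uniformly in $K$.

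One simplification: your hedge about the transpose kernel is unnecessary. The dynamic-programming equation $v_{n+1}=(\eta+Kv_n)^+$ is directly the stopping problem for the walk with kernel $K$, so the occupation bound is exactly $g_n^K(z)$.
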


Equation \eqref{eq:u-concentration} follows from the general-kernel concentration estimate \citep[Remark~3.4]{BP}. In this section,
we use $K=P$, $v=u$ and $g_n^P=g_n$.

\begin{theorem}[Critical upper bound]\label{thm:upper}
	Let $\eta=(\eta(x))_{x\in\Z^d}$ be independent copies of a nonconstant integer-valued
	$\eta(0)$ which has mean zero and satisfies $\E e^{\theta|\eta(0)|}<\infty$ for
	some $\theta>0$. There is $C<\infty$ such that, for every $n\geq2$,
	\begin{equation}\label{eq:target}
		\E U_n(0)\leq C\bigl(\E u_n(0)+\log n\bigr)\,.
	\end{equation}
	More generally, for every $r\geq2$,
	\begin{equation}\label{eq:critical-moment}
		\bigl(\E U_n(0)^r\bigr)^{1/r}
		\leq C\bigl(\E u_n(0)+\sqrt r\,\|g_n\|_2+r\max_xg_n(x)
		+r(n+1)^{2/r}\kappa_d(n)\bigr)\,.
	\end{equation}
\end{theorem}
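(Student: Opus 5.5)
We first prove the moment bound \eqref{eq:critical-moment} and then obtain \eqref{eq:target} by choosing $r$. Write $M_r\coloneqq(\E U_n(0)^r)^{1/r}$, which is finite by \eqref{eq:apriori-finite} and the exponential moment of $\eta(0)^+$. Lemma~\ref{lem:pathwise-comparison} gives $U_n(0)\leq u_n(0)+2w_n^\star(0)$. Minkowski's inequality and Proposition~\ref{prop:w-moment} then give
\[
	M_r\leq\bigl(\E u_n(0)^r\bigr)^{1/r}+C(n+1)^{1/r}\Bigl(\sqrt{r\kappa_d(n)M_r}+r\Bigr)\,.
\]
We absorb the square-root term with Young's inequality in the form
\[
	C(n+1)^{1/r}\sqrt{r\kappa_d(n)M_r}\leq\tfrac12M_r+\tfrac{C^2}{2}(n+1)^{2/r}r\kappa_d(n)\,.
\]
Since $M_r<\infty$, we may subtract $\tfrac12 M_r$ from both sides. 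Because $\kappa_d(n)\geq1$, this yields
\[
	M_r\leq2\bigl(\E u_n(0)^r\bigr)^{1/r}+Cr(n+1)^{2/r}\kappa_d(n)\,.
\]

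Next we control the $r$th moment of $u_n(0)$. Minkowski's inequality gives $(\E u_n(0)^r)^{1/r}\leq\E u_n(0)+(\E|u_n(0)-\E u_n(0)|^r)^{1/r}$. We apply Lemma~\ref{lem:u-concentration} with $K=P$ and $g_n^P=g_n$, and we note that $\|g_n\|_\infty=\max_xg_n(x)$. This gives $(\E u_n(0)^r)^{1/r}\leq\E u_n(0)+C(\sqrt r\|g_n\|_2+r\max_xg_n(x))$. Substituting into the previous display gives \eqref{eq:critical-moment}. The proof up to this point uses only the hypotheses of Lemma~\ref{lem:u-concentration} and Proposition~\ref{prop:w-moment}, and both are covered by the exponential moment assumption.

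To deduce \eqref{eq:target}, we use $\E U_n(0)\leq M_r$ and split into cases by dimension using \eqref{eq:green-norms} and Theorem~\ref{thm:BP}.

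\emph{Case $d\leq3$.} Take $r=8$. The error terms are then $C\|g_n\|_2+C\max_xg_n(x)+C(n+1)^{1/4}\kappa_d(n)$. By \eqref{eq:green-norms} these are of order $n^{3/4},n^{1/2},n^{1/4}$ for $\|g_n\|_2$; of order $n^{1/2},\log n,1$ for $\max_xg_n$; and of order $n^{3/4},n^{1/4}\log n,n^{1/4}$ for $(n+1)^{1/4}\kappa_d(n)$, in dimensions $1,2,3$ respectively. In each dimension all of these are $O(n^{(4-d)/4})$, and Theorem~\ref{thm:BP} gives $n^{(4-d)/4}\leq C\E u_n(0)$. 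For $d=2$, note that $n^{1/4}\log n\leq Cn^{1/2}$.

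\emph{Case $d\geq4$.} Take $r=\lceil\log n\rceil\vee2$. Then $(n+1)^{2/r}\leq C$ and $\kappa_d(n)=1$, so the last term is $O(\log n)$. Moreover $\max_xg_n(x)\leq C$, so $r\max_xg_n(x)=O(\log n)$. Finally $\sqrt r\|g_n\|_2$ is at most $C\sqrt{\log n}\sqrt{\log n}=C\log n$ when $d=4$, and at most $C\sqrt{\log n}$ when $d\geq5$. All error terms are therefore $O(\log n)$, which proves \eqref{eq:target}.

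The main obstacle is the self-referential term $\sqrt{M_r}$ coming from the predictable quadratic variation being controlled by the odometer itself. It is handled by Young's inequality, and this step requires knowing a priori that $M_r<\infty$ so that the absorption is legitimate; \eqref{eq:apriori-finite} supplies exactly that. The second delicate point is the choice of $r$ in dimensions $\geq4$. The factor $(n+1)^{2/r}$, which comes from the union bound over times in $w_n^\star$, forces $r$ to be at least of order $\log n$. That choice in turn makes $\sqrt r\|g_n\|_2$ of order $\log n$ in $d=4$, so it matches the $\log n$ target there but does not exceed it.
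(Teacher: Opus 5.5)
Your proposal is correct and follows essentially the same route as the paper. You combine Minkowski's inequality with Proposition~\ref{prop:w-moment}, absorb the self-referential square-root term by Young's inequality (legitimate thanks to the a priori finiteness from \eqref{eq:apriori-finite}), apply Lemma~\ref{lem:u-concentration} with $K=P$, and then take $r=8$ for $d\leq3$ and $r$ of order $\log n$ for $d\geq4$. The only difference is cosmetic: you use $\lceil\log n\rceil$ where the paper uses $\lceil\log(n+1)\rceil$, and both keep $(n+1)^{2/r}$ bounded for $n\geq2$.
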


\begin{proof}
	\smallskip
	\noindent\emph{Step 1.} We prove \eqref{eq:critical-moment}.
	Fix $n\geq1$ and $r\geq2$. By \eqref{eq:apriori-finite}, every
	moment in this proof is finite. Taking $r$-th moments in
	\eqref{eq:pathwise-comparison} and using Proposition~\ref{prop:w-moment},
	\[
		\bigl(\E U_n(0)^r\bigr)^{1/r}\leq\bigl(\E u_n(0)^r\bigr)^{1/r}
		+C(n+1)^{1/r}
		\Bigl(\sqrt{r\kappa_d(n)\bigl(\E U_n(0)^r\bigr)^{1/r}}+r\Bigr)\,.
	\]
	Young's inequality absorbs half of $(\E U_n(0)^r)^{1/r}$, and
	$\kappa_d(n)\geq1$ leaves
	\[
		\bigl(\E U_n(0)^r\bigr)^{1/r}\leq
		C\bigl(\bigl(\E u_n(0)^r\bigr)^{1/r}
		+r(n+1)^{2/r}\kappa_d(n)\bigr)\,.
	\]
	Combining this with Lemma~\ref{lem:u-concentration} yields
	\eqref{eq:critical-moment}.

	\smallskip
	\noindent\emph{Step 2.} We prove \eqref{eq:target} when $d\leq3$.
	Take $r=8$. By \eqref{eq:green-norms},
	$\sqrt r\,\|g_n\|_2+r\max_xg_n(x)\leq C\|g_n\|_2$. The term
	$r(n+1)^{2/r}\kappa_d(n)$ has order
	\[
		n^{3/4}\ (d=1)\,,\qquad n^{1/4}\log n\ (d=2)\,,\qquad n^{1/4}\ (d=3)\,.
	\]
	Theorem~\ref{thm:BP} bounds $\|g_n\|_2$ and the last displayed term by
	$C\E u_n(0)$.

	\smallskip
	\noindent\emph{Step 3.} We prove \eqref{eq:target} when $d\geq4$.
	Take
	$r=2\vee\lceil\log(n+1)\rceil$, so that $(n+1)^{2/r}\leq e^2$
	and $\kappa_d(n)=1$. Then
	$\sqrt r\,\|g_n\|_2+r\max_xg_n(x)+r(n+1)^{2/r}\kappa_d(n)\leq C\log n$.
\end{proof}

\begin{proof}[Proof of Theorem~\ref{thm:master}]
	The upper bound is \eqref{eq:target}. Corollary~\ref{cor:critical} bounds
	$\E U_n(0)$ below by $c(\E u_n(0)+\log n)-C$, which is the required lower
	bound for all sufficiently large $n$. The remaining values are covered by
	decreasing $c$, since
	$\E U_n(0)\geq\E U_1(0)=\E\eta(0)^+>0$.
\end{proof}

\begin{proof}[Proof of Corollary~\ref{cor:growth}]
	Insert Theorem~\ref{thm:BP} into \eqref{eq:master}. For $d\leq3$,
	$\E u_n(0)\asymp n^{(4-d)/4}$ dominates
	$\log n$; for $d=4$ $\E u_n(0)\asymp\log n$; for $d\geq5$ the
	term $\E u_n(0)$ is at most $C\log n$. For \eqref{eq:activity}, recall that each
	indicator that a particle has not settled is nonincreasing in $t$, hence so
	is $S_t$.
	When $d\leq3$, let $\beta=(4-d)/4$. Comparing the partial sums over
	$[0,n)$ and $[n,mn)$, for a fixed sufficiently large $m$, gives
	$S_t\asymp(t+1)^{\beta-1}$. When $d\geq4$, monotonicity gives
	$(t+1)S_t\leq\sum_{s\leq t}S_s=\E U_{t+1}(0)\leq C\log(t+2)$. The lower
	bound in \eqref{eq:activity} follows from Lemma~\ref{lem:critical-density} for
	all large $t$.
	On the event $\eta(0)>0$, select one particle at the origin. With positive
	probability, no other site in its assigned walk through time $t$ initially
	contains a hole, so that particle remains active. Hence $S_t>0$ at every
	fixed time, and decreasing the constant covers the finitely many smaller
	times.
\end{proof}

\begin{proposition}\label{prop:everyone-settles}
	Let $\eta=(\eta(x))_{x\in\Z^d}$ have i.i.d.\ integer-valued coordinates.
	Suppose that $\eta(0)$ is nonconstant, $\E\eta(0)=0$, and
	$\E e^{\theta|\eta(0)|}<\infty$ for some $\theta>0$. Then the following hold
	together on one event of probability one: every particle settles after
	finitely many rounds; every hole is filled after finitely many rounds; and
	infinitely many distinct particles leave every site, so that
	$U_\infty(x)=\infty$ for every $x\in\Z^d$.
\end{proposition}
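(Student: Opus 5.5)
We treat the three claims separately, each holding almost surely, and intersect the resulting countably many events.

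\emph{Every particle settles.} By Corollary~\ref{cor:growth}, $S_t\to0$. For $k\geq1$ with $\P(\eta(0)=k)>0$, \eqref{eq:S-expand} gives $\P(\tau_1>t\mid\eta(0)=k)\leq S_t/(k\P(\eta(0)=k))\to0$. Hence the particles at the origin settle almost surely, and by exchangeability so does each labelled particle. Translation invariance extends this to the particles at every site, and $\Z^d$ is countable.

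\emph{Every hole is filled.} We use $\E H_t(0)=S_t\to0$, which holds because the mean vanishes (Lemma~\ref{lem:activity-holes}). The process $H_t(0)$ is nonincreasing and integer-valued, so its limit $H_\infty(0)$ exists. Fatou's lemma gives $\E H_\infty(0)\leq\liminf_t\E H_t(0)=0$, so the origin eventually has no unfilled hole. Translation invariance again covers every site.

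\emph{Infinitely many particles leave every site.} First, $U_\infty(x)=\infty$ almost surely by the critical-regime result quoted from \citet[Theorem~10.4]{Rolla}. Alternatively, we can argue directly. The event $\{U_\infty(0)<\infty\}$ is translation invariant under the ergodic i.i.d.\ shift, with the instruction stacks attached to sites, so it has probability $0$ or $1$. If it had probability one, then monotone convergence together with $\E U_\infty(0)=\infty$ from Corollary~\ref{cor:growth} would not by itself give a contradiction. So we rely on the cited theorem, or on the standard argument that a finite odometer everywhere combined with mass balance contradicts $\Var\eta(0)>0$.

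Given $U_\infty(x)=\infty$, the first claim shows that each particle takes only finitely many steps. Hence infinitely many distinct particles must leave $x$: finitely many particles, each taking finitely many steps, would account for only finitely many departures.

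The main obstacle is the infinite-odometer step. If we want to avoid citing \citet{Rolla}, we would try a 0-1 law plus the following contradiction. Suppose $U_\infty<\infty$ everywhere. Then in a box $B_L$ the final configuration has at most $\eta$-mass flux through the boundary $\partial B_L$ of order $\sum_{\partial B_L}U_\infty$, which is of order $L^{d-1}$ in probability. On the other hand, all particles settle and all holes fill, so the net mass $\sum_{B_L}\eta$ must be transported across the boundary. That net mass fluctuates at order $L^{d/2}$, which gives a contradiction in $d\leq2$ only. In general dimension we would defer to \citet{Rolla}.
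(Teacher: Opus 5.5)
The first two claims (via $S_t\to0$, $\E H_t(0)=S_t$ and monotonicity of $H_t(0)$) and the final step from $U_\infty(x)=\infty$ to infinitely many distinct particles are argued as in the paper. The difference, and the gap, is the infinite-odometer claim: you do not prove it, you cite it.

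\textbf{The missing idea.} You correctly note that $\E U_\infty(0)=\infty$ says nothing by itself about $\P(U_\infty(0)=\infty)$. What you missed is that the paper also controls the second moment. Take \eqref{eq:critical-moment} with $r=8$ when $d\leq3$ and $r=2\vee\lceil\log(n+1)\rceil$ when $d\geq4$. Combined with the lower bound of Theorem~\ref{thm:master}, this gives $\E U_n(0)^2\leq C(\E U_n(0))^2$. Paley--Zygmund then gives $\P(U_n(0)\geq\tfrac12\E U_n(0))\geq c$ uniformly in $n$. Since $U_\infty(0)\geq U_n(0)$ and $\E U_n(0)\to\infty$, it follows that $\P(U_\infty(0)=\infty)\geq c$.

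\textbf{The zero--one step.} This needs an event that is genuinely shift-invariant. Your event $\{U_\infty(0)<\infty\}$ is not one, but $\{\exists x:\ U_\infty(x)=\infty\}$ is, so ergodicity gives it probability one. To reach every site, use propagation:
\begin{itemize}
\item if $U_\infty(y)=\infty$, every instruction at $y$ is eventually read;
\item almost surely the stack at $y$ points infinitely often to each neighbor $x$;
\item at most $\eta(x)^-$ of those arrivals can settle at $x$, so $U_\infty(x)=\infty$.
\end{itemize}
Connectedness then spreads this to all of $\Z^d$.

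\textbf{Your fallbacks.} They do not close the gap. The mass-balance contradiction needs $L^{d/2}\gg L^{d-1}$, which holds only for $d=1$; in $d=2$ both sides are of order $L$, so there is no contradiction there, contrary to your claim. Moreover, bounding $\sum_{\partial B_L}U_\infty$ by $O(L^{d-1})$ in probability needs moment control you do not have, since $\E U_\infty(0)=\infty$. Only in $d=1$, where boundedly many sites control the flux, does almost-sure finiteness (tightness) suffice.

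\textbf{The citation route.} Relying on \citet{Rolla} is acceptable only insofar as that result covers the statement. The paper's introduction quotes it for the parking and particle--hole models, where $\eta\geq-1$. The proposition allows arbitrary integer-valued $\eta$, i.e.\ several holes per site, so you would have to verify the scope of the cited theorem. The paper's Paley--Zygmund argument needs no such check, uses only estimates already proved, and is quantitative: it gives $\P\bigl(U_n(0)\geq c(\E u_n(0)+\log n)\bigr)\geq c$ for every $n$.
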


\begin{proof}
	\smallskip
	\noindent\emph{Step 1.} We prove that every particle settles and every hole is
	filled.
	Since $S_t$ is nonincreasing, $(t+1)S_t\leq\E U_{t+1}(0)=o(t)$ by
	Corollary~\ref{cor:growth}, so $S_t\to0$. The expected
	number of particles which start at the origin and never settle is
	$\lim_tS_t=0$, so almost surely there is none. Translation invariance and
	countability extend this to every particle. Since the mean vanishes,
	$\E H_t(0)=S_t$, and $H_t(0)$ decreases, so $\E H_\infty(0)=0$ and every
	hole at the origin is filled almost surely. Translation invariance and
	countability extend this to every hole.

	\smallskip
	\noindent\emph{Step 2.} We prove that every site has infinite odometer.
	The estimate
	\eqref{eq:critical-moment} bounds the second moment:
	\[
		\bigl(\E U_n(0)^2\bigr)^{1/2}\leq C\bigl(\E u_n(0)+\log n\bigr)\,.
	\]
	Indeed, take $r=8$ when $d\leq3$ and $r=2\vee\lceil\log(n+1)\rceil$ when
	$d\geq4$. Theorem~\ref{thm:master} and the Paley--Zygmund inequality
	therefore produce $c>0$ such that
	\[
		\P\bigl(U_n(0)\geq c(\E u_n(0)+\log n)\bigr)\geq c\,,
	\]
	for every $n\geq2$. Since $\E u_n(0)+\log n$ tends to infinity, for every
	$M\geq1$ there is $n$ with $c(\E u_n(0)+\log n)\geq M$.
	Hence $\P(U_\infty(0)\geq M)\geq c$ for every $M$, and letting $M$
	tend to infinity gives $\P(U_\infty(0)=\infty)\geq c$.
	The event that some site has infinite odometer is invariant under the
	translations of $\Z^d$, which act ergodically on $\eta$ and the instructions,
	so it has probability one. Almost surely, the stack at each $y$ sends infinitely
	many instructions to each neighbor $x$. If $U_\infty(y)=\infty$, then every
	instruction at $y$ is eventually used, so $x$ receives infinitely many
	arrivals. At most $\eta(x)^-<\infty$ of them settle at $x$, and every other
	arrival leads to a departure. Connectedness of $\Z^d$ shows that every site
	has infinite odometer. Since every particle takes finitely many steps,
	infinitely many distinct particles leave every site.
\end{proof}

\section{Quenched odometer comparison}\label{sec:four}

In dimensions at most three the two odometers agree to leading order, while in
dimensions at least five their expectations have different orders when
$\eta(0)$ is bounded below; these claims follow from
Section~\ref{sec:bounds} and \eqref{eq:bp}. In dimension four both
expectations are of order $\log n$ for each fixed initial law, but the upper
comparison constant cannot be uniform in that law. Indeed, the logarithmic
coefficient in \eqref{eq:q-lower-log} is law-independent, whereas
\eqref{eq:stopping} shows that $u_n(0)$, as a function of $\eta$, is
coordinatewise convex and positively
homogeneous. Theorem~\ref{thm:four-sparse} exploits this distinction with a
sparse symmetric law.

\begin{proposition}\label{prop:discrepancy}
	Under the assumptions of Theorem~\ref{thm:master}, suppose $d\leq3$. For
	$n\geq1$, let $r=2\vee\lceil\log(n+1)\rceil$. Then
	\begin{equation}\label{eq:discrepancy}
		\bigl(\E|U_n(0)-u_n(0)|^r\bigr)^{1/r}\leq C
		\begin{cases}
			n^{5/8}[\log(n+1)]^{3/4}&d=1\,,\\
			n^{1/4}[\log(n+1)]^{5/4}&d=2\,,\\
			n^{1/8}[\log(n+1)]^{3/4}&d=3\,.
		\end{cases}
	\end{equation}
	Consequently, for every $\eps>0$, there is $c>0$ such that, for all
	sufficiently large $n$,
	\begin{equation}\label{eq:discrepancy-tail}
		\P\bigl(|U_n(0)-u_n(0)|>\eps\E u_n(0)\bigr)
		\leq e^{-c(\log n)^2}\,.
	\end{equation}
\end{proposition}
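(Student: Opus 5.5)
Let $r=2\vee\lceil\log(n+1)\rceil$ throughout, so that $(n+1)^{1/r}\leq e$ and $(n+1)^{2/r}\leq e^2$. The plan chains the pathwise comparison, the martingale moment bound for $w$, and the a priori moment bound on $U_n(0)$.

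\textbf{Step 1: reduce to moments of $w$.} By \eqref{eq:pathwise-comparison} and the second estimate of Proposition~\ref{prop:w-moment},
\[
	\bigl(\E|U_n(0)-u_n(0)|^r\bigr)^{1/r}\leq 2e\max_{m\leq n}\bigl(\E|w_m(0)|^r\bigr)^{1/r}\,.
\]
The first estimate of Proposition~\ref{prop:w-moment} then bounds the right side by
\[
	C\Bigl(\sqrt{r\kappa_d(n)(\E U_n(0)^r)^{1/r}}+r\Bigr)\,.
\]

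\textbf{Step 2: bound the $r$-th moment of $U_n(0)$.} Apply \eqref{eq:critical-moment} with this $r$ and insert \eqref{eq:green-norms} and Theorem~\ref{thm:BP}:
\[
	(\E U_n(0)^r)^{1/r}\leq C\bigl(n^{(4-d)/4}+\sqrt{\log n}\,\|g_n\|_2+\log n\,\max_xg_n+\log n\,\kappa_d(n)\bigr)\,.
\]
In $d=1$ the terms are $n^{3/4}\sqrt{\log n}$ and $n^{1/2}\log n$, so the bound is $Cn^{3/4}(\log n)^{1/2}$. In $d=2$ the terms are $n^{1/2}$, $n^{1/2}\sqrt{\log n}$ and $(\log n)^2$, giving $Cn^{1/2}(\log n)^{1/2}$. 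In $d=3$ the terms are $n^{1/4}$, $n^{1/4}\sqrt{\log n}$ and $\log n$, giving $Cn^{1/4}(\log n)^{1/2}$.

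\textbf{Step 3: substitute and compare exponents.} Plugging Step 2 into Step 1 with $r\asymp\log n$:
\begin{itemize}
	\item In $d=1$, $\kappa=n^{1/2}$, so the bound is $\sqrt{\log n\cdot n^{1/2}\cdot n^{3/4}(\log n)^{1/2}}=n^{5/8}(\log n)^{3/4}$.
	\item In $d=2$, $\kappa=\log n$, so the bound is $\sqrt{(\log n)^2 n^{1/2}(\log n)^{1/2}}=n^{1/4}(\log n)^{5/4}$.
	\item In $d=3$, $\kappa=1$, so the bound is $\sqrt{\log n\cdot n^{1/4}(\log n)^{1/2}}=n^{1/8}(\log n)^{3/4}$.
\end{itemize}
The additive $Cr=C\log n$ is dominated in every case. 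This gives \eqref{eq:discrepancy}.

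\textbf{Step 4: the tail bound.} The bound in \eqref{eq:discrepancy} is $n^{(4-d)/4-a_d}$ times a power of the logarithm, with a gap $a_d=1/8$ in every dimension (e.g.\ $3/4-5/8=1/8$). Since Theorem~\ref{thm:BP} gives $\E u_n(0)\geq cn^{(4-d)/4}$, Markov's inequality at power $r$ yields
\[
	\P\bigl(|U_n(0)-u_n(0)|>\eps\E u_n(0)\bigr)\leq\bigl(C\eps^{-1}n^{-1/8}(\log n)^{5/4}\bigr)^{r}\,.
\]
For large $n$ the base is at most $n^{-1/16}$, so the right side is at most $\exp(-\tfrac1{16}\log n\cdot\log n)$. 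This gives \eqref{eq:discrepancy-tail} with $c=1/16$.

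The main obstacle is Step 2, and specifically checking that $r\,\kappa_d(n)(n+1)^{2/r}$ with $r\asymp\log n$ does not spoil the polynomial gap. In $d=1$ this term is $n^{1/2}\log n$, and in $d=2$ it is $(\log n)^2$; both stay below the leading orders $n^{3/4}$ and $n^{1/2}$. Beyond that, the argument is bookkeeping of logarithmic powers.
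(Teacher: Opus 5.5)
Your proposal is correct and follows the paper's proof: take $r=2\vee\lceil\log(n+1)\rceil$ in \eqref{eq:critical-moment} to get $(\E U_n(0)^r)^{1/r}\leq C\,\E u_n(0)\sqrt{\log(n+1)}$. Then feed this into Proposition~\ref{prop:w-moment} and \eqref{eq:pathwise-comparison}, and finish with Markov's inequality at exponent $r$.

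There is one harmless slip in Step~4. The polynomial gap in $d=2$ is $1/4$, not $1/8$, but a gap of at least $1/8$ is all your Step~4 actually uses.
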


\begin{proof}
	Take $r=2\vee\lceil\log(n+1)\rceil$ in
	\eqref{eq:critical-moment}. Since $d\leq3$, \eqref{eq:green-norms} and
	Theorem~\ref{thm:BP} bound $(\E U_n(0)^r)^{1/r}$ by
	$C\E u_n(0)\sqrt{\log(n+1)}$. Insert this into
	Proposition~\ref{prop:w-moment} and then
	\eqref{eq:pathwise-comparison}. Substituting the three values of
	$\kappa_d(n)$ and $\E u_n(0)\asymp n^{(4-d)/4}$ produces
	\eqref{eq:discrepancy}. In each dimension its right side divided by
	$\E u_n(0)$ is at most $n^{-c}$ times a power of $\log n$, so Markov's
	inequality with exponent $r$ yields \eqref{eq:discrepancy-tail}.
\end{proof}

For the dimension-four construction, write $u_n(0;\xi)$ for the solution of
\eqref{eq:divisible} at the origin with $\eta$ replaced by $\xi$.

\begin{theorem}[Rare nonzero values in dimension four]\label{thm:four-sparse}
	Let $d=4$ and, for $0<\eps\leq1/2$, let $(\eta(x))_{x\in\Z^4}$ be independent
	and identically distributed, with $\eta(0)$ taking the values $1$ and
	$-1$ with probability $\eps/2$ each and $0$ otherwise. There is
	$c>0$, independent of $\eps$, such that
	\begin{equation}\label{eq:four-sparse}
		\liminf_{n\to\infty}\frac{\E U_n(0)}{\E u_n(0)}
		\geq c\log(e/\eps).
	\end{equation}
\end{theorem}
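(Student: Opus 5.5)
The plan is to combine the law-independent lower bound with an upper bound on $\E u_n(0)$ whose constant decays like $1/\log(e/\eps)$. Lemma~\ref{lem:critical-density} gives $\E U_n(0)\geq c_0\log n-C_\eps$ with $c_0$ universal, because the sparse law is nonconstant, integer-valued and has mean zero. It then suffices to prove
\[
	\E u_n(0)\leq\frac{C\log n}{\log(e/\eps)}
\]
for all large $n$, with $C$ independent of $\eps$. Dividing the two bounds and letting $n\to\infty$ gives \eqref{eq:four-sparse}.

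For the upper bound I would use the convex-order comparison sketched in the outline. By Lemma~\ref{lem:stopping}, $u_n(0;\xi)$ is a supremum of functions that are linear in each coordinate. It is therefore convex in each coordinate, and also positively homogeneous. Replacing the i.i.d.\ coordinates one at a time by variables that dominate them in convex order can only increase $\E u_n(0)$; to pass to infinitely many sites, I would use finite-range dependence on the ball of radius $n$.

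Choose $\lambda\asymp\log(e/\eps)$ and let $\zeta$ be a symmetric Laplace variable with scale $1/\lambda$, so $\E e^{\lambda'|\zeta|}<\infty$ for $\lambda'<\lambda$. The main step is to show that $\eta(0)\leq_{cx}a\zeta$ with $a\asymp1$, or that $\eta(0)$ is dominated by a mixture of the form $B\cdot(\text{Laplace})$ whose variance is of order $\eps$. Both sides are symmetric, so convex order reduces to comparing $\E(\eta(0)-s)^+$ with $\E(a\zeta-s)^+$ for $s\geq0$. For $s\geq1$ the left side vanishes. For $0\leq s<1$ it equals $\tfrac{\eps}{2}(1-s)$, while the right side is $\tfrac{a}{2\lambda}e^{-\lambda s/a}$. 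I would take $a=1$ and $\lambda=\log(1/\eps)$; if this endpoint inequality is off by a constant, I would tune $\lambda=c\log(e/\eps)$.

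A cleaner route uses homogeneity. If $\eta(0)\leq_{cx}\lambda^{-1}\zeta_1$, where $\zeta_1$ is standard Laplace with a mixture adjustment, then $\E u_n(0;\eta)\leq\lambda^{-1}\E u_n(0;\zeta_1)$. Theorem~\ref{thm:BP} applies to $\zeta_1$, since it is real, mean zero and has an exponential moment, and gives $\E u_n(0;\zeta_1)\leq C\log n$ with $C$ fixed. This yields the required $C\log n/\log(e/\eps)$.

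The obstacle is the convex-order comparison near $s=0$. For $s$ near $0$ the inequality needs $\tfrac{\eps}{2}\leq\tfrac{1}{2\lambda}$, which holds easily. Near $s=1$ it needs $\tfrac{\eps}{2}(1-s)\leq\tfrac{1}{2\lambda}e^{-\lambda s}$, which holds when $\lambda\leq\log(1/\eps)$ up to constants because $1-s$ is small there. With $\lambda=\tfrac12\log(1/\eps)$, for $\eps\leq\eps_0$ I would check that $\sup_{s\in[0,1]}\eps(1-s)\lambda e^{\lambda s}\leq1$. For $\eps\in(\eps_0,1/2]$ the statement follows from Theorem~\ref{thm:trichotomy}(ii)-type bounds with a trivial constant, after shrinking $c$.
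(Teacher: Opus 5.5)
Your proposal is correct and is essentially the paper's proof. It combines the law-independent bound $\E U_n(0)\geq c\log n-C_\eps$ from Lemma~\ref{lem:critical-density} with convex-order domination of $\eta(0)$ by a Laplace variable of scale $1/\log(e/\eps)$, coordinatewise convexity and positive homogeneity of $\xi\mapsto u_n(0;\xi)$, and Theorem~\ref{thm:BP} for the fixed Laplace field. Your endpoint check also closes for every $0<\eps\leq1/2$: since $\max_{s\in[0,1]}(1-s)e^{\lambda s}=e^{\lambda-1}/\lambda$ for $\lambda\geq1$, the condition is exactly $\eps e^{\lambda-1}\leq1$, i.e.\ $\lambda\leq\log(e/\eps)$, which is the paper's choice $b_\eps=1/\log(e/\eps)$. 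The $\eps_0$ case split and the appeal to Theorem~\ref{thm:trichotomy} are therefore unnecessary.
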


\begin{proof}
	\smallskip
	\noindent\emph{Step 1.} We compare the two initial laws in convex order.
	The representation \eqref{eq:stopping} shows that
	$\xi\mapsto u_n(0;\xi)$ is convex and positively homogeneous: it is a
	supremum of linear functions of the coordinates $\xi(x)$ with $|x|<n$.
	Let $(\zeta(x))_{x\in\Z^4}$ be independent with the symmetric Laplace
	density $s\mapsto\tfrac12e^{-|s|}$, and put
	$b_\eps=1/\log(e/\eps)$. The law of $\eta(0)$ is dominated by that of $b_\eps\zeta(0)$ in convex
	order.
	Indeed, for $0\leq t\leq1$,
	\[
		\E(\eta(0)-t)^+=\frac\eps2(1-t),
		\qquad
		\E(b_\eps\zeta(0)-t)^+
		=\frac{b_\eps}{2}e^{-t/b_\eps}.
	\]
	For $0\leq t<1$, the ratio
	$b_\eps e^{-t/b_\eps}/(1-t)$ is minimized at
	$t=1-b_\eps\in(0,1)$, where its value is
	$e^{1-1/b_\eps}=\eps$. Thus
	$\E(\eta(0)-t)^+\leq\E(b_\eps\zeta(0)-t)^+$ holds on
	$[0,1]$. For $t\geq1$ its left side is zero, and symmetry together with
	equality of the means gives it for $t<0$.

	\smallskip
	\noindent\emph{Step 2.} We prove \eqref{eq:four-sparse}.
	Replacing the finitely many coordinates one at a time, then using
	independence and positive homogeneity of
    $\xi\mapsto u_n(0;\xi)$, gives
	\[
		\E u_n(0;\eta)
		\leq b_\eps\,\E u_n(0;\zeta)
		\leq Cb_\eps\log(n+1),
	\]
	where we used Theorem~\ref{thm:BP} for the second inequality. On the other hand,
	\eqref{eq:q-lower-log} gives
	$\E U_n(0)\geq c\log n-C_\eps$. Dividing and letting $n\to\infty$ proves
	\eqref{eq:four-sparse}.
\end{proof}

This law occurs in the particle--hole model: the number of particles at a site,
$\eta(0)+1$, takes the values $0$, $1$ and $2$. Thus, even within that model,
the ratio cannot be bounded uniformly over the initial law in dimension four.

\begin{proof}[Proof of Theorem~\ref{thm:trichotomy}]
	For $d\leq3$, the right side of \eqref{eq:discrepancy} is $o(\E u_n(0))$,
	and, since $2\vee\lceil\log(n+1)\rceil\to\infty$, this gives convergence in
	every fixed $L^q$. The summable bound
	\eqref{eq:discrepancy-tail} upgrades it to almost sure convergence by the
	Borel--Cantelli lemma. Taking expectations leaves
	$\E U_n(0)/\E u_n(0)\to1$, and Theorem~\ref{thm:BP} makes
	$n^{-(4-d)/4}\E u_n(0)$ converge to a limit in $(0,\infty)$, proving part
	\textup{(i)}.

	In dimension four Theorem~\ref{thm:BP} gives $\E u_n(0)\asymp\log n$, so
	both sides of \eqref{eq:master} are of order $\E u_n(0)$ and
	$\E U_n(0)/\E u_n(0)$ stays between two positive constants. Given $B>0$,
	choose $\eps$ in Theorem~\ref{thm:four-sparse} so that
	$c\log(e/\eps)\geq B$. This proves
	part~\textup{(ii)}.

	In dimensions five and higher, the assumption that $\eta(0)$ is bounded
	below and Theorem~\ref{thm:BP} give $\E u_n(0)=o(\log n)$. Hence
	$\E u_n(0)+\log n\asymp\log n$, so
	$\E U_n(0)\asymp\log n$ by Theorem~\ref{thm:master} and
	$\E U_n(0)/\E u_n(0)\to\infty$. Moreover
	$\E|U_n(0)-u_n(0)|\geq\E U_n(0)-\E u_n(0)\geq c\log n$ by
	\eqref{eq:q-lower-log}, while
	$\E|U_n(0)-u_n(0)|\leq\E U_n(0)+\E u_n(0)\leq C\log n$.
	This proves part~\textup{(iii)}.
\end{proof}

\section{The nearest particle and hole}\label{sec:nearest}

\subsection{Below dimension four}

Below dimension four, the rescaled scenery converges to spatial white noise, while the two rescaled odometers converge to the associated Brownian optimal-stopping value. The
rescaled excess of active particles over unfilled holes converges to the time
derivative of that value. This derivative solves the heat equation and is
strictly positive wherever the value is positive.

\begin{proposition}[Spatial scaling]\label{prop:spatial-scaling}
	Let $d\leq3$, and let $(\eta(x))_{x\in\Z^d}$ be independent and identically
	distributed, integer-valued and nonconstant, with $\E\eta(0)=0$ and
	$\E e^{\theta|\eta(0)|}<\infty$ for some $\theta>0$. For $R>0$,
	$s\geq0$, and $x\in\mathbb R^d$, let
	\[
		\overline U_R(s,x)\coloneqq
		R^{d/2-2}U_{\lfloor sR^2\rfloor}(\lfloor Rx\rfloor),
		\qquad
		\overline u_R(s,x)\coloneqq
		R^{d/2-2}u_{\lfloor sR^2\rfloor}(\lfloor Rx\rfloor),
	\]
	where the floor is taken coordinatewise. For $R>0$ and
	$\varphi\in C_c^\infty(\mathbb R^d)$, let
	\[
		\langle\eta_R,\varphi\rangle
		\coloneqq R^{-d/2}\sum_y\eta(y)\varphi(y/R)
	\]
	and
	\[
		\langle\nu_R,\varphi\rangle
		\coloneqq R^{-d/2}\sum_y
		\bigl(A_{\lfloor R^2\rfloor}(y)-H_{\lfloor R^2\rfloor}(y)\bigr)
		\varphi(y/R).
	\]
	Let $\mathcal W$ be a mean-zero spatial white noise with covariance
	\[
		\E\langle\mathcal W,\varphi\rangle\langle\mathcal W,\psi\rangle
		=\Var\eta(0)\int_{\mathbb R^d}\varphi(x)\psi(x)\,dx
		\qquad(\varphi,\psi\in C_c^\infty(\mathbb R^d))\,.
	\]
	Let $\mathcal U$ be the continuous Brownian optimal-stopping value driven by
	$\mathcal W$, with $\mathcal U(0,\cdot)=0$. Then, jointly,
	\begin{equation}\label{eq:space-time-scaling}
		(\eta_R,\overline u_R,\overline U_R)
		\Longrightarrow(\mathcal W,\mathcal U,\mathcal U)
	\end{equation}
	as $R\to\infty$, with the first coordinate converging as a random
	distribution and the last two locally uniformly on
	$(0,\infty)\times\mathbb R^d$.
	Let
	$\mathcal O\coloneqq\{\mathcal U>0\}$ and
	$\mathcal L\coloneqq(2d)^{-1}\Delta$. Then, in
	the sense of distributions on $\mathcal O$,
	\begin{equation}\label{eq:continuum-equation}
		\partial_s\mathcal U=\mathcal L\mathcal U+\mathcal W.
	\end{equation}
	For every $\varphi\in C_c^\infty(\mathbb R^d)$, jointly with
	\eqref{eq:space-time-scaling},
	\begin{equation}\label{eq:signed-density-limit}
		\langle\nu_R,\varphi\rangle\Longrightarrow
		\langle\mathcal W+\mathcal L\mathcal U(1,\cdot),\varphi\rangle.
	\end{equation}
	Moreover, on $\mathcal O$ the distribution
	$v=\partial_s\mathcal U$ is a smooth function and
	\begin{equation}\label{eq:strict-time-derivative}
		v(s,x)>0\qquad ((s,x)\in\mathcal O).
	\end{equation}
\end{proposition}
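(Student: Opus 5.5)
The plan is to import the scaling limit for the divisible sandpile from \citet{BP}, transfer it to the particle odometer with Proposition~\ref{prop:discrepancy}, and then read off the PDE facts from the Brownian optimal-stopping representation.

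\emph{Step 1: joint convergence of $(\eta_R,\overline u_R)$.} For $d\leq3$, \citet{BP} prove that the rescaled divisible odometer $\overline u_R$ converges locally uniformly to $\mathcal U$. They obtain this from the stopping representation \eqref{eq:stopping}, with the rescaled scenery converging to $\mathcal W$. I would check that their argument gives convergence jointly with $\eta_R$; if it does not, I would rerun it with the invariance principle for the pair (walk, scenery), which yields joint convergence as stated. Convergence of $\eta_R$ as a random distribution is the multivariate CLT applied to finitely many test functions, together with tightness.

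\emph{Step 2: transfer to $\overline U_R$.} Proposition~\ref{prop:discrepancy} gives $|U_n(0)-u_n(0)|=o(n^{(4-d)/4})$ with tail bound $e^{-c(\log n)^2}$. This bound is uniform over sites by translation invariance. A union bound over the polynomially many lattice points $(m,y)$ with $m\leq TR^2$ and $|y|\leq KR$ shows that $\sup|\overline U_R-\overline u_R|\to0$ in probability on compacts of $(0,\infty)\times\mathbb R^d$. No modulus of continuity is needed for $\overline U_R$, because the grid already contains every lattice point. This gives \eqref{eq:space-time-scaling}.

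\emph{Step 3: signed density.} By \eqref{eq:signed-count}, $A_t-H_t=\eta+\sum_yI_{y,\cdot}(U_t(y))-U_t$, which splits as
\[
A_t-H_t=\eta+(P-I)U_t+\Bigl(\sum_yI_{y,\cdot}(U_t(y))-PU_t\Bigr).
\]
Pairing with $\varphi(\cdot/R)$ and summing by parts moves $P-I$ onto $\varphi$; this costs $R^{-2}$ and produces $\mathcal L\varphi$ in the limit. After the $R^{-d/2}$ normalization, the term $R^{-d/2}\langle(P-I)U_t,\varphi(\cdot/R)\rangle$ equals $R^{d/2-2}\sum_y U_t(y)\,\mathcal L\varphi(y/R)R^{-d}$ up to $o(1)$. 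This converges to $\langle\mathcal U(1,\cdot),\mathcal L\varphi\rangle$ by Step 2, since the sum is a Riemann sum. The last bracket is a martingale sum with quadratic variation $\leq C\sum_yU_t(y)\varphi(y/R)^2$. By Theorem~\ref{thm:master}, its expectation is of order $R^{d}R^{(4-d)/2}$, so the rescaled term has variance $O(R^{-d}R^{d}R^{(4-d)/2})\cdot R^{-?}$. On reflection, the honest computation is: variance $\leq R^{-d}\cdot R^{d}\cdot R^{(4-d)/2}=R^{(4-d)/2}$, which is not small. So this term needs more care: instead of comparing arrivals with $PU_t$ at a single time, I would use the time-integrated structure of $w$ from Section~\ref{sec:routing}. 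There the noise is smoothed by the Green kernel, and Proposition~\ref{prop:w-moment} controls it by $\sqrt{\kappa_d\E U}$, which is $o(R^{2-d/2})$. The identity $A_t-H_t-\eta=(P-I)(U_t-w_t)+(\text{increment of }w)$ lets me write the pairing through $w_t$ and $w_{t+1}$ at scale $R$. Each of these is $o(R^{2-d/2})$ pointwise in $L^r$, and after dividing by $R^{2-d/2}$ and summing with weight $R^{-d}$ it vanishes. This step is the main obstacle, and I would expect to spend the most effort adapting the $w$-decomposition to the spatial pairing.

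\emph{Step 4: PDE and strict positivity.} On $\mathcal O$ the stopping value is not stopped, so the continuum dynamic programming equation gives \eqref{eq:continuum-equation}; this follows by passing to the limit in $u_{n+1}-u_n=(P-I)u_n+\eta$ on $\{u_n>0\}$ against test functions. Differentiating in $s$ kills the time-independent $\mathcal W$, so $v=\partial_s\mathcal U$ solves $\partial_sv=\mathcal Lv$ distributionally on $\mathcal O$. Hypoellipticity of the heat operator then makes $v$ smooth there. Since $\mathcal U$ is nondecreasing in $s$ (the stopping horizon grows), $v\geq0$. The strong minimum principle on the parabolic component then gives $v>0$ unless $v$ vanishes on an earlier portion of the component. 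Ruling out that case uses $\mathcal U(0,\cdot)=0<\mathcal U$ on $\mathcal O$: if $v$ vanished on a backward neighborhood, then $\mathcal U$ would be constant in time down to the boundary of $\mathcal O$, where it is $0$, which is a contradiction. This gives \eqref{eq:strict-time-derivative}.
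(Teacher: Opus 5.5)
Your Steps~1, 2 and~4 follow the paper's proof. Step~3, however, has a genuine gap, and it comes from a miscomputed variance.

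Write $\varphi_R(y)\coloneqq\varphi(y/R)$. By the symmetry of $P$, the noise term paired with $\varphi_R$ is
\[
	M_R(\varphi)=R^{-d/2}\sum_y\sum_{j\leq U_t(y)}\bigl[\varphi_R(\rho_j(y))-(P\varphi_R)(y)\bigr]\,.
\]
Each instruction sends its particle to exactly one neighbor. So each summand is the deviation of $\varphi_R$ at a uniform neighbor of $y$ from its neighbor average, and it is bounded by $C_\varphi/R$. Your quadratic-variation bound $C\sum_yU_t(y)\varphi(y/R)^2$ treats the arrivals at different sites as uncorrelated. It therefore loses this cancellation: the centered arrival vector of a single instruction sums to zero. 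With conditional variance $O(R^{-2})$ per read instruction, Lemma~\ref{lem:exposure} and Corollary~\ref{cor:growth} give
\[
	\E M_R(\varphi)^2\leq CR^{-d-2}\sum_{\operatorname{dist}(y,R\operatorname{supp}\varphi)\leq1}\E U_t(y)\leq CR^{-d-2}\cdot R^d\cdot R^{(4-d)/2}=CR^{-d/2}\,.
\]
So the direct approach you abandoned is the one that works, and it is exactly the paper's argument.

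The replacement argument through $w$ does not close the gap. The identity $A_t-H_t-\eta=(P-I)(U_t-w_t)+(w_{t+1}-w_t)$ is correct. The piece $(P-I)w_t$ does vanish: moving $P-I$ onto $\varphi_R$ gains $R^{-2}$, and $(\E|w_t(0)|^r)^{1/r}=o(R^{2-d/2})$. But the increment $w_{t+1}-w_t$ is paired with $\varphi_R$ at normalization $R^{-d/2}$, with no $P-I$ to transfer. Pointwise bounds on $w_{t+1}$ and $w_t$ then give only $R^{-d/2}\cdot R^d\cdot o(R^{2-d/2})=o(R^2)$. Your weighting (divide by $R^{2-d/2}$ and sum with weight $R^{-d}$) is too small by a factor $R^2$ for this term. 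Moreover, by \eqref{eq:error-recursion},
\[
	w_{t+1}-w_t=(P-I)w_t+\Bigl(\sum_yI_{y,\cdot}(U_t(y))-PU_t\Bigr)\,.
\]
Any estimate of the increment must therefore control precisely the one-round martingale term you were trying to avoid. The detour is circular, and \eqref{eq:signed-density-limit} is not established until the variance computation above is done correctly.
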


\begin{proof}
	\smallskip
	\noindent\emph{Step 1.} We prove \eqref{eq:space-time-scaling} and
	\eqref{eq:continuum-equation}.
	The argument proving the parabolic scaling limit in \citet{BP}, with the
	time variable and scenery retained, gives the joint convergence of
	$(\eta_R,\overline u_R)$. Its estimates are uniform on compact time
	intervals. A union bound in \eqref{eq:discrepancy-tail} over the
	$O(R^{d+2})$ space--time mesh points in a compact set transfers this
	convergence to $\overline U_R$. On a compact subset of $\mathcal O$, local
	uniform positivity makes
	the positive part in the divisible recursion inactive for all large $R$.
	Test $u_{n+1}-u_n=\eta+(P-I)u_n$ against a smooth compactly supported
	function and pass to the limit to obtain \eqref{eq:continuum-equation}.

	\smallskip
	\noindent\emph{Step 2.} We prove \eqref{eq:signed-density-limit}.
	Let $t\coloneqq\lfloor R^2\rfloor$ and
	$\varphi_R(y)\coloneqq\varphi(y/R)$. The identity \eqref{eq:signed-count}
	gives
	\begin{equation}\label{eq:signed-density-decomposition}
		\langle\nu_R,\varphi\rangle
		=\langle\eta_R,\varphi\rangle
		+R^{-d/2}\sum_yU_t(y)(P-I)\varphi_R(y)+M_R(\varphi),
	\end{equation}
	and
	\[
		M_R(\varphi)=R^{-d/2}\sum_y\sum_{j\leq U_t(y)}
		\left[\varphi\bigl(\rho_j(y)/R\bigr)-(P\varphi_R)(y)\right].
	\]
	Reveal the instructions in the order in which the particles read them. By Lemma~\ref{lem:exposure}, the summands in $M_R$ are martingale differences. Since each term is bounded by $C_\varphi/R$,
	Corollary~\ref{cor:growth} gives
	\[
		\E M_R(\varphi)^2
		\leq CR^{-d-2}
		\sum_{\operatorname{dist}(y,R\operatorname{supp}\varphi)\leq1}
		\E U_t(y)
		\leq CR^{-d/2}.
	\]
	Thus $M_R(\varphi)$ tends to zero in probability. By Taylor expansion and the
	locally uniform convergence of $\overline U_R(1,\cdot)$, the middle term
	in \eqref{eq:signed-density-decomposition} converges to
	$\langle\mathcal L\mathcal U(1,\cdot),\varphi\rangle$. The joint
	scenery--odometer convergence in \eqref{eq:space-time-scaling} now proves
	\eqref{eq:signed-density-limit}.

	\smallskip
	\noindent\emph{Step 3.} We prove that $v=\partial_s\mathcal U$ is smooth and
	satisfies $(\partial_s-\mathcal L)v=0$ on $\mathcal O$. Since $\mathcal U$ is nondecreasing in time, $v$ is nonnegative. On a cylinder compactly contained in $\mathcal O$, the time differences
	\[
		\frac{\mathcal U(s+h,x)-\mathcal U(s,x)}h
	\]
	are nonnegative and, by \eqref{eq:continuum-equation}, solve the heat equation: the time-independent white noise cancels. Letting $h\downarrow0$ shows that $(\partial_s-\mathcal L)v=0$ distributionally. Interior parabolic regularity therefore represents $v$ by a smooth nonnegative function on $\mathcal O$.

	\smallskip
	\noindent\emph{Step 4.} We prove \eqref{eq:strict-time-derivative}.
	Suppose that $v(s_0,x_0)=0$ at a point of $\mathcal O$, and let
	$\tau=\inf\{s:\mathcal U(s,x_0)>0\}$. Continuity and monotonicity give
	$\mathcal U(\tau,x_0)=0$. Since
	$(\tau,s_0]\times\{x_0\}\subset\mathcal O$, the strong minimum principle
	gives $v(s,x_0)=0$ for $\tau<s\leq s_0$. Hence $\mathcal U(s_0,x_0)=
	\mathcal U(\tau+\varepsilon,x_0)$ for every
	$0<\varepsilon<s_0-\tau$. Letting
	$\varepsilon\downarrow0$ gives $\mathcal U(s_0,x_0)=
	\mathcal U(\tau,x_0)=0$, a contradiction. This proves
	\eqref{eq:strict-time-derivative}.
\end{proof}

\begin{proof}[Proof of Theorem~\ref{thm:nearest}]
	Let $R=\sqrt t$ and use the notation of
	Proposition~\ref{prop:spatial-scaling}. Fix $K\geq1$ and $\eps>0$.
	The parabolic scaling limit and critical-scale lower-tail estimate of
	\citet{BP} imply that $\mathcal U(1,0)>0$ almost surely. By continuity and
	\eqref{eq:strict-time-derivative}, there are $r,a>0$ and a nonzero smooth
	$\varphi\geq0$, supported in the $\ell^1$-ball of radius $r$, such that with
	probability at least $1-\eps$,
	\[
		\inf_{|x|_1\leq(K+2)r}\mathcal U(1,x)>2a,
		\qquad \int\varphi(x)v(1,x)\,dx>2a.
	\]
	Proposition~\ref{prop:spatial-scaling} shows that, with limiting probability
	at least $1-\eps$, $U_t$ is positive on the lattice ball of radius
	$(K+1)rR$ and $\langle\nu_R,\varphi\rangle>a$.
	If $U_t(x)>0$, then a particle has left $x$,
	so every hole there has been filled and $H_t(x)=0$. The ball of
	radius $(K+1)rR$
	therefore contains no unfilled hole. On the support of $\varphi$ we have
	$A_t-H_t=A_t$, so
	$\langle\nu_R,\varphi\rangle>a$ implies that an active particle lies within
	distance $rR$. Thus
	every unfilled hole is more than $K$ times as far from the
	origin as the nearest active particle with limiting probability at least
	$1-\eps$. Letting
	$\eps\downarrow0$ completes the proof.
\end{proof}

\subsection{From dimension five on}

We now prove Theorem~\ref{thm:nearest-counterexample}. Fix $d\geq5$; every
constant below depends only on $d$. For $p\in(0,1/4]$, let
$(\eta(x))_{x\in\Z^d}$ be i.i.d.\ with
\[
	\P(\eta(0)=1)=\P(\eta(0)=-1)=p,
	\qquad \P(\eta(0)=0)=1-2p.
\]
Let
\[
	G(x)=\sum_{n\geq0}\P_0(X_n=x),\qquad g=G(0),\qquad
	q_x(y)=\frac{G(y-x)}g=\P_y(T_x<\infty)\,.
\]
The Chapman--Kolmogorov equations and the Gaussian bound for the simple random walk
\citep[Section~2.3]{LawlerLimic} give
\begin{equation}\label{eq:nearest-bubble-decay}
	\sum_yG(y-x)G(y-z)
	=\sum_{n\geq0}(n+1)\P_0(X_n=z-x)
	\leq C(1+|x-z|)^{4-d}\,.
\end{equation}
Let
\[
	h_t=\P(H_t(0)=1),\qquad m_t=\E U_t(0)\,.
\]
Since $\eta\geq-1$, no site ever holds more than one unfilled hole, and
Lemmas~\ref{lem:activity-holes} and~\ref{lem:transport} give
\begin{equation}\label{eq:nearest-one-point-identities}
	\E A_t(0)=\E H_t(0)=h_t,\qquad
	m_t=\sum_{s<t}h_s\,.
\end{equation}

\begin{lemma}\label{lem:nearest-one-point}
	There is $C<\infty$, depending only on $d$, such that, for every
	$t\geq0$, $x\in\Z^d$, and $r\geq2$,
	\begin{equation}\label{eq:nearest-odometer-moment}
		\bigl(\E U_t(x)^r\bigr)^{1/r}\leq C(m_t+r),
		\qquad
		m_t\leq C\log(1/h_t)\,.
	\end{equation}
	Moreover, $h_t\downarrow0$.
\end{lemma}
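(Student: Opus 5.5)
We would treat the three assertions in the order: monotonicity of $h_t$, the moment bound, and then $m_t\leq C\log(1/h_t)$. Two constant dependences are not fully resolved, and they are flagged at the end.

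\emph{Monotonicity of $h_t$.} Since $\eta\geq-1$, each site holds at most one hole, and an unfilled hole can only be filled, never reopened. So $H_t(0)\in\{0,1\}$ is nonincreasing in $t$, and hence $h_t$ is nonincreasing. By \eqref{eq:nearest-one-point-identities}, $h_t=\E A_t(0)=S_t$. Corollary~\ref{cor:growth} and Proposition~\ref{prop:everyone-settles} give $S_t\to0$, so $h_t\downarrow0$.

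\emph{Moment bound.} By translation invariance it suffices to take $x=0$. The starting point is Step~1 of the proof of Theorem~\ref{thm:upper}, with $\kappa_d=1$ since $d\geq5$:
\[
	\bigl(\E U_t(0)^r\bigr)^{1/r}\leq C\bigl((\E u_t(0)^r)^{1/r}+r(t+1)^{2/r}\bigr).
\]
We would bound $(\E u_t(0)^r)^{1/r}$ by $\E u_t(0)+C(\sqrt r\|g_t\|_2+r\max g_t)\leq \E u_t(0)+Cr$, using Lemma~\ref{lem:u-concentration} and \eqref{eq:green-norms}. Here $\|g_t\|_2$ and $\max g_t$ are bounded in $d\geq5$. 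We need $\E u_t(0)$ bounded uniformly in $p\leq1/4$. For this we would repeat Step~1 of Theorem~\ref{thm:four-sparse}: the three-point law is dominated in convex order by $b\,\zeta(0)$ for a fixed Laplace multiple $b$. This gives $\E u_t(0)\leq C\log(t+1)$, and even $\E u_t(0)\leq Cm_t$ by Theorem~\ref{thm:comparison}.

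The remaining error term is $r(t+1)^{2/r}$. When $r\geq\log(t+2)$ it is at most $Cr$. When $r<\log(t+2)$, we pass to the exponent $r'=\lceil\log(t+2)\rceil$ by monotonicity of $L^r$ norms, which yields $C\log(t+2)$. Finally we absorb $\log(t+2)$ into $m_t$ using Lemma~\ref{lem:critical-density}, namely $m_t\geq c\log t-C$ with universal $c$. Small $t$ are absorbed into the $+r$ term, since $U_t\leq t\sum_{|y|\leq t}\eta(y)^+$.

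\emph{The bound $m_t\leq C\log(1/h_t)$.} The previous step gives $m_t\leq C\log(t+2)$. Lemma~\ref{lem:critical-density} gives $h_t=S_t\geq1/(16t)$ for $t\geq1/(2\gamma)$, so $\log(t+2)\leq C\log(1/h_t)$ in that range. For $t<1/(2\gamma)$ we would use $h_t\leq h_0\leq p\leq1/4$, so that $\log(1/h_t)\geq\log4$. Together with $m_t\leq t$, this handles the bounded range.

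\emph{Main obstacle.} The hardest point is making every constant depend only on $d$, uniformly in $p$. The additive constant in \eqref{eq:q-lower-log} and the threshold $1/(2\gamma)$ both depend on the law through $\gamma\asymp p$. If uniformity fails, we would accept constants depending on $p$, since $p_d$ is fixed in Theorem~\ref{thm:nearest-counterexample}. The alternative is to re-run the resampling argument with $\eps$ chosen relative to $h_t$, aiming for $m_t\leq C\log(1/h_t)$ directly by summing $h_s\leq C\log(s+2)/(s+1)$ only up to the time where $h_s\approx h_t$.
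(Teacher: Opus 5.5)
Your monotonicity argument and your derivation of $h_t\downarrow0$ from Corollary~\ref{cor:growth} are fine. The gap is in the first bound in \eqref{eq:nearest-odometer-moment}: it is not proved with a constant depending only on $d$, and the problem is not just bookkeeping.

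Step~1 of Theorem~\ref{thm:upper} carries the factor $(t+1)^{2/r}$, which comes from the union bound over the $t+1$ times in $w_t^\star$. For $r<\log(t+2)$ your route therefore gives only $(\E U_t(0)^r)^{1/r}\leq C(m_t+r+\log(t+2))$. You would then need $\log(t+2)\leq C(m_t+1)$ uniformly in $p$, and that is false. Since $h_s\leq h_0=p$, we have $m_t\leq pt$, so at $t=\lfloor1/p\rfloor$ one has $m_t\leq1$ while $\log(t+2)\geq\log(1/p)$. This is also why the additive constant in \eqref{eq:q-lower-log} is of order $\log(1/\gamma)$, with $\gamma=4p(1-p)$ here.

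Your fallback of $p$-dependent constants does not prove the lemma as stated. It also breaks the application in Theorem~\ref{thm:nearest-counterexample}. There, $L$ is fixed using the constant of Proposition~\ref{prop:nearest-two-hole}, which inherits the constants of this lemma, and only afterwards is $p$ taken small in terms of $L$.

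The missing idea is to bypass the time-maximal function entirely. For $d\geq5$ the untruncated Green function is square-summable, so you can control $U_t(x)-m_t$ directly. The one-particle discrepancy of Lemma~\ref{lem:one-particle} reads fresh instructions and so moves as a simple random walk. Hence changing $\eta(y)$ moves $\E[U_t(x)\mid\eta]$ by at most $2G(y-x)$, and the bounded difference inequality gives fluctuations of order $C\sqrt r$ about $m_t$. Revealing instructions as they are read gives a martingale with increments at most $g$. Its predictable quadratic variation is at most $\sum_yU_t(y)\frac1{2d}\sum_{z\sim y}G(z-x)^2$, whose weights sum to $\sum_zG(z)^2<\infty$. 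Lemma~\ref{lem:bernstein}, Minkowski and Young then give $C(m_t+r)$ with constants depending only on $G$.

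Your derivation of $m_t\leq C\log(1/h_t)$ has a separate error: it uses Lemma~\ref{lem:critical-density} in the wrong direction. The bound $h_t\geq1/(16t)$ bounds $\log(1/h_t)$ from above. You need $\log(1/h_t)\geq c\log(t+2)$, i.e.\ an upper bound on $h_t$. Also, the range $t<1/(2\gamma)$ is not bounded uniformly in $p$.

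This part can be repaired along your lines. The Laplace comparison and Theorem~\ref{thm:upper} give $m_{t+1}\leq C\log(t+2)$ uniformly in $p$. Here the concentration constant in Lemma~\ref{lem:u-concentration} is uniform for $|\eta|\leq1$ by bounded differences. Monotonicity gives $(t+1)h_t\leq m_{t+1}$. Together with $h_t\leq p\leq1/4$, this yields $\log(1/h_t)\geq c\log(t+2)\geq c'm_t$.

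The paper instead proves the stronger bound $h_t\leq Cp\,e^{-cm_t}$ via the sink process. It writes $h_t=p\,\P(J=0)$ and shows that the compensator $\lambda$ concentrates around $\E\lambda\asymp m_t$.

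With that repair, your weaker moment bound would in fact suffice for its one downstream use, Step~4 of Proposition~\ref{prop:nearest-two-hole}, where $r\asymp\log(1/h_t)\geq c\log(t+2)$. But it is not the stated bound for every $r\geq2$.
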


\begin{proof}
	\smallskip
	\noindent\emph{Step 1.} We prove the first bound in
	\eqref{eq:nearest-odometer-moment}. Raise one coordinate of $\eta$ from $-1$
	to $1$.
	Apply Lemma~\ref{lem:one-particle} once for each added particle. Whenever the one-particle discrepancy is represented by an active particle, that particle uses a fresh instruction at each departure. Its trajectory is therefore a simple random walk. Changing $\eta(y)$ therefore changes
	$\E[U_t(x)\mid\eta]$ by at most $2G(y-x)$, and $\sum_yG(y-x)^2<\infty$, so the bounded difference inequality gives
	\begin{equation}\label{eq:nearest-scenery-moment}
		\bigl(\E\bigl|\E[U_t(x)\mid\eta]-m_t\bigr|^r\bigr)^{1/r}
		\leq C\sqrt r\,.
	\end{equation}
	Now reveal the used instructions in the order in which the particles read them.
	Compare each instruction with the deletion of the particle which reads it.
	This bounds the corresponding Doob increment by $g$. For an instruction at
	$y$, it bounds the conditional variance by
	$\frac1{2d}\sum_{z\sim y}G(z-x)^2$. Thus the predictable quadratic variation
	is at most $\sum_yU_t(y)\,\frac1{2d}\sum_{z\sim y}G(z-x)^2$. The weights sum
	to $\sum_zG(z-x)^2<\infty$. Lemma~\ref{lem:bernstein},
	Minkowski's inequality, translation invariance and
	\eqref{eq:nearest-scenery-moment} give
	\[
		\bigl(\E U_t(x)^r\bigr)^{1/r}
		\leq m_t+C\sqrt r
		+C\sqrt{r\bigl(\E U_t(0)^r\bigr)^{1/r}}+Cr\,.
	\]
	Young's inequality absorbs the square root, proving the first bound in
	\eqref{eq:nearest-odometer-moment}. A permanent sink only lowers the
	odometers, so this bound also holds for the process with a sink.

	\smallskip
	\noindent\emph{Step 2.} We prove \eqref{eq:nearest-hole-sink} and
	\eqref{eq:nearest-sink-mean}.
	Empty the origin, make it a permanent sink, and let $U^{\rm sink}_t$ be its
	odometer. Let
	$J$ count the particles killed there through round $t$; its predictable
	compensator is
	\[
		\lambda\coloneqq\frac1{2d}\sum_{y\sim0}U^{\rm sink}_t(y)\,.
	\]
	The origin's hole remains unfilled through round $t$ exactly when
	$\eta(0)=-1$ and no particle arrives. Before the first arrival, the original
	and sink processes agree. The sink process is independent of $\eta(0)$.
	Hence
	\begin{equation}\label{eq:nearest-hole-sink}
		h_t=p\,\P(J=0)\,.
	\end{equation}
	In an auxiliary coupled process, set
	\[
		\eta(0)=-1-\#\{y:|y|\leq t\}.
	\]
	Since $\eta^+\leq1$, its holes at the origin cannot be exhausted by round
	$t$, so off the origin it agrees with the sink process through that round.
	Its initial configuration is below the original one, and
	Lemma~\ref{lem:one-particle} gives $U_t^{\rm sink}\leq U_t$,
	$A_t^{\rm sink}\leq A_t$ and $H_t^{\rm sink}\geq H_t$ off the origin.
	Thus $\E\lambda\leq m_t$.
	Conversely, let $f(y)\coloneqq\E U_t^{\rm sink}(y)$. For $y\ne0$, averaging
	\eqref{eq:signed-count} identifies $(Pf)(y)-f(y)$ with the expected number of
	active particles minus unfilled holes at $y$ in the sink process. The same
	comparison therefore gives
	\[
		(Pf)(y)-f(y)\leq\E[A_t(y)-H_t(y)]=0\,.
	\]
	Moreover $f(0)=0$, while finite propagation gives $f(y)=m_t$ when $|y|>t$.
	Stop the walk when it hits the origin or leaves the ball of radius $t$.
	The optional stopping theorem gives $f(y)\geq m_t\P_y(T_0=\infty)$. Averaging
	over the neighbors of the origin and using
	$\frac1{2d}\sum_{y\sim0}\P_y(T_0=\infty)=1/g$ leaves
	\begin{equation}\label{eq:nearest-sink-mean}
		m_t/g\leq\E\lambda\leq m_t\,.
	\end{equation}

	\smallskip
	\noindent\emph{Step 3.} We prove the second bound in
	\eqref{eq:nearest-odometer-moment} and show that $h_t\downarrow0$. We first
	show that
	\begin{equation}\label{eq:nearest-sink-tail}
		\P(\lambda<\E\lambda/2)\leq Ce^{-cm_t}\,.
	\end{equation}
	Let $\bar\lambda\coloneqq\E[\lambda\mid\eta]$. Raising $\eta(z)$ by one,
	for $z\ne0$, changes the expected number of entries into the sink by at most
	\[
		\E_z\sum_{n<T_0}P(X_n,0)=q_0(z)\,.
	\]
	The one-particle comparison therefore bounds the oscillation of
	$\bar\lambda$ in $\eta(z)$ by $2q_0(z)$; the oscillation at the origin is
	zero. Since $\sum_zq_0(z)^2<\infty$, the bounded difference inequality and
	\eqref{eq:nearest-sink-mean} give
	$\P(\bar\lambda<3\E\lambda/4)\leq Ce^{-cm_t^2}$. Next reveal the instructions
	the particles read. The resulting martingale $\lambda-\bar\lambda$ has increments
	bounded by a constant and
	predictable quadratic variation at most
	\[
		\sum_yU^{\rm sink}_t(y)\,\frac1{2d}\sum_{z\sim y}q_0(z)^2\,.
	\]
	The weights in this sum add up to $\sum_zq_0(z)^2<\infty$. Minkowski's
	inequality and the sink version of \eqref{eq:nearest-odometer-moment}
	therefore bound its $L^{r/2}$ norm by $C(m_t+r)$.
	For $m_t$ larger than a constant, let $r$ be a sufficiently small fixed
	multiple of $m_t$, rounded down to an integer. Lemma~\ref{lem:bernstein} then
	gives
	\[
		\bigl(\E|\lambda-\bar\lambda|^r\bigr)^{1/r}
		\leq C\sqrt{r(m_t+r)}+Cr\leq m_t/(8g)\,,
	\]
	and $\E\lambda/4\geq m_t/(4g)$, so Markov's inequality bounds
	$\P(|\lambda-\bar\lambda|\geq\E\lambda/4)$ by
	$2^{-r}\leq Ce^{-cm_t}$. Since
	$\lambda\geq\bar\lambda-|\lambda-\bar\lambda|$, the two tail bounds prove
	\eqref{eq:nearest-sink-tail}, after enlarging $C$ to cover the bounded
	range of $m_t$.

	Reveal chronologically the finitely many instructions which can affect the
	sink through round $t$, each only after the preceding history determines that
	it is read. Multiply the indicator that no revealed instruction has entered
	the sink by the exponential of the accumulated conditional entrance
	probabilities. This is a nonnegative supermartingale because
	$(1-a)e^a\leq1$, and those probabilities sum to $\lambda$. Hence
	$\E[\one\{J=0\}e^\lambda]\leq1$, and splitting along
	\eqref{eq:nearest-sink-tail} and \eqref{eq:nearest-sink-mean} gives
	\[
		\P(J=0)\leq Ce^{-cm_t}+e^{-m_t/(2g)}\leq Ce^{-cm_t}\,.
	\]
	With \eqref{eq:nearest-hole-sink} and $h_t\leq p\leq1/4$ this proves the
	second bound in \eqref{eq:nearest-odometer-moment}. The sequence $h_t$
	decreases because holes never reappear. If $m_t$ stays bounded, then
	\eqref{eq:nearest-one-point-identities} gives $\sum_t h_t<\infty$, so
	$h_t\to0$. If
	$m_t\to\infty$, then \eqref{eq:nearest-hole-sink} and the bound
	$\P(J=0)\leq Ce^{-cm_t}$ give $h_t\to0$.
\end{proof}

\begin{proposition}\label{prop:nearest-two-hole}
	There is $C<\infty$, depending only on $d$, such that, for every $t\geq0$ and
	distinct $x,z\in\Z^d$,
	\begin{equation}\label{eq:nearest-two-hole}
		\P\bigl(H_t(x)=H_t(z)=1\bigr)
		\leq Ch_t^2\exp\bigl\{C\log(1/h_t)(1+|x-z|)^{4-d}\bigr\}\,.
	\end{equation}
\end{proposition}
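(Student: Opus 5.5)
We extend the one-point sink argument of Lemma~\ref{lem:nearest-one-point} to two sinks. Empty both $x$ and $z$ and make them permanent sinks, and let $J_x,J_z$ count the particles killed at each through round $t$. As in \eqref{eq:nearest-hole-sink}, the two holes both survive exactly when $\eta(x)=\eta(z)=-1$ and no particle arrives at either site, and before the first arrival the original and two-sink processes agree. The two-sink process is independent of $(\eta(x),\eta(z))$. Hence
\[
	\P\bigl(H_t(x)=H_t(z)=1\bigr)=p^2\,\P(J_x=J_z=0)\,.
\]
Let $\lambda_x,\lambda_z$ be the predictable compensators, namely $(2d)^{-1}$ times the two-sink odometer summed over the neighbours of $x$ and of $z$. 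The exponential supermartingale from Step~3 of Lemma~\ref{lem:nearest-one-point}, run over the instructions that can enter either sink, gives $\E[\one\{J_x=J_z=0\}e^{\lambda_x+\lambda_z}]\leq1$. It therefore suffices to show that $\lambda_x+\lambda_z$ is at least about $2m_t/g-Cm_t(1+|x-z|)^{4-d}$, except on an event of probability $Ce^{-cm_t}$ times the target. Then, using $p^2e^{-2m_t/g}\lesssim h_t^2$ up to the factor $e^{Cm_t(\cdot)}$ and $m_t\leq C\log(1/h_t)$, we obtain \eqref{eq:nearest-two-hole}.

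For this we need a lower bound on the means that matches $h_t^2$. I would not try to compare $\P(J_x=J_z=0)$ with $\P(J=0)^2$ through $m_t/g$ alone, because the constants $c$ lose too much. Instead, write $h_t=p\,\P(J=0)$ and use a decoupling: $\P(J_x=J_z=0)\leq\P(J_x=0)\,\P(J_z=0)\,e^{\text{interaction}}$.

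To get this, let $f_x(y)=\E U^{\rm sink,x}_t(y)$ and $f_{xz}$ be the one- and two-sink mean odometers. Adding the sink at $z$ to the sink at $x$ removes the particles that would have entered $z$. By the one-particle coupling (Lemma~\ref{lem:one-particle}), each removed particle lowers the odometer near $x$ by at most its Green occupation. The expected number of entries into $z$ is at most $m_t$, spread along walks. Thus $\E(\lambda_x-\lambda_x^{xz})\leq C m_t\sum_y q_z(y)G(y-x)$-type quantities, and \eqref{eq:nearest-bubble-decay} bounds these by $Cm_t(1+|x-z|)^{4-d}$.

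The concentration of $\lambda_x+\lambda_z$ around its conditional mean follows exactly as in \eqref{eq:nearest-sink-tail}. The bounded-difference weights become $q_x+q_z$, which are square-summable. The martingale part uses the moment bound \eqref{eq:nearest-odometer-moment}, valid with sinks.

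The main obstacle is sharpness in the exponent. We need $\P(J_x=J_z=0)\lesssim \P(J=0)^2e^{Cm_t(1+|x-z|)^{4-d}}$ with the same constants that relate $h_t$ and $\P(J=0)$, not merely $e^{-2cm_t}$. My plan is to avoid the crude bound $\P(J=0)\leq e^{-cm_t}$. Instead I would express both probabilities through a common exponential-tilt identity: a change of measure conditioning on no entry, under which the single-sink event has probability exactly $\E[e^{-\lambda}]$ up to the supermartingale error. I would then compare the two-sink tilt with the product of the one-sink tilts via the interaction estimate above.

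If this exact comparison proves too delicate, I would fall back on proving the lower bound $h_t\geq p\,e^{-Cm_t}$, via the FKG-type monotonicity of Lemma~\ref{lem:tagged-monotonicity} together with Jensen's inequality, $\P(J=0)\geq$ conditional means. This would only yield \eqref{eq:nearest-two-hole} with $h_t^{2}$ replaced by a power $h_t^{2c'}$. I would then check whether the constants can be matched.
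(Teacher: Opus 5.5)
Your two-sink identity $\P(H_t(x)=H_t(z)=1)=p^2\,\P(J_x=J_z=0)$ and the bound $\E[\one\{J_x=J_z=0\}e^{\lambda_x+\lambda_z}]\leq1$ are fine. The route through the compensators cannot reach \eqref{eq:nearest-two-hole}, however, and the step you call the main obstacle is left open.

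The proposition is sharp up to a constant factor. For $|x-z|\geq\ell^{1/(d-4)}$, with $\ell=\log(1/h_t)$, it asserts $\P(H_t(x)=H_t(z)=1)\leq Ch_t^2$. Step~1 of the proof of Theorem~\ref{thm:nearest-counterexample} sums this over about $b^dh_t^{-1}$ sites, so your fallback with $h_t^{2c'}$, $c'<1$, is of no use there.

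An upper bound of the form $p^2e^{-(\lambda_x+\lambda_z)}$ can be compared with $h_t^2=p^2\P(J=0)^2$ only if $\P(J=0)$ is known from below with a matching exponent. Lemma~\ref{lem:nearest-one-point} gives only upper information: $\P(J=0)\leq Ce^{-cm_t}$ with a small unspecified $c$, and $m_t/g\leq\E\lambda\leq m_t$.
\begin{itemize}
\item Your inequality $p^2e^{-2m_t/g}\lesssim h_t^2$ amounts to $\P(J=0)\gtrsim e^{-m_t/g}$, and nothing supports it. Heuristically $\P(J=0)\approx e^{-\E\lambda}$, and $\E\lambda$ may be close to $m_t$.
\item The additive error $Ce^{-cm_t}$ from the lower tail of $\lambda_x+\lambda_z$ need not be $O(h_t^2)$. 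Only $m_t\leq C\log(1/h_t)$ is available, not the reverse.
\item Your replacement, $\P(J_x=J_z=0)\leq\P(J_x=0)\P(J_z=0)e^{\text{interaction}}$, is the whole content of the proposition. The only interaction input you supply is the mean shift $\E(\lambda_x-\lambda^{xz}_x)$. Since $\P(J=0)$ is a large-deviation probability, to which the lower tail of $\lambda$ contributes at the same exponential order, means plus concentration cannot pin down a ratio of such probabilities to within a constant.
\item The proposed tilt does not help: conditioning on no entry changes the dynamics, so comparing the two-sink tilt with the one-sink tilts is again a decorrelation problem of the same kind.
\end{itemize}

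The missing idea is to decouple the two hole events directly, so that $h_t$ never has to be computed. The paper reveals the initial values near $x$ and $z$, then the needed instructions in chronological order. It follows the product of the two conditional hole probabilities, which starts at exactly $h_t^2$ and ends at $\one\{H_t(x)=H_t(z)=1\}$.

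The one-particle coupling gives \eqref{eq:nearest-one-particle-bound}, $(1-q_x(v))a_0\leq a_1\leq a_0$. After normalization, each revealed variable therefore moves the two probabilities by amounts whose covariance is of order $q_x(y)q_z(y)$ for an initial value at $y$. For an instruction at $y$ it is of order $k_y=(2d)^{-1}\sum_{v\sim y}q_x(v)q_z(v)$. Dividing by the accumulated factors gives a supermartingale, hence $\E[\one\{H_t(x)=H_t(z)=1\}/D]\leq h_t^2$ with $D\leq\exp\{C\sum_vq_x(v)q_z(v)+C\sum_yk_yU_t(y)\}$.

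The bubble bound \eqref{eq:nearest-bubble-decay} controls $\sum_vq_x(v)q_z(v)$. The event $\sum_yk_yU_t(y)\geq A\ell\sum_yk_y$ is handled on $\{H_t(z)=1\}$ by H\"older's inequality and moments of order $r=\lceil4\ell\rceil$, using $m_t\leq C\ell$ from \eqref{eq:nearest-odometer-moment}. It costs only $h_t\cdot h_t^4$. Both the main term and the error term are thus expressed in powers of $h_t$ itself, which the compensator route cannot do.
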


\begin{proof}
	Let $h\coloneqq h_t$.
	\smallskip
	\noindent\emph{Step 1.} We prove \eqref{eq:nearest-one-particle-bound} and
	\eqref{eq:nearest-factor-supermartingale}.
	By finite propagation, only initial values within distance $t$ of $x$ or
	$z$ can affect the two holes. Reveal these values first. Then reveal only the
	instructions needed to determine the two hole counts, in chronological order.
	Reveal each instruction only after the history up to its use determines its
	site and index, and break ties in a fixed order. Since $\eta^+\leq1$, the number of variables
	revealed has a deterministic upper bound. Immediately before revealing an
	instruction, the past determines its site and index, while its destination is
	conditionally uniform by Lemma~\ref{lem:exposure}.

	Condition on the variables already revealed, and fix a site $v\in\Z^d$. Let
	$a_0$ and $a_1$ be the conditional probabilities that the hole at $x$ is
	unfilled, without and with one added particle at $v$, respectively.
	Couple the two remaining evolutions as in
	Lemma~\ref{lem:one-particle}, using the construction in which each particle has
	its own walk. First condition on the evolution without the added particle
	through round $t$. Under this conditioning, the event defining $a_0$ is fixed.
	Whenever the one-particle discrepancy is active, it is represented either by
	the added particle or by a particle already settled in the conditioned
	evolution. Its next step is therefore unused by that evolution and remains
	independent and uniform. While the discrepancy is an unmatched hole, it does not move. After these waiting periods are removed, its path is a simple random
	walk from $v$. Its conditional probability of visiting $x$ by round $t$ is therefore at most $q_x(v)$. If the hole at $x$ is unfilled in the conditioned
	evolution and the discrepancy does not visit $x$, then it remains unfilled after the particle is added. Averaging over the conditioned evolution and using Lemma~\ref{lem:one-particle} for the upper bound gives
	\begin{equation}\label{eq:nearest-one-particle-bound}
		(1-q_x(v))\,a_0\leq a_1\leq a_0\,.
	\end{equation}
	The same bound holds with $z$ in place of $x$.

	Before revealing each remaining variable, let $a$ and $b$ be the conditional
	probabilities of the two hole events. Choose a factor $f\geq1$, determined by
	the variables already revealed, such that the conditional mean of their
	product after revealing the variable is at most $fab$. If either probability
	is zero, take $f=1$. Let $D$ be the product of these factors. Dividing
	the product of the two conditional probabilities by the factors accumulated so
	far gives a nonnegative supermartingale. Both initial probabilities equal
	$h$, while their product at the end is the indicator of the two-hole event.
	Therefore
	\begin{equation}\label{eq:nearest-factor-supermartingale}
		\E\frac{\one\{H_t(x)=H_t(z)=1\}}{D}\leq h^2\,.
	\end{equation}

	\smallskip
	\noindent\emph{Step 2.} We show that the factors from the initial values
	multiply to at most $\exp\{C\sum_vq_x(v)q_z(v)\}$. Let
	\[
		k_y\coloneqq\frac1{2d}\sum_{v\sim y}q_x(v)q_z(v)\,.
	\]
	Equation \eqref{eq:nearest-bubble-decay} gives
	\begin{equation}\label{eq:nearest-kernel-sum}
		\sum_yk_y=\sum_vq_x(v)q_z(v)
		\leq C(1+|x-z|)^{4-d}\,.
	\end{equation}

	Consider the initial value at $y\notin\{x,z\}$. If the conditional
	probability of either hole is zero when $\eta(y)=0$, then
	\eqref{eq:nearest-one-particle-bound} makes that probability zero for all
	three possible values. In that case, take $f=1$. Otherwise, normalize each
	conditional probability by its value when $\eta(y)=0$. Sampling $\eta(y)$
	then gives ratios $1+\delta$ and $1+\eps$. The maximum principle gives
	$q_x(y)\leq1-1/g$ and $q_z(y)\leq1-1/g$.
	Equation \eqref{eq:nearest-one-particle-bound} gives
	\[
		\E(1+\delta)\geq1-p,\qquad
		\E|\delta|\leq Cpq_x(y)
		\quad\text{and}\quad
		\E|\delta\eps|\leq Cpq_x(y)q_z(y)\,.
	\]
	The corresponding bounds hold with $\eps$ and $q_z(y)$ in place of
	$\delta$ and $q_x(y)$.
	The ratio that determines the factor is
	\[
		\frac{\E[(1+\delta)(1+\eps)]}
		{\E(1+\delta)\E(1+\eps)}
		=1+\frac{\Cov(\delta,\eps)}
		{\E(1+\delta)\E(1+\eps)}
		\leq e^{Cq_x(y)q_z(y)}\,.
	\]
	Thus we may take $f\leq\exp\{Cq_x(y)q_z(y)\}$.

	The event $H_t(x)=1$ requires $\eta(x)=-1$. The average conditional
	probability of a hole at $z$ is at least $1-q_z(x)$ times its value under
	$\eta(x)=-1$, because
	\[
		p+(1-2p)(1-q_z(x))+p(1-q_z(x))^2\geq1-q_z(x)\,.
	\]
	Thus the factor at $x$ is at most
	$(1-q_z(x))^{-1}\leq e^{gq_z(x)}$. Interchanging $x$ and $z$ gives the
	corresponding bound for the factor at $z$. The
	terms $v=x$ and $v=z$ in $\sum_vq_x(v)q_z(v)$ show that the two factors
	multiply to at most $\exp\{C\sum_vq_x(v)q_z(v)\}$.

	\smallskip
	\noindent\emph{Step 3.} We show that the factors from the instructions
	multiply to at most $\exp\{C\sum_yk_yU_t(y)\}$. Consider
	an instruction based at $y$ as we reveal it. If deleting the particle
	which reads it leaves either conditional probability zero, then restoring the
	particle leaves that probability zero, and we take $f=1$. If changing the
	instruction cannot affect one of the two holes, then the corresponding
	conditional probability is constant and again we take $f=1$.

	In the remaining case, normalize the two conditional probabilities by their
	values after deleting the particle which reads the instruction. For a destination $v$, the
	normalized values are $1-\alpha_v$ and $1-\beta_v$.
	Equation \eqref{eq:nearest-one-particle-bound} gives $0\leq\alpha_v\leq q_x(v)$ and
	$0\leq\beta_v\leq q_z(v)$. Let $\overline\alpha$ and $\overline\beta$ be
	the averages of $\alpha_v$ and $\beta_v$ over the neighbors of $y$. The
	corresponding neighbor averages of $q_x(v)$ and $q_z(v)$ are at most
	$1-1/g$. Thus both $1-\overline\alpha$ and $1-\overline\beta$ are at least
	$1/g$. Expanding the product gives
	\[
		1+\frac{\frac1{2d}\sum_{v\sim y}\alpha_v\beta_v
		-\overline\alpha\,\overline\beta}
		{(1-\overline\alpha)(1-\overline\beta)}
		\leq e^{Ck_y}\,.
	\]
	Thus we may take $f\leq e^{Ck_y}$. We reveal at most $U_t(y)$ instructions
	based at $y$, so Steps~2 and~3 give
	\begin{equation}\label{eq:nearest-factor-product}
		D\leq
		\exp\Bigl\{C\sum_vq_x(v)q_z(v)+C\sum_yk_yU_t(y)\Bigr\}\,.
	\end{equation}

	\smallskip
	\noindent\emph{Step 4.} We prove \eqref{eq:nearest-two-hole}. Forcing every site within
	distance $t$ of the origin to be a hole shows that $h>0$. Let
	$\ell\coloneqq\log(1/h)$. The event $H_t(0)=1$ requires $\eta(0)=-1$, so
	$h\leq p\leq1/4$ and $\ell\geq\log4$. Fix $A\geq1$, to be chosen after
	\eqref{eq:nearest-two-hole-split}. Equations
	\eqref{eq:nearest-factor-supermartingale}
	and \eqref{eq:nearest-factor-product} give
	\begin{align}
		\P\bigl(H_t(x)=H_t(z)=1\bigr)
		&\leq h^2\exp\Bigl\{CA\ell\sum_yk_y\Bigr\}\notag\\
		&\quad+h\,\P\left(
			\sum_yk_yU_t(y)\geq A\ell\sum_yk_y
			\,\middle|\,H_t(z)=1\right)\,.
		\label{eq:nearest-two-hole-split}
	\end{align}

	Let $r\coloneqq\lceil4\ell\rceil$. H\"older's inequality, Minkowski's
	inequality, translation invariance, and
	Lemma~\ref{lem:nearest-one-point} give
	\[
		\Bigl(\E\Bigl[\Bigl(\sum_yk_yU_t(y)\Bigr)^r
		\Bigm|H_t(z)=1\Bigr]\Bigr)^{1/r}
		\leq h^{-1/(2r)}
		\Bigl(\E\Bigl(\sum_yk_yU_t(y)\Bigr)^{2r}\Bigr)^{1/(2r)}
		\leq C\ell\sum_yk_y\,.
	\]
	Here $h^{-1/(2r)}\leq e^{1/8}$. Choose $A$ sufficiently large. Markov's
	inequality then bounds the conditional probability in
	\eqref{eq:nearest-two-hole-split} by $e^{-r}\leq h^4$. Hence
	\[
		\P\bigl(H_t(x)=H_t(z)=1\bigr)
		\leq h^2e^{CA\ell\sum_yk_y}+h^5
		\leq Ch^2e^{C\ell\sum_yk_y}\,.
	\]
	The estimate \eqref{eq:nearest-kernel-sum} proves
	\eqref{eq:nearest-two-hole}.
\end{proof}

\begin{lemma}\label{lem:nearest-close-pair}
	For every $t\geq0$ and distinct $x,z\in\Z^d$,
	\begin{equation}\label{eq:nearest-close-pair}
		\P\bigl(H_t(x)=H_t(z)=1\bigr)\leq2p\,h_t\,.
	\end{equation}
\end{lemma}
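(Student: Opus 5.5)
The plan is a one-site comparison that isolates the hole at $z$ from everything else. The event $H_t(z)=1$ requires $\eta(z)=-1$, which has probability $p$. Conditionally on $\eta(z)=-1$, I want to bound the probability that the hole at $x$ is unfilled after round $t$ by $2h_t$. This gives $\P(H_t(x)=H_t(z)=1)\leq p\cdot 2h_t$.

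For the conditional bound, I would compare the configuration with $\eta(z)=-1$ to configurations with $\eta(z)\in\{0,1\}$ and all other coordinates unchanged. Raising $\eta(z)$ only adds particles. By Lemma~\ref{lem:one-particle}, applied once or twice, adding particles never creates unfilled holes: $H_t$ is coordinatewise nonincreasing in $\eta$. Hence, pointwise in the remaining randomness, $H_t(x)$ under $\eta(z)=-1$ is at least its value under $\eta(z)=0$ or $\eta(z)=1$. That is the wrong direction for an upper bound, so instead I would lower $\eta(z)$, which can only help the hole at $x$ survive.

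To get the right direction, I would express $h_t=\P(H_t(x)=1)$ by conditioning on $\eta(z)$:
\[
h_t=p\,a_{-1}+(1-2p)\,a_0+p\,a_1,
\qquad a_k\coloneqq\P\bigl(H_t(x)=1\mid\eta(z)=k\bigr).
\]
The key step is to show $a_{-1}\leq 2a_0$, or some similar bound. Then, since $1-2p\geq1/2$ for $p\leq1/4$, we get $h_t\geq(1-2p)a_0\geq a_0/2\geq a_{-1}/4$. That yields a constant $4$ rather than $2$, so to reach exactly $2p\,h_t$ I expect to need something sharper. Going from $\eta(z)=0$ to $\eta(z)=-1$ adds one hole, and by the coupling in Lemma~\ref{lem:one-particle} the discrepancy is a single unmatched hole or active particle. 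So the hole at $x$ is unfilled under $\eta(z)=-1$ but filled under $\eta(z)=0$ only if the discrepancy particle is the one that fills $x$.

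The cleanest route I see is a swap. On $\{\eta(z)=-1,\ H_t(x)=H_t(z)=1\}$, both holes survive. Exchange the roles of $x$ and $z$ by resetting $\eta(z)$ to $0$. The hole at $z$ was never filled, so no particle was ever absorbed at $z$, and every particle that reached $z$ with $\eta(z)=-1$ would have settled there; because the hole survived, no particle reached $z$ at all. Hence the evolution with $\eta(z)=0$ is identical through round $t$, and $H_t(x)=1$ persists. This gives, pointwise,
\[
\one\{\eta(z)=-1,\ H_t(x)=H_t(z)=1\}\leq\one\{\eta(z)=-1\}\,\one\{H_t(x)=1\text{ in the configuration with }\eta(z)\text{ set to }0\}.
\]
Taking expectations and using independence of $\eta(z)$ from the rest gives $\P(H_t(x)=H_t(z)=1)\leq p\,a_0$. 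Since $(1-2p)a_0\leq h_t$ and $1-2p\geq1/2$, we get $a_0\leq 2h_t$, which proves \eqref{eq:nearest-close-pair}.

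The main thing to verify is that a surviving hole at $z$ means no particle ever arrived at $z$. This holds because $\eta^+\leq1$ and a site holds at most one hole: any arrival at a site with an unfilled hole fills it. So the dynamics with or without that hole coincide through round $t$.
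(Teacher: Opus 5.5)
Your final argument is correct and is essentially the paper's proof. On the two-hole event no particle has reached $z$, so resetting $\eta(z)$ from $-1$ to $0$ leaves the evolution unchanged and keeps the hole at $x$; independence of $\eta(z)$ then gives $p\,a_0\leq\frac{p}{1-2p}h_t\leq2p\,h_t$ for $p\leq1/4$, which is the paper's injective-map bound with weight ratio $(1-2p)/p$ written as a conditioning. The fact you need at the end is $\eta\geq-1$ (at most one hole per site), not $\eta^+\leq1$, and the exploratory first half can be dropped.
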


\begin{proof}
	On the two-hole event no particle has reached $z$. Change only
	$\eta(z)=-1$ to $0$. The evolution off $z$ is unchanged, and the image
	still has a hole at $x$. The map is injective and multiplies the weight by
	$(1-2p)/p$, so the two-hole probability is at most
	$\frac p{1-2p}\,h_t\leq2p\,h_t$.
\end{proof}

\begin{proof}[Proof of Theorem~\ref{thm:nearest-counterexample}]
	\smallskip
	\noindent\emph{Step 1.} We show that holes with no other hole within
	distance $4R$ have density at least $3h_t/4$. Let $h\coloneqq h_t$ and
	$\ell\coloneqq\log(1/h)$. Choose $L$ so large that
	$C(1+L)^{4-d}\leq1/4$, and then choose $p,b>0$ so small that
	$CL^dp+Cb^d\leq1/8$. Let $p_d\coloneqq p$ in
	\eqref{eq:nearest-counterexample-law}, and let
	$R\coloneqq\lfloor bh^{-1/d}\rfloor$.
	Lemma~\ref{lem:nearest-one-point} gives $h\to0$,
	so eventually $R\geq2d$ and $\ell^{1/(d-4)}<R$.

	A hole is isolated when no other hole lies within distance $4R$. The
	density of the holes that are not isolated is at most the sum of the two-hole
	probabilities over $0<|x|\leq4R$. Lemma~\ref{lem:nearest-close-pair}
	makes the sum over $|x|<L$ at most $CL^dph$. By the choice of $L$,
	Proposition~\ref{prop:nearest-two-hole}
	bounds each summand with $L\leq|x|<\ell^{1/(d-4)}$ by $Ch^{7/4}$. There
	are at most $C\ell^{d/(d-4)}$ such sites, so this range contributes $o(h)$.
	For $|x|\geq\ell^{1/(d-4)}$, the exponential factor in
	\eqref{eq:nearest-two-hole} is bounded. Each summand is at most $Ch^2$, and
	there are at most $CR^d\leq Cb^dh^{-1}$ sites. Thus this range contributes at
	most $Cb^dh$. Combining the three ranges bounds the density of holes which are
	not isolated by $(CL^dp+Cb^d+o(1))h\leq h/4$ for all large $t$.
	Isolated holes therefore have density at least $3h/4$ for all large $t$.

	\smallskip
	\noindent\emph{Step 2.} We show that isolated holes whose radius-$2R$ ball
	contains at most one active site have density at least $h/4$.
	Their balls of radius $2R$ are disjoint. Every active site in one of these
	balls sends unit mass to its center. Since
	$\P(A_t(0)>0)\leq\E A_t(0)=h$, applying the mass transport principle shows that the isolated holes
	with at least two active sites in their ball have density at most $h/2$.
	Isolated holes with at most one
	such site therefore have density at least $h/4$.

	\smallskip
	\noindent\emph{Step 3.} We prove \eqref{eq:nearest-counterexample}.
	Fix one of the holes from Step~2. If its radius-$2R$ ball contains an active
	site $a$, then choose the closed coordinate orthant based at the hole and
	opposite $a$. Use the positive sign at every tied coordinate. If the ball
	contains no active site, then choose a fixed coordinate orthant. Its
	intersection with the ball of radius $R$ contains at least $cR^d$ sites.

	Let $y$ be the hole and let $w$ lie in this intersection. If the active site
	$a$ exists, then
	$|w_i-a_i|=|w_i-y_i|+|a_i-y_i|$ for every coordinate $i$. Since $a\ne y$,
	this gives $|w-a|>|w-y|$. Every active site $v$ farther than $2R$ from $y$
	satisfies $|w-v|\geq|v-y|-R>R\geq|w-y|$. Thus every site in the chosen
	intersection is strictly closer to $y$ than to every active particle. The
	balls of radius $R$ about distinct isolated holes are disjoint, so mass
	transport gives
	\[
		\P(\text{the origin is closer to an unfilled hole than to an active
		particle at time }t)\geq\frac h4\,cR^d\geq c\,,
	\]
	since $R^d\geq cb^dh^{-1}$ eventually. Unfilled holes and active
	particles exist at every time almost surely, since both densities are
	positive and the translations of $\Z^d$ act ergodically. Taking the
	lower limit proves
	\eqref{eq:nearest-counterexample}.
\end{proof}

\section{Subcritical phase}\label{sec:subcritical}

Below the critical density a tagged particle settles almost surely, and the
question is how fast. Both bounds in \eqref{eq:sharpness} are exponential in
the size of the range of that particle's walk, and the Donsker--Varadhan
estimate converts them into stretched exponentials in $t$. The lower bound
needs only independence. For the upper bound, we tilt the initial law to raise
the density of particles and use Lemma~\ref{lem:product} to control the
resulting covariance.

Let $\eta=(\eta(x))_{x\in\Z^d}$ be i.i.d.\ and integer-valued, with a finite
first moment, negative mean, $\P(\eta(0)>0)>0$, and
$\E e^{\theta\eta(0)}<\infty$ for some $\theta>0$. Fix $k\geq1$ with
$\P(\eta(0)=k)>0$ and condition on $\eta(0)=k$. Follow particle $1$ among the
$k$ particles at the origin. Let $X_0=0,X_1,\ldots$ be its assigned walk and
$R_t=\{X_0,\ldots,X_t\}$ its range through time $t$. We bound
$\P(\tau_1>t)$ above and below by exponentials in $|R_t|$.

\begin{lemma}\label{lem:range-lower}
	For every $k\geq1$ with
	$\P(\eta(0)=k)>0$ and every $t\geq0$,
	\[
		\P(\tau_1>t\mid\eta(0)=k)\geq\E_0\bigl[\P(\eta(0)\geq0)^{|R_t|-1}\bigr]\,,
	\]
	and consequently
	$S_t\geq\E[\eta(0)^+]\,\E_0[\P(\eta(0)\geq0)^{|R_t|-1}]$.
\end{lemma}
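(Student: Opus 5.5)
The plan is to use the particle-wise construction, which has the same law as the stack construction. In it particle $1$ carries its own assigned walk $X$, independent of $\eta$ off the origin and of all other randomness. Condition on $\eta(0)=k$ and on the walk $X$. Consider the event
\[
E_t\coloneqq\{\eta(y)\geq0\text{ for every }y\in R_t\setminus\{0\}\}.
\]
On $E_t$, no site visited by particle $1$ through time $t$ other than the origin carries a hole initially. The origin also carries no hole, since $\eta(0)=k\geq1$ and, after cancellation, a site holds particles or holes but never both.

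The first step is to check that particle $1$ cannot settle by round $t$ on $E_t$. A particle settles only when it arrives at a site with an unfilled hole. Holes are never created; the number of unfilled holes at a site only decreases. Particle $1$ is at $X_s\in R_t$ after round $s$, and every site of $R_t$ starts with zero holes on $E_t$. So it never finds an unfilled hole and $\tau_1>t$. This needs no monotonicity or coupling, only that unfilled holes can only be removed.

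The second step is to compute the probability. Conditionally on $X$ and $\eta(0)=k$, the values $\eta(y)$ for $y\neq0$ are i.i.d.\ and independent of $X$. Hence
\[
\P(E_t\mid X,\eta(0)=k)=\P(\eta(0)\geq0)^{|R_t|-1}.
\]
Averaging over $X$ gives the first inequality.

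For the consequence, insert this bound into \eqref{eq:S-expand} of Lemma~\ref{lem:transport}. The lower bound does not depend on $k$, and $\sum_{k\geq1}k\,\P(\eta(0)=k)=\E[\eta(0)^+]$, which gives the stated bound on $S_t$.

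The only delicate point is the legitimacy of the assigned-walk picture, since the paper's main construction uses stacks at sites. I would appeal to the stated equivalence of the stack and particle-wise constructions in law. Alternatively, one can argue directly that on $E_t$ no hole is ever met on the sites read, whatever the instructions are. The tie-breaking uniforms are irrelevant, since settling requires an unfilled hole at the arrival site.
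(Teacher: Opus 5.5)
Your proposal is correct and is essentially the paper's proof. On the event that no site of the assigned walk's range other than the origin initially carries a hole, particle $1$ can never settle; independence of those $|R_t|-1$ sites given the walk yields the factor $\P(\eta(0)\geq0)^{|R_t|-1}$; and the bound, being uniform in $k$, is summed through \eqref{eq:S-expand}.

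The only thing to drop is your ``alternative'' stack-based aside. In the stack construction the path of particle $1$ depends on $\eta$ away from the origin, so the product formula would not follow directly there. The assigned-walk picture you use as your main route is the one the paper uses.
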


\begin{proof}
	Conditioning on $\eta(0)=k\geq1$ leaves no hole at the origin. If no other
	site of $R_t$ carries a hole initially, then particle $1$ meets none and
	cannot settle by time $t$. Conditionally on the assigned walk these
	requirements concern distinct sites other than the origin, so they are
	independent and each has probability $\P(\eta(0)\geq0)$. The bound is
	uniform in $k$, so \eqref{eq:S-expand} gives the stated lower bound on $S_t$.
\end{proof}

For the upper bound we raise the density of particles. Tilting the law of
$\eta(0)$ by $e^{\lambda\eta(0)}$ raises its mean. Differentiating
$\P(\tau_1>t)$ in $\lambda$ then produces a covariance between the event
$\{\tau_1>t\}$ and the number of unfilled holes left in the range of the walk.
Whenever particle $1$ arrives at a site without settling, all holes present
there are filled in that round, and holes never reappear. Hence the product of
the survival indicator and the number of unfilled holes in the range is zero.
Their covariance is therefore minus the product of their expectations.
Integrating the resulting differential inequality over $\lambda$ gives the
factor $e^{-a|R_t|}$.

Choose $0<\lambda_1<\theta$ so small that, for $0\leq\lambda\leq\lambda_1$, the
law
\[
	\P_\lambda(\eta(0)=j)
	=\frac{e^{\lambda j}\P(\eta(0)=j)}{\E e^{\lambda\eta(0)}}\,,
\]
has nonpositive mean. Such a choice exists by continuity. Let $\E_\lambda$
be the expectation for the corresponding product law, with the walks and
uniform variables unchanged, let $\Cov_\lambda$ denote its covariance, and let
$\delta(\lambda)\coloneqq-\E_\lambda\eta(0)$.

\begin{lemma}\label{lem:product}
	Fix $0\leq\lambda\leq\lambda_1$ and finitely many sites carrying the tilted
	law, and condition on the remaining randomness. Let $F$ and $Z$ be functions
	of the counts, walks, and uniform variables at these sites, each
	invariant under relabeling the particles at a site. Suppose $F$ takes values in $[0,1]$ and
	does not decrease when a particle is added. Suppose $Z$ is nonnegative and
	integrable, does not increase when a particle is added, and changes by at
	most one when a single particle is added or deleted. Then
	\begin{equation}\label{eq:cov}
		|\Cov_\lambda(F,Z)|\leq3\,\partial_\lambda\E_\lambda F\,.
	\end{equation}
\end{lemma}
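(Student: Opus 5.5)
The plan is to reduce \eqref{eq:cov} to a one-site inequality by a Doob decomposition over the finitely many tilted sites. Each one-site term will then be compared with the corresponding term of $\partial_\lambda\E_\lambda F$, which is itself a sum over sites of covariances with the counts.

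\emph{Step 1: the derivative.} Label the tilted sites $x_1,\ldots,x_N$ and write $\eta_i=\eta(x_i)$. Under $\P_\lambda$ the $\eta_i$ are independent with law proportional to $e^{\lambda j}\P(\eta(0)=j)$. Given the counts, the walks and uniform variables attached to the particles are unaffected by the tilt. Hence $\E_\lambda F=\E_\lambda f(\eta_1,\ldots,\eta_N)$, where $f$ is the average of $F$ over the walks and uniforms. Since $0<\lambda_1<\theta$, the exponential moment justifies differentiating under the expectation, and I expect
\[
	\partial_\lambda\E_\lambda F=\sum_{i=1}^N\Cov_\lambda(F,\eta_i)\,.
\]
Define $z$ from $Z$ in the same way. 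I will check that $f$ is nondecreasing in each $\eta_i$. To compare $\eta_i=j$ with $\eta_i=j+1$, realize the second configuration as the first plus one extra particle carrying an independent walk and an independent uniform. Relabeling invariance makes this coupling legitimate, and monotonicity of $F$ then holds pointwise. The same coupling shows that $z$ is nonincreasing in each $\eta_i$ and satisfies $|z(\ldots,j,\ldots)-z(\ldots,j',\ldots)|\leq|j-j'|$, by chaining single additions.

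\emph{Step 2: Doob decomposition.} Let $\mathcal H_i=\sigma(\eta_1,\ldots,\eta_i)$, and put $f_i=\E_\lambda[f\mid\mathcal H_i]$ and $z_i=\E_\lambda[z\mid\mathcal H_i]$. Orthogonality of martingale increments gives
\[
	\Cov_\lambda(F,Z)=\sum_i\E_\lambda\bigl[\Cov_\lambda(f_i,z_i\mid\mathcal H_{i-1})\bigr]\,.
\]
The remaining variables $\eta_{i+1},\ldots$ are independent of $\eta_i$. Therefore, conditionally on $\mathcal H_{i-1}$, the function $f_i$ is nondecreasing in $\eta_i$, while $z_i$ is nonincreasing and $1$-Lipschitz in $\eta_i$. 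Since $\eta_i$ is independent of $\mathcal H_{i-1}$, the same independence gives
\[
	\Cov_\lambda(F,\eta_i)=\E_\lambda\bigl[\Cov_\lambda(f_i,\eta_i\mid\mathcal H_{i-1})\bigr]\,.
\]

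\emph{Step 3: the one-site inequality.} For a single variable $\eta$ with independent copy $\eta'$, let $\phi$ be nondecreasing and $\psi$ be $1$-Lipschitz. The symmetrization
\[
	\Cov(\phi(\eta),\psi(\eta))=\tfrac12\E\bigl[(\phi(\eta)-\phi(\eta'))(\psi(\eta)-\psi(\eta'))\bigr]
\]
gives the bound
\[
	|\Cov(\phi(\eta),\psi(\eta))|\leq\tfrac12\E\bigl[|\phi(\eta)-\phi(\eta')|\,|\eta-\eta'|\bigr]\,.
\]
Because $\phi(\eta)-\phi(\eta')$ has the sign of $\eta-\eta'$, the right side equals $\tfrac12\E[(\phi(\eta)-\phi(\eta'))(\eta-\eta')]=\Cov(\phi(\eta),\eta)$. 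Applying this conditionally at each site and summing over $i$ yields $|\Cov_\lambda(F,Z)|\leq\partial_\lambda\E_\lambda F$, which is stronger than \eqref{eq:cov}; the constant $3$ leaves room.

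\emph{Main obstacle.} The delicate point is Step 1. There I must check that adding a particle at a site, with its own fresh walk and uniform, is exactly what \emph{adding a particle} means for $F$ and $Z$, including at sites where $\eta_i<0$, where adding a particle cancels a hole. I must also check that averaging over the remaining walks and uniforms preserves the monotonicity and the Lipschitz property. If the count-level reduction fails for sites with holes, I would fall back on an Efron--Stein-type expansion with an extra term for the hole, which is where a constant such as $3$ would arise.
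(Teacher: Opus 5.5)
Your Step~2 identity is wrong, and it is where the argument breaks. The filtration $\mathcal H_i=\sigma(\eta_1,\ldots,\eta_i)$ is generated by the counts alone. Its Doob martingale therefore ends at $f=\E_\lambda[F\mid\eta_1,\ldots,\eta_N]$, not at $F$, so the sum of conditional covariances you write equals $\Cov_\lambda(f,z)$. By the law of total covariance,
\[
	\Cov_\lambda(F,Z)=\Cov_\lambda(f,z)+\E_\lambda\bigl[\Cov_\lambda(F,Z\mid\eta_1,\ldots,\eta_N)\bigr].
\]
You have dropped the second term. It records the correlation that $F$ and $Z$ acquire through the walks and uniform variables of the particles at the tilted sites, and the lemma does not condition on these. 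The term is not small: in the application $FZ=0$ identically, so given the counts it equals $-\E_\lambda[F\mid\eta]\,\E_\lambda[Z\mid\eta]$.

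In fact your constant-one conclusion is false. Take one site with $\P(Y=1)=\P(Y=-1)=\tfrac12$; this is the parking-model law at $\lambda=\lambda_1$ in Remark~\ref{rem:tilt}. Let $N$ be the number of particles at the site whose first step is in a positive coordinate direction, and put $F=\one\{N\geq1\}$ and $Z=(1-N)^++Y^-$. Both satisfy the hypotheses. Here $\E F=\tfrac14$, $\E Z=\tfrac54$ and $FZ=0$, so $|\Cov(F,Z)|=\tfrac5{16}$, whereas $\partial_\lambda\E_\lambda F=\Cov(F,Y)=\tfrac14$. Your count-level bound does hold in this example, since $|\Cov(f(Y),z(Y))|=\tfrac3{16}$. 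The missing $\tfrac2{16}$ comes from the walk of the particle.

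What you need is a separate bound on the within-site term, which is the paper's Step~2. Reveal counts, walks and uniforms one site at a time.
\begin{itemize}
\item At a site with $Y=k\geq1$, decompose the conditional covariance into increments $D_i$ obtained by revealing the $k$ particles' walks and uniforms in turn.
\item The Lipschitz hypothesis gives $|D_iZ|\leq1$.
\item Let $F_i$ be $F$ with particle $i$ deleted. Then $F-F_i\geq0$ and $F_i$ does not depend on particle $i$, so $\E|D_iF|\leq2\E(F-F_i)=2(f(k)-f(k-1))$.
\item Hence $|\Cov(F,Z\mid Y=k)|\leq2k(f(k)-f(k-1))$.
\end{itemize}
Average over $Y$ and compare term by term with $\Cov(f(Y),Y)=\sum_j(f(j)-f(j-1))\E[(Y-\E Y)\one\{Y\geq j\}]$. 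This needs $\E[(Y-\E Y)\one\{Y\geq j\}]\geq j\P(Y=j)$ for $j\geq1$, which holds exactly because $\E_\lambda\eta(0)\leq0$, the defining property of $\lambda_1$.

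Your argument never uses this sign condition, and the lemma fails without it. A count equal to some $k\geq1$ almost surely gives $\partial_\lambda\E_\lambda F=0$, while the walk-induced covariance need not vanish. So no argument that uses only the counts can work. Adding the within-site bound $2\Cov(f(Y),Y)$ to your count-level bound, which is correct and matches the paper's Step~1, gives the constant $3$. Your worry about hole sites is also misplaced: when $\eta_i\leq0$ the site has no within-site randomness, and the difficulty lies at sites carrying particles.
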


\begin{proof}
	Fix $\lambda$ and suppress the subscript until Step~3.

	Adding a particle at a site means raising its count by one, which cancels a
	hole when the count is negative and adds a particle with a fresh independent
	walk and uniform variables when it is not. Deleting a particle is the inverse
	operation. Consider first a single random site and let $Y$ be its signed
	count. For every $k\in\Z$, prescribe signed count $k$ at this site and, when
	$k>0$, give its $k$ particles independent walks and uniform variables. Let $f(k)$
	and $z(k)$ be the resulting expectations of $F$ and $Z$. For $k$ in the
	support of $Y$, these equal $\E[F\mid Y=k]$ and $\E[Z\mid Y=k]$.
	The conditional covariance identity splits $\Cov(F,Z)$ into
	$\Cov(f(Y),z(Y))$ and $\E[\Cov(F,Z\mid Y)]$. Here $f$ is nondecreasing while
	$z$ is nonincreasing. Moreover, $z$ is one-Lipschitz because adding or deleting one
	particle changes $Z$ by at most one.

	\smallskip
	\noindent\emph{Step 1.} We prove
	\begin{equation}\label{eq:cov-counts}
		|\Cov(f(Y),z(Y))|\leq\Cov(f(Y),Y).
	\end{equation}
	Let $Y_1$ be an independent copy of $Y$. Then
	\[
		2\bigl|\Cov(f(Y),z(Y))\bigr|
		=\Bigl|\E\bigl[(f(Y)-f(Y_1))(z(Y)-z(Y_1))\bigr]\Bigr|
		\leq\E\bigl[|f(Y)-f(Y_1)|\,|Y-Y_1|\bigr]\,,
	\]
	where the equality is the covariance identity and the inequality uses that
	$z$ is one-Lipschitz. Since $f$ is nondecreasing,
	$\E[|f(Y)-f(Y_1)|\,|Y-Y_1|]=2\Cov(f(Y),Y)$.

	\smallskip
	\noindent\emph{Step 2.} We prove
	\begin{equation}\label{eq:cov-particle-randomness}
		\E|\Cov(F,Z\mid Y)|\leq2\Cov(f(Y),Y).
	\end{equation}
	For $k\leq0$, the site carries no particles, so the conditional covariance
	vanishes. Fix $k\geq1$. Let $\mathcal H_0$ be the trivial $\sigma$-field and,
	for $1\leq i\leq k$, let $\mathcal H_i$ be generated by the simple random
	walks and tie-breaking variables of particles $1,\ldots,i$.
	Writing $D_iF=\E[F\mid\mathcal H_i]-\E[F\mid\mathcal H_{i-1}]$ and likewise for
	$Z$, the martingale covariance decomposition gives
	\[
		\Cov(F,Z\mid Y=k)=\sum_{i=1}^k\E\bigl[D_iF\,D_iZ\bigr]\,,
		\qquad\text{so}\qquad
		\bigl|\Cov(F,Z\mid Y=k)\bigr|\leq\sum_{i=1}^k\E\bigl[|D_iF|\,|D_iZ|\bigr]\,.
	\]
	Let $F_i$ be the value after particle $i$ is deleted. Symmetrization and
	monotonicity give
	\[
		\E|D_iF|
		\leq2\E(F-F_i)
		=2\bigl(f(k)-f(k-1)\bigr)\,.
	\]
	The deletion comparison also gives $|D_iZ|\leq1$, and hence
	\[
		\bigl|\Cov(F,Z\mid Y=k)\bigr|\leq2k\bigl(f(k)-f(k-1)\bigr)\,.
	\]
	The summation by parts formula gives
	\[
		\Cov(f(Y),Y)=\sum_j(f(j)-f(j-1))\,
		\E[(Y-\E Y)\one\{Y\geq j\}].
	\]
	Every summand is nonnegative, and for $j\geq1$ the expectation in it is at
	least $j\P(Y=j)$ because $\E Y\leq0$. Comparing term by term shows that the
	mean of the absolute conditional covariances is at most
	$2\Cov(f(Y),Y)$. This proves \eqref{eq:cov-particle-randomness}.
	Together with \eqref{eq:cov-counts}, it gives
	$|\Cov(F,Z)|\leq3\Cov(f(Y),Y)$ for one site.

	\smallskip
	\noindent\emph{Step 3.} We prove \eqref{eq:cov} for the finite family of
	tilted sites.
	For finitely many sites $x_1,\ldots,x_N$, reveal the counts, walks, and uniform variables one site at a time. The martingale covariance decomposition writes $\Cov_\lambda(F,Z)$ as a sum of conditional one-site covariances. Conditional averaging over the unrevealed sites preserves the required monotonicity and Lipschitz properties,
	so Steps~1 and~2 give
	\[
		|\Cov_\lambda(F,Z)|
		\leq3\sum_i\Cov_\lambda(F,\eta(x_i)).
	\]
	Finally the tilted law has density proportional
	to $\prod_{i=1}^Ne^{\lambda\eta(x_i)}$ on these coordinates, so
	$\partial_\lambda\E_\lambda F=\sum_{i=1}^N\Cov_\lambda(F,\eta(x_i))$, the
	differentiation under the expectation being justified by $\lambda_1<\theta$.
\end{proof}

\begin{theorem}[Subcritical settling time]\label{thm:subcritical}
	Under the assumptions and notation above, let
	\[
		a\coloneqq\frac13\int_0^{\lambda_1}\delta(\lambda)\,d\lambda\,.
	\]
	Then $a>0$ and, for every $t\geq0$ and every $k\geq1$ in the support of
	$\eta(0)$,
	\begin{equation}\label{eq:range-upper}
		\P\bigl(\tau_1>t\mid\eta(0)=k,\ X_0,\ldots,X_t\bigr)
		\leq\exp\Bigl\{\tfrac13\int_0^{\lambda_1}\E_\lambda|\eta(0)|
		\,d\lambda\Bigr\}\,e^{\lambda_1(k-1)/3-a|R_t|}\,.
	\end{equation}
	Consequently $S_t\leq C\E_0e^{-a|R_t|}$.
\end{theorem}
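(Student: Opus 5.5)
Fix $k$ and condition on $\eta(0)=k$ and on the walk $X_0,\ldots,X_t$ of particle~1. By finite propagation only finitely many sites within distance $2t$ of the origin matter, so we tilt exactly those other than the origin, together with the origin through the extra $k-1$ particles, and keep everything else at $\lambda=0$. Set
\[
	\phi(\lambda)\coloneqq\P_\lambda(\tau_1>t\mid\eta(0)=k,X)\,,
\]
where the tilt acts on the counts at the finitely many relevant sites $y\neq0$. The origin is handled separately by the factor $e^{\lambda_1(k-1)/3}$, which appears to account for the tilt being unavailable at the fixed count $k$.

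Let $F\coloneqq\one\{\tau_1>t\}$ and let $Z$ be the number of unfilled holes after round $t$ at the sites of $R_t\setminus\{0\}$. Under the coupling of Lemma~\ref{lem:one-particle}, and using Lemma~\ref{lem:tagged-monotonicity} for $F$, the variable $F$ is nondecreasing under adding a particle. Also $Z$ is nonincreasing, since $H_t$ only decreases, and it is $1$-Lipschitz. Both are symmetric in particle labels, so Lemma~\ref{lem:product} gives $|\Cov_\lambda(F,Z)|\leq3\phi'(\lambda)$. As explained before the lemma, $FZ=0$: if particle~1 survives, every site it visited has had all its holes filled. Hence $\Cov_\lambda(F,Z)=-\phi\,\E_\lambda Z$, and therefore
\[
	\phi'(\lambda)\geq\tfrac13\phi(\lambda)\E_\lambda Z\,.
\]

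The key step is a lower bound on $\E_\lambda Z$. By Lemma~\ref{lem:activity-holes} applied to the tilted translation-invariant law, $\E_\lambda H_t(y)=\E_\lambda A_t(y)+\delta(\lambda)\geq\delta(\lambda)$. The complication is that only finitely many sites are tilted, which breaks translation invariance. To handle it, we first tilt all sites, which changes nothing for the sites that cannot affect the outcome. We then also need $\E_\lambda Z$ under the conditioning on particle~1's walk and on $\eta(0)=k$. Adding the fixed walk is a single extra particle, and by Lemma~\ref{lem:one-particle} it changes the total hole count by at most one. The fixed count at the origin contributes $k-1$ extra particles relative to tilted expectations, each reducing holes by at most one. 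I expect this to give
\[
	\E_\lambda Z\geq\delta(\lambda)(|R_t|-1)-(k-1)-\text{(origin correction)}\,,
\]
with the correction bounded by $\E_\lambda|\eta(0)|$. This bookkeeping is the main obstacle: making it match the precise constants in \eqref{eq:range-upper} is delicate.

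Integrating the inequality for $\phi'$ gives $\log\phi(\lambda_1)-\log\phi(0)\geq\frac13\int_0^{\lambda_1}\E_\lambda Z\,d\lambda$. Since $\phi(\lambda_1)\leq1$, this yields $\phi(0)\leq\exp\{-\frac13\int_0^{\lambda_1}\E_\lambda Z\,d\lambda\}$, and inserting the lower bound on $\E_\lambda Z$ produces \eqref{eq:range-upper}. To see $a>0$, note that $\delta(0)>0$ and $\delta$ is continuous and nonnegative on $[0,\lambda_1]$. Finally, averaging \eqref{eq:range-upper} over the walk and summing via \eqref{eq:S-expand} gives $\sum_k k\P(\eta(0)=k)e^{\lambda_1(k-1)/3}<\infty$, because $\lambda_1/3<\theta$. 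This yields $S_t\leq C\E_0e^{-a|R_t|}$.
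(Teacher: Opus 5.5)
Your argument is the paper's argument: the survival indicator $F$, a hole count $Z$ on the range with $FZ=0$, Lemma~\ref{lem:product} giving $3\phi'\geq\phi\,\E_\lambda Z$, Lemma~\ref{lem:activity-holes} plus single-particle comparisons to bound $\E_\lambda Z$ below, and integration over $[0,\lambda_1]$. The gap is the bookkeeping step you flag yourself. As you set it up, it does not give \eqref{eq:range-upper}. Your $Z$ is computed in the process that still contains particle~$1$, so passing from an origin value $j$ of the translation-invariant tilted law to your configuration costs $|j|+k$ single-particle changes. You also drop the origin from $Z$, so the translation-invariant count is only $\delta(\lambda)(|R_t|-1)$. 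Your own accounting therefore gives $\E_\lambda Z\geq\delta(\lambda)(|R_t|-1)-k-\E_\lambda|\eta(0)|$. Integration then yields \eqref{eq:range-upper} only with an extra factor $e^{a+\lambda_1/3}$. This still implies $S_t\leq C\E_0e^{-a|R_t|}$, but not the displayed inequality with its stated constants.

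The missing device is to compute $Z$ after deleting particle~$1$. Let $Z$ be the number of unfilled holes in all of $R_t$ at time $t$ in the process where the origin keeps only the other $k-1$ particles, and let $F$ indicate that particle~$1$ has not settled by time $t$ when it is put back. Then $FZ=0$ still holds. On $\{F=1\}$ particle~$1$ fills no hole, so deleting it leaves the holes unchanged through time $t$, while in the full process every hole at a site of $R_t$ is filled in the round particle~$1$ passes through it. The origin now carries no hole, so including it in $Z$ costs nothing. The comparison with origin value $j$ needs only $|j|+k-1$ changes, which gives exactly $\E Z\geq\delta(\lambda)|R_t|-\E_\lambda|\eta(0)|-(k-1)$ under the measure in which the origin carries the $k-1$ prescribed particles and the other relevant sites carry the tilted law. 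In particular, the factor $e^{\lambda_1(k-1)/3}$ is simply $\exp\{\frac13\int_0^{\lambda_1}(k-1)\,d\lambda\}$, not a correction for the untilted origin.

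Two smaller points. First, Lemma~\ref{lem:product} controls only the randomness at the tilted sites. You must therefore apply it, together with $\Cov(F,Z)=-\phi\,\E Z$, conditionally on the walks and uniform variables of all $k$ particles at the origin, integrate in $\lambda$, and only then average. This is harmless, since the lower bound on $\E Z$ is uniform in that conditioning. Second, $Z$ is nonincreasing under adding a particle because of Lemma~\ref{lem:one-particle}, not because $H_t$ decreases in $t$.
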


\begin{proof}
	Fix $X_0,\ldots,X_t$. Prescribe the $k$ particles at the origin and
	temporarily condition on all their walks and uniform variables. Remove
	particle $1$, and let $Z$ be the number of unfilled holes in $R_t$ at time
	$t$ in the resulting process. Let $F$ indicate that particle $1$ has not
	settled by time $t$ when it is put back. Let $\widehat\E_\lambda$ denote
	expectation when the origin carries the remaining $k-1$ prescribed particles
	and every other site has the tilted law.

	\smallskip
	\noindent\emph{Step 1.} We prove
	\begin{equation}\label{eq:range-hole-count}
		FZ=0,
		\qquad
		\widehat\E_\lambda Z\geq\delta(\lambda)|R_t|
		-\E_\lambda|\eta(0)|-(k-1).
	\end{equation}
	On $\{F=1\}$, particle $1$ fills no hole, so removing it does not change the
	holes through time $t$. Every hole at a site it visits is filled by the end of
	that round. Thus no site of $R_t$ carries an unfilled hole at time $t$, and
	$FZ=0$.

	Under $\E_\lambda$, \eqref{eq:activity-holes} gives
	$\E_\lambda H_t(0)\geq\delta(\lambda)$, so by translation invariance the
	expected number of unfilled holes in $R_t$ at time $t$ is at least
	$\delta(\lambda)|R_t|$. If the origin instead has value $j$, then passing from that value to the
	prescribed configuration takes at most $|j|+k-1$ single-particle changes,
	each changing $Z$ by at most one by Lemma~\ref{lem:one-particle}. Averaging
	over $j$ proves the second inequality in \eqref{eq:range-hole-count}.

	\smallskip
	\noindent\emph{Step 2.} We prove
	\begin{equation}\label{eq:survival-differential}
		3\varphi'(\lambda)\geq
		\bigl(\delta(\lambda)|R_t|-\E_\lambda|\eta(0)|-(k-1)\bigr)
		\varphi(\lambda).
	\end{equation}
	Let $\varphi(\lambda)\coloneqq\widehat\E_\lambda F$.
	Both $F$ and $Z$ depend only on the finitely many sites within distance $t$
	of $R_t$. They are invariant under relabeling the particles at those sites.
	By Lemma~\ref{lem:tagged-monotonicity}, $F$ is nondecreasing when a particle
	is added; by Lemma~\ref{lem:one-particle}, $Z$ is nonincreasing and changes by
	at most one. Thus Lemma~\ref{lem:product} applies. The identity $FZ=0$ shows
	that the covariance under $\widehat\E_\lambda$ is
	$-\varphi(\lambda)\widehat\E_\lambda Z\leq0$. Equation \eqref{eq:cov} and
	\eqref{eq:range-hole-count} now give \eqref{eq:survival-differential}.

	\smallskip
	\noindent\emph{Step 3.} We prove \eqref{eq:range-upper} and
	$S_t\leq C\E_0e^{-a|R_t|}$.
	Since $\delta(0)=-\E\eta(0)>0$ and $\delta$ is continuous, the constant
	$a$ in the theorem is positive.
	If $\varphi(0)=0$, then \eqref{eq:range-upper} is immediate. Otherwise,
	integrate the logarithmic derivative in
	\eqref{eq:survival-differential} over $[0,\lambda_1]$ and use
	$\varphi(\lambda_1)\leq1$. This gives \eqref{eq:range-upper}. Averaging over
	the conditioned variables and then summing over $k$ in \eqref{eq:S-expand}
	gives the last assertion because
	$k e^{\lambda_1k/3}\leq C e^{\lambda_1k}$ and
	$\E e^{\lambda_1\eta(0)}<\infty$. Therefore
	$S_t\leq C\E_0e^{-a|R_t|}$.
\end{proof}

\begin{proof}[Proof of Theorem~\ref{thm:subcritical-tail}]
	Lemma~\ref{lem:range-lower} bounds $S_t$ below by a positive constant times
	$\E_0e^{-c|R_t|}$ for a constant $c>0$, while Theorem~\ref{thm:subcritical}
	bounds it above by $C\E_0e^{-a|R_t|}$. The Donsker--Varadhan estimate
	for the range \citep[Theorem~1]{DonskerVaradhan} gives \eqref{eq:sharpness}.
	Summing the upper bound in \eqref{eq:sharpness} and using
	\eqref{eq:transport} gives $\E U_\infty(0)<\infty$. Also $S_t\to0$, so the
	expected number of particles starting at the origin which never settle is
	zero; translation invariance and countability extend this to every particle.
	Moreover, $H_t(0)\downarrow H_\infty(0)$ and
	$0\leq H_t(0)\leq\eta(0)^-$. The dominated convergence theorem and
	\eqref{eq:activity-holes} give
	\[
		\E H_\infty(0)=-\E\eta(0),
		\qquad
		\E[H_m(0)-H_\infty(0)]=S_m.
	\]
\end{proof}

\begin{remark}[The constant for the parking model]\label{rem:tilt}
	For the parking model, let
	\[
		\lambda_1\coloneqq\frac12\log\frac{1-p}{p}.
	\]
	Since
	$\frac d{d\lambda}\log\E e^{\lambda\eta(0)}=-\delta(\lambda)$,
	\[
		a=-\frac16\log(4p(1-p))\asymp(1-2p)^2
		\qquad\text{as }p\uparrow\tfrac12.
	\]
\end{remark}

\begin{remark}[Stabilization and decay of correlations]\label{rem:stabilization-mixing}
	The identities
	$\E[U_\infty(0)-U_m(0)]=\sum_{t\geq m}S_t$ and
	$\E[H_m(0)-H_\infty(0)]=S_m$ give
	\[
		\E[U_\infty(0)-U_m(0)]+\E[H_m(0)-H_\infty(0)]
		\leq Ce^{-cm^{d/(d+2)}}.
	\]
	For bounded local functions $F$ and $G$ with disjoint supports, let
	\[
		m\coloneqq\bigl\lfloor\operatorname{dist}(\operatorname{supp}F,
		\operatorname{supp}G)/3\bigr\rfloor\,.
	\]
	Finite propagation makes $F(U_m,H_m)$ and $G(U_m,H_m)$ independent.
	Markov's inequality and a union bound therefore give
	\[
		|\Cov(F(U_\infty,H_\infty),G(U_\infty,H_\infty))|
		\leq C\|F\|_\infty\|G\|_\infty
		\bigl(|\operatorname{supp}F|+|\operatorname{supp}G|\bigr)
		e^{-cm^{d/(d+2)}}.
	\]
\end{remark}

\begin{remark}[A finite first moment does not suffice]\label{rem:first-moment}
	Fix $\beta>1$ and let $\eta(0)$ be supported on $\{-1\}$ together with
	the positive integers. Let $q>0$ be small enough that
	$\E\eta(0)<0$, let $\P(\eta(0)=k)\coloneqq qk^{-\beta-1}$ for $k\geq1$,
	and let the remaining mass be at $-1$. By time $t$ the
	particles at the origin can fill at most $Ct^d$ holes. On the event
	$\{\eta(0)\geq2Ct^d\}$, at least half remain active. Thus
	\eqref{eq:S-expand} gives $S_t\geq c(1+t)^{-d(\beta-1)}$, which is not
	summable when $1<\beta\leq1+1/d$. Thus a finite first moment alone does not
	ensure $\E U_\infty(0)<\infty$.
\end{remark}

\section{Near criticality}\label{sec:near}

For an initial law of mean $-\delta$, $S_t$ differs by at most $C\delta$ from
its critical value, so it follows the critical decay until that decay reaches
order $\delta$. Two estimates are needed that the
critical theory does not supply. The first is a time beyond which $S_t$
is summable; it comes from the tilt of Section~\ref{sec:subcritical},
available only up to a parameter of order $\delta$, together with a lower tail
estimate for the range. The second is a bound on the divisible odometer of a
scenery with a small negative drift. This becomes an optimization over the expected duration of the stopping rule: a rule of expected duration $M$ collects at most the critical growth at time $M$, but incurs a drift cost $\delta M$. The delicate point is that such a rule may
run much longer than $M$.

For the rest of this section, let $\eta_\delta$ satisfy the assumptions of
Theorem~\ref{thm:near}, and let $S_t^\delta$ be the expected number of particles
from the origin not settled by time $t$. Let $R_t$ be the range through time
$t$ of a simple random walk from the origin, independent of the model. Thus
\[
	\E U_\infty^\delta(0)=\sum_{t\geq0}S_t^\delta\,.
\]

\begin{lemma}\label{lem:near-tilt}
	For all sufficiently small $\delta>0$ and every $t\geq0$,
	\[
		S_t^\delta\leq C\E_0e^{-c\delta^2|R_t|}.
	\]
\end{lemma}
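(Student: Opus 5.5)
The plan is to rerun the argument of Theorem~\ref{thm:subcritical} for the law $\eta_\delta$, keeping track of how the constants depend on $\delta$. The tilt interval will be chosen of order $\delta$, so that the exponent $a$ becomes of order $\delta^2$.

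First we choose the tilt range. Let $\Lambda_\delta(\lambda)\coloneqq\log\E e^{\lambda\eta_\delta(0)}$. Then $\Lambda_\delta'(0)=-\delta$, and $-\delta(\lambda)=\Lambda_\delta'(\lambda)$ is the mean of the tilted law. For $0\leq\lambda\leq\theta/2$, the uniform exponential moment bound gives $\Lambda_\delta''(\lambda)\leq C_0$ with $C_0$ independent of $\delta$. Hence
\[
	\delta(\lambda)\geq\delta-C_0\lambda .
\]
Take $\lambda_1\coloneqq\delta/(2C_0)$. Then $\delta(\lambda)\geq\delta/2$ on $[0,\lambda_1]$, the tilted means stay nonpositive, and $\lambda_1<\theta$ for small $\delta$. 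Therefore
\[
	a=\tfrac13\int_0^{\lambda_1}\delta(\lambda)\,d\lambda\geq\frac{\delta^2}{12C_0}=c\delta^2 .
\]

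Next we check that nothing else in the proof of Theorem~\ref{thm:subcritical} degenerates. Lemma~\ref{lem:product} only uses $\E_\lambda Y\leq0$ and $\lambda_1<\theta$, and its constant $3$ is universal. The prefactor in \eqref{eq:range-upper} is
\[
	\exp\Bigl\{\tfrac13\int_0^{\lambda_1}\E_\lambda|\eta_\delta(0)|\,d\lambda\Bigr\}\,e^{\lambda_1(k-1)/3}.
\]
Since $\lambda_1\leq1$ and $\E_\lambda|\eta_\delta(0)|$ is bounded uniformly by the exponential moments, the first factor is bounded uniformly in $\delta$. For the second factor, we sum over $k$ as in \eqref{eq:S-expand}:
\[
	\sum_k k\,\P(\eta_\delta(0)=k)\,e^{\lambda_1(k-1)/3}\leq\sup_\delta\E\bigl[\eta_\delta(0)^+e^{\theta\eta_\delta(0)^+}\bigr]\leq C .
\]
Putting these together gives $S_t^\delta\leq C\E_0e^{-c\delta^2|R_t|}$ with $C,c$ independent of $\delta$.

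The main obstacle is obtaining the lower bound on $\delta(\lambda)$ uniformly in $\delta$. This rests entirely on a uniform bound for $\Lambda_\delta''$ near zero, which in turn needs the tilted variance to be uniformly bounded. That follows from the uniform bound on $\E e^{\theta|\eta_\delta(0)|}$ via
\[
	\Lambda_\delta''\leq\E_\lambda\eta_\delta(0)^2\leq\frac{\E\bigl[\eta_\delta(0)^2e^{\lambda\eta_\delta(0)}\bigr]}{\E e^{\lambda\eta_\delta(0)}} .
\]
The denominator is at least $e^{\lambda\E\eta_\delta(0)}\geq e^{-\lambda\delta_0}$ by Jensen's inequality. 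The coupling hypothesis is not needed here. The restriction to small $\delta$ only serves to ensure $\lambda_1<\theta/2$.
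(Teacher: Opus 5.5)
Your proof is correct and follows the paper's strategy: apply Theorem~\ref{thm:subcritical} with a tilt range of order $\delta$. Where you differ is in how you choose that range.

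The paper takes $\lambda_1=\lambda_\delta$, the positive zero of $\psi_\delta'$, so the tilt runs until the mean vanishes and $a_\delta=-\tfrac13\psi_\delta(\lambda_\delta)$. Locating $\lambda_\delta$ in a fixed neighborhood of zero (so that $\lambda_\delta<\theta$ and $\lambda_\delta\asymp\delta$) requires the lower bound $\psi_\delta''\geq c$. The paper gets that bound from the coupling hypothesis together with $\Var\eta_0(0)>0$. The coupling is also how the paper verifies $\P(\eta_\delta(0)>0)>0$.

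You stop earlier, at $\lambda_1=\delta/(2C_0)$, and use only the upper bound $\Lambda_\delta''\leq C_0$, which follows from the uniform exponential moment alone. Since only a lower bound on $a$ matters for the conclusion, this suffices, and it shows the coupling is unnecessary for this lemma. You should still note that if $\P(\eta_\delta(0)>0)=0$ then $S_t^\delta=0$ and there is nothing to prove, because Theorem~\ref{thm:subcritical} assumes the contrary. The paper's choice gives the largest admissible $a$, as in Remark~\ref{rem:tilt}, but that only improves the constant $c$.

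One small slip: $\sup_\delta\E[\eta_\delta(0)^+e^{\theta\eta_\delta(0)^+}]$ need not be finite when only $\E e^{\theta|\eta_\delta(0)|}$ is uniformly bounded. Instead, use $\lambda_1/3\leq\theta/2$ and $ke^{\theta k/2}\leq C_\theta e^{\theta k}$.
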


\begin{proof}
	Let $\psi_\delta(\lambda)\coloneqq\log\E e^{\lambda\eta_\delta(0)}$.

	\smallskip
	\noindent\emph{Step 1.} We prove that $\psi_\delta'$ has a unique positive
	zero $\lambda_\delta$ in a fixed neighborhood of zero and that
	$\lambda_\delta\asymp\delta$.
	The coupling assumption gives $\eta_\delta(0)\to\eta_0(0)$ in probability,
	and the uniform exponential moment makes
	$\psi_\delta''\to\psi_0''$ uniformly on a neighborhood of zero. Since
	$\psi_0''(0)=\Var\eta_0(0)>0$, on a fixed neighborhood of zero
	\[
		c\leq\psi_\delta''\leq C
	\]
	for every small $\delta$. As $\psi_\delta'(0)=-\delta$, there is a unique $\lambda_\delta>0$ in that fixed neighborhood with
	$\psi_\delta'(\lambda_\delta)=0$, and
	$\lambda_\delta\asymp\delta$. Also
	$\E\eta_\delta(0)^+\to\E\eta_0(0)^+>0$, so
	Theorem~\ref{thm:subcritical} applies.

	\smallskip
	\noindent\emph{Step 2.} We prove the bound in the statement.
	Take $\lambda_1=\lambda_\delta$ in Theorem~\ref{thm:subcritical}. In its
	notation,
	\[
		a_\delta=\frac13\int_0^{\lambda_\delta}-\psi_\delta'(s)\,ds
		=-\frac13\psi_\delta(\lambda_\delta)\asymp\delta^2.
	\]
	The exponential-moment assumption and $\lambda_\delta\asymp\delta$ bound
	uniformly the prefactor obtained when \eqref{eq:range-upper} is averaged over
	the particles at the origin. This proves the result.
\end{proof}

The terms in $\E U_\infty^\delta(0)$ begin to decay on a scale which diverges as
$\delta\downarrow0$. We use the next range bound only beyond an explicit
cutoff; before it we compare with the subcritical divisible odometer.

\begin{proposition}\label{prop:resolvent}
	There are $c,C>0$ such that, for $0<a\leq1$ and $t\geq1$,
	\begin{equation}\label{eq:resolvent-pointwise}
		\E_0e^{-a|R_t|}\leq C
		\begin{cases}
			\exp\{-ca^{2/3}t^{1/3}\}&d=1\,,\\
			\exp\{-c\sqrt{at/\log(t+2)}\}&d=2,\ t\geq e/a\,,\\
			\exp\{-c\sqrt{at}\}&d\geq3\,.
		\end{cases}
	\end{equation}
	Moreover, with $\Lambda=\log(e/a)$ and
	\begin{equation}\label{eq:range-threshold}
		T=C
		\begin{cases}
			a^{-2}\Lambda^3&d=1\,,\\
			a^{-1}\Lambda^3&d=2\,,\\
			a^{-1}\Lambda^2&d\geq3\,,
		\end{cases}
	\end{equation}
	the sum $\sum_{t>T}\E_0e^{-a|R_t|}$ is at most one.
\end{proposition}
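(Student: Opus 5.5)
The plan is to split $\E_0e^{-a|R_t|}$ according to whether the range is small, using a lower-tail estimate for $|R_t|$ that holds uniformly in the scale. For any threshold $r>0$,
\[
	\E_0e^{-a|R_t|}\leq e^{-ar}+\P_0(|R_t|\leq r)\,,
\]
and I will choose $r$ so that the two terms balance. The small-range probability will be bounded by a blocking argument. Cut $[0,t]$ into $k$ consecutive blocks of length $t/k$. If $|R_t|\leq r$, then the walk stays inside a set of $r$ sites for the whole time, and in particular each block's displacement is constrained. The cleaner route is the standard one: if $|R_t|\leq r$, then the walk is confined to some connected set of diameter at most $r$ (in $d=1$, an interval of length $r$). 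So I will bound the probability of remaining in a fixed region.

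In $d=1$, the probability of staying in an interval of length $r$ for time $t$ is at most $C\exp\{-ct/r^2\}$. Counting at most $r$ positions for the interval, I get
\[
	\E_0e^{-a|R_t|}\leq e^{-ar}+Cr\,e^{-ct/r^2}\,.
\]
Choosing $r=(t/a)^{1/3}$ gives $\exp\{-ca^{2/3}t^{1/3}\}$. The polynomial factor is absorbed after shrinking $c$, since $a^{2/3}t^{1/3}\geq$ const or else the bound is trivial by enlarging $C$.

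For $d\geq2$, confinement to a small \emph{volume} is not the same as confinement to a small diameter, so I will use a different device. Divide $[0,t]$ into $k$ blocks of length $m=t/k$. Over each block the walk, started anywhere, visits at least $c\,m/\log m$ new-or-old sites in $d=2$ and at least $cm$ in $d\geq3$, with probability bounded below. The blocks' ranges are independent given their starting points. Requiring the total range to be at most $r$ forces most blocks to revisit the already-visited set. I expect this to be the hard step. It needs an estimate of the form: for any fixed set $A$ with $|A|\leq r$, the probability that a walk of length $m$ started in $A$ spends all its time in $A$ is at most $\exp\{-cm/r^{2/d}\}$. This is the Faber–Krahn bound for the principal Dirichlet eigenvalue of a set of volume $r$ in $\Z^d$, namely $\lambda_1(A)\geq c\,|A|^{-2/d}$. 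Rather than blocking, I will use it directly. The walk with $|R_t|\leq r$ stays in its own range $R_t$, which has volume at most $r$ and contains the origin. The number of connected lattice animals of size $r$ containing the origin is at most $C^r$. A union bound then gives
\[
	\P_0(|R_t|\leq r)\leq C^r\exp\{-ct\,r^{-2/d}\}\,.
\]
To make the union bound affordable, I will take $r$ with $r\leq c't\,r^{-2/d}$. Balancing $ar$ against $t\,r^{-2/d}$ gives the Donsker–Varadhan exponent $a^{2/(d+2)}t^{d/(d+2)}$. That exponent is at least $\sqrt{at}$ for $d\geq3$ whenever $at\geq1$, and at least $\sqrt{at/\log(t+2)}$ in $d=2$ when $t\geq e/a$; for $at<1$ the claimed bound is trivial. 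The logarithm in $d=2$ is an allowance for the $C^r$ entropy factor versus $\log$-corrections. I will check that $r=\sqrt{t/a}$ (times a log in $d=2$) satisfies the entropy condition. If the animal count is too crude in $d=1$, I will fall back to the interval count used above.

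For the threshold claim, with $T$ as in \eqref{eq:range-threshold} I need $\sum_{t>T}$ of the right side of \eqref{eq:resolvent-pointwise} to be at most one. In $d\geq3$, for $t>T=Ca^{-1}\Lambda^2$ we have $\sqrt{at}\geq\sqrt C\Lambda$. Writing $\sqrt{at}\geq\tfrac12\sqrt{at}+\tfrac12\sqrt C\Lambda$, the sum is at most
\[
	e^{-c\sqrt C\Lambda/2}\sum_t e^{-c\sqrt{at}/2}\leq (a/e)^{c\sqrt C/2}\cdot Ca^{-1}\,,
\]
which is at most one once $C$ is large. The cases $d=1,2$ work the same way. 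In $d=1$, $a^{2/3}T^{1/3}\asymp C^{1/3}\Lambda$ and $\sum_t e^{-ca^{2/3}t^{1/3}}\leq Ca^{-2}$. In $d=2$, $\sqrt{aT/\log T}\gtrsim\Lambda$ because $\log T\asymp\Lambda$, and the requirement $t\geq e/a$ holds beyond $T$. The main obstacle is the $d\geq2$ small-range probability. Specifically, it is controlling the entropy of possible ranges against the Faber–Krahn exit rate, uniformly in $a$, so that the stretched exponential comes with an absolute prefactor $C$.
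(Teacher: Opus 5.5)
Your $d\geq2$ argument has a genuine gap, and it is the one you flagged yourself. The lattice-animal union bound combined with Faber--Krahn gives at best $\P_0(|R_t|\le r)\le\exp\{Cr-ctr^{-2/d}\}$, up to polynomial factors. This is useful only when $r\lesssim t^{d/(d+2)}$. The scales you need are larger by a negative power of $a$: $r\asymp(t/a)^{d/(d+2)}$ for the balance, or $r\asymp\sqrt{t/a}$. So the entropy $C^r$ wins when $a$ is small, and the best this route gives is $\E_0e^{-a|R_t|}\le Ce^{-cat^{d/(d+2)}}$.

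At $t\asymp T$ the threshold claim needs $\E_0e^{-a|R_t|}$ to be polynomially small in $a$. There, however, your exponent is $a^{2/(d+2)}(C\Lambda^2)^{d/(d+2)}\to0$ for $d\geq3$, and $\sqrt{Ca}\,\Lambda^{3/2}\to0$ for $d=2$. The bound is therefore empty exactly where it matters.

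No sharper entropy count can rescue this plan, because its intermediate target is false. For $d\geq3$, the inequality $|R_t|\le t+1$ gives $\E_0e^{-a|R_t|}\geq e^{-3}$ at $t=\lceil1/a\rceil$. At that time, however, $a^{2/(d+2)}t^{d/(d+2)}\geq a^{-(d-2)/(d+2)}\to\infty$. The Donsker--Varadhan exponent is an asymptotic for fixed $a$, not a bound uniform in $a$. In the window $t\gtrsim1/a$ you need control of \emph{moderate} lower deviations of the range: $\P_0(|R_t|<m)$ for $m$ well above $t^{d/(d+2)}$ but below the typical size of $|R_t|$.

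The missing step is the blocking argument you sketched and then abandoned. It does not require tracking revisits to the already-visited set.
\begin{itemize}
\item For $(j+1)n\le t$, the block range $\{X_{jn},\ldots,X_{(j+1)n}\}$ is a subset of $R_t$. The sizes of these block ranges are independent, each with the law of $|R_n|$.
\item Hence $|R_t|<m$ forces all $\lfloor t/n\rfloor$ of them below $m$, and $\P_0(|R_t|<m)\le\P_0(|R_n|<m)^{\lfloor t/n\rfloor}$.
\item To get $\P_0(|R_n|\geq m)\geq c$, use Cauchy--Schwarz on local times to obtain $\E_0|R_n|\geq c\sqrt n$, $cn/\log(n+2)$, $cn$ in $d=1$, $d=2$, $d\geq3$. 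The strong Markov property gives $\E_0|R_n|^2\le2(\E_0|R_n|)^2$. Paley--Zygmund then applies with $n\asymp m^2$, $m\log(m+1)$, $m$ respectively.
\item Finally insert this into $\E_0e^{-a|R_t|}\le\P_0(|R_t|<m)+e^{-am}$ with $m$ of order $(t/a)^{1/3}$, $\sqrt{t/[a\log(t+2)]}$, $\sqrt{t/a}$. This gives the pointwise bounds, and it is the paper's proof.
\end{itemize}

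The same argument, with blocks of length $r^2$, also repairs a smaller defect in your $d=1$ case. There the interval-counting prefactor is $r=(t/a)^{1/3}=a^{-1}\cdot a^{2/3}t^{1/3}$. It cannot be absorbed into $\exp\{-ca^{2/3}t^{1/3}\}$ uniformly in $a$ when $1\ll a^{2/3}t^{1/3}\ll\log(1/a)$. It would, however, be harmless for the sum over $t>T$.

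Once the pointwise bounds are established, your summation over $t>T$ is fine.
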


\begin{proof}
	\smallskip
	\noindent\emph{Step 1.} We prove that, for every integer $m\geq2$,
	\begin{equation}\label{eq:range-lower-tail}
		\P_0(|R_t|<m)\leq C
		\begin{cases}
			\exp\{-ct/m^2\}&d=1\,,\\
			\exp\{-ct/[m\log(m+1)]\}&d=2\,,\\
			\exp\{-ct/m\}&d\geq3\,.
		\end{cases}
	\end{equation}
	Let $L_t(x)$ denote the number of visits to $x$ by time $t$. Then
	the Cauchy--Schwarz inequality and the local central limit theorem give
	\[
		\E_0|R_t|\geq\frac{(t+1)^2}{\E_0\sum_xL_t(x)^2}
		\geq c
		\begin{cases}
			\sqrt t&d=1\,,\\ t/\log(t+2)&d=2\,,\\ t&d\geq3\,.
		\end{cases}
	\]
	Writing $T_x$ for the first visit to $x$, the strong Markov property gives
	\[
		\E_0|R_t|^2
		=\sum_{x,y}\P_0(T_x\leq t,T_y\leq t)
		\leq2\sum_x\P_0(T_x\leq t)\sum_y\P_x(T_y\leq t)
		=2(\E_0|R_t|)^2.
	\]
	Thus for some
	\[
		n\leq C
		\begin{cases}
			m^2&d=1\,,\\ m\log(m+1)&d=2\,,\\ m&d\geq3\,,
		\end{cases}
	\]
	the Paley--Zygmund inequality gives $\P_0(|R_n|\geq m)\geq c$. The ranges traced within
	successive blocks of $n$ steps, translated to their starting points, are
	independent. If $|R_t|<m$, each block range has fewer than $m$ sites, so $\P_0(|R_t|<m)$ is at most
	$(1-c)^{\lfloor t/n\rfloor}$, proving \eqref{eq:range-lower-tail}.

	\smallskip
	\noindent\emph{Step 2.} We prove \eqref{eq:resolvent-pointwise}.
	For every integer $m\geq2$,
	\[
		\E_0e^{-a|R_t|}\leq\P_0(|R_t|<m)+e^{-am}\,.
	\]
	Take $m$ of order $(t/a)^{1/3}$ for $d=1$, of order $\sqrt{t/a}$ for
	$d\geq3$, and of order $\sqrt{t/[a\log(t+2)]}$ for $d=2$.
	In dimension two, $t\geq e/a$ ensures that
	$\log(m+1)\leq\log(t+2)$. These choices give
	\eqref{eq:resolvent-pointwise}. When the
	corresponding choice of $m$ lies outside $[2,t+1]$, the exponential factor
	on the right of \eqref{eq:resolvent-pointwise} is bounded below by a positive
	constant. Enlarging $C$ and using $\E_0e^{-a|R_t|}\leq1$ proves the estimate.

	\smallskip
	\noindent\emph{Step 3.} We prove
	$\sum_{t>T}\E_0e^{-a|R_t|}\leq1$.
	For $d\geq3$, comparison with an integral gives
	\[
		\sum_{t>T}\E_0e^{-a|R_t|}
		\leq Ca^{-1}(1+\sqrt{aT})e^{-c\sqrt{aT}},
	\]
	which is at most one for the value of $T$ in \eqref{eq:range-threshold} once
	the constant there is fixed sufficiently large. The substitution $s=v^3a^{-2}$ gives the
	same conclusion when $d=1$. For $d=2$, divide $(T,\infty)$ into the
	intervals $[2^jT,2^{j+1}T)$. Since the constant in
	\eqref{eq:range-threshold} is fixed, $\log T\leq C\Lambda$. Since
	$C\Lambda+j\leq C2^{j/3}\Lambda$,
	\[
		c\sqrt{\frac{a2^jT}{\log(2^{j+1}T+2)}}
		\geq c\,2^{j/3}\Lambda
	\]
	where the constant $c$ on the right of the last display can be chosen as large as desired by increasing the constant in \eqref{eq:range-threshold}. The $j$th interval contributes at most $2^jT\exp\{-c2^{j/3}\Lambda\}$, and these bounds sum to at most one.
\end{proof}

Let $\xi_\delta\coloneqq\eta_\delta+\delta$, so that $\xi_\delta$ has mean zero. For
mean-zero $\xi$, let
\[
	u_m(x;\xi)\coloneqq\sup_{\sigma\leq m}\E_x\sum_{j<\sigma}\xi(X_j).
\]
Let
\[
	\phi_d(s)\coloneqq
	\begin{cases}
		(s+1)^{(4-d)/4}&d\leq3\,,\\
		\log(s+2)&d\geq4.
	\end{cases}
\]

Let $\chi$ be a symmetric random sign. There are $a_0>0$ and a fixed symmetric Laplace variable $\zeta$ such that, for all sufficiently small $\delta$ and every convex $f$ for which the expectations are finite,
\begin{equation}\label{eq:near-convex}
	\E f(a_0\chi)\leq\E f(\xi_\delta(0))\leq\E f(\zeta).
\end{equation}
Indeed, $\E|\xi_\delta(0)-\eta_0(0)|\leq(C+1)\delta$ and
$\E\eta_0(0)^+>0$. Choose $a_0$ so small that, for $|t|\leq a_0$,
\[
	\E(\eta_0(0)-t)^+>\E(a_0\chi-t)^++2a_0.
\]
The same inequality without $2a_0$ holds with $\xi_\delta(0)$ in place of
$\eta_0(0)$ when $\delta$ is small. For $t\geq a_0$, $\E(a_0\chi-t)^+=0$; for $t\leq-a_0$, $\E(\xi_\delta(0)-t)^+\geq-t=\E(a_0\chi-t)^+$. Conversely, the
uniform exponential moment gives
\[
	\E(\xi_\delta(0)-t)^++\E(-\xi_\delta(0)-t)^+\leq Ce^{-ct}\qquad(t\geq0).
\]
For a symmetric Laplace variable of scale $b$,
$\E(\zeta-t)^+=(b/2)e^{-t/b}$ for $t\geq0$, so a fixed large $b$ dominates this bound. If $t<0$, the mean-zero property and the comparison at $-t$ give
\[
	\E(\xi_\delta(0)-t)^+
	=-t+\E(-\xi_\delta(0)+t)^+
	\leq-t+\E(\zeta+t)^+=\E(\zeta-t)^+.
\]
The functions $(x-t)^+$ characterize convex order, proving
\eqref{eq:near-convex}.

\begin{lemma}\label{lem:mean-horizon}
	Conditionally on $\xi_\delta$, let $\sigma$ be a bounded stopping time for the
	walk. If $M=\E\sigma$, then
	\[
		\E\sum_{j<\sigma}\xi_\delta(X_j)\leq C\phi_d(M).
	\]
\end{lemma}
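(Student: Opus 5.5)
The plan is to cut $\sigma$ at dyadic times and bound each piece with the divisible odometer. Throughout, $\xi=\xi_\delta$ and the walk $X$ is independent of $\xi$. Every bound will be transferred to the fixed Laplace scenery $\zeta$ by \eqref{eq:near-convex}, so that all constants are uniform in small $\delta$.

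\emph{Step 1: uniform moment bounds.} For fixed $n$ and $r\geq1$, the map $\xi\mapsto u_n(0;\xi)$ is a supremum of linear functionals of the coordinates, by \eqref{eq:stopping}. It is also nonnegative, since $\sigma=0$ is allowed. Hence $\xi\mapsto u_n(0;\xi)^r$ is convex in each coordinate. Replacing coordinates one at a time by i.i.d.\ copies of $\zeta$ and using \eqref{eq:near-convex} gives
\[
	\E u_n(0;\xi_\delta)^r\leq\E u_n(0;\zeta)^r\,.
\]
The law of $\zeta$ is fixed, so Theorem~\ref{thm:BP} and Lemma~\ref{lem:u-concentration} (with $K=P$), together with \eqref{eq:green-norms}, give
\[
	\bigl(\E u_n(0;\zeta)^r\bigr)^{1/r}\leq C_r\phi_d(n)\,.
\]
I will fix $r=8$. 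For $d\leq3$ this works because $\|g_n\|_2\asymp\phi_d(n)$ and $\max_x g_n(x)\leq C\phi_d(n)$. For $d\geq4$ the Green terms are $O(\sqrt{\log n})$, which is also at most $C\phi_d(n)$.

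\emph{Step 2: dyadic decomposition.} Let $k_0=\lceil\log_2(M+1)\rceil$. Write
\[
	\sum_{j<\sigma}\xi(X_j)=\sum_{j<\sigma\wedge2^{k_0}}\xi(X_j)
	+\sum_{k>k_0}\sum_{2^{k-1}\leq j<\sigma\wedge2^k}\xi(X_j)\,.
\]
Only finitely many terms are nonzero because $\sigma$ is bounded. The first term has expectation at most $\E u_{2^{k_0}}(0;\xi)\leq C\phi_d(M)$, by Lemma~\ref{lem:stopping} applied to the stopping time $\sigma\wedge2^{k_0}$ and by Step~1. For the $k$th block, condition on $\mathcal F_{2^{k-1}}$. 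On the event $\{\sigma>2^{k-1}\}$, the remainder of $\sigma\wedge2^k$ is a stopping time of length at most $2^{k-1}$ for the shifted walk. By the Markov property and Lemma~\ref{lem:stopping}, the conditional expectation of the block is at most $u_{2^{k-1}}(X_{2^{k-1}};\xi)$. Off the event, the block is empty. Hence the $k$th block contributes at most $\E\bigl[\one\{\sigma>2^{k-1}\}\,u_{2^{k-1}}(X_{2^{k-1}};\xi)\bigr]$.

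\emph{Step 3: Hölder and summation.} By Hölder with exponents $8/7$ and $8$, and by Markov's inequality $\P(\sigma>2^{k-1})\leq M2^{1-k}$, the $k$th block is at most
\[
	(M2^{1-k})^{7/8}\bigl(\E u_{2^{k-1}}(X_{2^{k-1}};\xi)^8\bigr)^{1/8}\,.
\]
Since $X$ is independent of $\xi$ and the law of $\xi$ is translation invariant, $\E u_{2^{k-1}}(X_{2^{k-1}};\xi)^8=\E u_{2^{k-1}}(0;\xi)^8$. Step~1 then bounds this moment by $C\phi_d(2^k)^8$. The exponent $7/8$ exceeds the growth exponent of $\phi_d$, which is at most $3/4$, so
\[
	\sum_{k>k_0}(M2^{-k})^{7/8}\phi_d(2^k)\leq C\phi_d(M)\,,
\]
a geometric sum for $d\leq3$ and a logarithm times a geometric sum for $d\geq4$.

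The step I expect to need the most care is the strong-Markov reduction in Step~2. The point is that $\sigma$ is measurable for the walk's filtration while $\xi$ is held fixed, so the post-$2^{k-1}$ remainder is admissible in \eqref{eq:stopping} at the random start $X_{2^{k-1}}$. I also need that this random start does not correlate with $\xi$ in a way that spoils translation invariance. This holds because the walk is independent of $\xi$, even though $\sigma$ itself depends on $\xi$. The uniformity of constants in $\delta$ is handled entirely by the convex-order comparison of $u^r$ in Step~1.
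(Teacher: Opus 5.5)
Your proposal is correct and follows essentially the same route as the paper. Both proofs transfer $u_m(0;\cdot)^r$ to the Laplace scenery by convex order and use Theorem~\ref{thm:BP} with Lemma~\ref{lem:u-concentration} to get moment bounds uniform in $\delta$. Both then bound each dyadic block beyond a scale comparable to $M$ by $u_{\ell}(X_s;\xi_\delta)$ through the Markov property, and sum with H\"older's and Markov's inequalities. Your choice of eighth moments, giving the exponent $7/8>3/4$, is simply one instance of the paper's condition $1-1/q>(4-d)/4$.
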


\begin{proof}
	Choose $q>1$ so that $1-1/q>(4-d)/4$ when $d\leq3$; when
	$d\geq4$, choose a fixed $q>1$.

	\smallskip
	\noindent\emph{Step 1.} We prove
	\begin{equation}\label{eq:mean-horizon-moment}
		\bigl(\E u_m(0;\xi_\delta)^q\bigr)^{1/q}\leq C_q\phi_d(m).
	\end{equation}
    The representation \eqref{eq:stopping} shows that
    $\xi\mapsto u_m(0;\xi)$ is the supremum of a family of linear
    functionals that includes the zero functional. It is therefore convex and
    nonnegative. Hence, for $r\geq2$, the map $\xi\mapsto u_m(0;\xi)^r$ is convex in every coordinate.
	Replacing the finitely many relevant coordinates one at a time by independent
	copies of $\zeta$ and applying \eqref{eq:near-convex} gives
	\[
		\bigl(\E u_m(0;\xi_\delta)^r\bigr)^{1/r}
		\leq\bigl(\E u_m(0;\zeta)^r\bigr)^{1/r}.
	\]
	Theorem~\ref{thm:BP} bounds $\E u_m(0;\zeta)$ by $C\phi_d(m)$.
	Lemma~\ref{lem:u-concentration} and
	\eqref{eq:green-norms} therefore give
	\begin{equation}\label{eq:near-centered-moment}
		\bigl(\E u_m(0;\xi_\delta)^r\bigr)^{1/r}
		\leq C\left(\phi_d(m)+\sqrt r\,\|g_m\|_2
		+r\max_xg_m(x)\right).
	\end{equation}
	Taking $r=2\vee q$ and using monotonicity of moments proves
	\eqref{eq:mean-horizon-moment}.

	\smallskip
	\noindent\emph{Step 2.} We prove the bound in the statement.
	Let $N=1\vee\lceil M\rceil$, set $s_0=0$ and $\ell_0=N$, and set
	$s_k=\ell_k=2^{k-1}N$ for $k\geq1$. These pairs describe the blocks
	$[0,N)$ and $[2^{k-1}N,2^kN)$. Conditional on $\xi_\delta$ and the walk through
	time $s_k$, on $\{\sigma>s_k\}$ the conditional expected reward collected
	from time $s_k$ through time $\sigma\wedge(s_k+\ell_k)-1$ is at most
	$u_{\ell_k}(X_{s_k};\xi_\delta)$. Hence
	\begin{equation}\label{eq:stopping-blocks}
		\E\sum_{j<\sigma}\xi_\delta(X_j)
		\leq\sum_{k\geq0}\E\left[
		\one\{\sigma>s_k\}u_{\ell_k}(X_{s_k};\xi_\delta)\right].
	\end{equation}
	The walk is independent of $\xi_\delta$, so stationarity gives
	$(\E u_{\ell_k}(X_{s_k};\xi_\delta)^q)^{1/q}\leq C_q\phi_d(\ell_k)$ by
	\eqref{eq:mean-horizon-moment}. Also
	$\P(\sigma>s_k)\leq M/s_k\leq2^{1-k}$ for $k\geq1$.
	H\"older's inequality in \eqref{eq:stopping-blocks} now gives
	\[
		\E\sum_{j<\sigma}\xi_\delta(X_j)
		\leq C\phi_d(N)+C\sum_{k\geq1}
		2^{-(k-1)(1-1/q)}\phi_d(2^{k-1}N)
		\leq C\phi_d(M).
	\]
	The last series converges by the choice of $q$.
\end{proof}

\begin{proposition}\label{prop:near-divisible}
	  For all
	sufficiently small $\delta>0$,
	\begin{equation}\label{eq:near-divisible-upper}
		\E u_\infty^\delta(0)\leq C
		\begin{cases}
			\delta^{-3}&d=1\,,\\
			\delta^{-1}&d=2\,,\\
			\delta^{-1/3}&d=3\,,\\
			\log(e/\delta)&d\geq4.
		\end{cases}
	\end{equation}
	The reverse inequality holds when $d\leq4$, and when $d\geq5$ the lower bound
	is $c[\log(e/\delta)]^{2/d}$. If in addition $\eta_\delta(0)$ is bounded
	uniformly in $\delta$, then $\E u_\infty^\delta(0)\asymp[\log(e/\delta)]^{2/d}$
	when $d\geq5$.
\end{proposition}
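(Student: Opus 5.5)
The upper bound rests on the stopping representation and Lemma~\ref{lem:mean-horizon}. Since $\eta_\delta=\xi_\delta-\delta$, Lemma~\ref{lem:stopping} gives, for every bounded stopping time $\sigma$ with $M=\E\sigma$,
\[
	\E_0\sum_{j<\sigma}\eta_\delta(X_j)=\E_0\sum_{j<\sigma}\xi_\delta(X_j)-\delta M\,.
\]
Conditionally on the scenery, Lemma~\ref{lem:mean-horizon} bounds the first term by $C\phi_d(M)$. A point to check is that the constant there is uniform over the scenery, or else that only its average is needed. I would therefore average before taking the supremum. Concretely, I would split $u_m(0;\eta_\delta)$ according to the dyadic block in which the optimal $\sigma$'s expected duration lies and repeat the block argument of Lemma~\ref{lem:mean-horizon} with the drift retained. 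On the block $[2^{k-1}N,2^kN)$ the drift charges at least $\delta 2^{k-1}N\,\P(\sigma>2^kN)$ roughly. The cleanest route is to bound $\E u_\infty(0;\eta_\delta)\le \E\sup_M(\,\cdot\,)$ through the inequality
\[
	u_m(0;\eta_\delta)\le\sup_{M}\bigl(C\phi_d(M)\,Y_M-\delta M\bigr),
\]
where $Y_M$ collects the normalized block maxima $u_{\ell_k}(X_{s_k};\xi_\delta)/\phi_d(\ell_k)$. Each $Y_M$ has bounded $q$-th moments by \eqref{eq:mean-horizon-moment}, and with a union over dyadic $M$ the supremum of the random coefficients has bounded mean, because the weights $2^{-(k-1)(1-1/q)}$ are summable. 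Then
\[
	\sup_M\bigl(C\phi_d(M)-\delta M\bigr)
\]
is attained at $M\asymp\delta^{-4/d}$ for $d\le3$, giving $\delta^{-(4-d)/d}$, that is $\delta^{-3},\delta^{-1},\delta^{-1/3}$, and at $M\asymp1/\delta$ for $d\ge4$, giving $\log(e/\delta)$. Monotone convergence in $m$ passes the bound to $u_\infty$. I expect this step to be the main obstacle: making the random coefficient $Y$ integrable uniformly in $\delta$ while the stopping rule may run far past $M$, which is exactly the difficulty flagged before Lemma~\ref{lem:mean-horizon}.

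For the lower bounds I would restrict to deterministic horizons, since $u_\infty\ge u_m$. The comparison $\eta_\delta=\xi_\delta-\delta$ gives
\[
	u_m(0;\eta_\delta)\ge u_m(0;\xi_\delta)-\delta m,
\]
because any $\sigma\le m$ loses at most $\delta m$. The lower convex-order bound in \eqref{eq:near-convex}, applied coordinatewise with convexity and homogeneity as in Theorem~\ref{thm:four-sparse}, gives
\[
	\E u_m(0;\xi_\delta)\ge a_0\,\E u_m(0;\chi)\,.
\]
Theorem~\ref{thm:BP}, applied to the symmetric sign $\chi$, which is bounded below, gives $\E u_m(0;\chi)\ge c\,n^{(4-d)/4}$, $c\log m$, or $c(\log m)^{2/d}$ according to the dimension. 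Choosing $m=\varepsilon\delta^{-4/d}$ for $d\le3$, or $m=\varepsilon/\delta$ for $d\ge4$, with $\varepsilon$ small so that the loss $\delta m$ is at most half of the main term, yields the matching lower bounds. In $d\ge5$ this gives $c[\log(e/\delta)]^{2/d}$.

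Finally, when $\eta_\delta(0)$ is uniformly bounded and $d\ge5$, I would rerun the upper bound with $\phi_d(s)$ replaced by $(\log(s+2))^{2/d}$. For this I would compare $\xi_\delta$ in convex order with a fixed bounded symmetric variable, which is possible by the same argument as \eqref{eq:near-convex} since the supports are uniformly bounded. Theorem~\ref{thm:BP} then gives the $(\log m)^{2/d}$ rate, and \eqref{eq:green-norms} shows that the fluctuation terms are $O(1)$. Optimizing against $\delta M$ gives $[\log(e/\delta)]^{2/d}$.
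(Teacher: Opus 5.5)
Your lower bounds are correct and match the paper's argument. You restrict to $u_m$ and lose at most $\delta m$, use the lower half of \eqref{eq:near-convex} with convexity and homogeneity, apply Theorem~\ref{thm:BP} to $\chi$, and take $m$ a small multiple of $\delta^{-4/d}$ or $1/\delta$. (Your $n^{(4-d)/4}$ should read $m^{(4-d)/4}$.)

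The gap is the upper bound, and it comes from reading Lemma~\ref{lem:mean-horizon} as a quenched statement. It is annealed. There $\E$ averages over the scenery and the walk, $\sigma$ may depend on $\xi_\delta$, and $M=\E\sigma$ is the annealed mean duration. Its proof uses $\P(\sigma>s_k)\leq M/s_k$ and H\"older against $\E u_{\ell_k}(X_{s_k};\xi_\delta)^q$, both averaged over everything. No quenched bound with a deterministic constant can hold, as $\sigma=1$ with $\xi_\delta(0)$ large shows.

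With the annealed reading the upper bound is one line. Let $\sigma_n$ be the first $j\leq n$ with $u^\delta_{n-j}(X_j)=0$, which is optimal, and let $M_n\coloneqq\E\sigma_n$. The drift penalty is linear in $\sigma_n$, so its expectation is exactly $\delta M_n$, and
\[
	\E u_n^\delta(0)=\E\sum_{j<\sigma_n}\xi_\delta(X_j)-\delta M_n
	\leq C\phi_d(M_n)-\delta M_n\leq\sup_{M\geq0}\bigl(C\phi_d(M)-\delta M\bigr)
\]
uniformly in $n$. Monotone convergence finishes. The fact that $\sigma_n$ may run far past $M_n$ is already absorbed inside the lemma, and no random coefficient ever appears.

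Your substitute route does not close, for three reasons.

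\emph{The supremum of the coefficients.} The weights $2^{-(k-1)(1-1/q)}$ are summable over the blocks $k$ at a fixed $M$. This bounds each $Y_M$ in $L^q$ uniformly, but says nothing about $\sup_MY_M$ over infinitely many scales. For $d\leq3$, the $k=0$ terms alone satisfy $u_N(0;\xi_\delta)/\phi_d(N)\geq(\sum_xg_N(x)\xi_\delta(x))^+/\phi_d(N)$. Across dyadic $N$ these behave like an asymptotically stationary sequence with decaying correlations, so that supremum is a.s.\ infinite.

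\emph{Convexity.} Even an integrable $Y$ would not suffice. The map $Y\mapsto\sup_M(C\phi_d(M)Y-\delta M)$ is convex, so you cannot replace $Y$ by a constant as you do at the end. You would need $\E Y^{4/d}$ when $d\leq3$ and $\E Y\log Y$ when $d\geq4$.

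\emph{The union-bound repair.} The natural fix is a union bound $\sum_{M\text{ dyadic}}\E\bigl(C\phi_d(M)Y_M-\delta M/2\bigr)^+$. It works for $d\leq3$ with $q>4/d$. For $d\geq4$, however, it adds about $C\log M$ over the $\asymp\log(e/\delta)$ dyadic scales below $1/\delta$, giving only $C\log^2(e/\delta)$. For the same reason it gives only $C[\log(e/\delta)]^{1+2/d}$ in the bounded $d\geq5$ case, which you obtain by rerunning this step. The paper handles that case with the same annealed block argument, taking $q=2$ and replacing $\phi_d$ by $[\log(M+2)]^{2/d}$.
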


\begin{proof}
	\smallskip
	\noindent\emph{Step 1.} We prove \eqref{eq:near-divisible-upper}.
	For each $n$, let $\sigma_n$ be the first $j\leq n$ for which $u_{n-j}^\delta(X_j)=0$. Dynamic programming shows that $\sigma_n$ attains the
	supremum in \eqref{eq:stopping}. Let $M_n\coloneqq\E\sigma_n$. Since
	$\eta_\delta=\xi_\delta-\delta$, Lemma~\ref{lem:mean-horizon} gives
	\[
		\E u_n^\delta(0)\leq C\phi_d(M_n)-\delta M_n.
	\]
	Taking the supremum over $M\geq0$ gives the upper bound in
	\eqref{eq:near-divisible-upper}. For $d\leq3$, the supremum has order at most
	$\delta^{-(4-d)/d}$; for $d\geq4$ it is at most
	$C\log(e/\delta)$. The resulting bound on $\E u_n^\delta(0)$ is uniform in $n$, so the monotone convergence theorem applies.

	\smallskip
	\noindent\emph{Step 2.} We prove the lower bounds in the statement.
	The comparison \eqref{eq:near-convex}, positive homogeneity, and
	Theorem~\ref{thm:BP} give
	\[
		\E u_m(0;\xi_\delta)\geq c
		\begin{cases}
			m^{(4-d)/4}&d\leq3\,,\\
			\log(m+1)&d=4\,,\\
			[\log(m+1)]^{2/d}&d\geq5
		\end{cases}
	\]
	for all large $m$, uniformly in $\delta$. An optimizer for
	$u_m(0;\xi_\delta)$ is a competitor for the $\eta_\delta$-problem, and hence
	\[
		u_\infty^\delta(0)\geq u_m(0;\xi_\delta)-\delta m.
	\]
	Taking expectations and optimizing over $m$ gives the matching lower bounds
	for $d\leq4$ and the stated lower bound for $d\geq5$.

	\smallskip
	\noindent\emph{Step 3.} We prove the matching upper bound for bounded laws
	when $d\geq5$. Choose $B$ so that $|\xi_\delta(0)|\leq B$ almost surely for every sufficiently small $\delta$. Since $\xi_\delta(0)$ has mean zero, it is
	dominated in convex order by $B\chi$. Applying the coordinatewise
	convex comparison to $u_m(0;\cdot)^2$, then using Theorem~\ref{thm:BP} and Lemma~\ref{lem:u-concentration}, gives
	\[
		\bigl(\E u_m(0;\xi_\delta)^2\bigr)^{1/2}
		\leq C[\log(m+2)]^{2/d}.
	\]
	Repeating Step~2 of Lemma~\ref{lem:mean-horizon} with $q=2$ bounds the
	expected reward of a stopping time of mean $M$ by
	$C[\log(M+2)]^{2/d}$. Subtracting $\delta M$ and optimizing over $M$
	proves the last assertion.
\end{proof}

\begin{proof}[Proof of the lower bounds in Theorem~\ref{thm:near}]
	Theorem~\ref{thm:comparison} gives $\E U_\infty^\delta(0)\geq\E u_\infty^\delta(0)$.
	Proposition~\ref{prop:near-divisible} now gives the claimed bounds when
	$d\leq3$. For $d\geq4$, take independent copies across
	sites of the coupling in Theorem~\ref{thm:near}. Applying
	Lemma~\ref{lem:density-compare} first to
	$\min\{\eta_\delta,\eta_0\}$ and then to
	$\max\{\eta_\delta,\eta_0\}$ gives
	\[
		S_t^\delta\geq S_t^0-\E|\eta_\delta(0)-\eta_0(0)|
		\geq S_t^0-C\delta.
	\]
	Lemma~\ref{lem:critical-density} gives $S_t^0\geq c/(t+1)$ for all large
	$t$. Hence $S_t^\delta\geq c/(t+1)$ from that point up to $c/\delta$,
	and summing proves $\E U_\infty^\delta(0)\geq c\log(e/\delta)$.
\end{proof}

\begin{proof}[Proof of the upper bounds in Theorem~\ref{thm:near}]
	\smallskip
	\noindent\emph{Step 1.} We prove \eqref{eq:near-tail} for the cutoff
	\eqref{eq:near-cutoff}.
	Let $L=\log(e/\delta)$. Take
	\begin{equation}\label{eq:near-cutoff}
		N=\left\lceil C
		\begin{cases}
			\delta^{-4}L^3&d=1\,,\\
			\delta^{-2}L^3&d=2\,,\\
			\delta^{-2}L^2&d\geq3.
		\end{cases}\right\rceil
	\end{equation}
	With $C$ sufficiently large, Lemma~\ref{lem:near-tilt} and
	Proposition~\ref{prop:resolvent} give
	\begin{equation}\label{eq:near-tail}
		\sum_{t\geq N}S_t^\delta\leq C,
		\qquad \E U_\infty^\delta(0)\leq\E U_N^\delta(0)+C\,.
	\end{equation}

	\smallskip
	\noindent\emph{Step 2.} We prove \eqref{eq:near-routing-mean} and
	\eqref{eq:near-routing-moment}.
	Theorem~\ref{thm:comparison}, Lemma~\ref{lem:pathwise-comparison}, and
	Proposition~\ref{prop:w-moment} give, for $n\geq2$ and $r\geq2$,
	\begin{align}
		0\leq\E U_n^\delta(0)-\E u_n^\delta(0)
		&\leq C(n+1)^{1/r}
		\left(\sqrt{r\kappa_d(n)\bigl(\E U_n^\delta(0)^r\bigr)^{1/r}}+r\right),
		\label{eq:near-routing-mean}\\
		\bigl(\E U_n^\delta(0)^r\bigr)^{1/r}
		&\leq C\left(\bigl(\E u_n^\delta(0)^r\bigr)^{1/r}
		+r(n+1)^{2/r}\kappa_d(n)\right).
		\label{eq:near-routing-moment}
	\end{align}
	The second bound follows from the pathwise estimate by Young's inequality.
	The constants are uniform in $\delta$.

	\smallskip
	\noindent\emph{Step 3.} We prove the upper bounds in \eqref{eq:near}.
	Since $\eta_\delta\leq\xi_\delta$, monotonicity and
	\eqref{eq:near-centered-moment} bound $(\E u_n^\delta(0)^r)^{1/r}$. Take
	$r=2\vee\lceil\log(N+1)\rceil$. Then $r\asymp L$ and both
	$(N+1)^{1/r}$ and $(N+1)^{2/r}$ are bounded. Equations
	\eqref{eq:near-centered-moment}, \eqref{eq:green-norms}, and
	\eqref{eq:near-routing-moment} give
	\[
		\bigl(\E U_N^\delta(0)^r\bigr)^{1/r}\leq C
		\begin{cases}
			\delta^{-3}L^3&d=1\,,\\
			\delta^{-1}L^2&d=2\,,\\
			\delta^{-1/2}L&d=3\,,\\
			L&d\geq4.
		\end{cases}
	\]
	Since $\kappa_1(N)=\sqrt N$, $\kappa_2(N)=\log(N+2)$, and
	$\kappa_d(N)=1$ for $d\geq3$, \eqref{eq:near-routing-mean} bounds
	$\E U_N^\delta(0)-\E u_N^\delta(0)$ by
	$C\delta^{-5/2}L^3$, $C\delta^{-1/2}L^2$, $C\delta^{-1/4}L$, and $CL$
	in dimensions $1$, $2$, $3$, and $d\geq4$, respectively. The first
	three are $o(\delta^{-3})$, $o(\delta^{-1})$, and
	$o(\delta^{-1/3})$, respectively. Since $u_N^\delta\leq u_\infty^\delta$, combining the last bounds on $\E U_N^\delta(0)-\E u_N^\delta(0)$
	with \eqref{eq:near-divisible-upper} and \eqref{eq:near-tail} proves the
	upper bounds in \eqref{eq:near}.
\end{proof}

\section{Oriented walk}\label{sec:oriented}

The oriented walk reaches the layer at distance $\ell$ from its starting layer
at time $\ell$ and at no other time. Its truncated Green function is therefore
a sum of probability measures with disjoint supports, so it is bounded by one
and the square of its norm is the sum over the layers of the squared norms of
the layer laws. That sum has order $\sqrt n$ in dimension two and $\log n$ in
dimension three, and is bounded from dimension four on, so the critical
ambient dimension is three rather than four. The same disjointness makes the
variances of Section~\ref{sec:routing} telescope over the layers and sum to
one, so that $\kappa_d(n)$ is replaced by one in every dimension, and it makes
the odometers on a layer depend only on lower layers, which supplies the
logarithmic lower bound in place of the coupling of
Section~\ref{sec:critical}.

Fix $d\geq2$, let $e_1,\ldots,e_d$ be the coordinate vectors in $\Z^d$, and
let $\vec P$ be the transition matrix with
\[
	\vec P(x,x-e_i)=\frac1{d}\qquad(1\leq i\leq d)
\]
and all other entries zero. The particles take the opposite steps, so their
transition matrix is the transpose $\vec P^*$. Let $\P_x^{\vec P}$ and
$\E_x^{\vec P}$ denote the law and expectation of the $\vec P$-walk started
at $x$. By Remark~\ref{rem:transpose}, the solution of
\begin{equation}\label{eq:oriented-divisible}
	\vec u_0=0\,,\qquad \vec u_{n+1}=(\eta+\vec P\vec u_n)^+
\end{equation}
satisfies $\E[\vec U_n(x)\mid\eta]\geq \vec u_n(x)$.

Let $\vec p_\ell(x)=\vec P^\ell(0,x)$. This measure is supported on the layer
$\{x:\sum_i x_i=-\ell\}$, which the $\vec P$-walk visits only at time $\ell$.
The local central limit theorem on this $(d-1)$-dimensional layer gives
\begin{equation}\label{eq:oriented-heat}
	\sum_x\vec p_\ell(x)^2\asymp(\ell+1)^{-(d-1)/2}.
\end{equation}
Since the supports of the $\vec p_\ell$ are disjoint, let
$g_n=\sum_{\ell<n}\vec p_\ell$. Then $0\leq g_n\leq1$ and
\begin{equation}\label{eq:oriented-green}
	\|g_n\|_2^2=\sum_{\ell<n}\|\vec p_\ell\|_2^2\asymp \vec\kappa_d(n),
	\qquad
	\vec\kappa_d(n)=
	\begin{cases}
		\sqrt n&d=2\,,\\
		\log(n+1)&d=3\,,\\
		1&d\geq4.
	\end{cases}
\end{equation}

When $d=2$, the number of steps in direction $-e_1$ identifies the support of
$\vec p_\ell$ with $\{0,\ldots,\ell\}$. Under this identification $\vec p_\ell$
is the binomial law with $\ell$ trials and success probability $1/2$. For an
integer $q$, let $\vec p_\ell(\cdot-q)$ denote its translate.

\begin{lemma}\label{lem:shift}
	If $d=2$, then for every integer $q$,
	\[
		\sum_{\ell\geq0}\bigl\|\vec p_\ell(\cdot-q)-\vec p_\ell\bigr\|_2^2=4|q|\,.
	\]
\end{lemma}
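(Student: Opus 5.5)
The plan is to reduce the sum to the potential kernel of one-dimensional simple random walk.

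\emph{Rewriting each term.} Write $\vec p_\ell$ as the law of $B$, where $B\sim\mathrm{Bin}(\ell,1/2)$, and let $B'$ be an independent copy. Expanding the square,
\[
	\bigl\|\vec p_\ell(\cdot-q)-\vec p_\ell\bigr\|_2^2
	=2\sum_k\vec p_\ell(k)^2-2\sum_k\vec p_\ell(k)\vec p_\ell(k-q)
	=2\bigl(\P(B-B'=0)-\P(B-B'=q)\bigr).
\]
Since $\ell-B'$ is again $\mathrm{Bin}(\ell,1/2)$, the difference $B-B'+\ell=B+(\ell-B')$ has law $\mathrm{Bin}(2\ell,1/2)$. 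Hence $B-B'$ has the law of $S_{2\ell}/2$, where $S$ is simple random walk on $\Z$ started at $0$. Each term is therefore
\[
	2\bigl(\P_0(S_{2\ell}=0)-\P_0(S_{2\ell}=2q)\bigr).
\]
By Cauchy--Schwarz, $\sum_k\vec p_\ell(k)\vec p_\ell(k-q)\leq\|\vec p_\ell\|_2^2$, so every term is nonnegative. This lets us sum in any order.

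\emph{Summing over $\ell$.} The walk $S$ has parity, so $\P_0(S_n=0)=\P_0(S_n=2q)=0$ at odd times $n$. Adding these vanishing odd-time terms, the sum over $\ell$ equals
\[
	2\sum_{n\geq0}\bigl(\P_0(S_n=0)-\P_0(S_n=x)\bigr)=2\,a(x),
	\qquad x=2q,
\]
where $a$ is the potential kernel of simple random walk on $\Z$.

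\emph{Identifying the potential kernel.} It remains to show $a(x)=|x|$, which gives $2|2q|=4|q|$. I would verify this by Fourier inversion. We have $\P_0(S_n=x)=\frac1{2\pi}\int_{-\pi}^{\pi}\cos^n\theta\,\cos(x\theta)\,d\theta$. The summands are nonnegative, so we may sum inside the integral (monotone convergence after pairing the even times, where $\cos^{2m}\theta\geq0$). This gives
\[
	a(x)=\frac1{2\pi}\int_{-\pi}^{\pi}\frac{1-\cos(x\theta)}{1-\cos\theta}\,d\theta .
\]
The integrand is a Fejér-type kernel: $\frac{1-\cos x\theta}{1-\cos\theta}=\bigl(\frac{\sin(x\theta/2)}{\sin(\theta/2)}\bigr)^2=\sum_{|j|<|x|}(|x|-|j|)e^{ij\theta}$. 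Its mean over the circle is its constant coefficient, namely $|x|$.

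Alternatively, $a$ is the unique symmetric solution of $(I-P)a=-\delta_0$ (with the sign convention of the sum above) that vanishes at $0$ and grows sublinearly in gradient. One checks directly that $|x|$ solves this equation.

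The only step needing care is justifying the interchange of sum and integral. The series $\sum_n\cos^n\theta$ diverges at $\theta=\pi$, but the full summand $\cos^n\theta\,(1-\cos x\theta)$ is integrable because it vanishes to second order at $\theta=0$. Restricting to even $n$, where all terms are nonnegative, removes this difficulty through monotone convergence.
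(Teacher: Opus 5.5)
Your argument is correct and is essentially the paper's proof. Expanding the square is Parseval's identity on the layer, the law of $B-B'$ has characteristic function $\cos^{2\ell}(\theta/2)$, and summing over $\ell$ leads to the same Fej\'er-kernel integral; you only add the intermediate step through the potential kernel $a(2q)=2|q|$ of simple random walk on $\Z$.

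One small slip: restricting to even $n$ and using monotone convergence produces the kernel $(1-\cos x\theta)/\sin^2\theta$, not $(1-\cos x\theta)/(1-\cos\theta)$. After halving the angle, the former is exactly the paper's integrand, and it also integrates to $|x|$ for even $x$. Your displayed formula is instead justified by dominated convergence over all $n$, since the partial sums $(1-\cos^N\theta)(1-\cos x\theta)/(1-\cos\theta)$ are bounded by $2x^2$.
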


\begin{proof}
	Both sides are unchanged when $q$ is replaced by $-q$, and both vanish at
	$q=0$, so assume $q\geq1$.
	On a layer the Fourier transform of $\vec p_\ell$ is
	$((1+e^{i\theta})/2)^\ell$, of modulus squared $\cos^{2\ell}(\theta/2)$.
	Summing the geometric series and applying Parseval's identity turns the left
	side into
	\[
		\frac1{2\pi}\int_{-\pi}^{\pi}\frac{|e^{iq\theta}-1|^2}
		{1-\cos^2(\theta/2)}\,d\theta
		=\frac1{2\pi}\int_{-\pi}^{\pi}
		\frac{4\sin^2(q\theta/2)}{\sin^2(\theta/2)}\,d\theta\,,
	\]
	and the integrand is four times
	$\bigl|\sum_{0\leq m<q}e^{im\theta}\bigr|^2$, whose normalized integral is
	$q$.
\end{proof}

\begin{theorem}[Divisible odometer]\label{thm:oriented}
	Let $\eta=(\eta(x))_{x\in\Z^d}$ be i.i.d.\ with mean zero, and let $\vec u_n$
	obey \eqref{eq:oriented-divisible}. The following bounds hold for $n\geq1$.
	If $d=2$ and $\E|\eta(0)|^r<\infty$ for some $r>4$, then
	\[
		\E \vec u_n(0)\leq Cn^{1/4}.
	\]
	If $d\geq3$ and $\E e^{\theta|\eta(0)|}<\infty$ for some $\theta>0$, then
	\[
		\E \vec u_n(0)\leq C\log(n+1).
	\]
	If $\eta(0)$ has positive finite variance, then
	$\E \vec u_n(0)\geq c\sqrt{\vec\kappa_d(n)}$.
\end{theorem}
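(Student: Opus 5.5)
Three bounds are needed, and each starts from the stopping representation of Lemma~\ref{lem:stopping} for the $\vec P$-walk: $\vec u_n(0)=\sup_{\sigma\le n}\E_0^{\vec P}\sum_{j<\sigma}\eta(X_j)$.

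\emph{Lower bound.} I would take one explicit stopping rule and bound its expected reward below. The simplest choice is $\sigma=n\,\one\{\E_0^{\vec P}\sum_{j<n}\eta(X_j)>0\}$. It gives
\[
\vec u_n(0)\ge\Bigl(\sum_x g_n(x)\eta(x)\Bigr)^+ ,
\]
a linear functional of the scenery with variance $\Var\eta(0)\|g_n\|_2^2\asymp\vec\kappa_d(n)$ by \eqref{eq:oriented-green}. Since $0\le g_n\le1$, the summands are uniformly small relative to the standard deviation whenever $\vec\kappa_d(n)\to\infty$. Then either a central limit theorem (Lindeberg, using only finite variance) or a fourth-moment-free Paley--Zygmund argument on the second and first absolute moments gives $\E(\sum g_n\eta)^+\ge c\|g_n\|_2$. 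The fourth-moment route would need more than variance, so the Lindeberg CLT is the choice. When $d\ge4$ the claim is only $\E\vec u_n\ge c$, which already follows from $\vec u_n(0)\ge\eta(0)^+$.

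\emph{Upper bound for $d\ge3$.} Here I would reuse the concentration Lemma~\ref{lem:u-concentration} with $K=\vec P$. It gives fluctuations of order $\sqrt r\|g_n\|_2+r$, with $\|g_n\|_\infty\le1$. For the mean, I would use a chaining argument over dyadic time scales, as announced in the outline. The key decomposition is $\vec u_{2m}(0)\le\vec u_m(0)+\E_0^{\vec P}\max_{x}\vec u_m(X_m)$-type bound. More precisely, I would split the reward before and after time $m$ and use the Markov property:
\[
\vec u_{2m}(0)\le\sup_{\sigma\le m}\E\Bigl[\sum_{j<\sigma}\eta(X_j)\Bigr]+\E_0^{\vec P}\bigl[\vec u_m(X_m)\bigr]\text{-like term}.
\]
A cleaner route is the maximal inequality $\vec u_n(0)\le\E_0^{\vec P}\max_{k\le n}\bigl(M_k\bigr)$, where $M_k=\E[\sum_{j<n}\eta(X_j)\mid\mathcal F_k]$-style Doob martingale in the walk filtration. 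Conditioned on the walk's position $X_k=y$, this martingale equals $\sum_{j<k}\eta(X_j)+\sum_x g_{n-k}(x-y)\eta(x)$. So I would bound $\sup_{\sigma}$ by the expected supremum over $k\le n$ and over reachable $y$ of the field $\Phi(k,y)=\sum_xg_{n-k}(x-y)\eta(x)$, plus a random-walk-in-scenery partial sum.

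For $d\ge3$, the increments of $\Phi$ have variance of order $\vec\kappa_d$, which is at most $\log n$. A union bound over polynomially many $(k,y)$ with exponential tails from Lemma~\ref{lem:u-concentration} then gives $C\log n$. This is where the exponential moment enters.

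\emph{Upper bound for $d=2$.} This is the main obstacle, because a union bound loses a logarithm. I would chain: control the increments $\Phi(k,y)-\Phi(k',y')$ via their variance. Lemma~\ref{lem:shift} gives exactly $4|q|$ for a spatial shift $q$ on a layer. The analogous time-shift computation, also by Parseval, should give a variance of order $|k-k'|^{1/2}$. So $\Phi$ has a canonical metric $d((k,y),(k',y'))\lesssim|y-y'|^{1/2}+|k-k'|^{1/4}$ on the region $|y|\lesssim\sqrt n$, $k\le n$ that the walk typically explores, plus a tail for atypical $y$.

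A Kolmogorov/Dudley chaining with $r>4$ moments then gives $\E\sup\Phi\le Cn^{1/4}$. The $r$-th moments of the linear increments follow from Rosenthal's inequality, and $r>4$ is exactly what makes the chaining sum converge against the dimension $3/2$ of the parabolic parameter set under this metric. Summing over dyadic scales $n2^{-j}$ gives a geometric series dominated by the top scale. The random-walk-in-scenery term $\sum_{j<k}\eta(X_j)$ must be absorbed into the same field; I expect to check that it has the same $k^{1/4}$ scaling, since the oriented range on each layer is a single point.
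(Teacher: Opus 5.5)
Your lower bound is the paper's argument: stop at $0$ or $n$ according to the sign of $\sum_xg_n(x)\eta(x)$, apply Lindeberg, and use $\vec u_n(0)\geq\eta(0)^+$ for $d\geq4$ and small $n$. The upper bound has two genuine gaps.

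\emph{The reduction to the field.} The inequality $\vec u_n(0)\leq\E_0^{\vec P}\max_{k\leq n}M_k$ is false. For $n=1$ and $\eta(0)<0$ the left side is $0$, while $M_0=M_1=\eta(0)<0$. The random-walk-in-scenery term you split off also does not scale like $k^{1/4}$. Because the oriented walk meets each layer at a single site, $X_0,\dots,X_{k-1}$ are distinct. Conditionally on the walk, $\sum_{j<k}\eta(X_j)$ is therefore a sum of $k$ i.i.d.\ copies of $\eta(0)$, and $\E\E_0^{\vec P}\max_{k\leq n}\sum_{j<k}\eta(X_j)$ has order $\sqrt n$ in every dimension. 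That would wreck both the $n^{1/4}$ and the $\log n$ bounds. The $k^{1/4}$ scaling belongs only to the walk average $\Phi_k(0)=\E_0^{\vec P}\sum_{j<k}\eta(X_j)$.

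What you need is optional stopping, not a maximal inequality. Since $M_k=\sum_{j<k}\eta(X_j)+\Phi_{n-k}(X_k)$ is a martingale for the walk, every $\sigma\leq n$ has expected reward $\Phi_n(0)-\E_0^{\vec P}\Phi_{n-\sigma}(X_\sigma)$. Hence $\vec u_n(0)\leq\Phi_n(0)+\E_0^{\vec P}\max_{j\leq n}|\Phi_{n-j}(X_j)|$ with $\E\Phi_n(0)=0$, and no partial sum survives. This is the identity the paper takes from \citet[Lemma~2.5]{BP}. With it your union bound for $d\geq3$ goes through. The tails there come from Bernstein's inequality for the linear field, not from Lemma~\ref{lem:u-concentration}, which concerns the odometer.

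\emph{The chaining for $d=2$.} The supremum of $\Phi$ over the space--time box $\{k\leq n,\ |y|\lesssim\sqrt n\}$ cannot be $O(n^{1/4})$ under only $r>4$ moments. In your metric $|y-y'|^{1/2}+|k-k'|^{1/4}$ the box has covering numbers about $n^{3/2}\eps^{-6}$. Its dimension is therefore $6$, not $3/2$, and $L^r$ chaining needs $r>6$. Worse, the recursion $\Phi_m=\eta+\vec P\Phi_{m-1}$ gives $|\eta(x)|\leq2\sup|\Phi|$ over the box, which contains about $n^{3/2}$ sites. A law with $\P(|\eta(0)|>s)\asymp s^{-5}$ has moments of order $4.5$, yet it makes the expected box supremum at least $cn^{3/10}\gg n^{1/4}$.

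The paper never takes a supremum in $y$. It keeps the walk inside the expectation and bounds $\E\E_0^{\vec P}\max_{j\leq n}|Z_j|$ with $Z_j=\Phi_{n-j}(X_j)$. The key estimate is the annealed increment bound $(\E|Z_j-Z_k|^r)^{1/r}\leq C|k-j|^{1/4}$, proved as follows. The $h=|k-j|$ intermediate layers contribute $C\sqrt h$ to the squared coefficients. The remaining layers contribute $4\E(|L-D|\mid D)$ by Lemma~\ref{lem:shift}, where $D$ counts the $-e_1$ steps and $L\sim\operatorname{Bin}(h,1/2)$. Conditional Rosenthal and $(\E|D-h/2|^{r/2})^{2/r}\leq C\sqrt h$ finish the estimate. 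The walk's spatial displacement is thus folded into the time increment, and the parameter set is just the $n+1$ times along one path. The dyadic sum $\sum_m2^{m/r}(n2^{-m})^{1/4}$ then converges exactly when $r>4$.
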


\begin{proof}[Proof of the lower bound]
	The deterministic stopping time $n$, and also $\sigma=0$, in the
	$\vec P$-walk version of \eqref{eq:stopping} give
	$\vec u_n(0)\geq(\sum_xg_n(x)\eta(x))^+$. When $d\leq3$,
	$\|g_n\|_2\to\infty$ while $\|g_n\|_\infty\leq1$, so the Lindeberg central
	limit theorem and uniform integrability give
	$\E \vec u_n(0)\geq c\|g_n\|_2\geq c\sqrt{\vec\kappa_d(n)}$ for all large $n$;
	monotonicity covers the remaining $n$. When
	$d\geq4$, the lower bound follows from
	$\vec u_n(0)\geq\vec u_1(0)=\eta(0)^+$ and
	$\vec\kappa_d(n)\asymp1$.
\end{proof}

\begin{proof}[Proof of the upper bound]
	\smallskip
	\noindent\emph{Step 1.} We prove the logarithmic bound when $d\geq3$.
	Suppose that $d\geq3$. Apply \citet[Lemmas~2.5 and~3.2]{BP} with $P$
	replaced by $\vec P$. For
	$\Phi_m(x)=\sum_y g_m(y-x)\eta(y)$, their stopping identity and
	concentration estimate give, with
	$r=2\vee\lceil\log(n+1)\rceil$,
	\[
		\E \vec u_n(0)\leq\E\E_0^{\vec P}\max_{j\leq n}|\Phi_{n-j}(X_j)|
		\leq C\bigl(\sqrt{r \vec\kappa_d(n)}+r\bigr)
		\leq C\log(n+1).
	\]
	Equation~\eqref{eq:oriented-green} gives
	$\|g_n\|_2^2\asymp\vec\kappa_d(n)$, while $g_n\leq1$ gives
	$\|g_n\|_\infty\leq1$.

	\smallskip
	\noindent\emph{Step 2.} We prove \eqref{eq:oriented-increment} when $d=2$.
	Suppose that $d=2$. Let $\Phi_m=\sum_{\ell<m}\vec P^\ell\eta$, so that
	$\Phi_m=\eta+\vec P\Phi_{m-1}$, and let $X$ be the $\vec P$-walk from the
	origin. The martingale identity in \citet[Lemma~2.5]{BP}, with $\vec P$
	in place of $P$, gives
	\[
		\vec u_n(0)\leq\Phi_n(0)+\E_0^{\vec P}\max_{0\leq j\leq n}|Z_j|,
		\qquad Z_j=\Phi_{n-j}(X_j).
	\]
	Fix an exponent $r>4$ for which $\E|\eta(0)|^r<\infty$. For
	$0\leq j,k\leq n$,
	\begin{equation}\label{eq:oriented-increment}
		\bigl(\E|Z_j-Z_k|^r\bigr)^{1/r}\leq C|k-j|^{1/4}\,.
	\end{equation}
	By symmetry, assume $0\leq j\leq k\leq n$. Condition on the walk and let
	$h=k-j$. Let $D$ be the number of $-e_1$
	steps between times $j$ and $k$. The $h$ layers between $X_j$ and $X_k$
	contribute at most $C\sqrt h$ to the sum of the squared coefficients of
	$Z_j-Z_k$. On the remaining layers, convolution with an independent
	$L\sim\operatorname{Bin}(h,1/2)$ and Lemma~\ref{lem:shift} bound this sum by
	$4\E(|L-D|\mid D)$. Hence the full sum of squared coefficients is at most
	\[
		C\sqrt h+4\E(|L-D|\mid D)
		\leq C\bigl(\sqrt h+|D-h/2|\bigr).
	\]
	Conditional Rosenthal's inequality bounds the $L^r$ norm of $Z_j-Z_k$ by
	a constant times the square root of this coefficient sum. Since
	$D\sim\operatorname{Bin}(h,1/2)$,
	\[
		\bigl(\E|D-h/2|^{r/2}\bigr)^{2/r}\leq C\sqrt h.
	\]
	Averaging over $D$ proves \eqref{eq:oriented-increment}.

	\smallskip
	\noindent\emph{Step 3.} We prove the $L^r$ maximal bound
	\eqref{eq:oriented-dyadic-max} by summing the dyadic increments.
	For dyadic $n$, \eqref{eq:oriented-increment} gives
	\begin{equation}\label{eq:oriented-dyadic-max}
		\Bigl(\E\max_{j\leq n}|Z_j|^r\Bigr)^{1/r}
		\leq C\sum_{m\geq1}2^{m/r}\bigl(n2^{-m}\bigr)^{1/4}
		\leq Cn^{1/4}.
	\end{equation}
	The random variable $\Phi_n(0)$ has mean zero, so the stopping identity
	gives $\E\vec u_n(0)\leq Cn^{1/4}$. For general $n$, compare with the least
	dyadic integer greater than or equal to $n$ and use monotonicity.
\end{proof}

\begin{proposition}\label{prop:oriented-scaling}
	Let $d=2$, and let $\eta=(\eta(x))_{x\in\Z^2}$ have i.i.d.\ coordinates.
	Suppose that $\eta(0)$ is nonconstant, $\E\eta(0)=0$, and
	$\E e^{\theta|\eta(0)|}<\infty$ for some $\theta>0$. Let $\vec P(x,x-e_i)=1/2$ for $i=1,2$, and let $\vec u_0=0$ and $\vec u_{n+1}=(\eta+\vec P\vec u_n)^+$. Then, for every
	$T>0$, $n^{-1/4}\vec u_{\lfloor nT\rfloor}(0)$ converges in distribution to a random variable $\mathcal U(T)$ defined in the proof. Moreover, $\mathcal U(T)\stackrel d=T^{1/4}\mathcal U(1)$, $\mu\coloneqq\E\mathcal U(1)\in(0,\infty)$, and
	\[
		\lim_{n\to\infty}n^{-1/4}\E \vec u_n(0)=\mu\,.
	\]
\end{proposition}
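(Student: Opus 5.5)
The plan follows the parabolic scaling argument of \citet{BP} for $d\leq3$, adapted to the oriented kernel $\vec P$. In $d=2$ the layer structure turns the $\vec P$-walk into a one-dimensional walk along layers in one time direction. Under the diffusive rescaling $x\mapsto x/\sqrt n$ within a layer and layer index $\ell\mapsto\ell/n$, the scenery becomes a white noise $\mathcal W$ on $[0,\infty)\times\mathbb R$ (layer index $\times$ position in the layer). The walk's position becomes a Brownian motion $B$ run in the layer-time variable.

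Define $\mathcal U(T)\coloneqq\sup_{\sigma\leq T}\E\int_0^\sigma\mathcal W(s,B_s)\,ds$, interpreted via the martingale decomposition of \citet[Lemma~2.5]{BP}. Concretely, set $\Phi^{\mathcal W}_m(s,y)=\int$ of the heat kernel against $\mathcal W$ over layers $s'\in[s,s+m)$. This is a Gaussian field, continuous by Kolmogorov's criterion thanks to the increment bound \eqref{eq:oriented-increment}. Then $\mathcal U(T)$ is the Snell envelope value $\sup_\sigma\E[\Phi_T(0,0)-\Phi_{T-\sigma}(\sigma,B_\sigma)]$, the continuum analogue of the stopping identity.

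\textbf{Step 1.} Tightness and uniform integrability. Theorem~\ref{thm:oriented} gives $\E\vec u_n(0)\leq Cn^{1/4}$. Using \eqref{eq:oriented-increment} with an exponential-moment exponent $r>4$, the chaining in \eqref{eq:oriented-dyadic-max} bounds $n^{-1/4}\vec u_{\lfloor nT\rfloor}(0)$ in $L^r$, hence gives uniform integrability. The same estimate, applied to the two-parameter field $(j,x)\mapsto n^{-1/4}\Phi_{\lfloor nT\rfloor-j}(x)$, gives tightness in local uniform topology. Here the spatial increments are controlled by Lemma~\ref{lem:shift}, which gives squared coefficient sums of order $|q|$, i.e.\ $n^{1/4}$ Hölder-$1/2$ in space at scale $\sqrt n$.

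\textbf{Step 2.} Identification of the limit. Finite-dimensional distributions of the rescaled $\Phi$ field converge to those of the Gaussian field $\Phi^{\mathcal W}$ by the Lindeberg CLT, since coefficients are bounded by one and covariances converge by the local CLT on layers. Use Skorokhod to couple the field with its limit locally uniformly. Then the lattice optimal stopping value for the random walk in the field converges to the continuum value. This uses Donsker's invariance principle for the walk along layers together with continuity of the Snell envelope under uniform perturbation of the reward and of the walk's law; the relevant rewards are continuous functions of the path, by \citet{BP}'s argument. This is the main obstacle: passing optimal stopping values to the limit requires stopping times adapted to the walk, not to the limit field. I would follow \citet{BP}'s proof verbatim. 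The upper bound uses that any discrete stopping rule gives a continuum randomized stopping time, with value equal up to the uniform field error. The lower bound discretizes continuum stopping rules to a grid of times.

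\textbf{Step 3.} Scaling and the mean. Brownian scaling of $\mathcal W$ (white noise in $1+1$ dimensions scales like $\lambda^{-1}$ in the sense $\mathcal W(\lambda^2\cdot,\lambda\cdot)\stackrel d=\lambda^{-3/2}\mathcal W$) and of $B$ gives $\mathcal U(T)\stackrel d=T^{1/4}\mathcal U(1)$. Uniform integrability from Step~1 yields $n^{-1/4}\E\vec u_n(0)\to\E\mathcal U(1)=\mu$. Finally, $\mu<\infty$ follows from Theorem~\ref{thm:oriented}, and $\mu>0$ from its lower bound $\E\vec u_n(0)\geq c\,n^{1/4}$.
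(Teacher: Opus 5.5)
Your proposal is correct and follows the paper's proof. You define $\mathcal U(T)$ as the Brownian optimal-stopping value against the heat-kernel potential of space--time white noise; this is the paper's $\mathcal Z_T(0,0)+\sup_{\tau\leq T}\E_0[-\mathcal Z_T(\tau,B_\tau)]$, where $B$ has variance $t/4$. You then transfer the parabolic scaling argument of \citet{BP}, with \eqref{eq:oriented-heat} and Lemma~\ref{lem:shift} supplying the kernel estimates, and the remaining steps are exactly the paper's: parabolic scaling gives $\mathcal U(T)\stackrel d=T^{1/4}\mathcal U(1)$, the $L^r$ bound \eqref{eq:oriented-dyadic-max} with $r>4$ gives uniform integrability, and the lower bound in Theorem~\ref{thm:oriented} gives $\mu>0$.
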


\begin{proof}
	We adapt the proof of the parabolic scaling limit in \citet{BP}, with
	space--time white noise in place of spatial white noise. We record the
	limiting object and the
	estimates that change for the directed kernel.

	\smallskip
	\noindent\emph{Step 1.} We define $\mathcal U(T)$ and prove
	\[
		n^{-1/4}\vec u_{\lfloor nT\rfloor}(0)
		\Longrightarrow\mathcal U(T).
	\]
	Let $B$ be Brownian motion with $\Var(B_t)=t/4$, let $q_t$ be its
	transition density, and let $W$ be space--time white noise on
	$[0,\infty)\times\mathbb R$, independent of $B$. Let
	\[
		\mathcal Z_T(s,x)\coloneqq
		\sqrt{\Var\eta(0)}\int_s^T\int_{\mathbb R}
		q_{r-s}(x,y)\,W(dr,dy).
	\]
	Conditional on $W$, let
	\[
		\mathcal U(T)\coloneqq\mathcal Z_T(0,0)
		+\sup_{\tau\leq T}\E_0[-\mathcal Z_T(\tau,B_\tau)],
	\]
	where the supremum is over stopping times of $B$ bounded by $T$, and $\E_0$
	averages only over $B$.

	Identify layer $\ell$ with the sites $(-j,j-\ell)$, $j\in\mathbb Z$. The
	rescaled scenery
	\[
		n^{-3/4}\sum_{\ell\geq0}\sum_{j\in\mathbb Z}
		\eta(-j,j-\ell)\,
		\delta_{(\ell/n,(j-\ell/2)/\sqrt n)}
	\]
	converges in law as a random distribution to
	$\sqrt{\Var\eta(0)}W$. The binomial local central limit
	theorem gives convergence of the convolved potentials, and the rescaled
	$\vec P$-walk converges to $B$. The cutoff and stability estimates in the
	proof of the parabolic scaling limit in \citet{BP} use only heat-kernel bounds and
	bounds on translated kernel differences. Equation~\eqref{eq:oriented-heat}
	and Lemma~\ref{lem:shift} give those bounds here. The cited cutoff and
	stability estimates therefore prove the displayed convergence.

	\smallskip
	\noindent\emph{Step 2.} We prove
	$\mathcal U(T)\stackrel d=T^{1/4}\mathcal U(1)$ and
	$n^{-1/4}\E\vec u_n(0)\to\mu\in(0,\infty)$.
	Parabolic scaling gives
	$\mathcal U(T)\stackrel d=T^{1/4}\mathcal U(1)$. Equation
	\eqref{eq:oriented-dyadic-max}, together with
	\[
		\bigl(\E|\Phi_n(0)|^r\bigr)^{1/r}
		\leq C\|g_n\|_2\leq Cn^{1/4},
	\]
	bounds $n^{-1/4}\vec u_n(0)$ in $L^r$ for a fixed $r>4$. Uniform
	integrability gives convergence of the means, and the lower bound in
	Theorem~\ref{thm:oriented} gives $\mu>0$.
\end{proof}

\begin{remark}[Critical dimension]\label{rem:critical-dimension}
	On $\Z^d$ the layers have dimension $d-1$ and
	$\|\vec p_\ell\|_2^2\asymp(\ell+1)^{-(d-1)/2}$. Accordingly, as
	$T\to\infty$,
	\[
		\int_0^T(s+1)^{-(d-1)/2}\,ds
	\]
	has order $\sqrt T$ when $d=2$, order $\log(T+1)$ when $d=3$, and remains
	bounded when $d\geq4$. Thus the critical ambient dimension is three,
	and the proof of Proposition~\ref{prop:oriented-scaling} applies only in
	$1+1$ dimensions. Under
	the hypotheses of Theorem~\ref{thm:oriented-walk}, the particle odometer has
	order $n^{1/4}$ when $d=2$ and $\log n$ when $d\geq3$.
\end{remark}

\begin{proof}[Proof of Theorem~\ref{thm:oriented-walk}]
	\smallskip
	\noindent\emph{Step 1.} We prove that
	Proposition~\ref{prop:w-moment} holds with $\kappa_d(n)$ replaced by one and
	prove \eqref{eq:oriented-u-concentration}.
	Although each instruction has law $\vec P^*(y,\cdot)$, its mean arrivals are
	propagated by $\vec P$. Let $w_0=0$ and
	\[
		w_{k+1}(x)=(\vec Pw_k)(x)
		+\sum_y\bigl(I_{y,x}(\vec U_k(y))-\vec P^*(y,x)\vec U_k(y)\bigr)\,.
	\]
	Since $\sum_y\vec P^*(y,x)\vec U_k(y)=(\vec P\vec U_k)(x)$,
	Lemma~\ref{lem:pathwise-comparison} applies with $\vec P$ in the recursion,
	and Lemma~\ref{lem:w-martingale} applies with $\vec P^*$ as the instruction
	law. For the Green function in \eqref{eq:oriented-green}, let
	\[
		\Gamma_m(y)=\sum_z\vec P^*(y,z)\bigl(g_m(z)-(\vec P^*g_m)(y)\bigr)^2.
	\]

	The $\vec P$-walk reaches the layer $\{x:\sum_i x_i=-\ell\}$ at time $\ell$ and at
	no other time. Thus $0\leq g_m\leq1$ and
	$|g_m(z)-(\vec P^*g_m)(y)|\leq1$ whenever $\vec P^*(y,z)>0$. For $y$ on layer
	$\ell\geq1$,
	\[
		\sup_m\Gamma_m(y)=\frac1{d}\sum_{i=1}^{d}
		\bigl(\vec p_{\ell-1}(y+e_i)-\vec p_\ell(y)\bigr)^2,
	\]
	while $\Gamma_m(y)=0$ for every $m<\ell$. Summing the variance on each layer
	and then telescoping gives
	\[
		\sum_y\sup_m\Gamma_m(y)
		=\sum_{\ell\geq1}
		\bigl(\|\vec p_{\ell-1}\|_2^2-\|\vec p_\ell\|_2^2\bigr)
		=1
	\]
	because $\|\vec p_0\|_2^2=1$ and
	$\|\vec p_\ell\|_2^2\to0$ by \eqref{eq:oriented-heat}. Therefore the conclusions of
	Proposition~\ref{prop:w-moment} apply with $\kappa_d(n)$ replaced by one.
	Lemma~\ref{lem:u-concentration}, applied to $\vec P$, gives
	\begin{equation}\label{eq:oriented-u-concentration}
		\bigl(\E|\vec u_n(0)-\E \vec u_n(0)|^r\bigr)^{1/r}
		\leq C\bigl(\sqrt{r\vec\kappa_d(n)}+r\bigr).
	\end{equation}

	\smallskip
	\noindent\emph{Step 2.} We prove the order of the odometer when $d=2$.
	Take $r=8$ in Lemma~\ref{lem:pathwise-comparison} and
	Proposition~\ref{prop:w-moment}, with $\kappa_d(n)$ replaced by one.
	Theorem~\ref{thm:oriented} gives
	\[
		\bigl(\E \vec U_n(0)^8\bigr)^{1/8}
		\leq Cn^{1/4}+Cn^{1/8}
		\Bigl(\bigl(\E \vec U_n(0)^8\bigr)^{1/16}+1\Bigr),
	\]
	so Young's inequality yields
	$(\E \vec U_n(0)^8)^{1/8}\leq Cn^{1/4}$. The inequality
	$\E \vec U_n(0)\geq\E \vec u_n(0)$ and the lower bound in
	Theorem~\ref{thm:oriented} give the matching lower bound.

	\smallskip
	\noindent\emph{Step 3.} We identify the odometer and activity asymptotics
	when $d=2$.
	To compare $\E\vec U_n(0)$ with $\E\vec u_n(0)$, take
	$r=2\vee\lceil\log(n+1)\rceil$. Proposition~\ref{prop:w-moment}, with
	$\kappa_d(n)$ replaced by one, and
	Lemma~\ref{lem:pathwise-comparison} give
	$(\E \vec U_n(0)^r)^{1/r}\leq Cn^{1/4}\sqrt{\log(n+1)}$, so the same
	proposition bounds $(\E|w_n(0)|^r)^{1/r}$ by
	\[
		C\bigl(n^{1/8}[\log(n+1)]^{3/4}+\log(n+1)\bigr)\,.
	\]
	Since $(n+1)^{1/r}\leq e$, Lemma~\ref{lem:pathwise-comparison} gives
	\begin{equation}\label{eq:oriented-particle-error}
		\E|\vec U_n(0)-\vec u_n(0)|
		\leq Cn^{1/8}[\log(n+1)]^{3/4}=o(n^{1/4}).
	\end{equation}
	Theorem~\ref{thm:oriented} gives
	$\E\vec u_n(0)\asymp n^{1/4}$, so
	$\E\vec U_n(0)/\E\vec u_n(0)\to1$. Proposition~\ref{prop:oriented-scaling}
	now gives $\E\vec U_n(0)\sim\mu n^{1/4}$. Equation
	\eqref{eq:oriented-particle-error} also gives
	$n^{-1/4}(\vec U_n(0)-\vec u_n(0))\to0$ in $L^1$, so
	$n^{-1/4}\vec U_n(0)$ converges in distribution to $\mathcal U(1)$.

	Since $\vec S_t$ decreases and
	$\E\vec U_n(0)=\sum_{s<n}\vec S_s$, for every $0<\eps<1$,
	\[
		\frac{\E\vec U_{\lfloor(1+\eps)t\rfloor}(0)-\E\vec U_t(0)}
		{\lfloor(1+\eps)t\rfloor-t}
		\leq\vec S_t\leq
		\frac{\E\vec U_t(0)-\E\vec U_{\lfloor(1-\eps)t\rfloor}(0)}
		{t-\lfloor(1-\eps)t\rfloor}
	\]
	for all sufficiently large integers $t$. Multiply by $t^{3/4}$, let
	$t\to\infty$, and then let $\eps\downarrow0$. This gives
	$\vec S_t\sim(\mu/4)t^{-3/4}$.

	\smallskip
	\noindent\emph{Step 4.} We prove the upper bound when $d\geq3$.
	Take $r=2\vee\lceil\log(n+1)\rceil$. Theorem~\ref{thm:oriented},
	\eqref{eq:oriented-u-concentration}, and
	Lemma~\ref{lem:pathwise-comparison} give
	\[
		\bigl(\E \vec U_n(0)^r\bigr)^{1/r}\leq C\log(n+1)
		+C\Bigl(\sqrt{r\bigl(\E \vec U_n(0)^r\bigr)^{1/r}}+r\Bigr).
	\]
	Another use of Young's inequality gives $\E \vec U_n(0)\leq C\log(n+1)$.

	\smallskip
	\noindent\emph{Step 5.} We prove the logarithmic lower bound when $d\geq3$.
	Let $a_n=\E \vec U_n(0)$ and let
	\[
		N_n=\sum_{i=1}^{d}I_{-e_i,0}\bigl(\vec U_n(-e_i)\bigr)
	\]
	be the number of arrivals at the origin through round $n$. Once an oriented
	path leaves a layer, it never returns. Induction in the particle recursion shows
	that the odometers on $\{x:\sum_i x_i=m\}$ depend only on $\eta$
	on layers with coordinate sum at most $m$ and stacks based on layers with
	coordinate sum less than $m$. Hence
	$(\vec U_n(-e_i))_{i=1}^{d}$ is independent of the stacks based on the layer
	of the $-e_i$ and of $\eta(0)$. With $q=(d-1)/d$, conditional independence gives
	\[
		\P\bigl(N_n=0\mid \vec U_n(-e_1),\ldots,\vec U_n(-e_d)\bigr)
		=q^{\sum_i\vec U_n(-e_i)}.
	\]
	The variable $N_n$ is measurable with respect to the scenery and the stacks
	based at sites with coordinate sum at most $-1$. It is therefore independent
	of $\eta(0)$.
	Conditional expectation and translation invariance give
	$\E N_n=d^{-1}\sum_i\E \vec U_n(-e_i)=a_n$. The particle recursion says
	$\vec U_{n+1}(0)=(\eta(0)+N_n)^+$. Since $x^+=x+x^-$ and
	$\E[\eta(0)+N_n]=a_n$,
	\[
		a_{n+1}-a_n
		=\E\bigl(\eta(0)+N_n\bigr)^-
		\geq \E\eta(0)^-\,\P(N_n=0),
	\]
	where the inequality uses the independence of $N_n$ and $\eta(0)$.
	Jensen's inequality yields
	\[
		\P(N_n=0)\geq q^{da_n}=e^{-\lambda a_n},
		\qquad \lambda=-d\log q>0.
	\]
	Since $\E\eta(0)^->0$, it follows that
	$a_{n+1}-a_n\geq ce^{-\lambda a_n}$. Therefore
	\[
		e^{\lambda a_{n+1}}-e^{\lambda a_n}
		=e^{\lambda a_n}\bigl(e^{\lambda(a_{n+1}-a_n)}-1\bigr)
		\geq \lambda e^{\lambda a_n}(a_{n+1}-a_n)\geq c.
	\]
	Summing from $a_0=0$ gives $a_n\geq c\log(n+1)$, which matches the upper
	bound when $d\geq3$.
\end{proof}

\bibliographystyle{plainnat}
\bibliography{refs}

\begin{thebibliography}{99}

\bibitem[Ahn, Junge, Lyu, Reeves, Richey and Sivakoff(2025)]{AJLRS}
Sungwon Ahn, Matthew Junge, Hanbaek Lyu, Lily Reeves, Jacob Richey, and David Sivakoff,
\emph{Diffusion-limited annihilating-coalescing systems},
Electron. J. Probab. \textbf{30} (2025);
\href{https://doi.org/10.1214/25-EJP1286}{doi:10.1214/25-EJP1286};
\href{https://arxiv.org/abs/2305.19333}{arXiv:2305.19333}.


\bibitem[Aldous, Contat, Curien and H\'enard(2023)]{ACCH}
David Aldous, Alice Contat, Nicolas Curien, and Olivier H\'enard,
\emph{Parking on the infinite binary tree},
Probab. Theory Related Fields \textbf{187} (2023), 481--504;
\href{https://doi.org/10.1007/s00440-023-01189-6}{doi:10.1007/s00440-023-01189-6};
\href{https://arxiv.org/abs/2205.15932}{arXiv:2205.15932}.

\bibitem[Athreya, Drewitz and Sun(2019)]{ADS}
Siva Athreya, Alexander Drewitz, and Rongfeng Sun,
\emph{Random walk among mobile/immobile traps: a short review},
Sojourns in Probability Theory and Statistical Physics III,
Springer Proc. Math. Stat. \textbf{300} (2019);
\href{https://doi.org/10.1007/978-981-15-0302-3_1}{doi:10.1007/978-981-15-0302-3\_1};
\href{https://arxiv.org/abs/1703.06617}{arXiv:1703.06617}.


\bibitem[Bahl, Barnet, Johnson and Junge(2022)]{BBJJ}
Riti Bahl, Philip Barnet, Tobias Johnson, and Matthew Junge,
\emph{Diffusion-limited annihilating systems and the increasing convex order},
Electron. J. Probab. \textbf{27} (2022), no.~84, 1--19;
\href{https://doi.org/10.1214/22-EJP808}{doi:10.1214/22-EJP808};
\href{https://arxiv.org/abs/2104.12797}{arXiv:2104.12797}.

\bibitem[Bahl, Barnet and Junge(2021)]{BBJ}
Riti Bahl, Philip Barnet, and Matthew Junge,
\emph{Parking on supercritical Galton-Watson trees},
ALEA Lat. Am. J. Probab. Math. Stat. \textbf{18} (2021), 1801--1815;
\href{https://doi.org/10.30757/alea.v18-67}{doi:10.30757/alea.v18-67};
\href{https://arxiv.org/abs/1912.13062}{arXiv:1912.13062}.


\bibitem[Bou-Rabee and Panagiotis(2026)]{BP}
Ahmed Bou-Rabee and Christoforos Panagiotis,
\emph{Quantitative explosion and percolation of the divisible sandpile},
preprint, 2026.

\bibitem[Bou-Rabee, Peres and Sava-Huss(2026)]{BPRS}
Ahmed Bou-Rabee, Yuval Peres, and Ecaterina Sava-Huss,
\emph{Divisible sandpiles via random walks in random scenery},
\href{https://arxiv.org/abs/2604.13968}{arXiv:2604.13968}.

\bibitem[Bramson and Lebowitz(1991)]{BL}
Maury Bramson and Joel L. Lebowitz,
\emph{Asymptotic behavior of densities for two-particle annihilating random walks},
J. Statist. Phys. \textbf{62} (1991), 297--372;
\href{https://doi.org/10.1007/BF01020872}{doi:10.1007/BF01020872}.

\bibitem[Cabezas, Rolla and Sidoravicius(2014)]{CRS14}
Manuel Cabezas, Leonardo T. Rolla, and Vladas Sidoravicius,
\emph{Non-equilibrium phase transitions: activated random walks at criticality},
J. Stat. Phys. \textbf{155} (2014), no.~6, 1112--1125;
\href{https://doi.org/10.1007/s10955-013-0909-3}{doi:10.1007/s10955-013-0909-3};
\href{https://arxiv.org/abs/1307.4450}{arXiv:1307.4450}.

\bibitem[Cabezas, Rolla and Sidoravicius(2018)]{CRS}
Manuel Cabezas, Leonardo T. Rolla, and Vladas Sidoravicius,
\emph{Recurrence and density decay for diffusion-limited annihilating systems},
Probab. Theory Related Fields \textbf{170} (2018), 587--615;
\href{https://doi.org/10.1007/s00440-017-0763-3}{doi:10.1007/s00440-017-0763-3};
\href{https://arxiv.org/abs/1309.4387}{arXiv:1309.4387}.

\bibitem[Damron, Gravner, Junge, Lyu and Sivakoff(2019)]{DGJLS}
Michael Damron, Janko Gravner, Matthew Junge, Hanbaek Lyu, and David Sivakoff,
\emph{Parking on transitive unimodular graphs},
Ann. Appl. Probab. \textbf{29} (2019), 2089--2113;
\href{https://doi.org/10.1214/18-AAP1443}{doi:10.1214/18-AAP1443};
\href{https://arxiv.org/abs/1710.10529}{arXiv:1710.10529}.

\bibitem[Damron, Lyu and Sivakoff(2021)]{DLS}
Michael Damron, Hanbaek Lyu, and David Sivakoff,
\emph{Stretched exponential decay for subcritical parking times on $\Z^d$},
Random Structures \& Algorithms \textbf{59} (2021), no.~2, 143--154;
\href{https://doi.org/10.1002/rsa.21001}{doi:10.1002/rsa.21001};
\href{https://arxiv.org/abs/2008.05072}{arXiv:2008.05072}.

\bibitem[Diaconis and Fulton(1991)]{DiaconisFulton}
Persi Diaconis and William Fulton,
\emph{A growth model, a game, an algebra, Lagrange inversion, and characteristic classes},
Rend. Sem. Mat. Univ. Politec. Torino \textbf{49} (1991), no.~1, 95--119.

\bibitem[Donsker and Varadhan(1979)]{DonskerVaradhan}
Monroe D. Donsker and S. R. Srinivasa Varadhan,
\emph{On the number of distinct sites visited by a random walk},
Comm. Pure Appl. Math. \textbf{32} (1979), 721--747;
\href{https://doi.org/10.1002/cpa.3160320602}{doi:10.1002/cpa.3160320602}.

\bibitem[Johnson and Richey(2025)]{JR}
Tobias Johnson and Jacob Richey,
\emph{The odometer in subcritical activated random walk},
\href{https://arxiv.org/abs/2510.05514}{arXiv:2510.05514}.

\bibitem[Johnson, Junge, Lyu and Sivakoff(2023)]{JJLS}
Tobias Johnson, Matthew Junge, Hanbaek Lyu, and David Sivakoff,
\emph{Particle density in diffusion-limited annihilating systems},
Ann. Probab. \textbf{51} (2023), no.~6, 2301--2344;
\href{https://doi.org/10.1214/23-AOP1653}{doi:10.1214/23-AOP1653};
\href{https://arxiv.org/abs/2005.06018}{arXiv:2005.06018}.

\bibitem[Lawler and Limic(2010)]{LawlerLimic}
Gregory F. Lawler and Vlada Limic,
\emph{Random walk: a modern introduction},
Cambridge University Press, Cambridge, 2010;
\href{https://doi.org/10.1017/CBO9780511750854}{doi:10.1017/CBO9780511750854}.

\bibitem[Levine, Murugan, Peres and Ugurcan(2016)]{LMPU}
Lionel Levine, Mathav Murugan, Yuval Peres, and Baris Evren Ugurcan,
\emph{The divisible sandpile at critical density},
Ann. Henri Poincar\'e \textbf{17} (2016), 1677--1711;
\href{https://doi.org/10.1007/s00023-015-0433-x}{doi:10.1007/s00023-015-0433-x};
\href{https://arxiv.org/abs/1501.07258}{arXiv:1501.07258}.


\bibitem[Cipriani, Hazra and Ruszel(2018a)]{CHR}
Alessandra Cipriani, Rajat Subhra Hazra, and Wioletta M. Ruszel,
\emph{Scaling limit of the odometer in divisible sandpiles},
Probab. Theory Related Fields \textbf{172} (2018), 829--868;
\href{https://doi.org/10.1007/s00440-017-0821-x}{doi:10.1007/s00440-017-0821-x};
\href{https://arxiv.org/abs/1604.03754}{arXiv:1604.03754}.

\bibitem[Cipriani, Hazra and Ruszel(2018b)]{CHR2}
Alessandra Cipriani, Rajat Subhra Hazra, and Wioletta M. Ruszel,
\emph{The divisible sandpile with heavy-tailed variables},
Stochastic Process. Appl. \textbf{128} (2018), 3054--3081;
\href{https://doi.org/10.1016/j.spa.2017.10.013}{doi:10.1016/j.spa.2017.10.013};
\href{https://arxiv.org/abs/1610.09863}{arXiv:1610.09863}.

\bibitem[Contat(2022)]{Contat}
Alice Contat,
\emph{Sharpness of the phase transition for parking on random trees},
Random Structures Algorithms \textbf{61} (2022), 84--100;
\href{https://doi.org/10.1002/rsa.21061}{doi:10.1002/rsa.21061};
\href{https://arxiv.org/abs/2012.00607}{arXiv:2012.00607}.

\bibitem[Curien and H\'enard(2022)]{CH}
Nicolas Curien and Olivier H\'enard,
\emph{The phase transition for parking on Galton-Watson trees},
Discrete Anal. (2022), paper no.~1;
\href{https://doi.org/10.19086/da.33167}{doi:10.19086/da.33167};
\href{https://arxiv.org/abs/1912.06012}{arXiv:1912.06012}.

\bibitem[Goldschmidt and Przykucki(2019)]{GP}
Christina Goldschmidt and Micha{\l} Przykucki,
\emph{Parking on a random tree},
Combin. Probab. Comput. \textbf{28} (2019), 23--45;
\href{https://doi.org/10.1017/S0963548318000457}{doi:10.1017/S0963548318000457}.

\bibitem[Kaufman and Meisel(2025)]{KM}
Harley Kaufman and Josh Meisel,
\emph{Asymptotic behavior of the critical density of activated random walk},
\href{https://arxiv.org/abs/2512.00720}{arXiv:2512.00720}.

\bibitem[Levine and Peres(2010)]{LP10}
Lionel Levine and Yuval Peres,
\emph{Scaling limits for internal aggregation models with multiple sources},
J. Anal. Math. \textbf{111} (2010), 151--219;
\href{https://doi.org/10.1007/s11854-010-0015-2}{doi:10.1007/s11854-010-0015-2};
\href{https://arxiv.org/abs/0712.3378}{arXiv:0712.3378}.

\bibitem[Levine and Silvestri(2024)]{LS}
Lionel Levine and Vittoria Silvestri,
\emph{Universality conjectures for activated random walk},
Probab. Surv. \textbf{21} (2024), 1--27;
\href{https://doi.org/10.1214/24-PS25}{doi:10.1214/24-PS25};
\href{https://arxiv.org/abs/2306.01698}{arXiv:2306.01698}.

\bibitem[Arratia(1983)]{Arratia}
Richard Arratia,
\emph{Site recurrence for annihilating random walks on $\Z_d$},
Ann. Probab. \textbf{11} (1983), 706--713;
\href{https://doi.org/10.1214/aop/1176993515}{doi:10.1214/aop/1176993515}.

\bibitem[Balagurov and Vaks(1974)]{BV}
B. Ya. Balagurov and V. G. Vaks,
\emph{Random walks of a particle on lattices with traps},
Soviet Phys. JETP \textbf{38} (1974), 968;
Russian original Zh. Eksp. Teor. Fiz. \textbf{65} (1973), 1939.

\bibitem[Bramson and Lebowitz(2001)]{BL01}
Maury Bramson and Joel L. Lebowitz,
\emph{Spatial structure in low dimensions for diffusion limited two-particle reactions},
Ann. Appl. Probab. \textbf{11} (2001), 121--181;
\href{https://doi.org/10.1214/aoap/998926989}{doi:10.1214/aoap/998926989}.

\bibitem[Kang and Redner(1984)]{KR}
Kyungsik Kang and Sidney Redner,
\emph{Scaling approach for the kinetics of recombination processes},
Phys. Rev. Lett. \textbf{52} (1984), 955--958;
\href{https://doi.org/10.1103/PhysRevLett.52.955}{doi:10.1103/PhysRevLett.52.955}.

\bibitem[Ovchinnikov and Zeldovich(1978)]{OZ}
Alexander A. Ovchinnikov and Yakov B. Zeldovich,
\emph{Role of density fluctuations in bimolecular reaction kinetics},
Chem. Phys. \textbf{28} (1978), 215--218;
\href{https://doi.org/10.1016/0301-0104(78)85052-6}{doi:10.1016/0301-0104(78)85052-6}.

\bibitem[Toussaint and Wilczek(1983)]{TW}
Doug Toussaint and Frank Wilczek,
\emph{Particle-antiparticle annihilation in diffusive motion},
J. Chem. Phys. \textbf{78} (1983), 2642--2647;
\href{https://doi.org/10.1063/1.445022}{doi:10.1063/1.445022}.

\bibitem[Levine and Peres(2009)]{LevinePeres09}
Lionel Levine and Yuval Peres,
\emph{Strong spherical asymptotics for rotor-router aggregation and the divisible sandpile},
Potential Anal. \textbf{30} (2009), no.~1, 1--27;
\href{https://doi.org/10.1007/s11118-008-9104-6}{doi:10.1007/s11118-008-9104-6};
\href{https://arxiv.org/abs/0704.0688}{arXiv:0704.0688}.

\bibitem[Peskir and Shiryaev(2006)]{PeskirShiryaev}
Goran Peskir and Albert N. Shiryaev,
\emph{Optimal stopping and free-boundary problems},
Lectures in Mathematics ETH Z\"urich, Birkh\"auser, Basel, 2006;
\href{https://doi.org/10.1007/978-3-7643-7390-0}{doi:10.1007/978-3-7643-7390-0}.

\bibitem[Pinelis(1994)]{Pinelis}
Iosif Pinelis,
\emph{Optimum bounds for the distributions of martingales in Banach spaces},
Ann. Probab. \textbf{22} (1994), no.~4, 1679--1706;
\href{https://doi.org/10.1214/aop/1176988477}{doi:10.1214/aop/1176988477}.

\bibitem[Przykucki, Roberts and Scott(2023)]{PRS}
Micha\l{} Przykucki, Alexander Roberts, and Alex Scott,
\emph{Parking on the integers},
Ann. Appl. Probab. \textbf{33} (2023), no.~2, 1076--1101;
\href{https://doi.org/10.1214/22-AAP1836}{doi:10.1214/22-AAP1836};
\href{https://arxiv.org/abs/1907.09437}{arXiv:1907.09437}.

\bibitem[Rolla(2020)]{Rolla}
Leonardo T. Rolla,
\emph{Activated random walks on $\Z^d$},
Probab. Surv. \textbf{17} (2020), 478--544;
\href{https://doi.org/10.1214/19-PS339}{doi:10.1214/19-PS339};
\href{https://arxiv.org/abs/1906.05037}{arXiv:1906.05037}.

\end{thebibliography}

\end{document}